\newtheorem{thm}{Theorem}[section]
\newtheorem{cor}[thm]{Corollary}
\newtheorem{prop}[thm]{Proposition}
\newtheorem{lemma}[thm]{Lemma}
\theoremstyle{definition}
\newtheorem{defi}[thm]{Definition}
\newtheorem{Ex}{Example}
\newtheorem{ex}[thm]{Example}
\newtheorem{rem}[thm]{Remark}
\newtheorem*{Rem}{Remark}
\pgfmathsetmacro{\shift}{0.65ex}
\renewcommand{\L}{\mathcal{L}}
\newcommand{\G}{\mathcal{G}}
\newcommand{\g}{\mathfrak{g}}
\newcommand{\h}{\mathfrak{h}}
\renewcommand{\t}{\mathfrak{t}}
\newcommand{\X}{\mathcal{X}}
\renewcommand{\H}{\mathcal{H}}
\newcommand{\T}{\mathbb{T}}
\newcommand{\No}{\mathcal{N}}
\newcommand{\R}{\mathbb{R}}
\newcommand{\C}{\mathbb{C}}
\newcommand{\Z}{\mathbb{Z}}
\renewcommand{\dim}{\text{dim}}
\renewcommand{\ker}{\textrm{Ker}}
\newcommand{\im}{\textrm{Im}}
\newcommand{\coker}{\text{CoKer}}
\renewcommand{\O}{\mathcal{O}}
\newcommand{\m}{\mathfrak{m}}
\renewcommand\S{\mathcal{S}}
\renewcommand\phi{\varphi}
\renewcommand{\d}{\textrm{d}}
\renewcommand{\P}{\mathcal{P}}
\newcommand{\J}{\mathcal{J}}
\newcommand{\p}{\mathfrak{p}}
\renewcommand{\c}{\mathfrak{c}}
\begin{document}
% Changes wrt v6:
% - introduction
% - added subsection 5.2.1
% - changed macros Rem<->rem
% - removed "theorem" in righamthm and normhamthm and changed intro to Section 1.
% - added guiding sentence to intro Section 2.
% - merged def 5.2 and 5.3
% - removed references to theorems in old introduction
% - changed introduction to each section; added guidelines for the reader. 
% - removed references to Lie and Weinstein in Sec. 1.1.1
% - changed "\omega_\O" is constant rank to ker(\omega_\O)=\ker(\dJ_\O) and added reference to Prop normrepham in Sec 1.3

% wrt v7:
% removed words in equations.
% added adjective "left" in proposition 1.5
% - colorcode: red=remove, green=keep for paper, purple=keep for thesis, blue=to be decided.

\newcommand{\Addresses}{{% additional braces for segregating \footnotesize
  \bigskip
  \footnotesize

  \textsc{Mathematisch Instituut, Universiteit Utrecht, The Netherlands}\par\nopagebreak
  \textit{E-mail address}: \texttt{maarten.mol.math@gmail.com}

}}

\title{Stratification of the transverse momentum map}
\author{Maarten Mol\\
Universiteit Utrecht}
\date{}
\maketitle
\begin{abstract} Given a Hamiltonian action of a proper symplectic groupoid (for instance, a Hamiltonian action of a compact Lie group), we show that the transverse momentum map admits a natural constant rank stratification. To this end, we construct a refinement of the canonical stratification associated to the Lie groupoid action (the orbit type stratification, in the case of a Hamiltonian Lie group action) that seems not to have appeared before, even in the literature on Hamiltonian Lie group actions. This refinement turns out to be compatible with the Poisson geometry of the Hamiltonian action: it is a Poisson stratification of the orbit space, each stratum of which is a regular Poisson manifold that admits a natural proper symplectic groupoid integrating it. The main tools in our proofs (which we believe could be of independent interest) are a version of the Marle-Guillemin-Sternberg normal form theorem for Hamiltonian actions of proper symplectic groupoids and a notion of equivalence between Hamiltonian actions of symplectic groupoids, closely related to Morita equivalence between symplectic groupoids. 
\end{abstract}
\tableofcontents

\section*{Introduction}
Traditionally, a Hamiltonian action is an action of a Lie group $G$ on a symplectic manifold $(S,\omega)$, equipped with an equivariant momentum map: 
\begin{equation*} J:(S,\omega)\to \g^*,
\end{equation*} taking values in the dual of the Lie algebra of $G$. Throughout the years, variations on this notion have been explored, many of which have the common feature that the momentum map:
\begin{equation}\label{mommapintro0} J:(S,\omega)\to (M,\pi),
\end{equation} is a Poisson map taking values in a specified Poisson manifold (see for instance \cite{McD,Lu,MeWo,We6}). In \cite{WeMi}, such momentum map theories were unified by introducing the notion of Hamiltonian actions for symplectic groupoids, in which the momentum map takes values in the Poisson manifold integrated by a given symplectic groupoid. In this paper, we show that the transverse momentum map of such Hamiltonian actions admits a natural stratification, provided the given symplectic groupoid is proper. To be more precise, let $(\G,\Omega)\rightrightarrows (M,\pi)$ be a proper symplectic groupoid with a Hamiltonian action along a momentum map (\ref{mommapintro0}). The symplectic groupoid generates a partition of $(M,\pi)$ into symplectic manifolds, called the symplectic leaves of $(\G,\Omega)$. On the other hand, the $(\G,\Omega)$-action generates the partition of $(S,\omega)$ into orbits. We denote the spaces of orbits and leaves as: 
\begin{equation*} \underline{S}:=S/\G \quad\&\quad \underline{M}:=M/\G.
\end{equation*}
The momentum map (\ref{mommapintro0}) descends to a map:
\begin{center}
\begin{tikzcd} (S,\omega)\arrow[r,"J"] \arrow[d] & (M,\pi) \arrow[d]\\
\underline{S}\arrow[r,"\underline{J}"] & \underline{M}
\end{tikzcd}
\end{center}
that we call the \textbf{transverse momentum map}. Because we assume $\G$ to be proper, by the results of \cite{PfPoTa,CrMe} (which we recall in Section \ref{stratsec}) both the orbit space $\underline{S}$ and the leaf space $\underline{M}$ admit a canonical Whitney stratification: $\S_\textrm{Gp}(\underline{S})$ and $\S_\textrm{Gp}(\underline{M})$, induced by the proper Lie groupoids $\G\ltimes S$ (the action groupoid) and $\G$. These, however, do not form a stratification of the transverse momentum map, in the sense that $\underline{J}$ need not send strata of $\S_\textrm{Gp}(\underline{S})$ into strata of $\S_\textrm{Gp}(\underline{M})$ (see Example \ref{hamGspex} below). Our first main result is Theorem \ref{canhamstratthm}, which shows that there is a natural refinement $\S_\textrm{Ham}(\underline{S})$ of $\S_\textrm{Gp}(\underline{S})$ that, together with the stratification $\S_\textrm{Gp}(\underline{M})$, forms a constant rank stratification of $\underline{J}$. This means that: 
\begin{itemize}\item $\underline{J}$ sends strata of $\S_\textrm{Ham}(\underline{S})$ into strata of $\S_\textrm{Gp}(\underline{M})$,
\item the restriction of $\underline{J}$ to each pair of strata is a smooth map of constant rank.
\end{itemize} Theorem \ref{canhamstratthm} further shows that $\S_\textrm{Ham}(\underline{S})$ is in fact a Whitney stratification of the orbit space. We call $\S_\textrm{Ham}(\underline{S})$ the \textbf{canonical Hamiltonian stratification} of $\underline{S}$. 
\begin{Ex}\label{hamGspex} Let $G$ be a compact Lie group with Lie algebra $\g$ and let $J:(S,\omega)\to \g^*$ be a Hamiltonian $G$-space with equivariant momentum map. In this case, $(\G,\Omega)=(T^*G,-\d\lambda_\textrm{can})$ (cf. Example \ref{exhamGsp}), $\underline{S}=S/G$, $\underline{M}=\g^*/G$, and $\S_\textrm{Gp}(\underline{S})$ and $\S_\textrm{Gp}(\underline{M})$ are the stratifications by connected components of the orbit types of the $G$-actions. The stratification $\S_\textrm{Ham}(\underline{S})$ can be described as follows. Let us call a pair $(K,H)$ of subgroups $H\subset K\subset G$ conjugate in $G$ to another such pair $(K',H')$ if there is a $g\in G$ such that $gKg^{-1}=K'$ and $gHg^{-1}=H'$. Consider the partition of $\underline{S}$ defined by the equivalence relation: 
\begin{equation}\label{eqrelorbleaftyp} \O_p\sim \O_q\iff (G_{J(p)},G_p) \text{ is conjugate in $G$ to }(G_{J(q)},G_q),
\end{equation} where $G_p$ and $G_q$ denote the isotropy groups of the action on $S$, whereas $G_{J(p)}$ and $G_{J(q)}$ denote the isotropy groups of the coadjoint action on $\g^*$. The connected components of the members of this partition form the stratification $\S_\textrm{Ham}(\underline{S})$. When $G$ is abelian, $\S_\textrm{Ham}(\underline{S})$ and $\S_\textrm{Gp}(\underline{S})$ coincide, but in general they need not (consider, for example, the cotangent lift of the action by left translation of a non-abelian compact Lie group $G$ on itself).
\end{Ex}

Our second main result is Theorem \ref{poisstratthm}$b$, which states that $\S_\textrm{Ham}(\underline{S})$ is in fact a constant rank Poisson stratification of the orbit space and gives a description of the symplectic leaves in terms of the fibers of the transverse momentum map. To elaborate, let us first provide some further context. The singular space $\underline{S}$ has a natural algebra of smooth functions $C^\infty(\underline{S})$: the algebra consisting of $\G$-invariant smooth functions on $S$. This is a Poisson subalgebra of: 
\begin{equation*} (C^\infty(S),\{\cdot,\cdot\}_\omega).
\end{equation*} Hence, it inherits a Poisson bracket, known as the reduced Poisson bracket. Geometrically, this is reflected by the fact that $\S_\textrm{Gp}(\underline{S})$ is a Poisson stratification of the orbit space (see Definition \ref{poisstratdef} and Theorem \ref{poisstratthm}$a$). In particular, each stratum of $\S_\textrm{Gp}(\underline{S})$ admits a natural Poisson structure, induced by the Poisson bracket on $C^\infty(\underline{S})$. Closely related to this is the singular symplectic reduction procedure of Lerman-Sjamaar \cite{LeSj}, which states that for each symplectic leaf $\L$ of $(\G,\Omega)$ in $M$, the symplectic reduced space: 
\begin{equation}\label{sympredspintro} \underline{S}_\L:=J^{-1}(\L)/\G
\end{equation} admits a natural symplectic Whitney stratification. Let us call this the Lerman-Sjamaar stratification of (\ref{sympredspintro}). This is related to the Poisson stratification $\S_\textrm{Gp}(\underline{S})$ by the fact that each symplectic stratum of such a reduced space $(\ref{sympredspintro})$ coincides with a symplectic leaf of a stratum of $\S_\textrm{Gp}(\underline{S})$.
\begin{Rem} The facts mentioned above are stated more precisely in Theorems \ref{poisstratthm}$a$, \ref{redspstratthm} and \ref{poisstratthm}$c$. Although these theorems should be known to experts, in the literature we could not find a written proof (that is, not in the generality of Hamiltonian actions for symplectic groupoids; see e.g. \cite{FeOrRa,LeSj} for the case of Lie group actions). Therefore, we have included proofs of these. 
\end{Rem}

Returning to our second main result: Theorem \ref{poisstratthm}$b$ states first of all that, like $\S_\textrm{Gp}(\underline{S})$, the canonical Hamiltonian stratification $\S_\textrm{Ham}(\underline{S})$ is a Poisson stratification of the orbit space, the leaves of which coincide with symplectic strata of the Lerman-Sjamaar stratification of the reduced spaces (\ref{sympredspintro}). In addition, it has the following properties:
\begin{itemize} %\item each stratum $\underline{\Sigma}\in \S_\textrm{Ham}(\underline{S})$ is a saturated Poisson submanifold of the stratum in $\S_\textrm{Gp}(\underline{S})$ that contains $\underline{\Sigma}$, 
\item in contrast to $\S_\textrm{Gp}(\underline{S})$, the Poisson structure on each stratum of $\S_\textrm{Ham}(\underline{S})$ is regular (meaning that the symplectic leaves have constant dimension),
\item the symplectic foliation on each stratum $\underline{\Sigma}\in \S_\textrm{Ham}(\underline{S})$ coincides, as a foliation, with that by the connected components of the fibers of the constant rank map $\underline{J}\vert_{\underline{\Sigma}}$.
\end{itemize}
The reduced spaces $(\ref{sympredspintro})$ are, as topological spaces, the fibers of $\underline{J}$. As stratified spaces (equipped with the Lerman-Sjamaar stratification), these can now be seen as the fibers of the  stratified map: \begin{equation*} \underline{J}:(\underline{S},\S_\textrm{Ham}(\underline{S}))\to (\underline{M},\S_\textrm{Gp}(\underline{M})).
\end{equation*} 
Our third main result is Theorem \ref{poisstratintgrthm}, which says that, besides the fact that the Poisson structure on each stratum of $\S_\textrm{Ham}(\underline{S})$ is regular, these Poisson manifolds admit natural proper symplectic groupoids integrating them. 

\begin{Ex}\label{identityactionex} Let $(\G,\Omega)\rightrightarrows (M,\pi)$ be a proper symplectic groupoid. Then $(\G,\Omega)$ has a canonical (left) Hamiltonian action on itself along the target map $t:(\G,\Omega)\to M$. In this case, $(S,\omega)=(\G,\Omega)$ and the orbit space $\underline{S}$ is $M$, with orbit projection the source map of $\G$. The stratification $\S_\textrm{Ham}(\underline{S})$ is the canonical stratification $\S_\textrm{Gp}(M)$ induced by the proper Lie groupoid $\G$ (as in Example \ref{exmortyp}). So, Theorem \ref{poisstratthm} and \ref{poisstratintgrthm} imply that each stratum of $\S_\textrm{Gp}(M)$ is a regular, saturated Poisson submanifold of $(M,\pi)$, that admits a natural proper symplectic groupoid integrating it. This is a result in \cite{CrFeTo2}.
\end{Ex}

Regular proper symplectic groupoids have been studied extensively in \cite{CrFeTo} and have been shown to admit a transverse integral affine structure. In particular, the proper symplectic groupoids over the strata of the canonical Hamiltonian stratification admit transverse integral affine structures. As it turns out, the leaf space of the proper symplectic groupoid over any stratum of $\S_\textrm{Ham}(\underline{S})$ is smooth, and the transverse momentum map descends to an integral affine immersion into the corresponding stratum of $\S_\textrm{Gp}(\underline{M})$. This is reminiscent of the findings of \cite{CoDaMo,Zu}. \\

\textbf{\underline{Brief outline:}} In Part 1 we generalize the Marle-Guillemin-Sternberg normal form for Hamiltonian actions of Lie groups, to those of symplectic groupoids (Theorem \ref{normhamthm}). From this we derive a simpler normal form for the transverse momentum map (Example \ref{locmodmoreq}), using a notion of equivalence for Hamiltonian actions that is analogous to Morita equivalence for Lie groupoids (Definition \ref{moreqdefHam}). Part 1 provides the main tools for the proofs in Part 2, where we introduce the canonical Hamiltonian stratification and prove the main theorems mentioned above (Theorems \ref{canhamstratthm}, \ref{redspstratthm}, \ref{poisstratthm} and \ref{poisstratintgrthm}). A more detailed outline is given at the start of each of these parts.\\ %In Section 1 we provide background on Hamiltonian actions of symplectic groupoids and we introduce and proof a version of the Marle-Guillemin-Sternberg normal form for these. This normal form theorem is the main tool in the proofs of Section 3. In Section 2 we introduce a notion of equivalence between Hamiltonian actions of symplectic groupoids, in the same spirit as Morita equivalence between symplectic groupoids. We believe that this notion of equivalence is of independent interest. In this paper, it is useful as it makes many proofs more conceptual (compare, for instance, our proof of Theorem \ref{redspstratthm} with that of \cite{LeSj}), it provides a simpler normal form for the transverse momentum map, and it lies at the heart of our preferred definition of the canonical Hamiltonian stratification. In Section 3 we introduce this stratification, we provide background on Whitney stratified spaces and prove theorems \ref{canhamstratthm}, \ref{redspstratthm}. Furthermore, we address the question of whether the canonical Hamiltonian stratification has a unique maximal stratum, a criterion for which is given in Theorem \ref{printypthm}. In Section 4 we state and prove Theorem \ref{poisstratthm}, which addresses the basic Poisson geometric properties of the various stratifications mentioned before. \\

%\begin{thm}[Normal form theorem] Let $J:(S,\omega)\to M$ be the momentum map of a Hamiltonian action by a symplectic groupoid $(\G,\Omega)\rightrightarrows M$. Further, let $p\in S$ be such that the action is proper at $p$ and $\G$ is linearizable at $J(p)$. Then locally, around the orbit $\O$ through $p$ and the symplectic leaf $\L$ through $J(p)$, the Hamiltonian action is isomorphic to a Hamiltonian action of the linearization of $\G$ at $\L$ with momentum map the second order approximation of $J$ around $\O$. 
%\end{thm} 
%\begin{Rem} Theorem \ref{redspstratthm} generalizes part of a famous result of Sjamaar and Lerman \cite{} which says that, given a compact Lie group $G$ and Hamiltonian $G$-space $J:(S,\omega)\to \g^*$, the reduced spaces are symplectic Whitney stratified spaces. There, a reduced space and its strata are viewed as coming from a quotient construction. Theorem \ref{canhamstratthm}$d$ sheds a slightly different light on this, for it shows that each reduced space can also be seen as a fiber of the smooth stratified map $\underline{J}$, which is naturally partitioned into smooth manifolds because $\underline{J}$ restricts to a smooth map of constant rank between the strata of $X$ and $Y$. In the next chapter we incorporate the symplectic structure on the reduced spaces into this picture.
%\end{Rem}

\textbf{\underline{Acknowledgements:}}
I would like to thank my PhD supervisor Marius Crainic for his guidance. Marius suggested to me to try to prove the aforementioned normal form theorem by means of Theorem \ref{sympmoreqasrep}. Moreover, he commented on an earlier version of this paper, which most certainly helped me to improve the presentation. I would further like to thank him, Rui Loja Fernandes and David Mart\'{i}nez Torres for sharing some of their unpublished work with me, and I am grateful to David and Rui for their lectures at the summer school of Poisson 2018; all of this has been an important source of inspiration for Theorem \ref{poisstratintgrthm}. This work was supported by NWO (Vici Grant no. 639.033.312).\\

\textbf{\underline{Conventions:}} Throughout, we require smooth manifolds to be both Hausdorff and second countable and we require the same for both the base and the space of arrows of a Lie groupoid. 

\section{The normal form theorem}
In this part we prove a version of the Marle-Guillemin-Sternberg normal form theorem for Hamiltonian actions of symplectic groupoids. 
\begin{thm}\label{normhamthm} Let $(\G,\Omega)\rightrightarrows M$ be a symplectic groupoid and suppose that we are given a Hamiltonian $(\G,\Omega)$-action along $J:(S,\omega)\to M$. Let $\O$ be the orbit of the action through some $p\in S$ and $\L$ the leaf of $\G$ through $x:=J(p)$. If $\G$ is proper at $x$ (in the sense of Definition \ref{propatxdefi}), then the Hamiltonian action is neighbourhood-equivalent (in the sense of Definition \ref{nhoodeq1defi}) to its local model around $\O$ (as constructed in Section \ref{locmodsec}).
\end{thm}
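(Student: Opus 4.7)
The plan is to follow the suggestion recorded in the acknowledgements and base the proof on Theorem \ref{sympmoreqasrep}, which (I anticipate) translates Hamiltonian actions of symplectic groupoids into a representation-theoretic datum that transfers cleanly under symplectic Morita equivalence. The strategy is then three-fold: reduce to a transverse slice at $x$ where the picture becomes a classical Hamiltonian action of the compact isotropy group $G_x$; invoke the classical Marle-Guillemin-Sternberg theorem there; and transport the resulting normal form back along a Morita equivalence to recover the local model around the full orbit $\O$.

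First, I would exploit the properness of $\G$ at $x$: the isotropy $G_x$ is compact, and a symplectic linearization-type statement for proper symplectic groupoids should provide a symplectic Morita equivalence between $(\G,\Omega)$ restricted to a saturated neighborhood of $\L$ and a much simpler symplectic groupoid built from $G_x$ together with the symplectic normal data of $\L$ at $x$, supported on a slice transverse to $\L$. Under Theorem \ref{sympmoreqasrep}, the given Hamiltonian $(\G,\Omega)$-action along $J$ then corresponds to a Hamiltonian action of this transverse groupoid on a symplectic slice $(S_p,\omega_p)$ of $S$ at $p$, which in turn is controlled by a Hamiltonian action of the compact group $G_x$ on $S_p$ with its own $G_x$-equivariant momentum map.

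Second, I would apply the classical Marle-Guillemin-Sternberg theorem to this residual Hamiltonian $G_x$-action near the (compact) orbit $G_x\cdot p$, obtaining its standard equivariant normal form. Inspecting the construction carried out in Section \ref{locmodsec}, the local model proposed there is by design the image of this classical MGS model under the Morita equivalence above; hence transferring the $G_x$-equivariant normal form back along Theorem \ref{sympmoreqasrep} produces the desired neighborhood-equivalence between the original Hamiltonian action and its local model, in the sense of Definition \ref{nhoodeq1defi}. A final Moser-type argument, applied over the tubular neighborhood of $\O$ supplied by the proper action groupoid $\G\ltimes S$, would identify the two symplectic forms after possibly shrinking.

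The step I expect to be the main obstacle is the Morita-equivalence reduction itself: producing the symplectic Morita equivalence near $\L$ with the precise compatibilities required (a transverse symplectic linearization for proper symplectic groupoids), and checking that under Theorem \ref{sympmoreqasrep} the momentum map $J$, the symplectic form $\omega$, and the $\G$-action all correspond exactly, term by term, to the data of the local model of Section \ref{locmodsec}. Once this dictionary is set up, the reduction to the compact-group case and the concluding Moser argument should be essentially routine, and the rigidity provided by properness of $\G$ at $x$ is what makes the whole scheme available.
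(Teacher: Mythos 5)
Your proposal matches the paper's proof in all essentials: the paper likewise uses the rigidity of proper symplectic groupoids near $\L$ (Theorem \ref{normformsymgp}, combined with the explicit equivalence of Example \ref{crucexmoreq}) to produce a symplectic Morita equivalence with a restriction of the cotangent groupoid $(\G_x\ltimes \g_x^*,-\d\lambda_{\textrm{can}})$, transfers the Hamiltonian space across via Theorem \ref{sympmoreqasrep}, applies the classical MGS theorem in its rigidity form (Theorem \ref{mgsrigzerofib}) to the resulting Hamiltonian $\G_x$-space, and transfers back. The only organizational difference is that the paper packages the conclusion as a rigidity statement (Theorem \ref{righamthm}: the symplectic normal representation at $p$ is a complete invariant of realizations of the given zeroth-order data), so that neither a separate Moser argument nor a literal identification of the transported MGS model with the local model of Section \ref{locmodsec} is required---only a matching of symplectic normal representations, which is controlled by Lemma \ref{assrepprop}.
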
 
Both the local model and the proof of this theorem are inspired on those of two existing normal form theorems: the MGS-normal form \cite{Ma1,GS4} by Marle, and Guillemin and Sternberg on one hand, and the normal form for proper Lie groupoids \cite{We4,Zu,CrStr,FeHo} and symplectic groupoids \cite{Zu,CrMar1,CrFeTo1,CrFeTo2} on the other.\\

We split the proof of this theorem into a rigidity theorem (Theorem \ref{righamthm}) and the construction of a local model out of a certain collection of data that can be associated to any orbit $\O$ of a Hamiltonian action. In Section \ref{backhamactsec} and Section \ref{normrephamsec} we introduce the reader to this data and in Section \ref{locmodsec} we construct the local model. %Let us note here that subsections \ref{locmoddiffsec} and \ref{MGSrelsec} are meant to give further insight into the local model, but could in principle be skipped at a first read.
To prove Theorem \ref{normhamthm}, we are then left to prove the rigidity theorem, which is the content of Section \ref{normformpfsubsec}. %Using the fact that Morita equivalent symplectic groupoids have equivalent categories of modules, we will reduce the proof of Theorem \ref{righamthm} to the case where $\O\subset J_X^{-1}(0)$ is an orbit of a Hamiltonian $G$-space $J_X:(X,\omega_X)\to \g^*$ (with $G$ a compact Lie group), to which we can apply the Marle-Guilleming-Sternberg theorem (the idea of such a reduction is by no means new \textemdash in fact, it appears already in the work of Guillemin and Sternberg). In Subsection \ref{normformpfsubsec} we first recall the construction of this equivalence of categories and then use this to carry out the aforementioned reduction and prove Theorem \ref{righamthm}. The reader familiar with Morita equivalences of symplectic groupoids, and interested mainly in our application (rather than the proof) of the normal form theorem, could skip this subsection. 
Lastly, in Section \ref{translocmodsec} we introduce a notion of Morita equivalence between Hamiltonian actions that allows us to make sense of a simpler normal form for the transverse momentum map. We then study some elementary invariants for this notion of equivalence, analogous to those for Morita equivalence between Lie groupoids, which will lead to further insight into the proof of Theorem \ref{normhamthm}. This will also be important later in our definition of the canonical Hamiltonian stratification and our proof of Theorem \ref{canhamstratthm} and Theorem \ref{redspstratthm}.

\subsection{Background on Hamiltonian groupoid actions}\label{backhamactsec}
\subsubsection{Poisson structures and symplectic groupoids}\label{backhamactsubsec1} %Symplectic groupoids arise naturally when integrating the Lie algebroids of Poisson structures. To elaborate on this, 
Recall that a \textbf{symplectic groupoid} is a pair $(\G,\Omega)$ consisting of a Lie groupoid $\G$ and a symplectic form $\Omega$ on $\G$ which is \textbf{multiplicative}. That is, it is compatible with the groupoid structure in the sense that:
\begin{equation*} (\textrm{pr}_1)^*\Omega=m^*\Omega-(\textrm{pr}_2)^*\Omega,
\end{equation*} where we denote by:
\begin{equation*} m,\textrm{pr}_1,\textrm{pr}_2:\G^{(2)}\to \G
\end{equation*} the groupoid multiplication and the projections from the space of composable arrows $\G^{(2)}$ to $\G$. Given a symplectic groupoid $(\G,\Omega)\rightrightarrows M$, there is a unique Poisson structure $\pi$ on $M$ with the property that the target map $t:(\G,\Omega)\to (M,\pi)$ is a Poisson map. The Lie algebroid of $\G$ is canonically isomorphic to the Lie algebroid $T^*_\pi M$ of the Poisson structure $\pi$ on $M$, via:
\begin{equation}\label{imsymp} \rho_\Omega:T^*_\pi M\to \text{Lie}(\G), \quad \iota_{\rho_\Omega(\alpha)}\Omega_{1_x}=(\d t_{1_x})^*\alpha, \quad \forall\alpha\in T^*_xM,\text{ } x\in M.
\end{equation} The symplectic groupoid $(\G,\Omega)$ it said to integrate the Poisson structure $\pi$ on $M$.%, which in light of (\ref{imsymp}) means in particular that the Lie algebroid $T^*_\pi M$ of the Poisson structure is integrated by the Lie groupoid $\G$. Conversely, if the Lie algebroid $T_\pi^*M$ of a given Poisson structure $\pi$ is integrable, there is a unique symplectic form $\Omega_\pi$ on the source-simply-connected Lie groupoid $\G_\pi$ integrating the Lie algebroid $T^*_\pi M$ such that $(\G_\pi,\Omega_\pi)$ becomes a symplectic groupoid integrating $\pi$ and (\ref{imsymp}) coincides with the given isomorphism between $\textrm{Lie}(\G_\pi)$ and $T^*_\pi M$.
\begin{ex}\label{liealgintex} The dual of a Lie algebra $\g$ is naturally a Poisson manifold $(\g^*,\pi_{\textrm{lin}})$, equipped with the so-called Lie-Poisson structure. Given a Lie group $G$ with Lie algebra $\g$, the cotangent groupoid $(T^*G,-\d\lambda_{\textrm{can}})$ is a symplectic groupoid integrating $(\g^*,\pi_{\textrm{lin}})$. The groupoid structure on $T^*G$ is determined by that fact that, via left-multiplication on $G$, it is isomorphic to the action groupoid $G\ltimes \g^*$ of the coadjoint action. 
\end{ex} 
\subsubsection{Momentum maps and Hamiltonian actions} To begin with, recall:
\begin{defi}[\cite{WeMi}]\label{hamactdef} Let $(S,\omega)$ be a symplectic manifold. A left action of a symplectic groupoid $(\G,\Omega)\rightrightarrows M$ along a map $J:(S,\omega)\to M$ is called \textbf{Hamiltonian} if it satisfies the multiplicativity condition:
\begin{equation}\label{hammultcond} (\textrm{pr}_\G)^*\Omega=(m_S)^*\omega-(\textrm{pr}_S)^*\omega, 
\end{equation} where we denote by:
\begin{equation*} m_S,\textrm{pr}_S:\G\ltimes S\to S,
\quad \textrm{pr}_\G:\G\ltimes S\to \G,
\end{equation*} the map defining the action and the projections from the action groupoid to $S$ and $\G$. Right Hamiltonian actions are defined similarly.
\end{defi} The infinitesimal version of Hamiltonian actions for symplectic groupoids are momentum maps. To be more precise, by a \textbf{momentum map} we mean a Poisson map $J:(S,\omega)\to (M,\pi)$ from a symplectic manifold into a Poisson manifold. That is, for all $f,g\in C^\infty(M)$ it holds that:
\begin{equation*} J^*\{f,g\}_\pi=\{J^*f,J^*g\}_\omega.
\end{equation*}
Every momentum map comes with a symmetry, in the form of a Lie algebroid action. Indeed, a momentum map $J:(S,\omega)\to (M,\pi)$ is acted on by the Lie algebroid $T^*_\pi M$ of the Poisson structure $\pi$. Explicitly, the Lie algebroid action $a_J:\Omega^1(M)\to \X(S)$ along $J$ is determined by the \textbf{momentum map condition}:
\begin{equation}\label{mommapcond} \iota_{a_J(\alpha)}\omega=J^*\alpha, \quad \forall \alpha\in \Omega^1(M).
\end{equation}
Hamiltonian actions integrate such Lie algebroid actions, in the following sense. 
\begin{prop} Let $(\G,\Omega)\rightrightarrows M$ be a symplectic groupoid and let $\pi$ be the induced Poisson structure on $M$ (as in Subsection \ref{backhamactsubsec1}). Suppose that we are given a left Hamiltonian $(\G,\Omega)$-action along $J:(S,\omega)\to M$. Then $J:(S,\omega)\to (M,\pi)$ is a momentum map and the Lie algebroid action: \begin{equation}\label{assliealgact} a:\Omega^1(M)\to \X(S)
\end{equation} associated to the Lie groupoid action (via (\ref{imsymp})) coincides with the canonical $T^*_\pi M$-action along $J$. In other words, (\ref{assliealgact}) satisfies the momentum map condition (\ref{mommapcond}). A similar statement holds for right Hamiltonian actions. 
\end{prop}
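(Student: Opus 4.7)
My plan is to extract the momentum map condition (\ref{mommapcond}) directly from the multiplicativity relation (\ref{hammultcond}) by restricting it to the unit locus of $\G\ltimes S$ and evaluating on a carefully chosen pair of tangent vectors; both claims of the proposition then follow formally. The key insight is that (\ref{imsymp}) already encodes the momentum map condition at the level of arrows above $x$, so the action map is exactly the tool needed to transport it down to $T_pS$.

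Concretely, I would fix $\alpha\in\Omega^1(M)$, a point $p\in S$ with $x:=J(p)$, and an arbitrary $w\in T_pS$, and consider the tangent vectors
\[ X_1 := (\rho_\Omega(\alpha_x),\, 0), \qquad X_2 := (\d 1_x(\d J_p(w)),\, w) \]
in $T_{(1_x,p)}(\G\ltimes S)$. The fibered-product constraint is satisfied for both, since $s\circ 1 = \textrm{id}_M$ and $\rho_\Omega(\alpha_x)\in\ker(\d s_{1_x})$. By construction $\d m_S(X_1) = a(\alpha)_p$; since multiplying by a unit at $p$ is trivial, $\d m_S(X_2) = w = \d\textrm{pr}_S(X_2)$; and $\d\textrm{pr}_S(X_1) = 0$. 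Plugging these into (\ref{hammultcond}) collapses the three terms to
\[ \Omega_{1_x}\bigl(\rho_\Omega(\alpha_x),\, \d 1_x(\d J_p(w))\bigr) \;=\; \omega_p(a(\alpha)_p,\, w), \]
and the left-hand side equals $\alpha_x(\d J_p(w))$ by (\ref{imsymp}) together with $t\circ 1 = \textrm{id}_M$. This is precisely (\ref{mommapcond}).

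Once (\ref{mommapcond}) is in hand, non-degeneracy of $\omega$ shows that the condition characterizes $a$ uniquely, so $a$ will coincide with the canonical $T^*_\pi M$-action along $J$ provided that action exists, i.e.\ provided $J$ is Poisson. To establish the Poisson property I would specialize (\ref{mommapcond}) to $\alpha=\d f$, yielding $a(\d f) = X_{J^*f}$. On the other hand, the anchor compatibility of any Lie algebroid action, combined with the intertwining $\rho_\G\circ\rho_\Omega = \pi^\sharp$ built into $\rho_\Omega$ being a Lie-algebroid isomorphism, gives $\d J\circ a(\d f) = X_f\circ J$. Together these yield $\d J\circ X_{J^*f} = X_f\circ J$, which is the defining property of a Poisson map. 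The right-action case is entirely analogous up to sign. The main obstacle I anticipate is not computational but a matter of insight: finding the precise pair of test vectors $(X_1,X_2)$ that pins down (\ref{mommapcond}) from a single instance of (\ref{hammultcond}); every subsequent step is a formal consequence of non-degeneracy of $\omega$ and standard Lie-algebroid-action properties.
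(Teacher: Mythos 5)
Your proof is correct. The paper itself does not prove this proposition --- it is stated as a known fact (with a pointer to the literature for the converse) --- so there is no in-text argument to compare against; but your computation is the standard one and it is complete. The choice of test vectors $X_1=(\rho_\Omega(\alpha_x),0)$ and $X_2=(\d 1_x(\d J_p(w)),w)$ is exactly right: both lie in $T_{(1_x,p)}(\G\ltimes S)$ (for $X_1$ you correctly use that $\rho_\Omega(\alpha_x)\in\ker(\d s_{1_x})$, which follows from $\ker(\d s)$ and $\ker(\d t)$ being $\Omega$-orthogonal, the fact implicit in the paper's assertion that $\rho_\Omega$ lands in $\mathrm{Lie}(\G)$), and evaluating (\ref{hammultcond}) on this pair collapses to $\alpha_x(\d J_p(w))=\omega_p(a(\alpha)_p,w)$ via $t\circ 1=s\circ 1=\mathrm{id}_M$ and $1_{J(q)}\cdot q=q$. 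The deduction of the Poisson property from $a(\d f)=X_{J^*f}$ together with anchor compatibility $\d J\circ a=\pi^\sharp\circ J^*$ is likewise standard and correct (it is the characterization $\d J(X_{J^*f})=X_f\circ J$ of Poisson maps out of symplectic manifolds), and the order of your argument avoids any circularity in invoking the ``canonical'' $T^*_\pi M$-action. The only caveat worth recording is that the identity $a(\alpha)_p=\d m_S(\rho_\Omega(\alpha_x),0)$ and the relation $\d t\circ\rho_\Omega=\pi^\sharp$ depend on the paper's sign conventions for $\mathrm{Lie}(\G)$ and for $\pi$ (defined by $t$ being Poisson); with those conventions fixed as in Subsection \ref{backhamactsubsec1}, everything matches.
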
 An appropriate converse to this statement holds as well; see for instance \cite{BuCr}.  
%\begin{prop} Let $J:(S,\omega)\to (M,\pi)$ be a map from a symplectic manifold into a Poisson manifold $(M,\pi)$. Suppose that $(\G,\Omega)$ is a source-connected symplectic groupoid integrating $(M,\pi)$. Then a left action of $\G$ along $J$ is Hamiltonian if and only if its associated Lie algebroid action $a:\Omega^1(M)\to \X(S)$ satisfies the momentum map condition (\ref{mommapcond}). In this case, $J:(S,\omega)\to (M,\pi)$ is indeed a momentum map.
%\end{prop}
%In other words: a left action of $(\G,\Omega)$ along a momentum map $J:(S,\omega)\to (M,\pi)$ is Hamiltonian if and only if it integrates the canonical $T^*_\pi M$-action along $J$. It further holds true that, if a given momentum map $J:(S,\omega)\to (M,\pi)$ is a complete Poisson map and the Poisson structure $\pi$ is integrable, then there is a unique Hamiltonian action of the source-simply-connected symplectic integration $(\G_\pi,\Omega_\pi)$ along $J$. For a more detailed discussion on this, see for instance \cite{BuCr}. 
\begin{ex}\label{exhamGsp} Continuing Example \ref{liealgintex}: as observed in \cite{WeMi}, the data of a Hamiltonian $G$-action with equivariant momentum map $J:(S,\omega)\to \g^*$ is the same as that of a Hamiltonian action of the symplectic groupoid $(G\ltimes \g^*,-\d\lambda_{\textrm{can}})$ along $J$. 
\end{ex}
\begin{ex} Any symplectic groupoid has canonical left and right Hamiltonian actions along its target and source map, respectively.
\end{ex}
\subsection{The local invariants}\label{normrephamsec}
%In this section we introduce the invariants of a Hamiltonian groupoid action out of which its local model will be build. 
\subsubsection{The leaves and normal representations of Lie and symplectic groupoids} To start with, we introduce some more terminology. Let $\G\rightrightarrows M$ be a Lie groupoid and $x\in M$. By the \textbf{leaf of $\G$ through $x$} we mean the set $\L_x$ consisting of points in $M$ that are the target of an arrow starting at $x$. By the \textbf{isotropy group of $\G$} at $x$ we mean the group $\G_x:=s^{-1}(x)\cap t^{-1}(x)$ consisting of arrows that start and end at $x$. In general, $\G_x$ is a submanifold of $\G$ and as such it is a Lie group. The leaf $\L_x$ is an initial submanifold of $M$, with smooth manifold structure determined by the fact that: \begin{equation}\label{sfibprinbun} t:s^{-1}(x)\to \L_x\end{equation} is a (right) principal $\G_x$-bundle. Notice that a leaf of $\G$ may be disconnected. Given a leaf $\L\subset M$ of $\G$, we let $\G_\L:=s^{-1}(\L)$
denote the restriction of $\G$ to $\L$. This is a Lie subgroupoid of $\G$. In all of our main theorems, we assume at least that $\G$ is proper at points in the leaves under consideration, in the sense below.
\begin{defi}[\cite{CrStr}]\label{propatxdefi} A Hausdorff Lie groupoid $\G$ is called \textbf{proper at $x\in M$} if the map \begin{equation*} (t,s):\G\to M\times M
\end{equation*} is proper at $(x,x)$, meaning that any sequence $(g_n)$ in $\G$ such that $(t(g_n),s(g_n))$ converges to $(x,x)$ admits a convergent subsequence.
\end{defi} If $\G$ is proper at some (or equivalently every) point $x\in \L$, then $\L$ and the Lie subgroupoid $\G_\L$ are embedded submanifolds of $M$ and $\G$ respectively, and the isotropy group $\G_x$ is compact. Returning to a general leaf $\L$, the normal bundle $\No_\L$ to the leaf in $M$ is naturally a representation:
\begin{equation*} \No_\L\in \textrm{Rep}(\G_\L)
\end{equation*} of $\G_\L$, with the action defined as:
\begin{equation}\label{normrepleaf} g\cdot[v]=[\d t(\hat{v})]\in \No_{t(g)}, \quad g\in \G_\L,\text{ } [v]\in \No_{s(g)},
\end{equation} where $\hat{v}\in T_g\G$ is any tangent vector satisfying $\d s(\hat{v})=v$. We call this the \textbf{normal representation of $\G$ at $\L$}. It encodes first order data of $\G$ in directions normal to $\L$ (see also \cite{CrStr}). Given $x\in \L$, so that $\L=\L_x$, this restricts to a representation:
\begin{equation}\label{normreppt} \No_x\in \textrm{Rep}(\G_x)
\end{equation}
of the isotropy group $\G_x$ on the fiber $\No_x$ of $\No_\L$ over $x$, which we refer to as the \textbf{normal representation of $\G$ at $x$}. Without loss of information, one can restrict attention to the normal representation at a point, which will often be more convenient for our purposes. This is because the transitive Lie groupoid $\G_\L$ is canonically isomorphic to the gauge-groupoid of the principal bundle (\ref{sfibprinbun}), and the normal bundle $\No_\L$ is canonically isomorphic to the vector bundle associated to the principal bundle (\ref{sfibprinbun}) and the representation (\ref{normreppt}). 
\begin{ex} For the holonomy groupoid of a foliation (assumed to be Hausdorff here), the leaves are those of the foliation and the normal representation at $x$ is the linear holonomy representation (the linearization of the holonomy action on a transversal through $x$).
\end{ex}
\begin{ex} For the action groupoid of a Lie group action, the leaves are the orbits of the action and the normal representation at $x$ is simply induced by the isotropy representation on the tangent space to $x$. 
\end{ex} 
For a symplectic groupoid the basic facts stated below hold, which follow from multiplicativity of the symplectic form on the groupoid (see e.g. \cite {BuCrWeZh} for background on multiplicative $2$-forms).
\begin{prop}\label{normrepsymp} Let $(\G,\Omega)\rightrightarrows M$ be a symplectic groupoid and let $\pi$ be the induced Poisson structure on $M$. Let $x\in M$, let $\L$ be the leaf of $\G$ through $x$ and $\G_\L$ the restriction of $\G$ to $\L$. 
\begin{itemize}\item[a)] There is a unique symplectic form $\omega_\L$ on $\L$ such that:
\begin{equation*} \Omega\vert_{\G_\L}=t^*\omega_\L-s^*\omega_\L\in \Omega^2(\G_\L).
\end{equation*} The connected components of $(\L,\omega_\L)$ are symplectic leaves of the Poisson manifold $(M,\pi)$.
\item[b)] The normal representation (\ref{normreppt}) is isomorphic (via (\ref{imsymp})) to the coadjoint representation:
\begin{equation*} \g_x^*\in \textrm{Rep}(\G_x).
\end{equation*}  
\end{itemize}
\end{prop}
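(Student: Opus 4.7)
The plan is to derive both parts from multiplicativity of $\Omega$ together with the defining identity (\ref{imsymp}) for $\rho_\Omega$. For part (a), I would construct $\omega_\L$ intrinsically from $\pi$ and verify the multiplicative formula first at the unit section and then globally by a multiplicativity argument. For part (b), I would identify $\g_x$ with the conormal space $(T_x\L)^0$ via (\ref{imsymp}) and use the natural annihilator pairing with $\No_x$.

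For (a), define $\omega_\L(\pi^\#\alpha,\pi^\#\beta):=\pi(\alpha,\beta)$ for $\alpha,\beta\in T^*_xM$, $x\in\L$; this is the standard symplectic leaf form of $(M,\pi)$, which is well-defined, closed and non-degenerate, and whose connected components are manifestly the symplectic leaves of $\pi$ (since $T_x\L=\pi^\#(T^*_xM)$). Uniqueness in $\Omega|_{\G_\L}=t^*\omega_\L-s^*\omega_\L$ follows by evaluating both sides at a unit $1_x$ on a pair $(\rho_\Omega(\alpha),\d u_x(w))$, using $\d s\circ \d u=\d t\circ \d u=\mathrm{id}$, $\d s\,\rho_\Omega(\alpha)=0$, $\d t\,\rho_\Omega(\alpha)=\pi^\#\alpha$, and (\ref{imsymp}): one reads off $\omega_\L(\pi^\#\alpha,w)=\alpha(w)$, and surjectivity of $\pi^\#:T^*_xM\to T_x\L$ then pins down the form. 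For existence, one verifies the formula at $1_x$ on each block of the splitting $T_{1_x}\G_\L=A_x\oplus \d u_x(T_x\L)$: the $\d u\times \d u$ block vanishes because $u^*\Omega=0$ (a direct consequence of multiplicativity pulled back along the diagonal $x\mapsto(1_x,1_x)$), the mixed block is handled by (\ref{imsymp}), and the $A_x\times A_x$ block reduces to the definition of $\omega_\L$ via $\d t\,\rho_\Omega=\pi^\#$. To extend to arbitrary $g\in\G_\L$, observe that $\tilde\Omega:=\Omega-t^*\omega_\L+s^*\omega_\L$ is again a closed multiplicative $2$-form (since $s\circ m=s\circ\mathrm{pr}_2$ and $t\circ m=t\circ\mathrm{pr}_1$); such a form is determined by its restriction to the identity section together with its IM data, both of which vanish by the computation at $1_x$, so $\tilde\Omega\equiv 0$.

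For (b), via (\ref{imsymp}) one has $\g_x=\ker\rho_x=(T_x\L)^0$, and the annihilator pairing $(T_x\L)^0\times(T_xM/T_x\L)\to\R$ is perfect, giving a linear isomorphism $\Phi:\No_x\to\g_x^*$. To verify $\G_x$-equivariance between the normal representation $g\cdot[v]=[\d t(\hat v)]$ (for any lift $\hat v\in T_g\G$ of $v$ under $\d s$) and the coadjoint action on $\g_x^*$, I would show that for $g\in\G_x$, $\alpha\in\g_x$, and $[v]\in\No_x$, the value $\alpha(\d t(\hat v))$ equals the pairing of $v$ with the coadjoint translate of $\alpha$. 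The computation pairs $\alpha$ with its right-invariant extension via $\rho_\Omega$ and uses multiplicativity of $\Omega$ on the composable pair $(g,g^{-1})$: multiplicativity converts groupoid conjugation on $\g_x$ into the tangent translation $\hat v\mapsto \d t(\hat v)$ modulo $T_x\L$, and (\ref{imsymp}) then identifies the two resulting actions.

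The main technical obstacle is the extension step in (a), namely showing that the multiplicative closed $2$-form $\tilde\Omega$ vanishes everywhere on $\G_\L$ once its value and IM data vanish along the identity section. This can be handled by invoking the reconstruction of multiplicative forms from IM data (cf.\ the theory developed in the references on multiplicative $2$-forms), or more concretely by transporting the identity from $1_x$ to any $g\in\G_\L$ along a local bisection through $g$ and noting that $\Omega$, $t^*\omega_\L$ and $s^*\omega_\L$ all transform compatibly under multiplication by bisections.
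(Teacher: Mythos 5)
The paper itself does not prove this proposition; it records it as a consequence of multiplicativity and defers to the literature on multiplicative $2$-forms, so your argument has to stand on its own. Your unit-section computations are correct and standard: the uniqueness argument, the three blocks of $T_{1_x}\G_\L=A_x\oplus \d u_x(T_x\L)$, and the identification $\g_x=\ker\rho_x=(T_x\L)^0\cong \No_x^*$ for part (b). The genuine gap is the globalization step in part (a), and neither of your two proposed fixes closes it as stated. The IM-reconstruction theorem for closed multiplicative $2$-forms requires the groupoid to be source-connected, and $\G_\L=s^{-1}(\L)$ need not be (the paper explicitly allows disconnected leaves and makes no source-connectedness assumption); without that hypothesis a closed multiplicative $2$-form with vanishing restriction to units and vanishing IM data need not vanish \textemdash\ e.g.\ on a bundle of discrete groups $M\times\Gamma\rightrightarrows M$ the assignment $\gamma\mapsto c(\gamma)\eta$, for $c:\Gamma\to\R$ a homomorphism and $\eta$ closed, is such a form. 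The bisection argument is circular as written: for a multiplicative form one has $L_\sigma^*\tilde\Omega=\tilde\Omega+t^*(\sigma^*\tilde\Omega)$, so transporting the vanishing of $\tilde\Omega$ at $1_{s(g)}$ to $g=\sigma(s(g))$ only reduces the claim to the vanishing of $\tilde\Omega_g$ on the bisection directions $\d\sigma(T_{s(g)}\L)$, which is part of what you are trying to prove.

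The step can be completed with no connectedness hypothesis as follows. First upgrade (\ref{imsymp}) to the global identity $\iota_{\overrightarrow{\rho_\Omega(\alpha)}}\Omega=t^*\alpha$ on all of $\G$, where $\overrightarrow{\rho_\Omega(\alpha)}$ is the right-invariant extension: for $\alpha=\d f$ this says $X_{t^*f}=\overrightarrow{\rho_\Omega(\d f)}$, which follows pointwise from multiplicativity (the graph of $m$ is Lagrangian and is preserved by the flow of $t^*f\circ\mathrm{pr}_1-t^*f\circ\mathrm{pr}_3$), and the general case follows by $C^\infty(M)$-linearity. Since $\d s(\overrightarrow{\rho_\Omega(\alpha)})=0$ and $\d t(\overrightarrow{\rho_\Omega(\alpha)}_g)=\pi^\#\alpha_{t(g)}$, your defining property $\omega_\L(\pi^\#\alpha,\cdot)=\alpha(\cdot)$ gives $\tilde\Omega_g(X,\cdot)=0$ for every $X\in\ker\d s_g$; the left-invariant analogue gives the same for $X\in\ker\d t_g$. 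Finally $\ker\d s_g+\ker\d t_g=T_g\G_\L$ for $g\in\G_\L$: both summands lie in $T_g\G_\L$, each has dimension $\dim M$, their intersection $\ker\d(t,s)_g$ has dimension $\dim\g_x=\dim M-\dim\L$, and $\dim\G_\L=\dim M+\dim\L$. Hence $\tilde\Omega\equiv 0$ on $\G_\L$. Part (b) is fine as a sketch, and these same global contraction identities (right- versus left-invariant extensions of $\rho_\Omega(\alpha)$ for $\alpha\in\g_x$) are exactly what make your equivariance computation go through.
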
 
\subsubsection{The orbits, leaves and normal representations of Hamiltonian actions} Next, we will study the leaves and the normal representations for the action groupoid of a Hamiltonian action. Let $(\G,\Omega)\rightrightarrows M$ be a symplectic groupoid and suppose that we are given a left Hamiltonian $(\G,\Omega)$-action along $J:(S,\omega)\to M$. Let $p\in S$, $x:=J(p)\in M$, let $(\L,\omega_\L)$ be the symplectic leaf of $(\G,\Omega)$ through $x$ (as in Proposition \ref{normrepsymp}) and let $\G_x$ be the isotropy group of $\G$ at $x$. By the \textbf{orbit of the action through $p$} we mean:
\begin{equation*} \O_p:=\{g\cdot p\mid g\in s^{-1}(x)\}\subset S,
\end{equation*} and by the \textbf{isotropy group of the $\G$-action at $p$} we mean the closed subgroup:
\begin{equation*} \G_p:=\{g\in \G_x\mid g\cdot p=p\}\subset \G_x.
\end{equation*} Note that these coincide with the leaf and the isotropy group at $p$ of the action groupoid. We let 
\begin{equation}\label{normrepactgpoid} \No_p\in \textrm{Rep}(\G_p)
\end{equation} denote the normal representation of the action groupoid at $p$. There are various relationships between the orbits, leaves and the normal representations at $p$ and $x$. To state these, consider the symplectic normal space to the orbit $\O$ at $p$: 
\begin{equation}\label{sympnormsp} \S\No_p:=\frac{T_p\O^\omega}{T_p\O\cap T_p\O^\omega},
\end{equation}
where we denote the symplectic orthogonal of the tangent space $T_p\O$ to the orbit through $p$ as: 
\begin{equation}\label{symporthorb} T_p\O^\omega:=\{v\in T_pS\mid\omega(v,w)=0,\text{ }\forall w\in T_p\O\}.\end{equation} 
Further, consider the annihilator of $\g_p$ in $\g_x$:
\begin{equation}\label{annisoliealg} \g_p^0\subset \g_x^*.
\end{equation} 
\begin{prop}\label{normrepham} Let $(\G,\Omega)\rightrightarrows M$ be a symplectic groupoid and suppose that we are given left Hamiltonian $(\G,\Omega)$-action along $J:(S,\omega)\to M$. Let $\O$ be the orbit of the action through $p\in S$. 
\begin{itemize}\item[a)] The map $J$ restricts to surjective submersion $J_\O:\O\to \L$ from the orbit $\O$ onto a leaf $\L$ of $\G$. Moreover, the restriction $\omega_\O\in \Omega^2(\O)$ of $\omega$ coincides with the pull-back of $\omega_\L$:
\begin{equation}\label{pullbackorbitform} \omega_\O=(J_\O)^*\omega_\L.
\end{equation}
\item[b)] The symplectic normal space (\ref{sympnormsp}) to $\O$ at $p$ is a subrepresentation of the normal representation (\ref{normrepactgpoid}) of the action at $p$. In fact, (\ref{sympnormsp}), (\ref{normrepactgpoid}) and (\ref{annisoliealg}) fit into a canonical short exact sequence of $\G_p$-representations: 
\begin{equation}\label{ses1pois} 0\to \S\No_p\to \No_p\to \g_p^0\to 0. 
\end{equation} 
\item[c)] The normal representation (\ref{normreppt}) of $\G$ at $x:=J(p)$ fits into the canonical short exact sequence of $\G_p$-representations:
\begin{equation}\label{ses2pois} 0\to \g_p^0\to \g^*_x\to \g_p^*\to 0.
\end{equation}
\end{itemize}
\end{prop}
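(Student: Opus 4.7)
The three parts can be handled largely independently. I would organize the proof as follows.

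\textbf{Part (a).} First I would show that $J$ is equivariant in the sense $J(g\cdot q)=t(g)$ for all composable $(g,q)$. This is immediate from the Hamiltonian condition: $\rho_\Omega$ identifies the Lie algebroid of $\G$ with $T^*_\pi M$, and on the infinitesimal level $\mathrm{d}J\circ a_J(\alpha)=\pi^{\sharp}(\alpha)$ follows from $\iota_{a_J(\alpha)}\omega=J^*\alpha$ together with $J$ being Poisson; integrating along source fibers gives equivariance. This at once yields $J(\O_p)=\L_x$ and surjectivity of $J_\O$. For the submersion property, $\mathrm{d}J_p(T_p\O)$ is the image of $\pi^{\sharp}_x\colon T_x^*M\to T_x M$, which is $T_x\L$. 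Finally, for $\omega_\O=J_\O^*\omega_\L$, I would apply the multiplicativity identity \eqref{hammultcond} to the map $s^{-1}(x)\to \G\ltimes S$, $g\mapsto(g,p)$. Its compositions with $m_S$, $\mathrm{pr}_S$ and $\mathrm{pr}_\G$ are $\sigma_p\colon g\mapsto g\cdot p$, the constant $p$, and the inclusion. This yields $\sigma_p^*\omega_\O=\iota^*\Omega=t^*\omega_\L$ by Proposition~\ref{normrepsymp}(a) (the $s^*\omega_\L$ term vanishes), and since $t=J_\O\circ\sigma_p$ and $\sigma_p$ is a surjective submersion, the result follows.

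\textbf{Part (b).} The key observation is that $\mathrm{d}J_p\colon T_pS\to T_xM$ is $\G_p$-equivariant and sends $T_p\O$ onto $T_x\L$, so it descends to a $\G_p$-equivariant map $\overline{\mathrm{d}J}_p\colon\No_p\to T_xM/T_x\L\cong \g_x^*$ (via Proposition~\ref{normrepsymp}(b)). I would compute its kernel and image, and then identify the kernel with $\S\No_p$. Using the momentum map condition, $\alpha(\mathrm{d}J_p(v))=\omega_p(a_J(\alpha)_p,v)$, so $\ker\mathrm{d}J_p=(\mathrm{im}\,a_J|_p)^{\omega}=T_p\O^{\omega}$; modding out by $T_p\O$ gives $\ker\overline{\mathrm{d}J}_p=T_p\O^{\omega}/(T_p\O\cap T_p\O^{\omega})=\S\No_p$. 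For the image, the same duality gives
\begin{equation*}
(T_x\L+\mathrm{d}J_p(T_pS))^{0}=\{\alpha\in T_x^*M:\pi^\sharp(\alpha)=0\text{ and }a_J(\alpha)_p=0\}=\g_p\subset (T_x\L)^0=\g_x,
\end{equation*}
so taking annihilators in $T_xM$ yields $\mathrm{d}J_p(T_pS)+T_x\L=\g_p^{0}$ (annihilator computed in $T_xM$), and hence $\mathrm{im}\,\overline{\mathrm{d}J}_p=\g_p^{0}\subset\g_x^*$. It remains to check that $\S\No_p\hookrightarrow\No_p$ is a subrepresentation. The $\G_p$-action on $T_pS$ is by symplectic linear maps: one realises the action of $g\in\G_p$ via a local Lagrangian bisection through $g$ (which exists through any point of $\G$ since $\G$ is symplectic), and such a bisection acts by a local symplectomorphism on $S$ by the multiplicativity condition. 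Hence the action preserves $T_p\O^{\omega}$ and descends to the quotient, giving the subrepresentation and exhibiting $\overline{\mathrm{d}J}_p$ as a map of $\G_p$-representations.

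\textbf{Part (c).} This is purely algebraic: the inclusion $\g_p\hookrightarrow\g_x$ of $\G_p$-representations (the adjoint action of $\G_x$ restricted to $\G_p$ preserves $\g_p$) dualises to a surjection $\g_x^*\twoheadrightarrow\g_p^*$ whose kernel is by definition $\g_p^{0}$; all maps are manifestly $\G_p$-equivariant.

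The main technical point I expect to need care is the symplectic-ness of the isotropy action on $T_pS$ in (b); everything else reduces either to the momentum map condition, to equivariance of $J$, or to a direct annihilator computation.
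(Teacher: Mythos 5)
Your proposal is correct and follows essentially the same route as the paper: one introduces the induced $\G_p$-equivariant map $\underline{\d J}_p:\No_p\to \No_x$ and computes its kernel and image from the momentum map condition (this is exactly the content of the paper's Proposition \ref{infmomact}), while part (a) comes from the action axioms together with (\ref{hammultcond}) restricted to $g\mapsto(g,p)$, and part (c) is dual to the inclusion $\g_p\subset\g_x$. Two small economies worth noting: the identity $J(g\cdot q)=t(g)$ is an axiom of a Lie groupoid action along $J$ rather than a consequence of the Hamiltonian condition, and once $\underline{\d J}_p$ is known to be equivariant its kernel is automatically a subrepresentation, so the local Lagrangian bisection argument, while valid, is not needed for part (b).
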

\begin{proof} That $J$ maps $\O$ submersively onto a leaf $\L$ follows from the axioms of a Lie groupoid action. The equality (\ref{pullbackorbitform}) is readily derived from (\ref{hammultcond}). Part $c$ is immediate from Proposition \ref{normrepsymp}$b$. To prove part $b$ and provide some further insight into part $c$, observe that $J$ induces a $\G_p$-equivariant map:
\begin{equation*} \underline{\d J}_p:\No_p\to \No_x. 
\end{equation*} Therefore we have two short exact sequences of $\G_p$-representations:
\begin{align} &0\to \ker(\underline{\d J}_p)\to \No_p\to \im(\underline{\d J}_p)\to 0 \label{ses1}\\
&0\to \im(\underline{\d J}_p)\to \No_x\to \coker(\underline{\d J}_p)\to 0 \label{ses2}
\end{align} 
Using the proposition below, the short exact sequence (\ref{ses2}) translates into the short exact sequence (\ref{ses2pois}), whereas (\ref{ses1}) translates into (\ref{ses1pois}). In particular, this proves part $b$. 
\end{proof}

\begin{prop}\label{infmomact} Let $(\G,\Omega)\rightrightarrows M$ be a symplectic groupoid and suppose that we are given a left Hamiltonian $(\G,\Omega)$-action along $J:(S,\omega)\to M$. Further, let $p\in S$. 
\begin{itemize}\item[a)] The symplectic orthogonal (\ref{symporthorb}) of the tangent space $T_p\O$ to the orbit $\O$ through $p$ coincides with $\ker{(\d J_p)}$.
\item[b)] The isotropy Lie algebra $\g_p$, viewed as subset of $T^*_xM$ via (\ref{imsymp}), is the annihilator of $\im(\d J_p)$ in $T_xM$, where $x=J(p)$.
\end{itemize}
\end{prop}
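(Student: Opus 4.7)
My plan is to deduce both parts directly from the momentum map condition (\ref{mommapcond}), using only two standard identifications for the groupoid action: the tangent space to the orbit at $p$ is the image of the infinitesimal action, $T_p\O=a(T^*_xM)$, and the isotropy Lie algebra at $p$ (viewed in $T^*_xM$ via (\ref{imsymp})) is its pointwise kernel at $p$, $\g_p=\{\alpha\in T^*_xM : a(\alpha)_p=0\}$.

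For part (a), a vector $v\in T_pS$ lies in $T_p\O^\omega$ iff $\omega_p(v,a(\alpha)_p)=0$ for every $\alpha\in T^*_xM$. Invoking (\ref{mommapcond}) turns this into the condition that $\alpha(\d J_p(v))=0$ for every $\alpha$, which is just $\d J_p(v)=0$. This gives $T_p\O^\omega=\ker(\d J_p)$.

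For part (b), an element $\alpha\in T^*_xM$ lies in $\g_p$ iff $a(\alpha)_p=0$, and since $\omega_p$ is nondegenerate this is equivalent to $\iota_{a(\alpha)_p}\omega_p=0$. By (\ref{mommapcond}) the latter reads $\alpha\circ \d J_p=0$, that is, $\alpha\in\im(\d J_p)^0$. Hence $\g_p=\im(\d J_p)^0$. As a built-in consistency check, this subspace automatically lies in $\g_x$, reflecting $\G_p\subset\G_x$: since $J(g\cdot p)=t(g)$ produces a surjection $\O\to\L$, we have $T_x\L\subset\im(\d J_p)$, whence $\im(\d J_p)^0\subset T_x\L^0=\ker(\pi^\#_x)=\g_x$.

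There is no real obstacle here. Once the momentum map condition is in play, both statements reduce to one-line arguments in linear algebra; the only thing worth stating explicitly is the description of $T_p\O$ and $\g_p$ in terms of the Lie algebroid action of $T^*_\pi M$ along $J$, which ensures that each of the equivalences above is legitimate.
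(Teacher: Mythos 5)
Your proof is correct and is exactly the argument the paper has in mind: the paper dismisses this proposition with the single remark that it "is readily derived from the momentum map condition (\ref{mommapcond})", and your write-up — identifying $T_p\O$ with the image of $(a_J)_p$ and $\g_p$ with its kernel, then applying $\iota_{a_J(\alpha)}\omega=J^*\alpha$ together with nondegeneracy of $\omega$ — is the intended derivation. The consistency check $\im(\d J_p)^0\subset\g_x$ is a nice extra but not needed.
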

This is readily derived from the momentum map condition $(\ref{mommapcond})$.
\subsubsection{The symplectic normal representation} Notice that the symplectic form $\omega$ on $S$ descends to a linear symplectic form $\omega_p$ on the symplectic normal space (\ref{sympnormsp}). 
\begin{prop} $(\S\No_p,\omega_p)$ is a symplectic $\G_p$-representation. 
\end{prop}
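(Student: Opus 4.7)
The statement splits into two independent claims: (i) $\omega$ descends to a well-defined nondegenerate bilinear form $\omega_p$ on $\S\No_p$, and (ii) the $\G_p$-action on $\S\No_p$ inherited from the normal representation $\No_p$ (via Proposition~\ref{normrepham}$b$) preserves $\omega_p$.

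For (i) I would invoke pure linear symplectic algebra. The restriction $\omega\vert_{T_p\O^\omega}$ is a presymplectic form whose radical is $T_p\O^\omega\cap(T_p\O^\omega)^\omega$; since $(V^\omega)^\omega=V$ for every subspace $V$ of the symplectic vector space $T_pS$, this radical coincides with $T_p\O^\omega\cap T_p\O$. Hence $\omega$ descends to a nondegenerate form $\omega_p$ on the quotient $\S\No_p$.

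For (ii) the plan is to unwind how the normal representation of the action groupoid $\G\ltimes S$ acts on $\S\No_p$, and then apply the multiplicativity identity~(\ref{hammultcond}) to a specific pair of composable tangent vectors. Concretely, for $g\in\G_p$ and $v\in T_p\O^\omega=\ker(\d J_p)$ (the equality being Proposition~\ref{infmomact}$a$), the pair $\hat v:=(0_g,v)\in T_g\G\oplus T_pS$ satisfies the composability condition $\d s(0_g)=0=\d J(v)$, so $\hat v\in T_{(g,p)}(\G\ltimes S)$ and projects to $v$ under $\d\textrm{pr}_S$. Setting $\phi_g(v):=\d m_S(\hat v)$, the definition~(\ref{normrepleaf}) of the normal representation gives $g\cdot[v]=[\phi_g(v)]\in\No_p$. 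Moreover $\phi_g$ preserves $T_p\O^\omega$, because the identity $J\circ m_S=t\circ\textrm{pr}_\G$ yields $\d J(\phi_g(v))=\d t(0_g)=0$. Evaluating~(\ref{hammultcond}) at the pair $(\hat v_1,\hat v_2)$ for $v_1,v_2\in T_p\O^\omega$ then produces
\[
0=\Omega_g(0_g,0_g)=\omega\bigl(\phi_g(v_1),\phi_g(v_2)\bigr)-\omega(v_1,v_2),
\]
which, upon passing to the quotient, is exactly the $\G_p$-invariance of $\omega_p$ on $\S\No_p$.

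The main obstacle, mild as it is, is the bookkeeping that identifies the restriction of the normal representation to the subspace $\S\No_p\subset\No_p$ with the map induced by the chosen lift $\hat v=(0_g,v)$; once that identification is pinned down, everything collapses to the one-line computation above using~(\ref{hammultcond}).
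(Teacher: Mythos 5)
Your proposal is correct and follows essentially the same route as the paper: the invariance of $\omega_p$ is obtained by lifting $v\in T_p\O^\omega=\ker(\d J_p)$ to $(0_g,v)\in T_{(g,p)}(\G\ltimes S)$ and evaluating the multiplicativity identity (\ref{hammultcond}) on such pairs, exactly as in the paper's one-line computation. Your part (i) merely makes explicit the standard linear-algebra fact (that the radical of $\omega\vert_{T_p\O^\omega}$ is $T_p\O\cap T_p\O^\omega$) which the paper states without proof just before the proposition.
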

\begin{proof} We ought to show that $\omega_p$ is $\G_p$-invariant. Note that, for any $v\in \ker(\d J_p)$ and $g\in \G_p$: \begin{equation*} g\cdot[v]=[\d m_{(g,p)}(0,v)].
\end{equation*} So, using Proposition \ref{infmomact}$a$ we find that for all $v,w\in T_p\O^\omega$ and $g\in \G_p$:
\begin{equation*} \omega_p(g\cdot[v],g\cdot[w])=(m^*\omega)_{(g,p)}((0,v),(0,w))=\omega_p([v],[w]),
\end{equation*} where in the last step we applied (\ref{hammultcond}).
\end{proof}
\begin{defi} Given a Hamiltonian action as above, we call \begin{equation}\label{sympnormrep} (\S\No_p,\omega_p)\in \textrm{SympRep}(\G_p)
\end{equation} its \textbf{symplectic normal representation at $p$}.
\end{defi}
Given any symplectic representation $(V,\omega_V)$ of a Lie group $H$, the $H$-action is Hamiltonian with quadratic momentum map:
\begin{equation}\label{quadsympmommap} J_V:(V,\omega_V)\to \h^*, \quad \langle J_V(v),\xi \rangle=\frac{1}{2}\omega_V(\xi\cdot v,v).
\end{equation} As we will now show, given a Hamiltonian $(\G,\Omega)$-action along $J:(S,\omega)\to M$, the quadratic momentum map:
\begin{equation}\label{quadsympmommap2} J_{\S\No_p}:(\S\No_p,\omega_p)\to \g_p^*
\end{equation} of the symplectic normal representation at $p$ can be expressed in terms of the quadratic differential of $J$ at $p$. Recall from \cite{AGV1} that the \textbf{quadratic differential} of a map $F:S\to M$ at $p\in S$ is defined to be the quadratic map:
\begin{equation*} \d^2F_p:\ker(\d F_p)\to \coker(\d F_p),\quad \d^2F_p(v)=\left[\frac{1}{2}\left.\frac{\d^2}{\d^2 t}\right|_{t=0}(\psi\circ F\circ \phi^{-1})(tv)\right],
\end{equation*} where $\phi:(U,p)\to (T_pS,0)$ and $\psi:(V,x)\to (T_{x}M,0)$ are any two open embeddings, defined on open neighbourhoods of $p$ and $x:=F(p)$ such that $F(U)\subset V$, with the property that their differentials at $p$ and $x$ are the respective identity maps. Returning to the momentum map $J$, by Proposition \ref{infmomact} its quadratic differential becomes a map:
\begin{equation}\label{quadsympmommap3} \d^2J_p:T_p\O^\omega\to \g_p^*.
\end{equation} 
\begin{prop}\label{quaddifmommap} Let $J:(S,\omega)\to M$ be the momentum map of a Hamiltonian action and $p\in S$. Then the quadratic differential (\ref{quadsympmommap3}) is the composition of the quadratic momentum map (\ref{quadsympmommap2}) with the canonical projection $T_p\O^\omega\to \S\No_p$:
\begin{center} 
\begin{tikzcd} T_p\O^\omega\arrow[d] \arrow[r,"\d^2J_p"] & \g_p^* \\
\S\No_p\arrow[ru,"J_{\S\No_p}"'] & 
\end{tikzcd}
\end{center} 
\end{prop}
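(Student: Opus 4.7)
The plan is to verify the identity by pairing both sides with an arbitrary $\xi\in\g_p\subset T^*_xM$ (the inclusion being via (\ref{imsymp})). Choose an auxiliary $f\in C^\infty(M)$ with $f(x)=0$ and $\d f_x=\xi$, and set $h:=f\circ J$ and $X:=a_J(\d f)$. By Proposition \ref{infmomact}$b$, $\xi$ annihilates $\im(\d J_p)$, so $\d h_p=(\d J_p)^*\xi=0$ and the Hessian $\textrm{Hess}_p(h)$ is a well-defined symmetric bilinear form on $T_pS$. Writing out the second-order Taylor expansion of $J$ in charts of the type used to define $\d^2J_p$ (with identity differentials at $p$ and $x$), a direct computation shows that for $v\in T_p\O^\omega=\ker(\d J_p)$,
\begin{equation*}
\langle\d^2 J_p(v),\xi\rangle=\tfrac{1}{2}\textrm{Hess}_p(h)(v,v).
\end{equation*}

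Next, I would invoke the momentum map condition (\ref{mommapcond}) applied to $\d f$: it yields $\iota_X\omega=\d h$, and $X(p)$ is the infinitesimal action of $\rho_\Omega(\xi)\in\g_p$ at $p$, which vanishes since $\xi\in\g_p$. At this zero of $X$ the linearization $A:=\textrm{Lin}_p(X):T_pS\to T_pS$ is intrinsically defined, and the standard identity for the Hessian of a Hamiltonian function at a critical point gives
\begin{equation*}
\textrm{Hess}_p(h)(v,w)=\omega_p(A(v),w),\qquad v,w\in T_pS,
\end{equation*}
as one sees by differentiating $\omega(X,\tilde w)=\tilde w\cdot h$ at $p$ and using $X(p)=0$.

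The crucial step in my plan is then to identify $A$, modulo $T_p\O$, with the derived $\G_p$-representation on $T_pS$ evaluated at $\xi$; equivalently, $A$ descends to the action of $\xi$ on $\No_p=T_pS/T_p\O$. This is a general feature of the integration of Lie algebroid actions by Lie groupoid actions: for any extension $\tilde\xi$ of $\xi$ to a $1$-form, the vector field $a_J(\tilde\xi)$ vanishes at $p$ and its linearization there, read modulo $T_p\O$, is the intrinsic Lie algebra action of $\xi$ on $\No_p$. The quotient by $T_p\O$ absorbs the extension-dependence, for $a_J(g\tilde\xi)=(g\circ J)\,a_J(\tilde\xi)$ by $C^\infty(M)$-linearity, so two extensions of $\xi$ produce vector fields whose linearizations at $p$ differ by a linear map into $T_p\O$. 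Multiplicativity of $\Omega$ along $\G_p\times\{p\}$ (as used in the proof of Proposition \ref{normrepham}) ensures that the $\G_p$-action on $T_pS$ preserves both $T_p\O^\omega$ and $T_p\O\cap T_p\O^\omega$, so that the $\g_p$-action restricts to the subrepresentation $\S\No_p\subset\No_p$.

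Finally, to assemble the argument, pick a lift $u\in T_p\O^\omega$ of $\xi\cdot[v]\in\S\No_p$. By the previous step $A(v)-u\in T_p\O$, and since $v\in T_p\O^\omega$ gives $\omega_p(T_p\O,v)=0$, we obtain
\begin{equation*}
\omega_p(A(v),v)=\omega_p(u,v)=\omega_p(\xi\cdot[v],[v])=2\langle J_{\S\No_p}([v]),\xi\rangle,
\end{equation*}
the last equality being the definition (\ref{quadsympmommap}) of the quadratic momentum map. Combining the three displayed equations yields $\langle\d^2J_p(v),\xi\rangle=\langle J_{\S\No_p}([v]),\xi\rangle$, and varying $\xi\in\g_p$ gives the claimed equality $\d^2J_p(v)=J_{\S\No_p}([v])$ in $\g_p^*$. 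The main obstacle I anticipate is the third paragraph — establishing that the linearization of $a_J(\d f)$ at $p$, read modulo tangent-to-orbit directions, recovers the intrinsic $\g_p$-action on $\No_p$; this is a general Lie-groupoid fact whose verification reduces to a coordinate computation around $(1_x,p)\in\G\ltimes S$, after which the remaining steps are routine manipulations with the momentum map condition and Hamilton's equation at a critical point.
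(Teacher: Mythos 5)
Your argument is correct and is essentially the paper's own proof: both linearize the momentum map condition $\iota_{a_J(\alpha)}\omega=J^*\alpha$ at $p$ and rest on the same key identification $\alpha_x\cdot[v]=[a_J(\alpha)^{\textrm{lin}}_p(v)]$ of the linearized infinitesimal action with the induced $\g_p$-representation on $\S\No_p$, which the paper likewise invokes without detailed verification. Your restriction to exact one-forms $\alpha=\d f$ and the packaging via the Hessian of $f\circ J$ at a critical point of the Hamiltonian vector field is only a cosmetic variant of the paper's linearization of both sides via Lemma \ref{linseclem1}.
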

For the proof, we use an alternative description of the quadratic differential. Recall that, given a vector bundle $E\to S$ and a germ of sections $e\in \Gamma_p(E)$ vanishing at $p\in S$, the linearization of $e$ at $p$ is the linear map:
\begin{equation*} e^\textrm{lin}_p:T_pS \to E_p,\quad e^\textrm{lin}_p:=\textrm{pr}_{E_p}\circ (\d e)_p,
\end{equation*} where we view the differential $(\d e)_p$ of the map $e$ at $p$ as map into $E_p\oplus T_pS$, via the canonical identification of $(TE)_{(p,0)}$ with $E_p\oplus T_pS$. With this, one can define the \textbf{intrinsic Hessian} of $F$ at $p$ to be the symmetric bilinear map:
\begin{equation*} \textrm{Hess}_p(F):\ker(\d F_p)\times \ker(\d F_p)\to \coker(\d F_p), \quad (X_p,Y_p)\mapsto\left[\frac{1}{2} (\d F(Y))^\textrm{lin}_p(X_p)\right],
\end{equation*} where $Y\in \X_p(S)$ is any germ of vector fields extending $Y_p$ and we see $\d F(Y)$ as a germ of sections of $F^*(TM)$. %This is well-defined, as follows from Lemma \ref{linseclem1} below.
%Note that if $f\in C^\infty(S)$ and $p$ is a critical point of $f$, then this boils down to the usual definition:
%\begin{equation*} \text{Hess}_p(f):T_pS\times T_pS\to \R, \quad (X_p,Y_p)\mapsto (\L_X\L_Yf)(p).
%\end{equation*}
The quadratic differential is now given by the quadratic form:
\begin{equation*} \d^2F_p(v)=\textrm{Hess}_p(F)(v,v), \quad v\in \ker(\d F_p).
\end{equation*} %Using this description one readily verifies that if $\O\subset S$ and $\L\subset M$ are (possibly immersed) submanifolds such that $F(\O)\subset \L$, then not only the differential, but also the quadratic differential at $p$ factors through a map in normal directions:
%\begin{align*} \overline{\d F_p}:\No_p\O&\to \No_x\L,\\
%\overline{\d^2F_p}:\ker(\overline{\d F_p})&\to \coker(\overline{\d F_p}).
%\end{align*} 
We will further use the following immediate, but useful, observation.
\begin{lemma}\label{linseclem1} Let $\Phi:E\to F$ be a map of vector bundles over the same manifold, covering the identity map. If $e\in \Gamma_p(E)$ is a germ of sections that vanishes at $p$, then so does $\Phi(e)\in \Gamma_p(F)$ and we have:
\begin{equation*} \Phi(e)^\textrm{lin}_p=\Phi\circ e^\textrm{lin}_p.
\end{equation*}
\end{lemma}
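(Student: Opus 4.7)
The plan is to unpack the definition of the linearization at $p$ and apply the chain rule, exploiting the fact that $\Phi$ covers the identity so that its differential along the zero section decomposes compatibly with the splitting used to define the linearization.

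First I would note that $\Phi(e)$ indeed vanishes at $p$: since $\Phi$ covers the identity, it carries the zero section of $E$ into the zero section of $F$, and thus $\Phi(e)(p)=\Phi(0_p)=0_p$. Hence the linearization $\Phi(e)_p^{\textrm{lin}}:T_pS\to F_p$ is defined, and equals $\textrm{pr}_{F_p}\circ (\d(\Phi\circ e))_p$, where $(\d(\Phi\circ e))_p$ is viewed as a map $T_pS\to (TF)_{(p,0)}\cong F_p\oplus T_pS$. By the chain rule,
\begin{equation*}
(\d(\Phi\circ e))_p=(\d\Phi)_{0_p}\circ (\d e)_p,
\end{equation*}
where $(\d\Phi)_{0_p}:(TE)_{(p,0)}\to (TF)_{(p,0)}$.

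The key observation is that, under the canonical identifications $(TE)_{(p,0)}\cong E_p\oplus T_pS$ and $(TF)_{(p,0)}\cong F_p\oplus T_pS$, the differential $(\d\Phi)_{0_p}$ takes the block form $\Phi_p\oplus \textrm{id}_{T_pS}$. Indeed, the horizontal component is controlled by the base map, which is the identity, so the differential is $\textrm{id}_{T_pS}$ on the $T_pS$ summand; and the vertical component is the differential at $0_p$ of the linear fiber map $\Phi_p:E_p\to F_p$, which, being linear, equals $\Phi_p$ itself.

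Putting the pieces together, and using that $\textrm{pr}_{F_p}$ kills the $T_pS$ summand while $\Phi_p$ commutes with $\textrm{pr}_{E_p}$ via the block decomposition, we obtain
\begin{equation*}
\Phi(e)_p^{\textrm{lin}}=\textrm{pr}_{F_p}\circ (\d\Phi)_{0_p}\circ (\d e)_p = \Phi_p\circ \textrm{pr}_{E_p}\circ (\d e)_p =\Phi\circ e_p^{\textrm{lin}},
\end{equation*}
which is the claimed identity. The only subtle point, and arguably the main thing to check carefully, is the block decomposition of $(\d\Phi)_{0_p}$ with respect to the canonical splittings — once this is granted, everything else is a direct unwinding of definitions, which is why the lemma is marked as immediate.
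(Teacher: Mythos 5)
Your proof is correct, and the paper offers no proof of its own for this lemma (it is stated as an ``immediate, but useful, observation''), so your unwinding of the definition via the chain rule and the block decomposition $(\d\Phi)_{0_p}=\Phi_p\oplus \textrm{id}_{T_pS}$ under the canonical splittings is exactly the argument the authors are taking for granted. The one point worth phrasing slightly more carefully is that $\Phi(0_p)=0_p$ follows from fiberwise linearity of $\Phi$ together with the fact that it covers the identity, not from the latter alone; everything else, including the verification that both off-diagonal blocks of $(\d\Phi)_{0_p}$ vanish, is in order.
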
 
\begin{proof}[Proof of Proposition \ref{quaddifmommap}] Let $\alpha_x\in \g_p \subset T_x^*M$ and $X_p\in \ker(\d J_p)=T_p\O^\omega$. We have to prove: \begin{equation*} \langle J_{\S\No_p}([X_p]), \alpha_x\rangle=\langle \alpha_x, \d^2J_p(X_p)\rangle. 
\end{equation*} This will follow by linearizing both sides of equation (\ref{mommapcond}). Let $\alpha\in \Omega^1(M)$ and $X\in \X(S)$ be extensions of $\alpha_x$ and $X_p$, respectively. On one hand, we have:
\begin{align*} \langle (\iota_{a(\alpha)}\omega)^\textrm{lin}_p(X_p),X_p\rangle &=\omega_p(a(\alpha)^\textrm{lin}_p(X_p),X_p)\\
&=2\langle J_{\S\No_p}([X_p]), \alpha_x\rangle.
\end{align*} Here we have first used that, given a $k$-form $\beta$ and a vector field $Y$ that vanishes at $p$, it holds that \begin{equation*} (\iota_Y\beta)^\textrm{lin}_p(X_p)=\iota_{Y^\textrm{lin}_p(X_p)}\beta_p,\end{equation*} as follows from Lemma \ref{linseclem1}. Furthermore, for the second step we have used that the Lie algebra representation $\g_p\to \mathfrak{sp}(\S\No_p,\omega_p)$ induced by the symplectic normal representation is given by:
\begin{equation*} \alpha_x\cdot[X_p]=[a(\alpha)^{\textrm{lin}}_p(X_p)].
\end{equation*}  
On the other hand, linearizing the right-hand side of (\ref{mommapcond}) we find (as desired):
\begin{align*} \langle (J^*\alpha)^\textrm{lin}_p(X_p),X_p\rangle&=(\alpha(\d J(X))^\textrm{lin}_p(X_p)\\
&=2\langle \alpha_x, \d^2J_p(X_p)\rangle.
\end{align*} Here we have first used that, given a vector field $Y$ and a $k$-form $\beta$ that vanishes at $p$, it holds that
\begin{equation*} (\iota_Y\beta)^\textrm{lin}_p(X_p)=\iota_{Y_p}(\beta^\textrm{lin}_p(X_p)),\end{equation*}
as follows from Lemma \ref{linseclem1}. Furthermore, for the second step we have again used Lemma \ref{linseclem1}.
\end{proof} 
\subsubsection{Neighbourhood equivalence and rigidity}\label{normformthmsec}
We now turn to the notion of neighbourhood equivalence, used in the statement of Theorem \ref{normhamthm}. In view of Proposition \ref{normrepsymp}, the restriction of a symplectic groupoid $(\G,\Omega)$ to a leaf $\L$ gives rise to the data of:
\begin{itemize}\item a symplectic manifold $(\L,\omega_\L)$
\item a transitive Lie groupoid $\G_\L\rightrightarrows \L$ equipped with a closed multiplicative $2$-form $\Omega_\L$, 
\end{itemize} subject to the relation:
\begin{equation}\label{data0} \Omega_\L=t_{\G_\L}^*\omega_\L-s_{\G_\L}^*\omega_\L.
\end{equation}
\begin{defi}\label{data0defi} We call a collection of such data a \textbf{zeroth-order symplectic groupoid data}. 
\end{defi}
Further, using Proposition \ref{normrepham}$a$, we observe that the restriction of a Hamiltonian $(\G,\Omega)$-action along $J:(S,\omega)\to M$ to an orbit $\O$ (with corresponding leaf $\L=J(\O)$) encodes the data of:
\begin{itemize}\item a zeroth-order symplectic groupoid data $(\G_\L,\Omega_\L)\rightrightarrows (\L,\omega_\L)$, 
\item a pre-symplectic manifold $(\O,\omega_\O)$,
\item a transitive Lie groupoid action of $\G_\L$ along a map $J_\O:\O\to \L$, 
\end{itemize} subject to the relations:
\begin{equation}\label{data1} (\textrm{pr}_{\G_\L})^*\Omega_\L=(m_\O)^*\omega_\O-(\textrm{pr}_\O)^*\omega_\O \quad \&\quad \omega_\O=(J_\O)^*\omega_\L.
\end{equation} where we denote by: \begin{equation*} m_\O,\textrm{pr}_\O:\G_\L\ltimes \O\to \O,
\quad \textrm{pr}_{\G_\L}:\G_\L\ltimes \O\to \G_\L,
\end{equation*} the map defining the action and the projections from the action groupoid to $\O$ and $\G_\L$.
\begin{defi}\label{data1defi} We call a collection of such data a \textbf{zeroth-order Hamiltonian data}.
\end{defi} 
Next, we define realizations of such zeroth-order data and neighbourhood equivalences thereof. 
\begin{defi}\label{nhoodeq0defi} By a \textbf{realization} of a given \textbf{zeroth-order symplectic groupoid data}:
\begin{center} 
\begin{tikzpicture} \node (G1) at (0,0) {$(\G_\L,\Omega_\L)$};
\node (M1) at (0,-1.3) {$(\L,\omega_\L)$};

\node (G) at (2.7,0) {$(\G,\Omega)$};
\node (M) at (2.7,-1.3) {$(M,\pi)$};
 
\draw[->,transform canvas={xshift=-\shift}](G1) to node[midway,left] {}(M1);
\draw[->,transform canvas={xshift=\shift}](G1) to node[midway,right] {}(M1);

\draw[right hook->] (0.8,-0.65) -- (2,-0.65) node[pos=0.4,above] {$\text{ }\text{ }i$};

\draw[->,transform canvas={xshift=-\shift}](G) to node[midway,left] {}(M);
\draw[->,transform canvas={xshift=\shift}](G) to node[midway,right] {}(M);

\end{tikzpicture}
\end{center} we mean an embedding of Lie groupoids $i:\G_\L\hookrightarrow \G$ with the property that $\Omega$ pulls back to $\Omega_\L$ and that $\G_\L$ embeds as the restriction of $\G$ to a leaf. Of course, $(\L,\omega_\L)$ then automatically embeds as a symplectic leaf of $(\G,\Omega)$. We call two realizations $i_1$ and $i_2$ of the same zeroth-order symplectic groupoid data \textbf{neighbourhood-equivalent} if there are opens $V_1$ and $V_2$ around $\L$ in $M_1$ and $M_2$ respectively, together with an isomorphism of symplectic groupoids: 
\begin{center} 
\begin{tikzpicture} \node (G1) at (0,0) {$(\G_1,\Omega_1)\vert_{V_1}$};
\node (M1) at (0,-1.3) {$(V_1,\pi_1)$};

\node (G) at (2.7,0) {$(\G_2,\Omega_2)\vert_{V_2}$};
\node (M) at (2.7,-1.3) {$(V_2,\pi_2)$};
 
\draw[->,transform canvas={xshift=-\shift}](G1) to node[midway,left] {}(M1);
\draw[->,transform canvas={xshift=\shift}](G1) to node[midway,right] {}(M1);

\draw[transparent] (0.2,-0.65) -- (1.4,-0.65) node[opacity=1] {\resizebox{0.8cm}{0.2cm}{$\cong$}};

\draw[->,transform canvas={xshift=-\shift}](G) to node[midway,left] {}(M);
\draw[->,transform canvas={xshift=\shift}](G) to node[midway,right] {}(M);

\end{tikzpicture}
\end{center}
that intertwines $i_1$ with $i_2$. 
\end{defi}
\begin{defi}\label{nhoodeq1defi} By a \textbf{realization} of a given \textbf{zeroth order Hamiltonian data}: 
\begin{center}
\begin{tikzpicture} \node (G1) at (0,0) {$(\G_\L,\Omega_\L)$};
\node (M1) at (0,-1.3) {$(\L,\omega_\L)$};
\node (O) at (2,0) {$(\O,\omega_\O)$};

\node (G) at (5,0) {$(\G,\Omega)$};
\node (M) at (5,-1.3) {$(M,\pi)$};
\node (S) at (6.8,0) {$(S,\omega)$};
 
\draw[->,transform canvas={xshift=-\shift}](G1) to node[midway,left] {}(M1);
\draw[->,transform canvas={xshift=\shift}](G1) to node[midway,right] {}(M1);
\draw[->](O) to node[pos=0.25, below] {$\text{ }\text{ }J_\O$} (M1);
\draw[->] (1.2,-0.15) arc (315:30:0.25cm);

\draw[right hook->] (2.7,-0.65) -- (4.4,-0.65) node[pos=0.4,above] {$\text{ }\text{ }(i,j)$};

\draw[->,transform canvas={xshift=-\shift}](G) to node[midway,left] {}(M);
\draw[->,transform canvas={xshift=\shift}](G) to node[midway,right] {}(M);
\draw[->](S) to node[pos=0.25, below] {$\text{ }\text{ }J$} (M);
\draw[->] (6.1,-0.15) arc (315:30:0.25cm);

\end{tikzpicture}
\end{center}
we mean a pair $(i,j)$ consisting of:
\begin{itemize}\item a realization $i$ of the zeroth-order symplectic groupoid data $(\G_\L,\Omega_\L)\rightrightarrows (\L,\omega_\L)$,
\item an embedding $j:\O\hookrightarrow S$ that pulls back $\omega$ to $\omega_\O$ and is compatible with $i$, in the sense that $i$ and $j$ together interwine $J_\O$ with $J$, and  the actions along these maps. 
\end{itemize}
We call two realizations $(i_1,j_1)$ and $(i_2,j_2)$ of the same zeroth-order Hamiltonian data \textbf{neighbourhood-equivalent} if there are opens $V_1$ and $V_2$ around $\L$ in $M_1$ and $M_2$ respectively, a $\G_1\vert_{V_1}$-invariant open $U_1$ and a $\G_2\vert_{V_2}$-invariant open $U_2$ around $\O$ in $J_1^{-1}(V_1)$, respectively $J_2^{-1}(V_2)$, together with:
\begin{itemize} 
\item an isomorphism $(\G_1,\Omega_1)\vert_{V_1}\cong (\G_2,\Omega_2)\vert_{V_2}$ that intertwines $i_1$ with $i_2$,
\item a symplectomorphism $(U_1,\omega_1)\cong (U_2,\omega_2)$ that intertwines $j_1$ with $j_2$ and is compatible with the above isomorphism of symplectic groupoids, in the sense that together these intertwine $J_1:U_1\to V_1$ with $J_2:U_2\to V_2$, and the actions along these maps. 
\end{itemize}
In other words, we have an isomorphism of Hamiltonian actions:
\begin{center}
\begin{tikzpicture} \node (G1) at (0,0) {$(\G_1,\Omega_1)\vert_{V_1}$};
\node (M1) at (0,-1.3) {$(V_1,\pi_1)$};
\node (O) at (2,0) {$(U_1,\omega_1)$};

\node[transparent] (A) at (2.2,-0.65) {$A$};
\node[transparent] (B) at (3.4,-0.65) {$B$};

\node (G) at (4,0) {$(\G_2,\Omega_2)\vert_{V_2}$};
\node (M) at (4,-1.3) {$(V_2,\pi_2)$};
\node (S) at (6,0) {$(U_2,\omega_2)$};
 
\draw[->,transform canvas={xshift=-\shift}](G1) to node[midway,left] {}(M1);
\draw[->,transform canvas={xshift=\shift}](G1) to node[midway,right] {}(M1);
\draw[->](O) to node[pos=0.25, below] {$\text{ }\text{ }J_1$} (M1);
\draw[->] (1.3,-0.15) arc (315:30:0.25cm);

\draw[transparent] (A) edge node[opacity=1] {\resizebox{0.8cm}{0.2cm}{$\cong$}} (B);

\draw[->,transform canvas={xshift=-\shift}](G) to node[midway,left] {}(M);
\draw[->,transform canvas={xshift=\shift}](G) to node[midway,right] {}(M);
\draw[->](S) to node[pos=0.25, below] {$\text{ }\text{ }J_2$} (M);
\draw[->] (5.3,-0.15) arc (315:30:0.25cm);

\end{tikzpicture}
\end{center} that intertwines the embeddings of zeroth-order data. Usually the embeddings are clear from the context and we simply call the two Hamiltonian actions neighbourhood-equivalent around $\O$. 
\end{defi} We can now state the rigidity result mentioned in the introduction to this section. 
\begin{thm}\label{righamthm} Suppose that we are given two realizations of the same zeroth-order Hamiltonian data with orbit $\O$ and leaf $\L$. Fix $p\in \O$ and let $x=J_\O(p)\in \L$. If both symplectic groupoids are proper at $x$ (in the sense of Definition \ref{propatxdefi}), then the realizations are neighbourhood-equivalent if and only if their symplectic normal representations at $p$ are isomorphic as symplectic $\G_p$-representations.
\end{thm}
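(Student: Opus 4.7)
The forward direction is immediate: any neighborhood equivalence restricts, via its differential at $p$, to a $\G_p$-equivariant linear symplectomorphism of symplectic normal spaces at $p$.

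For the converse, the plan is to split the argument into two successive rigidity steps. First, I would establish that the ambient symplectic groupoids $(\G_1,\Omega_1)$ and $(\G_2,\Omega_2)$ are themselves neighborhood-equivalent around $\L$. Both realize the same zeroth-order symplectic groupoid data and are proper at $x$, and Proposition \ref{normrepsymp}(b) identifies each of their normal representations at $x$ canonically with the coadjoint representation $\g_x^*$ of the isotropy group, which is determined by the zeroth-order data. Hence the normal representations agree, and invoking the rigidity/normal form theorem for proper symplectic groupoids \cite{Zu,CrMar1,CrFeTo1,CrFeTo2} yields an isomorphism $(\G_1,\Omega_1)\vert_{V_1}\cong (\G_2,\Omega_2)\vert_{V_2}$ that restricts to the identity on $(\G_\L,\Omega_\L)$. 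Pulling back the second Hamiltonian action along this isomorphism, I may assume from this point on that both actions are actions of one and the same symplectic groupoid $(\G,\Omega)\rightrightarrows M$.

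Next, given two Hamiltonian $(\G,\Omega)$-actions $J_i:(S_i,\omega_i)\to M$ realizing the common zeroth-order data with isomorphic symplectic normal representations at $p$, I would construct the equivalence in two sub-steps. Sub-step (i): build a $\G$-equivariant diffeomorphism $\phi:U_1\to U_2$ between $\G$-invariant tubular neighborhoods of $\O$ satisfying $\phi\circ j_1=j_2$ and $J_2\circ\phi=J_1$ at the level of manifolds. The symplectic normal representation isomorphism $\Psi:\S\No_{p,1}\to\S\No_{p,2}$ extends by the transitive $\G_\L$-action to an isomorphism of $\G_\L$-equivariant symplectic vector bundles over $\O$; combined with the canonical short exact sequence in Proposition \ref{normrepham}(b), this enlarges to an isomorphism of the full normal bundles of $\O$ in $S_1$ and $S_2$. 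A $\G$-invariant tubular neighborhood theorem, available because $\G$ is proper at $x$, then exponentiates this bundle isomorphism into the required $\phi$. Sub-step (ii): deform $\phi^*\omega_2$ to $\omega_1$ by a $\G$-equivariant Moser isotopy that preserves $J_1$.

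The main obstacle lies in sub-step (ii). By construction $\phi^*\omega_2$ and $\omega_1$ agree on $\O$ (via the compatibility with $\omega_\O$ built into the zeroth-order data), and Proposition \ref{quaddifmommap} ensures that the quadratic differentials of $J_1$ and $\phi^*J_2=J_1$ coincide, since both are controlled by the same symplectic quadratic momentum map on $\S\No_p$. One must then produce a $\G$-equivariant primitive $\alpha$ of $\phi^*\omega_2-\omega_1$ that vanishes along $\O$ and whose associated time-dependent Moser vector field is tangent to the fibers of $J_1$ (so that its flow leaves the momentum map and the $\G$-action invariant). Equivariance will be arranged by a relative equivariant Poincar\'e lemma on the tubular neighborhood, followed by averaging over the compact isotropy $\G_p$ and spreading along the transitive $\G_\L$-action on $\O$; the $J_1$-verticality will follow from the matching of the symplectic normal representations, which forces the contractions of the two symplectic forms with $\d J_1$ to agree to first order along $\O$. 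A standard Moser argument on a suitably shrunken tubular neighborhood will then complete the proof.
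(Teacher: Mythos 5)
Your forward direction and your first reduction (invoking the rigidity theorem for proper symplectic groupoids to replace $(\G_2,\Omega_2)\vert_{V_2}$ by $(\G_1,\Omega_1)\vert_{V_1}$ and pulling back the second action) coincide with the paper's proof. After that the paper takes a genuinely different route: it uses Theorem \ref{normformsymgp} together with Examples \ref{crucexmoreq2} and \ref{crucexmoreq} to produce a symplectic Morita equivalence between $(\G,\Omega)\vert_V$ and $(G\ltimes \g^*,-\d\lambda_{\textrm{can}})\vert_W$ relating $\L$ to the origin, transports both Hamiltonian spaces through the induced equivalence of categories (Theorem \ref{sympmoreqasrep}), checks via Lemma \ref{assrepprop} that the transported spaces satisfy the hypotheses of the classical MGS rigidity theorem at points of $J^{-1}(0)$, applies Theorem \ref{mgsrigzerofib} as a black box, and transports back. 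You instead propose to redo the MGS argument (equivariant tubular neighbourhood plus equivariant relative Moser) directly in the groupoid setting.

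The difficulty is that your sketch assumes precisely the hard parts of that argument. In sub-step (i) you assert that a $\G$-invariant tubular neighbourhood construction yields $\phi$ with $J_2\circ\phi=J_1$ exactly. Matching the normal bundles of $\O$ only controls $J_2\circ\phi$ to first order along $\O$: the momentum map has a nontrivial quadratic part in the symplectic normal directions (this is the content of Proposition \ref{quaddifmommap} and of the term $\p(J_V(v))$ in (\ref{mommaplocmod2})), and $J$ is not of constant rank near $\O$, so there is no fibered tubular neighbourhood to appeal to. Arranging $J_2\circ\phi=J_1$ on the nose is essentially equivalent to putting the momentum map itself in normal form, i.e., it is a large part of what is to be proven. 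Likewise, in sub-step (ii) the existence of a primitive of $\phi^*\omega_2-\omega_1$ that is simultaneously invariant, vanishing along $\O$, and such that the Moser field is \emph{exactly} tangent to the $J_1$-fibers is the analytic core of the MGS theorem; agreement "to first order along $\O$" does not make the Moser flow preserve $J_1$, and without that the flow need not intertwine the $\G$-actions. These steps can be carried out in the Lie group case (they are the content of \cite{Ma1,GS4}), but as written they are asserted rather than proved, and reproducing them for a proper symplectic groupoid without the Morita reduction would require reconstructing the equivariant homotopy-operator arguments of the classical proof in that setting.
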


%\item $\G$ is called proper if (\ref{tasomap}) is proper,
%\item if $\G$ is proper at $x$, then it is proper at every point in the leaf $\L$ through $x$.
%\end{itemize} Therefore, it makes sense to say that $\G$ is proper at $\L$. Moreover, $\G$ is proper if and only if it is proper at all $x\in M$ and the leaf space $M/\G$ is Hausdorff. 
%\item[ii)] The assumptions in the above theorem can be weakened slightly. Details on this will be given in Section ... .
%\end{itemize}
%\end{Rem}
%\item a symplectic groupoid $(\G,\Omega)\rightrightarrows M$ and a Hamiltonian $\G$-action along $J:(S,\omega)\to M$,
%\item an embedding of Lie groupoids $\G_\L\to \G_1$ and $\G_\L\to \G_2$ with the property that for each $i$ the base of $\H$ embeds as a leaf of $\G_i$ and $\G_\L$ embeds as the restriction of $\G_i$ to this leaf,
%\item embeddings $(\O,J^*\omega_\L)\hookrightarrow (S_1,\omega_1)$ and $(\O,J^*\omega_\L)\hookrightarrow (S_2,\omega_2)$ that, together with the above embeddings of Lie groupoids, are compatible with the given actions. 
%\end{itemize} We will refer to this as an \textbf{embedding  action of $\G_\L$$J:(\O,J^*\omega_\L)\to (\L,\omega_\L)$
In the coming section, we give an explicit construction to show:
\begin{prop}\label{existlocmodprop} For any zeroth-order Hamiltonian data with orbit $\O$, any choice of $p\in \O$ and any symplectic representation $(V,\omega_V)$ of the isotropy group $\G_p$, there is a realization of the zeroth-order data that has $(V,\omega_V)$ as symplectic normal representation at $p$.
\end{prop}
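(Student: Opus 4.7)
The plan is to adapt the Marle--Guillemin--Sternberg local model construction to the groupoid setting, fibered over a principal-bundle description of the zeroth-order data. Write $H := \G_p$ and $x := J_\O(p)$, and consider $P := s_{\G_\L}^{-1}(x)\subset \G_\L$, which is simultaneously a principal $\G_x$-bundle over $\L$ (via the target map) and a principal $H$-bundle over $\O$ (via $g\mapsto g\cdot p$). These identifications give $\L\cong P/\G_x$, $\O\cong P/H$, and the gauge description $\G_\L\cong (P\times P)/\G_x$. Since $H$ is compact in the cases of interest, fix an $H$-invariant splitting $\g_x=\g_p\oplus \m$, yielding $\m^*\cong \g_p^0\subset \g_x^*$ as $H$-representations.

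First I would construct the ambient symplectic groupoid $(\G,\Omega)\rightrightarrows (M,\pi)$ around $\L$ via the standard local model for proper symplectic groupoids: let $M$ be a neighborhood of the zero section of the coadjoint associated bundle $P\times_{\G_x}\g_x^*$, and build $\G$ and $\Omega$ by coupling $\Omega_\L$ on $\G_\L$ with the canonical symplectic structure of the cotangent groupoid of $\G_x$. The tautological embedding $i:\G_\L\hookrightarrow \G$ as the restriction $\G|_\L$ then realizes the zeroth-order symplectic groupoid data with $i^*\Omega=\Omega_\L$.

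For the Hamiltonian space I would take $S := P\times_H (\g_p^0\oplus V)$ (restricted to a tubular neighborhood of the zero section), with $\G$ acting through the $P$-factor. Choose a principal $H$-connection on $P\to \O$ compatible with a principal $\G_x$-connection on $P\to \L$, and equip $S$ with a symplectic form $\omega$ by a minimal coupling (Sternberg-type) construction, so that $\omega$ restricts to $\omega_\O$ on the zero section, to $\omega_V$ along the $V$-directions, and along the $\g_p^0$-directions couples the fiberwise linear Kirillov--Kostant--Souriau form with the curvature of the connection. The momentum map is
\begin{equation*}
 J:S\longrightarrow M,\qquad [g,\mu,v]\longmapsto \bigl[g,\,\mu+J_V(v)\bigr],
\end{equation*}
where $J_V$ is the quadratic momentum map (\ref{quadsympmommap}) of the symplectic $H$-representation $V$, regarded as taking values in $\g_x^*$ via the inclusion $\g_p^*\hookrightarrow \g_x^*$ induced by the chosen splitting. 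The zero section embedding $j:\O\hookrightarrow S$ then realizes the full zeroth-order Hamiltonian data, since $j^*\omega=\omega_\O$ and $J\circ j=J_\O$ by construction.

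Finally, the identifications at $p$ are immediate: $j(p)=[e_x,0,0]$, and the fibered-product description of $S$ gives a canonical decomposition $\No_{j(p)}\cong \g_p^0\oplus V$ under which $\underline{\d J}_{j(p)}:\No_{j(p)}\to \No_x\cong \g_x^*$ is the inclusion $\g_p^0\hookrightarrow \g_x^*$ on the first summand and vanishes on $V$. Its kernel is therefore $V$, which by Proposition \ref{normrepham}$b$ is precisely $\S\No_{j(p)}$, and the construction of $\omega$ makes its induced symplectic form equal to $\omega_V$. The main obstacle is the simultaneous verification of the multiplicativity condition (\ref{hammultcond}) and the momentum map condition (\ref{mommapcond}) for the coupled forms: this requires the two independent Sternberg constructions on $\G$ and on $S$ to be compatible through the shared connection data on $P$, together with a careful treatment of the interaction between the quadratic term $J_V(v)$ and the linear fiberwise form on $\g_p^0$. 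These are the standard cocycle-style computations of the classical MGS normal form, transposed to the groupoid setting.
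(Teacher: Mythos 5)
Your construction is correct in outline, but it takes a genuinely different route from the paper's. You build the local model directly as the associated bundle $P\times_H(\g_p^0\oplus V)$ with a minimal-coupling symplectic form and the explicit momentum map $[g,\mu,v]\mapsto[g,\mu+\p(J_V(v))]$; this is essentially the classical MGS model transposed to the groupoid setting, and it is exactly the description the paper records only as a secondary reformulation in Remark \ref{splitremlocmod}. The paper's primary construction instead obtains $(S_\theta,\omega_{S_\theta})$ as the symplectic reduction (\ref{locmodham1}) of $(\Sigma_\theta\times V,\omega_\theta\oplus\omega_V)$ by the diagonal $H$-action at $0\in\h^*$, with the $(\G_\theta,\Omega_\theta)$-action descending from the Hamiltonian action of the pair groupoid $(\Sigma_\theta\times\Sigma_\theta,\omega_\theta\oplus-\omega_\theta)$ on the first factor. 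The reduction route buys two things that your approach has to pay for by hand. First, the multiplicativity condition (\ref{hammultcond}), closedness, and nondegeneracy of the reduced form are inherited automatically from the pair groupoid and the commuting-actions diagram, whereas you defer precisely these verifications to ``standard cocycle-style computations''; in the groupoid setting (arbitrary $P\to\L$ and arbitrary $\omega_\L$, not just a coadjoint orbit) checking (\ref{hammultcond}) against the reduced groupoid (\ref{locmodsymgp}) is the least routine part, so this is where your write-up is thinnest. Second, your model requires an $H$-equivariant splitting of the sequence (\ref{ses2poisabs}), which you justify by compactness of $H$ ``in the cases of interest''; but Proposition \ref{existlocmodprop} is stated for arbitrary zeroth-order Hamiltonian data with no properness hypothesis, and the reduction construction needs no splitting (only freeness and properness of the $H$-action on $\Sigma_\theta\times V$, which is inherited from $P$). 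So as written your argument proves the proposition only under the additional splitting hypothesis and modulo the deferred coupling computations; what it buys in exchange is the explicit vector-bundle form of the model and of the momentum map (\ref{mommaplocmod2}), which the paper also wants but derives afterwards from the reduced picture.
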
 
Given a Hamiltonian action, we call the realization constructed from the zeroth-order Hamiltonian data obtained by restriction to $\O$ and from the symplectic normal representation at $p$: \textbf{the local model} of the Hamiltonian action around $\O$ (we disregard the choice of $p\in \O$, as different choices result in isomorphic local models). Applying Theorem \ref{righamthm} to the given Hamiltonian action on one hand and, on the other hand, to its local model around $\O$, Theorem \ref{normhamthm} follows. Hence, after the construction of this local model, it remains for us to prove Theorem \ref{righamthm}.

\subsection{The local model}\label{locmodsec}
\subsubsection{Reorganization of the zeroth-order Hamiltonian data}\label{reorghamdat}
Before constructing the local model, we rearrange the zeroth-order data (defined in the previous subsection) into a simpler form. First, due to the relations (\ref{data0}) and (\ref{data1}), the triple of $2$-forms $\Omega_\L$, $\omega_\L$ and $\omega_\O$ can be fully reconstructed from the single $2$-form $\omega_\L$. Therefore, a collection of zeroth-order Hamiltonian data can equivalently be defined as the data of:
\begin{itemize}\item a symplectic manifold $(\L,\omega_\L)$,
\item a transitive Lie groupoid $\G_\L\rightrightarrows \L$, 
\item a transitive Lie groupoid action of $\G_\L$ along a map $J_\O:\O\to \L$. 
\end{itemize} After the choice of a point $p\in \O$, this can be simplified further to a collection consisting of:
\begin{itemize}
\item a symplectic manifold $(\L,\omega_\L)$,
\item a Lie group $G$ (corresponding to $\G_x$),
\item a (right) principal $G$-bundle $P\to \L$ (corresponding to $t:s^{-1}(x)\to \L$),
\item a closed subgroup $H$ of $G$ (corresponding to $\G_p$).
\end{itemize}  
To see this, fix a point $p\in \O$ and let $x=J_\O(p)\in \L$. Since $\G_\L$ is transitive, the choice of $x\in \L$ induces an isomorphism between $\G_\L$ and the gauge-groupoid: 
\begin{equation}\label{gaugegpoidprinbunlocmod} s^{-1}(x)\times_{\G_x}s^{-1}(x) \rightrightarrows \L,
\end{equation} of the principal $\G_x$-bundle $t:s^{-1}(x)\to \L$. In particular, $\G_\L$ is entirely encoded by this principal bundle. Furthermore, due to transitivity the $\G_\L$-action along $J_\O$ is entirely determined by this principal bundle and the subgroup $\G_p$ of $\G_x$. Indeed, the map $J_\O$ can be recovered from this, for we have a commutative square:
\begin{center}
\begin{tikzcd}
 s^{-1}(x)/{\G_p} \arrow[r]\arrow{d}[rotate=90, xshift=-0.8ex, yshift=0.7ex]{\sim} & s^{-1}(x)/{\G_x}\arrow{d}[rotate=90, xshift=-0.8ex, yshift=0.7ex]{\sim} \\
 \O \arrow[r,"J_\O"]  & \L 
 \end{tikzcd} 
 \end{center} where the left vertical map is defined by acting on $p$ and the upper horizontal map is the canonical one. Moreover, the action can be recovered as the action of the groupoid (\ref{gaugegpoidprinbunlocmod}) along the upper horizontal map, given by $[p,q]\cdot [q]=[p]$.  
 \subsubsection{Construction of the local model for the symplectic groupoid}\label{locmodconstsec}
The construction presented here is well-known. For other (more Poisson geometric) constructions of this local model, see \cite{CrMar1,Mar1}. The local model for the symplectic groupoid is built out of the zeroth-order symplectic groupoid data, encoded as above by:
\begin{itemize}
\item a symplectic manifold $(\L,\omega_\L)$,
\item a Lie group $G$,
\item a (right) principal $G$-bundle $P\to \L$.
\end{itemize} 
To construct the local model, we make an auxiliary choice of a connection $1$-form $\theta\in \Omega^1(P;\g)$ and define:\begin{equation}\label{hattheta} \hat{\theta}\in \Omega^1(P\times \g^*), \quad \hat{\theta}_{(q,\alpha)}=\langle \alpha, \theta_q\rangle. \end{equation} Then, we use the symplectic structure $\omega_\L$ on $\L$ to define:
\begin{equation}\label{omegathetalocmod} \omega_\theta=(\textrm{pr}_\L)^*\omega_\L-\d\hat{\theta}\in \Omega^2(P\times \g^*),
\end{equation} where by $\textrm{pr}_\L$ we denote the composition $P\times \g^*\xrightarrow{\textrm{pr}_1} P\to \L$. The $2$-form $\omega_\theta$ is closed, non-degenerate at all points of $P\times \{0\}$ and $(P\times \g^*,\omega_\theta)\to \g^*$ is a (right) pre-symplectic Hamiltonian $G$-space. Therefore, the open subset $\Sigma_\theta\subset P\times \g^*$ on which $\omega_\theta$ is non-degenerate is a $G$-invariant neigbourhood of $P\times \{0\}$. Since the action is free and proper, the symplectic form $\omega_\theta$ descends to a Poisson structure $\pi_\theta$ on the open neighbourhood $M_\theta$ of the zero-section $\L$, defined as: 
\begin{equation*} M_\theta:=\Sigma_\theta/G\subset P\times_G\g^*.
\end{equation*} This is the base of the local model. For the construction of the integrating symplectic groupoid, notice first that the pair groupoid: \begin{equation}\label{symppairlocmod} \left(\Sigma_\theta\times \Sigma_\theta,\omega_\theta\oplus -\omega_\theta\right)
\end{equation} is a symplectic groupoid and, furthermore, it is a (right) free and proper Hamiltonian $G$-space (being a product of two). Therefore, the symplectic form $\omega_\theta\oplus-\omega_\theta$ descends to the symplectic reduced space at $0\in \g^*$: \begin{equation}\label{locmodsymgp} (\G_\theta,\Omega_\theta):=\left( (\Sigma_\theta\times\Sigma_\theta)\sslash G,\Omega_\textrm{red}\right).
\end{equation} The pair groupoid structure on $\Sigma_\theta\times \Sigma_\theta$ descends to a Lie groupoid structure on (\ref{locmodsymgp}), making it a symplectic groupoid integrating $(M_\theta,\pi_\theta)$. This is the symplectic groupoid in the local model. It is canonically a realization of the given zeroth-order symplectic groupoid data: the gauge-groupoid of the principal $G$-bundle $P\to \L$ (corresponding to (\ref{gaugegpoidprinbunlocmod})) embeds into $(\ref{locmodsymgp})$ via the zero-section.

\subsubsection{Construction of the local model for Hamiltonian actions}\label{hamlocmodconsec} The construction below generalizes that in \cite{Ma1,GS4}. The local model is built out of a zeroth-order Hamiltonian data and a symplectic representation of an isotropy group of the action, encoded as in Subsection \ref{reorghamdat} by:
\begin{itemize}
\item a symplectic manifold $(\L,\omega_\L)$,
\item a Lie group $G$,
\item a (right) principal $G$-bundle $P\to \L$,
\item a closed subgroup $H$ of $G$,
\item a symplectic $H$-representation $(V,\omega_V)$.
\end{itemize} Choose an auxiliary connection $1$-form $\theta\in \Omega^1(P;\g)$ and define $\omega_\theta$, $\Sigma_\theta$ and $M_\theta$ as in the construction of the local model for symplectic groupoids. To construct a Hamiltonian action of the symplectic groupoid (\ref{locmodsymgp}), consider the product of the Hamiltonian $H$-spaces: 
\begin{equation*} \textrm{pr}_{\h^*}:(\Sigma_\theta,\omega_\theta)\xrightarrow{\textrm{pr}_{\g^*}}\g^*\to \h^*\quad \& \quad J_V:(V,\omega_V)\to \h^*,
\end{equation*} 
where $J_V$ is as in (\ref{quadsympmommap}). This is another (right) Hamiltonian $H$-space:
\begin{equation*} J_H:(\Sigma_\theta\times V, {\omega}_\theta\oplus\omega_V)\to \h^*,\quad (q,\alpha,v)\mapsto \alpha\vert_\h-J_V(v),
\end{equation*} 
where the action is the diagonal one, which is free and proper. The symplectic manifold in the local model is the reduced space at $0\in \h^*$:
\begin{equation}\label{locmodham1} (S_\theta,\omega_{S_\theta}):=\left((\Sigma_\theta\times V)\sslash H,\omega_{\textrm{red}}\right).
\end{equation}
To equip this with a Hamiltonian action of (\ref{locmodsymgp}), observe that, on the other hand, the symplectic pair groupoid (\ref{symppairlocmod}) acts along: \begin{equation*} \text{pr}_{\Sigma_\theta}:(\Sigma_\theta\times V, {\omega}_\theta\oplus\omega_V)\to \Sigma_\theta\end{equation*} in a Hamiltonian fashion as: $(\sigma,\tau)\cdot(\tau,v)=(\sigma,v)$ for $\sigma,\tau\in \Sigma_\theta$ and $v\in V$. This descends to a Hamiltonian action of (\ref{locmodsymgp}) that fits into a diagram of commuting Hamiltonian actions: 
\begin{center}
\begin{tikzpicture} \node (G1) at (0,0) {$\left(\G_\theta,\Omega_\theta\right)$};
\node (M1) at (0,-1.3) {$M_\theta$};
\node (S) at (3.3,0) {$(\Sigma_\theta\times V, {\omega}_\theta\oplus\omega_V)$};
\node (M2) at (6.6,-1.3) {$\h^*$};
\node (G2) at (6.6,0) {$(T^*H,-\d \lambda_{\textrm{can}})$};
 
\draw[->,transform canvas={xshift=-\shift}](G1) to node[midway,left] {}(M1);
\draw[->,transform canvas={xshift=\shift}](G1) to node[midway,right] {}(M1);
\draw[->,transform canvas={xshift=-\shift}](G2) to node[midway,left] {}(M2);
\draw[->,transform canvas={xshift=\shift}](G2) to node[midway,right] {}(M2);
\draw[->](S) to node[pos=0.25, below] {$\text{ }\text{ }\text{ }\text{ }\textrm{pr}_{M_\theta}$} (M1);
\draw[->] (1.75,-0.15) arc (315:30:0.25cm);
\draw[<-] (4.8,0.15) arc (145:-145:0.25cm);
\draw[->](S) to node[pos=0.25, below] {$J_H$\text{}} (M2);
\end{tikzpicture}
\end{center} with the property that the momentum map of each one is invariant under the action of the other. It therefore follows that the left-hand action descends to a Hamiltonian action along the map:
\begin{equation}\label{locmodham2} J_\theta:\left(S_\theta,\omega_{S_\theta}\right)\to M_\theta, \quad [\sigma,v]\mapsto [\sigma].
\end{equation} This is the Hamiltonian action in the local model. It is canonically a realization of the given zeroth-order Hamiltonian data: as in the previous subsection the gauge-groupoid of the principal $G$-bundle $P\to \L$ embeds into $(\ref{locmodsymgp})$ via the zero-section and similarly $P/H$ embeds into (\ref{locmodham1}). This completes the construction of the local model. Finally, given the starting data in Proposition \ref{existlocmodprop}, one readily verifies that the symplectic normal representation at $p$ of the resulting Hamiltonian action of (\ref{locmodsymgp}) along (\ref{locmodham2}) is isomorphic to $(V,\omega_V)$ as symplectic $\G_p$-representation. So, this also completes the proof of Proposition \ref{existlocmodprop}.
\begin{rem}\label{splitremlocmod} Under the assumption that the short exact sequence:
\begin{equation}\label{ses2poisabs} 0\to \h^0\to \g^*\to \h^*\to 0
\end{equation} splits $H$-equivariantly (which holds if $H$ is compact), the local model can be put in the more familiar form of a vector bundle over $\O$. Indeed,  
let $\p:\h^*\to \g^*$ be such a splitting. Then we have an open embedding:
\begin{equation}\label{locmodvecbun} S_\theta\to P\times_H(\h^0\oplus V), \quad [p,\alpha,v]\mapsto [p,\alpha-\p(J_V(v)),v],
\end{equation} onto an open neighbourhood of the zero-section, which identifies the momentum map (\ref{locmodham2}) with the restriction to this open neighbourhood of the map:
\begin{equation}\label{mommaplocmod2} P\times_H(\h^0\oplus V)\to P\times_G\g^*,\quad [p,\alpha,v]\mapsto [p,\alpha+\p(J_V(v))]. 
\end{equation} 
To identify the action accordingly, observe that, as Lie groupoid, $(\ref{locmodsymgp})$ embeds canonically onto an open subgroupoid of:
\begin{equation}\label{simpmodgp} (P\times P)\times_{G}\g^*\rightrightarrows P\times_G\g^*,
\end{equation} which inherits its Lie groupoid structure from the submersion groupoid of $\textrm{pr}_{\g^*}:P\times \g^*\to \g^*$, being a quotient of it. This identifies the action of (\ref{locmodsymgp}) along (\ref{mommaplocmod2}) with (a restriction of) the action of (\ref{simpmodgp}) along (\ref{mommaplocmod2}), given by:
\begin{equation*} [p_1,p_2,\alpha+\p(J_V(v))]\cdot [p_2,\alpha,v]=[p_1,\alpha,v], \quad p_1,p_2\in P,\quad \alpha\in \h^0,\quad v\in V.
\end{equation*}
\end{rem}

\subsubsection{Relation to the Marle-Guillemin-Sternberg model}\label{MGSrelsec} Let $G$ be a Lie group and consider a Hamiltonian $G$-space $J:(S,\omega)\to \g^*$. As remarked in Example \ref{exhamGsp}, this is the same as a Hamiltonian action of the cotangent groupoid $(G\ltimes \g^*,-\d \lambda_{\textrm{can}})\rightrightarrows \g^*$ along $J$. Let $p\in S$, $\alpha=J(p)$ and suppose that $G\ltimes \g^*$ is proper at $\alpha$ (in the sense of Definition \ref{propatxdefi}). In this case, our local model around the orbit $\O$ through $p$ is equivalent to the local model in the Marle-Guillemin-Sternberg (MGS) normal form theorem for Hamiltonian $G$-spaces (recalled below). To see this, first note that, since the isotropy group $G_\alpha$ is compact, the short exact sequence of $G_\alpha$-representations:
\begin{equation}\label{splitseqcotangentgpoid} 0\to \g_\alpha^0\to \g^*\to \g_\alpha^*\to 0
\end{equation} is split. Let $\sigma:\g_\alpha^*\to \g^*$ be a $G_\alpha$-equivariant splitting of (\ref{splitseqcotangentgpoid}) and consider the connection one-form $\theta\in \Omega^1(G;\g_\alpha)$ on $G$ (viewed as right principal $G_\alpha$-bundle) obtained by composing the left-invariant Maurer-Cartan form on $G$ with $\sigma^*:\g\to \g_\alpha$. The leaf $\L$ through $\alpha$ is a coadjoint orbit and $\omega_{\L}$ is the KKS-symplectic form, which is invariant under the coadjoint action. Therefore, the $2$-form $\omega_\theta\in \Omega^2(G\times \g^*_\alpha)$, defined as in (\ref{omegathetalocmod}), is not only invariant under the right diagonal action of $G_\alpha$, but it also invariant under the left action of $G$ by left translation on the first factor. This implies that the open $\Sigma_\theta$ on which $\omega_\theta$ is non-degenerate is of the form $G\times W$ for a $G_\alpha$-invariant open $W$ around the origin in $\g_\alpha^*$. The local model for the cotangent groupoid around $\L$ becomes: \begin{equation*} 
(G\ltimes (G\times_{G_\alpha} W), \Omega_\theta)\rightrightarrows G\times_{G_\alpha}W,
\end{equation*} the groupoid associated to the action of $G$ by left translation on the first factor. To compare this to the cotangent groupoid itself, consider the $G$-equivariant map:
\begin{equation*}\label{lincoadform}  \phi:G\times_{G_\alpha}W\to \g^*, \quad [g,\beta]\mapsto g\cdot(\alpha+\sigma(\beta)).
\end{equation*} Since $G\ltimes \g^*$ is proper at $\alpha$, we can shrink $W$ so that $\phi$ becomes an embedding onto a $G$-invariant open neighbourhood of $\L$. Then $\phi$ lifts canonically to an isomorphism of symplectic groupoids:
\begin{equation}\label{isolocmodmgsmod} (G\ltimes (G\times_{G_\alpha}W), \Omega_\theta)\xrightarrow{\sim} (G\ltimes \g^*,-\d\lambda_{\textrm{can}})\vert_{\phi(G\times_{G_\alpha}W)},
\end{equation} and this is a neighbourhood equivalence around $G\ltimes \L$ (with respect to the canonical embeddings). Our local model for $(S,\omega)$ around $\O$ is the same as that in the MGS local model, and via (\ref{isolocmodmgsmod}) the Hamiltonian action in our local model is identified with the Hamiltonian $G$-space in the MGS local model. In particular the momentum map (\ref{mommaplocmod2}) is identified with:
\begin{equation*} J_{\textrm{MGS}}:G\times_{G_p}(\g_p^0\oplus \S\No_p)\to \g^*, \quad [g,\beta,v]\mapsto g\cdot\left(\alpha+\sigma\left(\beta+\p\left(J_{\S\No_p}\left(v\right)\right)\right)\right).
\end{equation*} 
\begin{rem}\label{sharphamliegpactrem} As will be clear from the proof of Theorem \ref{righamthm}, the conclusion of Theorem \ref{normhamthm} can be sharpened for Hamiltonian Lie group actions: if we start with a Hamiltonian $G$-space, then under the assumptions of Theorem \ref{normhamthm} we can in fact find a neighbourhood equivalence in which the isomorphism of symplectic groupoids is the explicit isomorphism (\ref{isolocmodmgsmod}). In particular, this neighbourhood equivalence is defined on $G$-invariant neighbourhoods of $\O$ in $S$ and $\L$ in $\g^*$.
\end{rem}

\subsection{The proof}\label{normformpfsubsec}
\subsubsection{Morita equivalence of groupoids}
To prove Theorem \ref{righamthm} (and hence Theorem \ref{normhamthm}), we will reduce to the case where $\O\subset J_X^{-1}(0)$ is an orbit of a Hamiltonian $G$-space $J_X:(X,\omega_X)\to \g^*$ (with $G$ a compact Lie group), to which we can apply the Marle-Guilleming-Sternberg theorem. The idea of such a reduction is by no means new \textemdash in fact, it appears already in the work of Guillemin and Sternberg. To do so, we use the fact that Morita equivalent symplectic groupoids have equivalent categories of modules. In preparation for this, we will now first recall the definition, some useful properties and examples of Morita equivalence. 
\begin{defi} Let $\G_1\rightrightarrows M_1$ and $\G_2\rightrightarrows M_2$ be Lie groupoids. A \textbf{Morita equivalence} from $\G_1$ to $\G_2$ is a principal $(\G_1,\G_2)$-bi-bundle $(P,\alpha_1,\alpha_2)$. This consists of:
\begin{itemize} \item A manifold $P$ with two surjective submersions $\alpha_i:P\to M_i$.
\item A left action of $\G_1$ along $\alpha_1$ that makes $\alpha_2$ into a principal $\G_1$-bundle.
\item A right action of $\G_2$ along $\alpha_2$ that makes $\alpha_1$ into a principal $\G_2$-bundle.
\end{itemize} Furthermore, the two actions are required to commute. We depict this as:
\begin{center}
\begin{tikzpicture} \node (G1) at (0,0) {$\G_1$};
\node (M1) at (0,-1.3) {$M_1$};
\node (S) at (1.4,0) {$P$};
\node (M2) at (2.7,-1.3) {$M_2$};
\node (G2) at (2.7,0) {$\G_2$};
 
\draw[->,transform canvas={xshift=-\shift}](G1) to node[midway,left] {}(M1);
\draw[->,transform canvas={xshift=\shift}](G1) to node[midway,right] {}(M1);
\draw[->,transform canvas={xshift=-\shift}](G2) to node[midway,left] {}(M2);
\draw[->,transform canvas={xshift=\shift}](G2) to node[midway,right] {}(M2);
\draw[->](S) to node[pos=0.25, below] {$\text{ }\text{ }\alpha_1$} (M1);
\draw[->] (0.8,-0.15) arc (315:30:0.25cm);
\draw[<-] (1.9,0.15) arc (145:-145:0.25cm);
\draw[->](S) to node[pos=0.25, below] {$\alpha_2$\text{ }} (M2);
\end{tikzpicture}
\end{center}
For every leaf $\L_1\subset M_1$, there is a unique leaf $\L_2\subset M_2$ such that $\alpha_1^{-1}(\L_1)=\alpha_2^{-1}(\L_2)$; such leaves $\L_1$ and $\L_2$ are called \textbf{$P$-related}. When $(\G_1,\Omega_1)$ and $(\G_2,\Omega_2)$ are symplectic groupoids, then a \textbf{symplectic Morita equivalence} from $(\G_1,\Omega_1)$ to $(\G_2,\Omega_2)$ is a Morita equivalence with the extra requirement that $(P,\omega_P)$ is a symplectic manifold and both actions are Hamiltonian. 
\end{defi}
%\begin{ex}\label{moreqop1} Let $\G\rightrightarrows M$ be a Lie groupoid and $U\subset M$ an open. Denote by $\widehat{U}:=t(s^{-1}(U))$ the saturation of $U$. Then $\G\vert_U$ is Morita equivalent to $\G\vert_{\widehat{U}}$ via the bibundle $(s^{-1}(U),s,t)$.
%\end{ex}
%\begin{ex}\label{moreqop2} More generally, given a Morita equivalence $(P,\alpha_1,\alpha_2)$ between Lie groupoids $\G_1\rightrightarrows M_1$ and $\G_2\rightrightarrows M_2$ and an open $U_1\subset M_1$, then, setting $U_2:=\alpha_2(\alpha_1^{-1}(U_1))$, we have an induced Morita equivalence between $\G_1\vert_{U_1}$ and $\G_2\vert_{U_2}$ given by the restricted bibundle $(\alpha_1^{-1}(U),\alpha_1,\alpha_2)$. 
%\end{ex} 
Morita equivalence is an equivalence relation that, heuristically speaking, captures the geometry transverse to the leaves. The simplest motivation for this principle is the following basic result.
\begin{prop}\label{transgeomgpoid} Let $(P,\alpha_1,\alpha_2)$ be a Morita equivalence from $\G_1\rightrightarrows M_1$ to $\G_2\rightrightarrows M_2$. 
\begin{enumerate} \item[a)] The map \begin{equation}\label{leafsphommoreq} h_P:\underline{M}_1\to \underline{M}_2, \quad \L_1\mapsto \alpha_2(\alpha_1^{-1}(\L_1))\end{equation} that sends a leaf $\L_1$ of $\G_1$ to the unique $P$-related leaf of $\G_2$ is a homeomorphism.
\item[b)] Suppose that $x_1\in M_1$ and $x_2\in M_2$ belong to $P$-related leaves and let $p\in P$ such that $\alpha_1(p)=x_1$ and $\alpha_2(p)=x_2$. Then the map:
\begin{equation}\label{isotgpisomoreq} \Phi_p:(\G_1)_{x_1}\to (\G_2)_{x_2} 
\end{equation} defined by the relation:
\begin{equation*} g\cdot p=p\cdot \Phi_p(g), \quad g\in (\G_1)_{x_1},
\end{equation*} is an isomorphism of Lie groups. Furthermore, the map: \begin{equation}\label{isotrepisomoreq} \phi_p:\No_{x_1}\to \No_{x_2},\quad [v]\mapsto [d\alpha_2(\hat{v})],
\end{equation} where $\hat{v}\in T_pP$ is any tangent vector such that $d\alpha_1(\hat{v})=v$, is a compatible isomorphism between the normal representations at $x_1$ and $x_2$.
%\item[c)] If $x_1\in M_1$ and $x_2\in M_2$ are $P$-related, then $\G_1$ is proper at $x_1$ if and only if $\G_2$ is proper at $x_2$.
\end{enumerate} 
\end{prop}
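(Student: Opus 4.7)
For part (a), I would begin by showing that $\alpha_1^{-1}(\L_1)$ is saturated under both actions: under the $\G_2$-action trivially, because $\alpha_1$ is $\G_2$-invariant (being the projection of the principal $\G_2$-bundle), and under the $\G_1$-action because $\alpha_1$ intertwines the action of $\G_1$ on $P$ with that of $\G_1$ on $M_1$, so it pulls saturated subsets back to saturated subsets. Dually, the set $\L_2:=\alpha_2(\alpha_1^{-1}(\L_1))$ is $\G_2$-saturated in $M_2$; using that the $\G_1$-orbits on $P$ are precisely the $\alpha_2$-fibers (by principality), $\L_2$ equals the $\alpha_2$-image of a single leaf of $\G_1\ltimes P$ and hence is a single leaf of $\G_2$, with $\alpha_2^{-1}(\L_2)=\alpha_1^{-1}(\L_1)$. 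This establishes existence, uniqueness and well-definedness of $h_P$. To upgrade to a homeomorphism, I would pass to the double quotient $\underline{P}$ of $P$ by both commuting actions: principality gives $\underline{P}=(P/\G_2)/\G_1=M_1/\G_1=\underline{M}_1$ via $\alpha_1$, and symmetrically $\underline{P}=\underline{M}_2$ via $\alpha_2$. Composing these two continuous open identifications yields precisely $h_P$, which is therefore a homeomorphism.

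For part (b), the map $\Phi_p$ is defined via principality as follows. For $g\in(\G_1)_{x_1}$, the point $g\cdot p$ lies in the $\alpha_2$-fiber over $x_2$ (since $\alpha_2$ is $\G_1$-invariant) and in the $\alpha_1$-fiber over $x_1$; by principality of the $\G_2$-action along $\alpha_1$, there exists a unique $h\in\G_2$ with $g\cdot p=p\cdot h$, and checking source/target places $h$ in $(\G_2)_{x_2}$. Smoothness follows from smoothness of the division map of the principal $\G_2$-bundle, the homomorphism property from uniqueness together with the fact that the two actions commute and are associative, and bijectivity from the symmetric construction producing an inverse from $(\G_2)_{x_2}$ to $(\G_1)_{x_1}$.

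For the normal representation isomorphism $\phi_p$, well-definedness reduces to the inclusion $\d\alpha_2(\ker \d\alpha_1|_p)\subseteq T_{x_2}\L_2$, which holds because $\ker \d\alpha_1|_p$ is the tangent space at $p$ to the $\G_2$-orbit through $p$, and this orbit $\alpha_2$-projects into $\L_2$. A symmetric construction furnishes the inverse. The main step is equivariance under $\Phi_p$: given $v\in T_{x_1}M_1$ and $\hat{v}\in T_g\G_1$ with $\d s(\hat{v})=v$, I would differentiate the identity $g\cdot p=p\cdot\Phi_p(g)$ in the $\G_1$-direction $\hat{v}$ to produce a single vector $\hat{w}\in T_{g\cdot p}P$ that simultaneously projects under $\d\alpha_1$ to $\d t(\hat{v})$ (so that $[\d\alpha_2(\hat{w})]$ computes $\phi_p(g\cdot[v])$) and is obtained by applying the right $\G_2$-action differential to a compatible lift of $\d\alpha_2(\hat{v}')$ at $\Phi_p(g)$ (so that $[\d\alpha_2(\hat{w})]$ also computes $\Phi_p(g)\cdot\phi_p([v])$). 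This careful bookkeeping of compatible lifts across the two commuting actions is the only real technical point; everything else reduces to unwinding the definitions.
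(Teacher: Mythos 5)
The paper states this proposition without proof (it is quoted as a known basic fact about Morita equivalences), so there is no in-paper argument to compare against; I assess your plan on its own merits. Part (a) and the construction of $\Phi_p$ are correct and follow the standard route: identifying $\alpha_1^{-1}(\L_1)=\alpha_2^{-1}(\L_2)$ as a single orbit of the two commuting actions, describing $h_P$ via the double quotient, and using the division map of the principal $\G_2$-bundle $\alpha_1$ to define $\Phi_p$. For well-definedness of $\phi_p$ you need slightly more than $\d\alpha_2(\ker \d\alpha_1|_p)\subseteq T_{x_2}\L_2$: independence of the representative of $[v]$ modulo $T_{x_1}\L_1$ requires $\d\alpha_2\bigl((\d\alpha_1)^{-1}(T_{x_1}\L_1)\bigr)\subseteq T_{x_2}\L_2$, which follows from $\alpha_1^{-1}(\L_1)=\alpha_2^{-1}(\L_2)$ and subsumes your inclusion.

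The step that would fail as written is the equivariance of $\phi_p$. You ask for a single vector $\hat w\in T_{g\cdot p}P$ that both projects under $\d\alpha_1$ to $\d t(\hat v)$ and arises by applying the right $\G_2$-action differential to a lift of $v$ at $p$. No such vector exists in general: since $\alpha_1(q\cdot h)=\alpha_1(q)$, any vector produced by the right-action differential from a lift $\hat v_P\in T_pP$ of $v$ projects under $\d\alpha_1$ to $v$, not to $\d t(\hat v)$, so the two requirements are incompatible unless $v=\d t(\hat v)$. Concretely, the curve identity $g(t)\cdot p(t)=p(t)\cdot h(t)$ you would differentiate cannot hold for $t\neq 0$, as the left side lies in the $\alpha_1$-fiber over $t(g(t))$ and the right side in the fiber over $s(g(t))$. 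The standard fix uses two lifts: choose $p(t)$ with $\alpha_1(p(t))=s(g(t))$, $p(0)=p$, and a second curve $p'(t)$ with $\alpha_1(p'(t))=t(g(t))$, $p'(0)=p$, and set $h(t):=\delta\bigl(p'(t),\,g(t)\cdot p(t)\bigr)$ via the division map $\delta$ of the principal bundle $\alpha_1$, so that $g(t)\cdot p(t)=p'(t)\cdot h(t)$ and $h(0)=\Phi_p(g)$. Then $\d s_{\G_2}(\dot h(0))=\d\alpha_2(\dot p(0))$ represents $\phi_p([v])$, while $\d t_{\G_2}(\dot h(0))=\d\alpha_2(\dot p'(0))$ represents $\phi_p(g\cdot[v])$, which yields $\phi_p(g\cdot[v])=\Phi_p(g)\cdot\phi_p([v])$. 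Equivalently, one may split the argument into the two identities $\phi_{g\cdot p}\circ(g\,\cdot)=\phi_p$ and $\phi_{p\cdot h}=h^{-1}\cdot\phi_p$ and combine them; either way the bookkeeping genuinely requires two lifts rather than one.
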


\begin{ex}\label{idmoreqex} Any Lie groupoid $\G\rightrightarrows M$ is Morita equivalent to itself via the canonical bi-module $(\G,t,s)$. The same goes for symplectic groupoids. Another simple example: any transitive Lie groupoid is Morita equivalent to a Lie group (viewed as groupoid over the one-point space); as a particular case of this, the pair groupoid of a manifold is Morita equivalent to the unit groupoid of the one-point space.
\end{ex}

\begin{ex}\label{crucexmoreq2} Morita equivalences can be restricted to opens. Indeed, let $(P,\alpha_1,\alpha_2)$ be a Morita equivalence between $\G_1\rightrightarrows M_1$ and $\G_2\rightrightarrows M_2$, and let $V_1$ be an open in $M_1$. Then $V_2:=\alpha_2(\alpha_1^{-1}(V_1))$ is an invariant open in $M_2$ and $(\alpha_1^{-1}(V_1),\alpha_1,\alpha_2)$ is a Morita equivalence between $\G_1\vert_{V_1}$ and $\G_2\vert_{V_2}$. In particular, given a $\G\rightrightarrows M$ Lie groupoid and an open $V\subset M$, letting $\widehat{V}:=s(t^{-1}(V))$ denote the saturation of $V$ (the smallest invariant open containing $V$), the first Morita equivalence in Example \ref{idmoreqex} restricts to one between $\G\vert_V$ and $\G\vert_{\widehat{V}}$. The same goes for symplectic Morita equivalences. 
\end{ex}

\begin{ex}\label{crucexmoreq} 
The following example plays a crucial role in our proof of Theorem \ref{righamthm}. Consider the set-up of Subsection \ref{locmodconstsec}. There is a canonical symplectic Morita equivalence:
\begin{center}
\begin{tikzpicture} \node (G1) at (0,0) {$\left(\G_\theta,\Omega_\theta\right)$};
\node (M1) at (0,-1.3) {$M_\theta$};
\node (S) at (3.3,0) {$(\Sigma_\theta,\omega_\theta)$};
\node (M2) at (6.6,-1.3) {$W_\theta$};
\node (G2) at (6.6,0) {$(G\ltimes \g^*,-\d \lambda_{\textrm{can}})\vert_{W_\theta}$};
 
\draw[->,transform canvas={xshift=-\shift}](G1) to node[midway,left] {}(M1);
\draw[->,transform canvas={xshift=\shift}](G1) to node[midway,right] {}(M1);
\draw[->,transform canvas={xshift=-\shift}](G2) to node[midway,left] {}(M2);
\draw[->,transform canvas={xshift=\shift}](G2) to node[midway,right] {}(M2);
\draw[->](S) to node[pos=0.25, below] {$\quad\quad\textrm{pr}_{M_\theta}$} (M1);
\draw[->] (2.25,-0.15) arc (315:30:0.25cm);
\draw[<-] (4.35,0.15) arc (145:-145:0.25cm);
\draw[->](S) to node[pos=0.25, below] {$\textrm{pr}_{\g^*}$\text{ }} (M2);
\end{tikzpicture}
\end{center}
between (\ref{locmodsymgp}) and the restriction of the cotangent groupoid to the $G$-invariant open $W_\theta:=\textrm{pr}_{\g^*}(\Sigma_\theta)$ around the origin in $\g^*$. This relates the central leaf $\L$ in $M_\theta$ to the origin in $\g^*$. 
\end{ex}

%To the interested reader, we recommend \cite{CrStr} for examples related to the linearization of Lie groupoids and \cite{Xu,Xu1} for examples in Poisson geometry and momentum map theory. Here we recall only the following examples, which will be important for our proof of Theorem \ref{righamthm}.

\subsubsection{Equivalence between categories of modules} Next, we recall how a Morita equivalence induces an equivalence between the categories of modules. Given a Lie groupoid $\G\rightrightarrows M$, by a \textbf{$\G$-module} we simply mean smooth map $J:S\to M$ equipped with a left action of $\G$. A morphism from a $\G$-module $J_1:S_1\to M$ to $J_2:S_2\to M$ is smooth map $\phi:S_1\to S_2$ that intertwines $J_1$ and $J_2$ and is $\G$-equivariant. This defines a category $\textsf{Mod}(\G)$. 
\begin{ex}\label{restrinvopmodex} Let $\G\rightrightarrows M$ be a Lie groupoid and let $W$ be an invariant open in $M$. Consider the full subcategory $\textsf{Mod}_W(\G)$ of $\textsf{Mod}(\G)$ consisting of those $\G$-modules $J:S\to M$ with the property that $J(S)\subset W$. There is a canonical equivalence of categories between $\textsf{Mod}_W(\G)$ and $\textsf{Mod}(\G\vert_W)$.
\end{ex}
\begin{ex}\label{eqmapmod} Let $G$ be a Lie group and $M$ a left $G$-space. Consider the category $\textsf{Hom}_G(-,M)$ of smooth $G$-equivariant maps from left $G$-spaces into $M$. A morphism between two such maps $J_1:S_1\to M$ and $J_2:S_2\to M$ is a smooth $G$-equivariant map $\phi:S_1\to S_2$ that intertwines $J_1$ and $J_2$. There is a canonical equivalence of categories between $\textsf{Hom}_G(-,M)$ and $\textsf{Mod}(G\ltimes M)$. 
\end{ex}
We now recall:
\begin{thm}\label{moreqasrep} A Morita equivalence $(P,\alpha_1,\alpha_2)$ between two Lie groupoids $\G_1$ and $\G_2$ induces an equivalence of categories between $\textsf{Mod}(\G_1)$ and $\textsf{Mod}(\G_2)$, explicitly given by (\ref{moreqfunct}).
\end{thm}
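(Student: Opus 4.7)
The plan is to construct the functor in (\ref{moreqfunct}) explicitly, then exhibit a quasi-inverse via the opposite bi-bundle, and finally produce the two natural isomorphisms witnessing the equivalence.

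First, I define a functor $F_P:\textsf{Mod}(\G_2)\to\textsf{Mod}(\G_1)$. Given a $\G_2$-module $J_2:S_2\to M_2$, form the fibered product $P\times_{M_2}S_2$ along $\alpha_2$ and $J_2$, equipped with the diagonal right $\G_2$-action $(p,s)\cdot g=(p\cdot g,g^{-1}\cdot s)$. Since $\alpha_1:P\to M_1$ is a principal $\G_2$-bundle, the right $\G_2$-action on $P$ is free and proper, hence so is the diagonal action. Therefore the quotient $F_P(S_2,J_2):=(P\times_{M_2}S_2)/\G_2$ is a smooth manifold. The projection $\alpha_1$ descends to a smooth map $F_P(J_2):F_P(S_2,J_2)\to M_1$, and the commuting left $\G_1$-action on the first factor descends to a smooth left $\G_1$-action along $F_P(J_2)$. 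On arrows, a morphism $\phi:S_2\to S_2'$ induces $\mathrm{id}_P\times\phi$ on fibered products, which descends to a $\G_1$-equivariant morphism $F_P(\phi)$; functoriality is immediate.

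Second, the opposite bi-bundle $P^{\mathrm{op}}$, obtained by swapping the two actions via groupoid inversion, is a Morita equivalence from $\G_2$ to $\G_1$, and it yields a functor $F_{P^{\mathrm{op}}}:\textsf{Mod}(\G_1)\to\textsf{Mod}(\G_2)$ by the same recipe. To compare the composite $F_{P^{\mathrm{op}}}\circ F_P$ with the identity, I use the following key observation: because $\alpha_1$ is a principal $\G_2$-bundle, for every pair $(p,q)\in P\times_{M_1}P$ there exists a unique division arrow $\tau(p,q)\in\G_2$ satisfying $p=q\cdot\tau(p,q)$, and the map $\tau:P\times_{M_1}P\to\G_2$ is smooth. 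The rule $[q,[p,s]]\mapsto\tau(p,q)\cdot s$ then descends through all the quotients to a well-defined smooth $\G_2$-equivariant map from $(F_{P^{\mathrm{op}}}\circ F_P)(S_2,J_2)$ to $S_2$, which one verifies is bijective with smooth inverse $s\mapsto[p,[p,s]]$ (independent of the choice of $p\in\alpha_2^{-1}(J_2(s))$, again by the principality of $\alpha_1$). This supplies a natural isomorphism $\varepsilon:F_{P^{\mathrm{op}}}\circ F_P\xrightarrow{\sim}\mathrm{id}_{\textsf{Mod}(\G_2)}$; symmetrically, the division arrow along $\alpha_2$ produces $\eta:F_P\circ F_{P^{\mathrm{op}}}\xrightarrow{\sim}\mathrm{id}_{\textsf{Mod}(\G_1)}$. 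Naturality in the module is a direct check on representatives.

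The main technical hurdle is the smoothness of the quotient manifolds $F_P(S_2,J_2)$ and of the descended maps and actions. This rests on the standard fact that the base of a free, proper Lie groupoid action is smooth with submersive quotient map, combined with local triviality of the principal bundles $\alpha_1$ and $\alpha_2$ (which is what makes $\tau$ smooth and the various descents automatic). Once these points are granted, all remaining verifications (equivariance, compatibility with $J$, naturality of $\eta$ and $\varepsilon$) reduce to routine diagram chases on representatives in the fibered products. The same construction works verbatim in the symplectic setting for the version in Theorem \ref{sympmoreqasrep}, replacing quotients by symplectic reductions, which is how the result will subsequently be applied.
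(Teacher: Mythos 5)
Your construction is essentially the paper's own: the functor is the quotient of the fibered product $P\times_{M}S$ by the free and proper diagonal action, with the inverse given by the same recipe applied to the opposite bi-bundle. You go further than the paper (which only says "an analogous construction from right to left gives an inverse") by exhibiting the natural isomorphisms explicitly via the division map $\tau$; the only nitpick is that the independence of $[p,[p,s]]$ from the choice of $p\in\alpha_2^{-1}(J_2(s))$ uses principality of $\alpha_2$ as a $\G_1$-bundle, not of $\alpha_1$.
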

\begin{proof} To any $\G_1$-module $J:S\to M_1$ we can associate a $\G_2$-module, as follows. The Lie groupoid $\G_1$ acts diagonally on the manifold $P\times_{M_1} S$ along the map $\alpha_1\circ \textrm{pr}_1$, in a free and proper way. Hence, the quotient:
\begin{equation*} P\ast_{\G_1} S:=\frac{(P\times_{M_1} S)}{\G_1}
\end{equation*} is smooth. Moreover, since the actions of $\G_1$ and $\G_2$ commute and $\alpha_1$ is $\G_2$-invariant, we have a left action of $\G_2$ along: 
\begin{equation}\label{asmod} P_*(J):P\ast_{\G_1} S\to M_2, \quad [p_P,p_S]\mapsto \alpha_2(p_P), 
\end{equation} given by:
\begin{equation*} g\cdot [p_P,p_S]=[p_P\cdot g^{-1},p_S].
\end{equation*} We call this the $\G_2$-module associated to the $\G_1$-module $J$. For any morphism of $\G_1$-modules there is a canonical morphism between the associated $\G_2$-modules. So, this defines a functor:
\begin{align}\label{moreqfunct} \textsf{Mod}(\G_1)&\to\textsf{Mod}(\G_2) \\
(J:S\to M_1)&\mapsto (P_*(J):P\ast_{\G_1}S\to M_2). \nonumber
\end{align} An analogous construction from right to left gives an inverse to this functor.
\end{proof}

Next, we recall the analogue for symplectic groupoids. Given a symplectic groupoid $(\G,\Omega)\rightrightarrows M$, by a \textbf{Hamiltonian $(\G,\Omega)$-space} (called \textbf{symplectic left $(\G,\Omega)$-module} in \cite{Xu}) we mean a smooth map $J:(S,\omega)\to M$ equipped with a Hamiltonian $(\G,\Omega)$-action. A morphism $\phi$ from $J_1:(S_1,\omega_1)\to M$ to $J_2:(S_2,\omega_2)\to M$ is a morphism of $\G$-modules satisfying $\phi^*\omega_2=\omega_1$. This defines a category $\textsf{Ham}(\G,\Omega)$. 
\begin{ex}\label{restrinvopsympmodex} Let $(\G,\Omega)\rightrightarrows M$ be a symplectic groupoid and let $W$ be an invariant open in $M$. The equivalence in Example \ref{restrinvopmodex} restricts to an equivalence between the category $\textsf{Ham}_W(\G,\Omega)$, consisting of Hamiltonian $(\G,\Omega)$-spaces with the property that $J(S)\subset W$, and $\textsf{Ham}((\G,\Omega)\vert_W)$.
\end{ex}
\begin{ex}\label{eqcatmodHamGex} Let $G$ be a Lie group and consider the category $\textsf{Ham}(G)$ of left Hamiltonian $G$-spaces. Here, a morphism between Hamiltonian $G$-spaces $J_1:(S_1,\omega_1)\to \g^*$ and $J_2:(S_2,\omega_2)\to \g^*$ is a $G$-equivariant map $\phi:S_1\to S_2$ that intertwines $J_1$ and $J_2$ and satisfies $\phi^*\omega_2=\omega_1$. The equivalence in Example \ref{eqmapmod} restricts to one between $\textsf{Ham}(G)$ and $\textsf{Ham}(G\ltimes \g^*,-\d\lambda_{\textrm{can}})$. This refines the statement in Example \ref{exhamGsp}. 
\end{ex} 
\begin{thm}[\cite{Xu}]\label{sympmoreqasrep} A symplectic Morita equivalence $(P,\omega_P,\alpha_1,\alpha_2)$ between two symplectic groupoids $(\G_1,\Omega_1)$ and $(\G_2,\Omega_2)$ induces an equivalence of categories between $\textsf{Ham}(\G_1,\Omega_1)$ and $\textsf{Ham}(\G_2,\Omega_2)$, explicitly given by (\ref{sympmoreqfunct}). 
\end{thm}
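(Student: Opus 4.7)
The plan is to build on the construction in Theorem \ref{moreqasrep} by upgrading it to the symplectic setting: equip $P\ast_{\G_1}S$ with a canonical reduced symplectic form and verify that the induced $\G_2$-action becomes Hamiltonian.

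First, given a Hamiltonian $(\G_1,\Omega_1)$-space $J:(S,\omega)\to M_1$, I would consider the fiber product $P\times_{M_1}S\subset P\times S$ equipped with the restriction $\widetilde{\omega}$ of $\omega_P\oplus \omega$. Using the multiplicativity identity (\ref{hammultcond}) applied to both Hamiltonian actions on $P$ and on $S$, a direct computation shows that $\widetilde{\omega}$ is invariant under the free and proper diagonal $\G_1$-action along $\alpha_1\circ\textrm{pr}_P=J\circ\textrm{pr}_S$, and also under the $\G_2$-action inherited from its action on $P$.

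The heart of the argument is to show that $\widetilde{\omega}$ has null distribution equal to the tangent distribution to the diagonal $\G_1$-orbits --- essentially a Marsden-Weinstein-type reduction for the diagonal $\G_1$-action on $P\times S$ at level zero, with $P\times_{M_1}S$ playing the role of the zero-level set. Concretely, using the momentum map condition (\ref{mommapcond}) and Proposition \ref{infmomact} on both factors, I would show that a tangent vector $(\hat{v},v)\in T_{(p,s)}(P\times_{M_1}S)$ pairs to zero under $\widetilde{\omega}$ with every other such vector iff $(\hat v,v)$ is the infinitesimal generator of a diagonal action by a single Lie algebroid element of $\G_1$ (with the sign dictated by the diagonal action); a dimension count using freeness closes the argument. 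By standard presymplectic reduction, $\widetilde{\omega}$ then descends to a symplectic form $\omega_{\text{red}}$ on $P\ast_{\G_1}S$ whose pullback to $P\times_{M_1}S$ is $\widetilde{\omega}$. The Hamiltonian condition (\ref{hammultcond}) for the descended $\G_2$-action along (\ref{asmod}) then reduces, via the quotient map, to the analogous identity on the cover $P\times_{M_1}S$, which is immediate from the fact that $(P,\omega_P)$ is itself a Hamiltonian $(\G_2,\Omega_2)$-space (the $S$-factor being $\G_2$-invariant).

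Functoriality is essentially formal: a morphism $\phi$ in $\textsf{Ham}(\G_1,\Omega_1)$ lifts to $\textrm{id}_P\times\phi$ on fiber products, preserves presymplectic forms since $\phi^*\omega_2=\omega_1$, and thus descends to a morphism in $\textsf{Ham}(\G_2,\Omega_2)$. An inverse functor is provided by the opposite symplectic bimodule $(\overline{P},-\omega_P,\alpha_2,\alpha_1)$, which is a symplectic Morita equivalence from $(\G_2,\Omega_2)$ to $(\G_1,\Omega_1)$; the natural isomorphisms witnessing the equivalence come from the underlying construction used in the proof of Theorem \ref{moreqasrep}, and the symplectic forms match because in the iterated fiber product $P\times_{M_2}\overline{P}\times_{M_1}S$ the $\omega_P$ and $-\omega_P$ contributions cancel after reduction by $\G_2$, leaving precisely $\omega$ on $S$. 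The main obstacle I anticipate is the precise identification of the null distribution of $\widetilde{\omega}$; once that is in hand, all remaining steps reduce to Theorem \ref{moreqasrep} together with routine applications of (\ref{hammultcond}) and (\ref{mommapcond}).
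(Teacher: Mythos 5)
Your overall strategy is the paper's: descend a product form from $P\times_{M_1}S$ to $P\ast_{\G_1}S$, check that the induced $\G_2$-action is Hamiltonian, and invert via the opposite bimodule. However, there is a concrete error at the central step: the form you put on $P\times_{M_1}S$ has the wrong sign on the $P$-factor, and with your choice the construction breaks down. Since both the $\G_1$-action on $P$ (along $\alpha_1$) and the $\G_1$-action on $S$ (along $J$) are \emph{left} Hamiltonian actions, each satisfies (\ref{hammultcond}) with the same sign, so for the diagonal action groupoid $\G_1\ltimes(P\times_{M_1}S)$ one computes
\begin{equation*}
m^*(\omega_P\oplus\omega)-\textrm{pr}^*(\omega_P\oplus\omega)=2\,(\textrm{pr}_{\G_1})^*\Omega_1\neq 0,
\end{equation*}
whereas the same computation with $(-\omega_P)\oplus\omega$ gives $0$. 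Equivalently, at the infinitesimal level the momentum map condition (\ref{mommapcond}) on each factor gives, for $\alpha\in T^*M_1$ and the diagonal generator $(a_P(\alpha),a_S(\alpha))$,
\begin{equation*}
\iota_{(a_P(\alpha),a_S(\alpha))}(\omega_P\oplus\omega)\big|_{P\times_{M_1}S}=\textrm{pr}_P^*\alpha_1^*\alpha+\textrm{pr}_S^*J^*\alpha=2\,\textrm{pr}_P^*\alpha_1^*\alpha,
\end{equation*}
which does not vanish; so the diagonal orbit directions are \emph{not} in the kernel of your $\widetilde{\omega}$, the "null distribution equals diagonal orbit tangents" claim is false, and the form does not descend to $P\ast_{\G_1}S$. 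With $(-\omega_P)\oplus\omega$ the two terms cancel on the fiber product (where $\alpha_1\circ\textrm{pr}_P=J\circ\textrm{pr}_S$), the form is basic for the diagonal action, and the rest of your argument — identification of the kernel with the orbit directions by a rank count using freeness and Proposition \ref{infmomact}, descent, and verification of (\ref{hammultcond}) for the induced $\G_2$-action along (\ref{asmod}) — goes through as you describe. This is exactly the form $\omega_{PS}$ the paper uses.

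Once the sign is corrected you should also re-audit the bookkeeping in your last paragraph: the inverse is given by the reverse bimodule, and in the iterated quotient of $\overline{P}\times_{M_2}P\times_{M_1}S$ the cancellation of the $P$-contributions happens because the two copies of $\omega_P$ enter with opposite signs \emph{after} the corrected convention is applied consistently to both equivalences; with your original signs the claimed cancellation would not leave $\omega$ on $S$.
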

\begin{proof} Let $(P,\omega_P,\alpha_1,\alpha_2)$ be a symplectic Morita equivalence between symplectic groupoids $(\G_1,\Omega_1)$ and $(\G_2,\Omega_2)$ and let $J:(S,\omega_S)\to M_1$ be a Hamiltonian $(\G_1,\Omega_1)$-space. The symplectic form $(-\omega_P)\oplus \omega_S$ descends to a symplectic form $\omega_{PS}$ on $P\ast_{\G_1}S$ and the $(\G_2,\Omega_2)$-action along the associated module $P_*(J)$, as in (\ref{asmod}), becomes Hamiltonian. As before, this extends to a functor:
\begin{align}\label{sympmoreqfunct} \textsf{Ham}(\G_1,\Omega_1)&\to\textsf{Ham}(\G_2,\Omega_2) \\
(J:(S,\omega_S)\to M_1)&\mapsto (P_*(J):(P\ast_{\G_1}S,\omega_{PS})\to M_2) \nonumber
\end{align} and an analogous construction from right to left gives an inverse functor.
\end{proof}
%\begin{cor}\label{crucexeqmodu} Let $(\G,\Omega)\rightrightarrows M$ be a symplectic groupoid, let $G$ be a Lie group and let $W$ be a $G$-invariant open in $\g^*$. A symplectic Morita equivalence between $(\G,\Omega)$ and $(G\ltimes \g^*,-\d\lambda_\textrm{can})\vert_W$ induces an equivalence of categories between $\textrm{Mod}(\G,\Omega)$ and the category of Hamiltonian $G$-spaces $J:(S,\omega)\to \g^*$ with the property that $J(S)\subset W$. 
%\end{cor}
%\begin{proof} 
%This follows from Theorem \ref{sympmoreqasrep}, combined with Examples \ref{restrinvopsympmodex} and \ref{eqcatmodHamGex}. 
%\end{proof}
\subsubsection{Proof of rigidity}
The proof of Theorem \ref{righamthm} hinges on the following two known results. The first is a rigidity theorem for symplectic groupoids.
\begin{thm}[\cite{CrFeTo1}]\label{normformsymgp} Suppose that we are given two realizations of the same zeroth-order symplectic groupoid data with leaf $\L$. Fix $x\in \L$. If both symplectic groupoids are proper at $x$ (in the sense of Definition \ref{propatxdefi}), then the realizations are neighbourhood-equivalent.
\end{thm}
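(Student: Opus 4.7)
The plan is to reduce Theorem \ref{normformsymgp} to the statement that any realization of the given zeroth-order symplectic groupoid data is neighbourhood-equivalent to the local model $(\G_\theta,\Omega_\theta)$ constructed in Subsection \ref{locmodconstsec} from this data (and an auxiliary choice of connection $\theta$ on the principal $\G_x$-bundle $t\colon s^{-1}(x)\to \L$). Since neighbourhood-equivalence is transitive and the local models associated with different choices of $\theta$ are themselves neighbourhood-equivalent (by a Moser argument on the contractible affine space of connections, applied over a neighbourhood of $\L$), proving this single comparison suffices.

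The first step dispenses with the underlying Lie groupoid structure via the normal form theorem for proper Lie groupoids of \cite{We4,Zu,CrStr,FeHo}. Applied to the realization $\G$, which is proper at $x \in \L$, this yields a Lie groupoid isomorphism $\Phi\colon \G|_V \xrightarrow{\sim} \G_\theta|_{V'}$ between saturated open neighbourhoods of $\L$, that restricts to the canonical identification on the leaf restrictions $\G_\L \cong (\G_\theta)_\L$ (both being the gauge groupoid of $s^{-1}(x)\to \L$). Pulling back by $\Phi$, one is reduced to the following problem: on the Lie groupoid $\G_\theta|_{V'}$ we have two multiplicative closed symplectic forms, namely $\Omega_\theta$ and $\Omega' := (\Phi^{-1})^*\Omega$, which by the definition of a zeroth-order symplectic groupoid data agree upon restriction to $(\G_\theta)_\L$; we must show they are related by an isomorphism of symplectic groupoids defined near $\L$ that is the identity on $(\G_\theta)_\L$.

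The main obstacle is this multiplicative Moser argument. I would form the family $\Omega_t := (1-t)\Omega_\theta + t\Omega'$ of closed multiplicative $2$-forms, each of which coincides with $\Omega_\L$ on $(\G_\theta)_\L$ and is therefore non-degenerate on some neighbourhood of the leaf restriction; by compactness of the isotropy $\G_x$ and properness, this neighbourhood may be chosen $\G_\theta$-invariant and uniform in $t$. The key technical input is a relative Poincaré-type lemma in the multiplicative de Rham complex: one needs a smooth family of multiplicative $1$-forms $\alpha_t$ on $\G_\theta|_{V'}$, vanishing on $(\G_\theta)_\L$, with $\d\alpha_t = \Omega_\theta - \Omega'$. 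Such an $\alpha_t$ can be produced by choosing a multiplicative tubular neighbourhood of $(\G_\theta)_\L$ (available in the proper case from the same normal form results), applying a fiberwise cone homotopy operator along the linear retraction onto the zero section to the difference $\Omega_\theta-\Omega'$ (which is exact relative to $(\G_\theta)_\L$ because it vanishes there), and then averaging over the compact isotropy to restore multiplicativity. The time-dependent multiplicative vector field $X_t$ defined by $\iota_{X_t}\Omega_t = -\alpha_t$ vanishes on $(\G_\theta)_\L$; its time-$1$ flow is defined on a saturated neighbourhood of $\L$, is by Lie groupoid automorphisms, fixes $(\G_\theta)_\L$ pointwise, and pulls $\Omega'$ back to $\Omega_\theta$. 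Composing this flow with $\Phi$ gives the sought neighbourhood-equivalence.
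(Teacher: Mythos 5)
First, a remark on scope: the paper does not actually prove Theorem \ref{normformsymgp} --- it is imported verbatim from \cite[Thm 8.2]{CrFeTo1}, with only the observation (in the remark that follows it) that ``proper at $x$'' reduces to ``proper'' after shrinking to a suitable invariant open. So there is no in-paper proof to compare against, and your proposal has to be judged as a proof of the cited result. Your skeleton --- linearize the underlying Lie groupoid via the linearization theorem for proper groupoids, then run a Moser deformation between the two multiplicative symplectic forms --- is indeed the standard route. But the two steps that carry all of the content are gapped.

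The more serious gap is the nondegeneracy of the interpolation $\Omega_t=(1-t)\Omega_\theta+t\,\Omega'$. The zeroth-order data only fixes the pullback $i^*\Omega$ to the submanifold $\G_\L$, not the restriction of $\Omega$ as a bilinear form on $T\G\vert_{\G_\L}$; the subspace $T_g\G_\L=\ker(\d s_g)+\ker(\d t_g)$ is merely coisotropic in $(T_g\G,\Omega_g)$, and two symplectic forms that agree on a coisotropic subspace (even with the same characteristic directions) can have a degenerate convex combination. For instance, on $\R^4$ with $V=\textrm{span}(e_1,e_2,e_3)$, the forms $e_1^*\wedge e_2^*+e_3^*\wedge e_4^*$ and $e_1^*\wedge e_2^*-e_3^*\wedge e_4^*$ agree on $V$, are both nondegenerate with $V^\perp=\textrm{span}(e_3)$, yet their average is degenerate. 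So ``is therefore non-degenerate on some neighbourhood of the leaf restriction'' does not follow from what you have assumed; one first needs to show that the full restriction (indeed the first-order jet) of a multiplicative symplectic form along $\G_\L$ is determined by the zeroth-order data, which uses multiplicativity in an essential way and is part of what \cite{CrFeTo1} actually establishes. The second gap is the ``relative multiplicative Poincar\'e lemma'': a fiberwise cone homotopy destroys multiplicativity, and averaging over the compact isotropy group $\G_x$ alone does not restore it --- multiplicativity is a condition on $\G^{(2)}$ involving the multiplication map, and the correct mechanism is averaging over the $s$-fibres against a normalized proper Haar system, i.e.\ the vanishing of the relevant Bott--Shulman/differentiable cohomology of a proper groupoid. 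That averaging argument is precisely the hard input of the theorem, so as written the proposal asserts, rather than proves, the two facts that make the statement true.
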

\begin{rem} The assumption appearing in \cite[Thm 8.2]{CrFeTo1} is that $\G$ is proper, which is stronger than properness at $x$. However, if $\G$ is proper at $x$, then there is an open  $U$ around the leaf $\L$ through $x$ such that $\G\vert_U$ is proper (see e.g. \cite[Remark 5.1.4]{Hoy}). 
\end{rem}

The second result that we will need is the following rigidity theorem for Hamiltonian $G$-spaces.  
\begin{thm}[\cite{Ma,GS4}]\label{mgsrigzerofib} Let $G$ be a compact Lie group and let $J_1:(S_1,\omega_1)\to \g^*$ and $J_2:(S_2,\omega_2)\to \g^*$ be Hamiltonian $G$-spaces. Suppose that $p_1\in J_1^{-1}(0)$ and $p_2\in J_2^{-1}(0)$ are such that $G_{p_1}=G_{p_2}$. Then there are $G$-invariant neighbourhoods $U_1$ of $p_1$ and $U_2$ of $p_2$, together with an isomorphism of Hamiltonian $G$-spaces that sends $p_1$ to $p_2$: 
\begin{center}
\begin{tikzcd} (U_1,\omega_1,p_1)\arrow[rr,"\sim"]\arrow[dr,"J_1"'] & & (U_2,\omega_2,p_2)\arrow[dl, "J_2"] \\
& \g^* &
\end{tikzcd}
\end{center} if and only if there is an equivariant symplectic linear isomorphism: 
\begin{equation*} (\S\No_{p_1},\omega_{p_1})\cong (\S\No_{p_2},\omega_{p_2}).
\end{equation*} 
\end{thm}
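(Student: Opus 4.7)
The ``only if'' direction is trivial: any local isomorphism of Hamiltonian $G$-spaces sending $p_1$ to $p_2$ differentiates to a symplectic $G_{p_1}$-equivariant isomorphism of symplectic normal spaces, and $G_{p_1}=G_{p_2}$ identifies the acting groups. I will focus on the ``if'' direction.

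Let $H:=G_{p_1}=G_{p_2}$. The key observation is that $\O_i:=G\cdot p_i\subset S_i$ is an isotropic $G$-invariant submanifold: equivariance of $J_i$ together with $J_i(p_i)=0$ forces $\O_i\subset J_i^{-1}(0)$, hence for any $q\in\O_i$ we have $T_q\O_i\subset\ker(\d J_i)_q=T_q\O_i^{\omega_i}$ by Proposition~\ref{infmomact}(a). Moreover, both $\O_i$ are $G$-equivariantly diffeomorphic to $G/H$ via $gH\mapsto g\cdot p_i$. The symplectic normal bundle $\mathcal{SN}_{\O_i}:=T\O_i^{\omega_i}/T\O_i$ is a $G$-equivariant symplectic vector bundle over $\O_i$, canonically isomorphic to the associated bundle $G\times_H\mathcal{SN}_{p_i}$ with its induced symplectic structure. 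Hence the hypothesized symplectic $H$-equivariant isomorphism $(\mathcal{SN}_{p_1},\omega_{p_1})\cong(\mathcal{SN}_{p_2},\omega_{p_2})$ lifts to a $G$-equivariant isomorphism of symplectic vector bundles $\mathcal{SN}_{\O_1}\cong\mathcal{SN}_{\O_2}$ covering the diffeomorphism $\O_1\cong\O_2$ that sends $p_1$ to $p_2$.

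The main step is then to invoke the equivariant isotropic (Weinstein) neighborhood theorem: since $G$ is compact, the $G$-equivariant germ of a symplectic manifold around a $G$-invariant isotropic submanifold is determined, up to $G$-equivariant symplectomorphism, by that submanifold together with its symplectic normal bundle. Applied to $\O_i\subset(S_i,\omega_i)$, this yields a $G$-equivariant symplectomorphism $\phi:(U_1,\omega_1)\xrightarrow{\sim}(U_2,\omega_2)$ from a $G$-invariant neighborhood of $\O_1$ onto one of $\O_2$, with $\phi(p_1)=p_2$. The proof of this neighborhood theorem is the technical heart of the argument and proceeds along the standard lines of Marle and Guillemin-Sternberg: construct a common local model using the symplectic normal bundle and an $H$-invariant complement (possible since $H$ is compact), and then interpolate the two symplectic structures via an equivariant Moser argument, after averaging with respect to a Haar measure on $G$. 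I would take this as the main black box (it is the classical equivariant coisotropic/isotropic embedding theorem).

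It remains to verify that $\phi$ intertwines the momentum maps. Both $J_2$ and $J_1\circ\phi^{-1}$ are $G$-equivariant momentum maps for the same Hamiltonian $G$-action on $(U_2,\omega_2)$, so for every $\xi\in\g$ with fundamental vector field $X_\xi$, the identity $\iota_{X_\xi}\omega_2=\d\langle J_2,\xi\rangle=\d\langle J_1\circ\phi^{-1},\xi\rangle$ forces $\d\langle J_2-J_1\circ\phi^{-1},\xi\rangle=0$ for all $\xi$. Hence $J_2-J_1\circ\phi^{-1}$ is locally constant; after shrinking $U_2$ to be connected and $G$-invariant, it is constant, and evaluation at $p_2$ gives $J_2(p_2)-J_1(p_1)=0$. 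Therefore $J_1\circ\phi^{-1}=J_2$ on $U_2$, and $\phi$ is the desired isomorphism of Hamiltonian $G$-spaces. The only real obstacle is the equivariant isotropic neighborhood theorem itself; the rest reduces to tracking equivariance and the standard uniqueness-of-momentum-map argument.
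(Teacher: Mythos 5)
Your argument is correct, and it is essentially the classical proof: the paper does not prove Theorem~\ref{mgsrigzerofib} but cites it to Marle and Guillemin--Sternberg, and your reconstruction — observe that $J_i(p_i)=0$ forces the orbits $\O_i\cong G/H$ to be isotropic with symplectic normal bundle $G\times_H\S\No_{p_i}$, then invoke the equivariant isotropic (Weinstein) embedding theorem and fix up the momentum maps by the uniqueness-up-to-a-constant argument — is exactly the route taken in those references. The only cosmetic slip is the appeal to shrinking $U_2$ to a \emph{connected} invariant neighbourhood, which may be impossible when $G/H$ is disconnected; this is harmless, since the locally constant difference $J_2-J_1\circ\phi^{-1}$ satisfies $c(g\cdot q)=\textrm{Ad}_g^*c(q)$ and hence vanishes on all of $\O_2$, which meets every component of a small invariant tubular neighbourhood.
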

The main step in proving Theorem \ref{righamthm} is to prove the following generalization of Theorem \ref{mgsrigzerofib}. 
\begin{thm}\label{loceqthmhamact} Let $(\G,\Omega)\rightrightarrows M$ be a symplectic groupoid that is proper at $x\in M$. Suppose that we are given two Hamiltonian $(\G,\Omega)$-spaces $J_1:(S_1,\omega_1)\to M$ and $J_2:(S_2,\omega_2)\to M$. Let $p_1\in S_1$ and $p_2\in S_2$ be such that $J_1(p_1)=J_2(p_2)=x$ and $\G_{p_1}=\G_{p_2}$. Then there are $\G$-invariant open neighbourhoods $U_1$ of $p_1$ and $U_2$ of $p_2$, together with an isomorphism of Hamiltonian $(\G,\Omega)$-spaces that sends $p_1$ to $p_2$:
\begin{center}
\begin{tikzcd} (U_1,\omega_1,p_1) \arrow[rr,"\sim"] \arrow[dr, "J_1"'] & & (U_2,\omega_2,p_2) \arrow[dl,"J_2"] \\
 & M & 
\end{tikzcd}
\end{center} if and only if there is an equivariant symplectic linear isomorphism:
\begin{equation*} (\S\No_{p_1},\omega_{p_1})\cong (\S\No_{p_2},\omega_{p_2}).
\end{equation*} 
\end{thm}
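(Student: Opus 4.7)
The forward direction $(\Rightarrow)$ is immediate: if $\Phi: U_1 \to U_2$ is an isomorphism of Hamiltonian $(\G,\Omega)$-spaces with $\Phi(p_1) = p_2$, then $\d\Phi_{p_1}$ is a $\G_{p_1}$-equivariant linear symplectomorphism $T_{p_1}S_1 \to T_{p_2}S_2$ intertwining $\d (J_1)_{p_1}$ and $\d (J_2)_{p_2}$, and hence by Proposition \ref{infmomact}$a$ it sends $T_{p_1}\O_1^{\omega_1}$ to $T_{p_2}\O_2^{\omega_2}$ and the orbit tangent spaces to each other, so it descends to an isomorphism $(\S\No_{p_1}, \omega_{p_1}) \cong (\S\No_{p_2}, \omega_{p_2})$ of symplectic $\G_{p_1}$-representations.

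For the converse, the plan is to combine three tools: the rigidity theorem for symplectic groupoids (Theorem \ref{normformsymgp}), the symplectic Morita equivalence of Example \ref{crucexmoreq}, and the classical MGS rigidity result (Theorem \ref{mgsrigzerofib}). Since $\G$ is proper at $x$, the isotropy group $G := \G_x$ is compact. Let $\L$ denote the leaf through $x$. After replacing $M$ by a $\G$-invariant open neighborhood of $\L$ (which is harmless by Example \ref{restrinvopsympmodex}), Theorem \ref{normformsymgp} allows us to assume that $(\G, \Omega) = (\G_\theta, \Omega_\theta)$ is the local model symplectic groupoid constructed in Subsection \ref{locmodconstsec} from some connection $\theta$.

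Example \ref{crucexmoreq} then provides a symplectic Morita equivalence between $(\G_\theta, \Omega_\theta)$ and the restriction $(G \ltimes \g^*, -\d\lambda_{\textrm{can}})\vert_{W_\theta}$ of the cotangent groupoid to the $G$-invariant open $W_\theta \subset \g^*$ around the origin, relating the central leaf $\L$ in $M_\theta$ to $0 \in \g^*$. By Theorem \ref{sympmoreqasrep}, together with Examples \ref{restrinvopsympmodex} and \ref{eqcatmodHamGex}, this induces an equivalence of categories between $\textsf{Ham}(\G_\theta,\Omega_\theta)$ and the full subcategory of $\textsf{Ham}(G)$ consisting of Hamiltonian $G$-spaces whose momentum map lands in $W_\theta$. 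Under this equivalence, each $J_i: (S_i, \omega_i) \to M_\theta$ corresponds to a Hamiltonian $G$-space $\tilde J_i: (\tilde S_i, \tilde\omega_i) \to \g^*$ with image in $W_\theta$, and the orbit of $p_i$ corresponds to a $G$-orbit through some $\tilde p_i \in \tilde J_i^{-1}(0)$. A straightforward extension of Proposition \ref{transgeomgpoid}$b$ to the Hamiltonian setting then identifies $\G_{p_i}$ with $G_{\tilde p_i}$ and the symplectic normal representations at $p_i$ with those at $\tilde p_i$; by using the identification $\G_{p_1} = \G_{p_2}$ to choose compatible lifts, we may arrange $G_{\tilde p_1} = G_{\tilde p_2}$ as subgroups of $G$, and the hypothesis yields $(\S\No_{\tilde p_1}, \omega_{\tilde p_1}) \cong (\S\No_{\tilde p_2}, \omega_{\tilde p_2})$ as symplectic $G_{\tilde p_1}$-representations.

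Theorem \ref{mgsrigzerofib} now produces $G$-invariant neighborhoods $\tilde U_i$ of $\tilde p_i$ together with an isomorphism of Hamiltonian $G$-spaces $\tilde \Phi: \tilde U_1 \to \tilde U_2$ sending $\tilde p_1$ to $\tilde p_2$. Transporting $\tilde \Phi$ through the inverse functor of (\ref{sympmoreqfunct})---which carries $G$-invariant opens to $\G_\theta$-invariant opens and equivariant symplectomorphisms to morphisms of Hamiltonian $(\G_\theta,\Omega_\theta)$-spaces---produces the desired isomorphism between $\G_\theta$-invariant neighborhoods of $p_1$ and $p_2$. I expect the main technical obstacle to be verifying that the isomorphism of isotropy groups and symplectic normal representations induced by the Morita equivalence functor is compatible with the identification $\G_{p_1} = \G_{p_2}$ from the hypothesis, so that the lifts $\tilde p_i$ can genuinely be chosen so as to make $G_{\tilde p_1}$ and $G_{\tilde p_2}$ equal (not merely conjugate) in $G$ and so that the transported isomorphism actually sends $p_1$ to $p_2$; this amounts to a careful unpacking of the construction (\ref{sympmoreqfunct}) at the level of tangent spaces along $P\times_{M_\theta} S_i$.
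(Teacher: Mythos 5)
Your proposal is correct and follows essentially the same route as the paper: reduce to the local model via Theorem \ref{normformsymgp}, transport both Hamiltonian spaces across the symplectic Morita equivalence of Example \ref{crucexmoreq} using the equivalence of categories of Theorem \ref{sympmoreqasrep}, apply Theorem \ref{mgsrigzerofib}, and pull back. The ``main technical obstacle'' you flag at the end is exactly what the paper isolates as Lemma \ref{assrepprop} (proved via Proposition \ref{transgeomham} applied to Example \ref{asrepmoreq}), and your device of choosing a single lift to make the two isotropy groups literally equal is the paper's choice of one point $p\in P$ over $x$ used for both $[p,p_1]$ and $[p,p_2]$.
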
 
To prove this we further use the lemma below.
\begin{lemma}\label{assrepprop} Let $(P,\omega_P,\alpha_1,\alpha_2)$ be a symplectic Morita equivalence between $(\G_1,\Omega_1)$ and $(\G_2,\Omega_2)$. Further, let $J:(S,\omega_S)\to M$ be a Hamiltonian $(\G_1,\Omega_1)$-space, let $p_S\in S$ and fix a $p_P\in P$ such that $\alpha_1(p_P)=J(p_S)$. Then the isomorphism (\ref{isotgpisomoreq}) restricts to an isomorphism:
\begin{equation*} \Phi_{p_P}:\G_{p_S}\xrightarrow{\sim} \G_{[p_P,p_S]}, 
\end{equation*}
and there is a compatible symplectic linear isomorphism:
\begin{equation*} \left(\S\No_{p_S},(\omega_S)_{p_S}\right)\cong\left(\S\No_{[p_P,p_S]},(\omega_{PS})_{[p_P,p_S]}\right)
\end{equation*} between the symplectic normal representation at $p_S$ of the Hamiltonian $(\G_1,\Omega_1)$-space $J$ and the symplectic normal representation at $[p_P,p_S]$ of the associated Hamiltonian $(\G_2,\Omega_2)$-space $P_*(J)$ of Theorem \ref{sympmoreqasrep}. 
\end{lemma}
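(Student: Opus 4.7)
The plan is to verify part 1 by direct computation, and to construct the symplectic isomorphism in part 2 via a local section of $\alpha_1$. For part 1, the defining relation $g\cdot p_P = p_P\cdot\Phi_{p_P}(g)$ yields $p_P\cdot\Phi_{p_P}(g)^{-1} = g^{-1}\cdot p_P$, and hence
\[ \Phi_{p_P}(g)\cdot[p_P,p_S] = [p_P\cdot\Phi_{p_P}(g)^{-1},p_S] = [g^{-1}\cdot p_P,p_S] = [p_P,g\cdot p_S], \]
the last equality using the defining diagonal $\G_1$-equivalence relation on $P\times_{M_1}S$. Thus $\Phi_{p_P}$ sends $\G_{p_S}$ into $\G_{[p_P,p_S]}$; the converse inclusion follows from the freeness of this diagonal $\G_1$-action, itself a consequence of the principality of $\alpha_2$ as a $\G_1$-bundle.

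For part 2, fix a smooth local section $\sigma:V\to P$ of $\alpha_1$ near $x_1:=\alpha_1(p_P)=J(p_S)$ with $\sigma(x_1)=p_P$, and consider the smooth map
\[ \tilde\sigma_S:J^{-1}(V)\to P\ast_{\G_1}S,\quad p\mapsto[\sigma(J(p)),p]. \]
Unravelling $\omega_{PS}$ as the reduction of $(-\omega_P)\oplus\omega_S$ gives $(\tilde\sigma_S)^*\omega_{PS}=\omega_S-J^*\sigma^*\omega_P$, and the differential $d\tilde\sigma_S\vert_{p_S}$ will induce the desired isomorphism. It enjoys three key properties: (i) since $P_*(J)\circ\tilde\sigma_S=\alpha_2\circ\sigma\circ J$, it sends $\ker(dJ_{p_S})$ into $\ker(d(P_*(J))_{[p_P,p_S]})$; (ii) on the fiber $J^{-1}(x_1)$ one has $\tilde\sigma_S(g\cdot p)=\Phi_{p_P}(g)\cdot\tilde\sigma_S(p)$ for all $g\in(\G_1)_{x_1}$, so $d\tilde\sigma_S\vert_{\ker(dJ_{p_S})}$ is $\Phi_{p_P}$-equivariant; (iii) for $\alpha\in T^*_{x_1}M_1$, the integral curve $\tau\mapsto g_\tau\cdot p_S$ of $a_J(\alpha)$ is sent into the $\G_2$-orbit of $[p_P,p_S]$, using the identification $\alpha_1^{-1}(x_1)=\G_2\cdot p_P$ to rewrite $g_\tau^{-1}\cdot\sigma(t(g_\tau))$ as $p_P\cdot h_\tau^{-1}$ with $h_\tau\in\G_2$; thus $d\tilde\sigma_S$ sends $T_{p_S}\O_{p_S}$ into $T_{[p_P,p_S]}\O_{[p_P,p_S]}$.

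Invoking Proposition \ref{infmomact}(a) to identify $T\O^\omega$ with $\ker(dJ)$ on both sides, properties (i)--(iii) imply that $d\tilde\sigma_S\vert_{p_S}$ descends to a $\Phi_{p_P}$-equivariant linear map $\S\No_{p_S}\to\S\No_{[p_P,p_S]}$; it is symplectic because $J^*\sigma^*\omega_P$ vanishes on $\ker(dJ_{p_S})$. Injectivity of $d\tilde\sigma_S\vert_{\ker(dJ_{p_S})}$ comes from freeness of the $\G_1$-action on $P$: if $dq_{(p_P,p_S)}(0,v)=0$, then $(0,v)$ is tangent to a $\G_1$-orbit and $\xi\cdot p_P=0$ forces $\xi=0$. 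Bijectivity of the descended map then follows from a dimension count, using $\dim(P\ast_{\G_1}S)=\dim S+\dim M_2-\dim M_1$, the isomorphism $\G_{p_S}\cong\G_{[p_P,p_S]}$ from part 1, and the equality of codimensions $\dim M_1-\dim\L_{x_1}=\dim M_2-\dim\L_{x_2}$ at $P$-related leaves guaranteed by Proposition \ref{transgeomgpoid}: both symplectic normal spaces then have the same dimension. The main technical step is (iii), since $\tilde\sigma_S$ is not equivariant under the full $(\G_1)_{x_1}$-action away from $J^{-1}(x_1)$; this orbit-to-orbit property is what is ultimately needed for the descended map to be injective on the quotient by orbit directions.
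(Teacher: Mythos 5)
Your proof is essentially correct, but it takes a genuinely different route from the paper's. The paper deliberately avoids the direct verification: it first packages the data of Example \ref{asrepmoreq} as a Hamiltonian Morita equivalence between the $(\G_1,\Omega_1)$-action on $S$ and the associated $(\G_2,\Omega_2)$-action on $P\ast_{\G_1}S$ (with bibundle $Q=P\times_{M_1}S$), and then quotes the general invariance statement, Proposition \ref{transgeomham}, whose proof produces the isomorphism $\S\No_{[p_P,p_S]}\to\S\No_{p_S}$, $[v]\mapsto[\d\beta_1(\hat v)]$, by choosing lifts $\hat v\in T_qQ$ with $\d j(\hat v)=0$. Your map $d\tilde\sigma_S(v)=\d q_{(p_P,p_S)}(0,v)$ is precisely the inverse of that construction (a lift with vanishing $P$-component), so the underlying linear algebra is the same; what differs is the packaging. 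The paper's route buys reusability \textemdash{} Proposition \ref{transgeomham} is invoked again elsewhere, and the identity $j^*\omega_P=\beta_1^*\omega_1-\beta_2^*\omega_2$ does the bookkeeping that your identity $(\tilde\sigma_S)^*\omega_{PS}=\omega_S-J^*\sigma^*\omega_P$ does by hand. Your route buys self-containedness and an explicit formula via the local section $\sigma$.

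Two details deserve tightening. First, in step (ii) you deduce $\Phi_{p_P}$-equivariance of $d\tilde\sigma_S$ on $\ker(\d J_{p_S})$ by differentiating the identity $\tilde\sigma_S(g\cdot p)=\Phi_{p_P}(g)\cdot\tilde\sigma_S(p)$ along $J^{-1}(x_1)$; but $J$ need not be submersive, so $J^{-1}(x_1)$ need not be a manifold with tangent space $\ker(\d J_{p_S})$, and the action of $g$ on $[v]$ is defined via $\d m_{(g,p_S)}(0,v)$ using curves in $\G_1\times_{M_1}S$ that leave the fiber. The fix is the same mechanism you already use in (iii): writing $\tilde\sigma_S(g_\tau\cdot\gamma_\tau)=[g_\tau^{-1}\cdot\sigma(t(g_\tau)),\gamma_\tau]$, the discrepancy with $\Phi_{p_P}(g)\cdot\tilde\sigma_S(\gamma_\tau)$ moves within a single $\alpha_1$-fiber of $P$, hence within a $\G_2$-orbit downstairs, and dies in $\S\No_{[p_P,p_S]}$. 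Second, injectivity of $d\tilde\sigma_S\vert_{\ker(\d J_{p_S})}$ is not the statement you need: the relevant map is the descended one on $\S\No_{p_S}$, and its injectivity does not follow from injectivity upstairs. It does, however, follow for free from the fact that it pulls back the nondegenerate form $(\omega_{PS})_{[p_P,p_S]}$ to the nondegenerate form $(\omega_S)_{p_S}$; combined with your (correct) dimension count this gives bijectivity, so the argument closes.
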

Although this lemma can be verified directly, we postpone its proof to Subsection \ref{bashammorinv}, where we give a more conceptual explanation. With this at hand, we can prove the desired theorems.
\begin{proof}[Proof of Theorem \ref{loceqthmhamact}] The forward implication is straightforward. Let us prove the backward implication. Throughout, let $G:=\G_x$ denote the isotropy group of $\G$ at $x$. To begin with observe that, since $\G$ is proper at $x$, there is an invariant open neighbourhood $V$ of the leaf $\L$ through $x$ and a $G$-invariant open neighbourhood $W$ of the origin in $\g^*$, together with a symplectic Morita equivalence:
\begin{center}
\begin{tikzpicture} \node (G1) at (0,0) {$(\G,\Omega)\vert_V$};
\node (M1) at (0,-1.3) {$V$};
\node (S) at (3.3,0) {$(P,\omega_P)$};
\node (M2) at (6.6,-1.3) {$W$};
\node (G2) at (6.6,0) {$(G\ltimes \g^*,-\d \lambda_{\textrm{can}})\vert_W$};
 
\draw[->,transform canvas={xshift=-\shift}](G1) to node[midway,left] {}(M1);
\draw[->,transform canvas={xshift=\shift}](G1) to node[midway,right] {}(M1);
\draw[->,transform canvas={xshift=-\shift}](G2) to node[midway,left] {}(M2);
\draw[->,transform canvas={xshift=\shift}](G2) to node[midway,right] {}(M2);
\draw[->](S) to node[pos=0.25, below] {$\quad\quad\alpha_1$} (M1);
\draw[->] (2.25,-0.15) arc (315:30:0.25cm);
\draw[<-] (4.35,0.15) arc (145:-145:0.25cm);
\draw[->](S) to node[pos=0.25, below] {$\alpha_2$\text{ }} (M2);
\end{tikzpicture}
\end{center}
that relates the leaf $\L$ to the origin in $\g^*$. Indeed, this follows by first applying Theorem \ref{normformsymgp} to:
\begin{itemize}\item the zeroth-order data of $(\G,\Omega)$ at $\L$, 
\item the canonical realization $(\G,\Omega)$,
\item the realization (\ref{locmodsymgp}),
\end{itemize} and then combining the neighbourhood-equivalence of symplectic groupoids obtained thereby with Examples \ref{crucexmoreq2} and \ref{crucexmoreq}. Since $V$ is $\G$-invariant, so are $J_1^{-1}(V)$ and $J_2^{-1}(V)$ and we can consider the Hamiltonian $(\G,\Omega)$-spaces: 
\begin{equation}\label{restmodloceqthmhamact} {J_1^V}:(J_1^{-1}(V),\omega_1)\to M \quad \&\quad {J_2^V}:(J_2^{-1}(V),\omega_2)\to M
\end{equation} obtained by restricting the given Hamiltonian $(\G,\Omega)$-spaces $J_1$ and $J_2$. By Theorem \ref{sympmoreqasrep}, combined with Examples \ref{restrinvopsympmodex} and \ref{eqcatmodHamGex}, the above Morita equivalence induces an equivalence of categories (with explicit inverse) between the category of Hamiltonian $(\G,\Omega)$-spaces $J:(S,\omega)\to M$ with $J(S)\subset V$ and the category of Hamiltonian $G$-spaces $J:(S,\omega)\to \g^*$ with $J(S)\subset W$. Consider the Hamiltonian $G$-spaces associated to $(\ref{restmodloceqthmhamact})$:
\begin{equation*} P_*(J_1^V):(P\ast_{(\G\vert_V)} J_1^{-1}(V),\omega_{PS_1})\to \g^* \quad \& \quad  P_*(J_2^V):(P\ast_{(\G\vert_V)} J_2^{-1}(V),\omega_{PS_2})\to \g^*,
\end{equation*} and fix a $p\in P$ such that $\alpha_1(p)=x$. We will show that these Hamiltonian $G$-spaces satisfy the assumptions of Theorem \ref{mgsrigzerofib} for the points $[p,p_1]$ and $[p,p_2]$. First of all, since the leaf $\L$ is $P$-related to the origin in $\g^*$, it must be that $\alpha_2(p)=0$. Therefore, we find: 
\begin{equation*} P_*(J_1^V)([p,p_1])=\alpha_2(p)=0 \quad \& \quad P_*(J_2^V)([p,p_2])=\alpha_2(p)=0. 
\end{equation*} Second, Lemma \ref{assrepprop} implies that $G_{[p,p_1]}=G_{[p,p_2]}$, as both coincide with the image of $\G_{p_1}=\G_{p_2}$ under $\Phi_p:\G_x\to G$. Third, by the same lemma, there are symplectic linear isomorphisms:
\begin{equation*} \psi_1:\left(\S\No_{p_1},(\omega_1)_{p_1}\right)\xrightarrow{\sim} \left(\S\No_{[p,p_1]},(\omega_{PS_1})_{[p,p_1]}\right)\quad \& \quad \psi_2:\left(\S\No_{p_2},(\omega_2)_{p_2}\right)\xrightarrow{\sim} \left(\S\No_{[p,p_2]},(\omega_{PS_2})_{[p,p_2]}\right),
\end{equation*} that are both compatible with the isomorphism of Lie groups:
\begin{equation*} \G_{p_1}\xrightarrow{\Phi_p} G_{[p,p_1]}=\G_{p_2}\xrightarrow{\Phi_p} G_{[p,p_2]}.
\end{equation*}
By assumption, there is an equivariant symplectic linear isomorphism:
\begin{equation*} \psi:(\S\No_{p_1},\omega_{p_1})\xrightarrow{\sim} (\S\No_{p_2},\omega_{p_2}).
\end{equation*} All together, the composition: 
\begin{equation*} \psi_2\circ \psi\circ \psi_1^{-1}: \left(\S\No_{[p,p_1]},(\omega_{PS_1})_{[p,p_1]}\right)\xrightarrow{\sim} \left(\S\No_{[p,p_2]},(\omega_{PS_2})_{[p,p_2]}\right)
\end{equation*} becomes an equivariant symplectic linear isomorphism. So, the assumptions of Theorem \ref{mgsrigzerofib} hold, which implies that there are $G$-invariant opens $U_{[p,p_1]}$ around $[p,p_1]$ and $U_{[p,p_2]}$ around $[p,p_2]$, together with an isomorphism of Hamiltonian $G$-spaces that sends $[p,p_1]$ to $[p,p_2]$: 
\begin{center}
\begin{tikzcd} \left(U_{[p,p_1]},\omega_{PS_1},[p,p_1]\right)\arrow[rr,"\sim"]\arrow[dr,"P_*(J_1^V)"'] & & \left(U_{[p,p_2]},\omega_{PS_2},[p,p_2]\right)\arrow[dl, "P_*(J_2^V)"] \\
& \g^* &
\end{tikzcd}
\end{center} One readily verifies that, by passing back through the above equivalence of categories via the explicit inverse functor, we obtain $\G$-invariant opens $U_1$ around $p_1$ and $U_2$ around $p_2$, together with an isomorphism of Hamiltonian $(\G,\Omega)$-spaces from $J_1:(U_1,\omega_1)\to M$ to $J_2:(U_2,\omega_2)\to M$ that sends $p_1$ to $p_2$, as desired.  
\end{proof}
\begin{proof}[Proof of Theorem \ref{righamthm}] As in the previous proof, the forward implication is straightforward. For the backward implication, let $(i_1,j_1)$ and $(i_2,j_2)$ be two realizations of the same zeroth-order Hamiltonian data (with notation as in Definition \ref{nhoodeq1defi}). Let $p\in \O$ and $x=J_\O(p)$ and suppose that their symplectic normal representations at $p$ are isomorphic as symplectic $\G_p$-representations. By Theorem \ref{normformsymgp} there are respective opens $V_1$ and $V_2$ around $\L$ in $M_1$ and $M_2$, together with an isomorphism: 
\begin{equation*} \Phi:(\G_1,\Omega_1)\vert_{V_1}\xrightarrow{\sim}(\G_2,\Omega_2)\vert_{V_2}
\end{equation*} 
 that intertwines $i_1$ with $i_2$. Consider, on one hand, the Hamiltonian $(\G_1,\Omega_1)\vert_{V_1}$-space obtained from the given Hamiltonian $(\G_1,\Omega_1)$-space $J_1$ by restriction to $V_1$ and, on the other hand, the Hamiltonian $(\G_1,\Omega_1)\vert_{V_1}$-space $\Phi^*(J_2)$ obtained from the given Hamiltonian $(\G_2,\Omega_2)$-space $J_2$ by restriction to $V_2$ and pullback along $\Phi$. These two Hamiltonian $(\G_1,\Omega_1)\vert_{V_1}$-spaces meet the assumptions of Theorem \ref{loceqthmhamact} at the points $j_1(p)$ and $j_2(p)$. So, there are $(\G_1\vert_{V_1})$-invariant opens $U_1\subset J_1^{-1}(V_1)$ and $U_2\subset J_2^{-1}(V_2)$, together with an isomorphism of Hamiltonian $(\G_1,\Omega_1)\vert_{V_1}$-spaces that sends $j_1(p)$ to $j_2(p)$:
\begin{center}
\begin{tikzcd} (U_1,\omega_1,j_1(p)) \arrow[rr,"\Psi"] \arrow[dr, "J_1"'] & & (U_2,\omega_2,j_2(p)) \arrow[dl,"\Phi^*(J_2)"] \\
 & V_1 & 
\end{tikzcd}
\end{center} As one readily verifies, the pair $(\Phi,\Psi)$ is the desired neighbourhood equivalence.
\end{proof}

\subsection{The transverse part of the local model}\label{translocmodsec}
\subsubsection{Hamiltonian Morita equivalence} In order to define a notion of Morita equivalence between Hamiltonian actions, we first consider a natural equivalence relation between Lie groupoid maps (resp. groupoid maps of Hamilonian type, defined below). In the next subsection we explain how this restricts to an equivalence relation between Lie groupoid actions (resp. Hamiltonian actions). 
\begin{defi}\label{moreqdefLie} Let $\J_1:\H_1\to \G_1$ and $\J_2:\H_2\to \G_2$ be maps of Lie groupoids. By a \textbf{Morita equivalence} from $\J_1$ to $\J_2$ we mean the data consisting of:
\begin{itemize}\item a Morita equivalence $(P,\alpha_1,\alpha_2)$ from $\G_1$ to $\G_2$,
\item a Morita equivalence $(Q,\beta_1,\beta_2)$ from $\H_1$ to $\H_2$,
\item a smooth map $j:Q\to P$ that interwines $J_i\circ\beta_i$ with $\alpha_i$ and that intertwines the $\H_i$-action with the $\G_i$-action via $\J_i$, for both $i=1,2$.
\end{itemize} We depict this as:
\begin{center}
\begin{tikzpicture} 
\node (H1) at (-0.8,0) {$\H_1$};
\node (S1) at (-0.8,-1.3) {$S_1$};
\node (Q) at (1.35,0) {$Q$};
\node (S2) at (3.5,-1.3) {$S_2$};
\node (H2) at (3.5,0) {$\H_2$};

\node (G1) at (0,-3) {$\G_1$};
\node (M1) at (0,-4.3) {$M_1$};
\node (P) at (1.35,-3) {$P$};
\node (M2) at (2.7,-4.3) {$M_2$};
\node (G2) at (2.7,-3) {$\G_2$};

\draw[->, bend right=50](H1) to node[pos=0.45,below] {$\J_1\text{ }\text{ }\text{ }\text{ }$} (G1);
\draw[->, bend right=20](S1) to node[pos=0.45,below] {$J_1\text{ }\text{ }\text{ }\text{ }$} (M1);
\draw[->, bend left=50](H2) to node[pos=0.45,below] {$\text{ }\text{ }\text{ }\text{ }\J_2$} (G2);
\draw[->, bend left=20](S2) to node[pos=0.45,below] {$\text{ }\text{ }\text{ }\text{ }J_2$} (M2);
\draw[->](Q) to node[pos=0.45,below] {$j\text{ }\text{ }\text{ }\text{ }$} (P);
 
\draw[->,transform canvas={xshift=-\shift}](H1) to node[midway,left] {}(S1);
\draw[->,transform canvas={xshift=\shift}](H1) to node[midway,right] {}(S1);
\draw[->,transform canvas={xshift=-\shift}](H2) to node[midway,left] {}(S2);
\draw[->,transform canvas={xshift=\shift}](H2) to node[midway,right] {}(S2);
\draw[->](Q) to node[pos=0.25, below] {$\text{ }\text{ }\beta_1$} (S1);
\draw[->] (0.3,-0.15) arc (315:30:0.25cm);
\draw[<-] (2.4,0.15) arc (145:-145:0.25cm);
\draw[->](Q) to node[pos=0.25, below] {$\beta_2$\text{ }} (S2); 
 
\draw[->,transform canvas={xshift=-\shift}](G1) to node[midway,left] {}(M1);
\draw[->,transform canvas={xshift=\shift}](G1) to node[midway,right] {}(M1);
\draw[->,transform canvas={xshift=-\shift}](G2) to node[midway,left] {}(M2);
\draw[->,transform canvas={xshift=\shift}](G2) to node[midway,right] {}(M2);
\draw[->](P) to node[pos=0.25, below] {$\text{ }\text{ }\alpha_1$} (M1);
\draw[->] (0.8,-3.15) arc (315:30:0.25cm);
\draw[<-] (1.9,-2.85) arc (145:-145:0.25cm);
\draw[->](P) to node[pos=0.25, below] {$\alpha_2$\text{ }} (M2);
\end{tikzpicture}
\end{center}
\end{defi}
As an analogue of this in the Hamiltonian setting, we propose the following definitions (more motivation for which will be given in the coming subsections).
\begin{defi}\label{gpoidmaphamtyp} Let $(\G,\Omega)\rightrightarrows M$ be a symplectic groupoid and let $\H\rightrightarrows (S,\omega)$ be a Lie groupoid over a pre-symplectic manifold. We call a Lie groupoid map $\J:\H\to \G$ of \textbf{Hamiltonian type} if:
\begin{equation*} \J^*\Omega=(t_\H)^*\omega-(s_\H)^*\omega.
\end{equation*} 
\end{defi}
\begin{defi}\label{moreqdefHam} Let $\J_1:\H_1\to \G_1$ and $\J_2:\H_2\to \G_2$ be of Hamiltonian type. By a \textbf{Hamiltonian Morita equivalence} from $\J_1$ to $\J_2$ we mean: a Morita equivalence (in the sense of Definition \ref{moreqdefLie}) with the extra requirement that $(P,\omega_P,\alpha_1,\alpha_2)$ is a symplectic Morita equivalence and that: 
\begin{equation}\label{hameqmor1} j^*\omega_P=(\beta_1)^*\omega_1-(\beta_2)^*\omega_2.
\end{equation} 
\end{defi}
The same type of arguments as for Morita equivalence of Lie and symplectic groupoids (see \cite{Xu}) show that Hamiltonian Morita equivalence indeed defines an equivalence relation. 
\subsubsection{Morita equivalence between groupoid maps of action type} To see that the equivalence relation(s) in the previous subsection induce an equivalence relation between Lie groupoid actions (resp. Hamiltonian actions), the key remark is that a left action of a Lie groupoid $\G$ along a map $J:S\to M$ gives rise to a map of Lie groupoids covering $J$:
\begin{equation}\label{asslgpoidmapact} \textrm{pr}_\G:\G\ltimes S\to \G.
\end{equation} 
Further, notice that the groupoid map (\ref{asslgpoidmapact}) is of Hamiltonian type precisely when the action is Hamiltonian (that is, when (\ref{hammultcond}) holds). 
\begin{defi}\label{moreqdefHamacttyp} By a \textbf{Morita equivalence between (left) Lie groupoid actions} we mean a Morita equivalence between their associated Lie groupoid maps (\ref{asslgpoidmapact}). Similarly, by a \textbf{Morita equivalence between (left) Hamiltonian actions} we mean a Hamiltonian Morita equivalence between their associated groupoid maps (\ref{asslgpoidmapact}). 
\end{defi}
In the remainder of this subsection, we further unravel what it means for to Hamiltonian actions to be Morita equivalent. The starting point for this is the following example, which concerns the modules appearing in Theorems \ref{moreqasrep} and \ref{sympmoreqasrep}.
\begin{ex}\label{asrepmoreq} Let $\G_1\rightrightarrows M_1$ be a Lie groupoid acting along $J:S\to M_1$ and suppose that we are given a Morita equivalence $(P,\alpha_1,\alpha_2)$ from $\G_1$ to another Lie groupoid $\G_2\rightrightarrows M_2$. Consider the associated $\G_2$-action along $P_*(J):P\ast_{\G_1}S\to M_2$. The Morita equivalence from $\G_1$ to $\G_2$ extends to a canonical Morita equivalence between these two actions:
\begin{center}
\begin{tikzpicture} 
\node (H1) at (-1,0) {$\G_1\ltimes S$};
\node (S1) at (-1,-1.3) {$S$};
\node (Q) at (1.35,0) {$P\times_{M_1} S$};
\node (S2) at (3.9,-1.3) {$P\ast_{\G_1}S$};
\node (H2) at (3.9,0) {$\G_2\ltimes \left(P\ast_{\G_1}S\right)$};

\node (G1) at (0,-3) {$\G_1$};
\node (M1) at (0,-4.3) {$M_1$};
\node (P) at (1.35,-3) {$P$};
\node (M2) at (2.7,-4.3) {$M_2$};
\node (G2) at (2.7,-3) {$\G_2$};

\draw[->, bend right=50](H1) to node[pos=0.45,below] {$\textrm{pr}_{\G_1}\quad\quad$} (G1);
\draw[->, bend right=20](S1) to node[pos=0.45,below] {$J\text{ }\text{ }\text{ }\text{ }$} (M1);
\draw[->, bend left=50](H2) to node[pos=0.45,below] {$\quad\quad\quad\textrm{pr}_{\G_2}$} (G2);
\draw[->, bend left=20](S2) to node[pos=0.45,below] {$\quad\quad\text{ }\text{ }P_*(J)$} (M2);
\draw[->](Q) to node[pos=0.45,below] {$\textrm{pr}_P\quad\quad$} (P);
 
\draw[->,transform canvas={xshift=-\shift}](H1) to node[midway,left] {}(S1);
\draw[->,transform canvas={xshift=\shift}](H1) to node[midway,right] {}(S1);
\draw[->,transform canvas={xshift=-\shift}](H2) to node[midway,left] {}(S2);
\draw[->,transform canvas={xshift=\shift}](H2) to node[midway,right] {}(S2);
\draw[->](Q) to node[pos=0.25, below] {$\quad\text{ }\textrm{pr}_{S}$} (S1);
\draw[->] (0.5,-0.15) arc (315:30:0.25cm);
\draw[<-] (2.2,0.15) arc (145:-145:0.25cm);
\draw[->](Q) to node[pos=0.25, below] {$\textrm{pr}_{PS}$\text{ }} (S2); 
 
\draw[->,transform canvas={xshift=-\shift}](G1) to node[midway,left] {}(M1);
\draw[->,transform canvas={xshift=\shift}](G1) to node[midway,right] {}(M1);
\draw[->,transform canvas={xshift=-\shift}](G2) to node[midway,left] {}(M2);
\draw[->,transform canvas={xshift=\shift}](G2) to node[midway,right] {}(M2);
\draw[->](P) to node[pos=0.25, below] {$\text{ }\text{ }\alpha_1$} (M1);
\draw[->] (0.8,-3.15) arc (315:30:0.25cm);
\draw[<-] (1.9,-2.85) arc (145:-145:0.25cm);
\draw[->](P) to node[pos=0.25, below] {$\alpha_2$\text{ }} (M2);
\end{tikzpicture}
\end{center}
Here the upper left action is induced by the diagonal $\G_1$-action, whereas the upper right action is induced by the $\G_2$-action on the first factor. When $(\G_1,\Omega_1)$ and $(\G_2,\Omega_2)$ are symplectic groupoids, the action along $J_1:(S_1,\omega_1)\to M_1$ is Hamiltonian, and the Morita equivalence $(P,\omega_P,\alpha_1,\alpha_2)$ is symplectic, then the associated $(\G_2,\Omega_2)$-action along $P_*(J):(P\ast_{\G_1}S,\omega_{PS})\to M_2$ is Hamiltonian. In this case, the above Morita equivalence is Hamiltonian. 
\end{ex}
In fact, we will show that more is true:
\begin{prop}\label{moreqactgpoids} Every Morita equivalence between two Lie groupoid maps that are both of action type is of the form of Example \ref{asrepmoreq}. The same holds for Hamiltonian Morita equivalence. 
\end{prop}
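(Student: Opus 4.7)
The plan is to show that any such Morita equivalence is isomorphic to one of the form in Example \ref{asrepmoreq}. Write the given data as a Morita equivalence $(P,\alpha_1,\alpha_2)$ between $\G_1$ and $\G_2$, a Morita equivalence $(Q,\beta_1,\beta_2)$ between $\G_1\ltimes S_1$ and $\G_2\ltimes S_2$, and an intertwining map $j:Q\to P$. The key observation is that the compatibility $\alpha_1\circ j=J_1\circ\beta_1$ yields a canonical smooth map
\begin{equation*}
\psi:=(j,\beta_1)\colon Q\longrightarrow P\times_{M_1}S_1.
\end{equation*}
Once this map is shown to be an equivariant diffeomorphism, everything follows: the left $\G_1\ltimes S_1$-action on $Q$ corresponds under $\psi$ to the diagonal $\G_1$-action on $P\times_{M_1}S_1$, and the right $\G_2\ltimes S_2$-action corresponds to the action on the first factor via $\textrm{pr}_{\G_2}$; passing to the $\G_1\ltimes S_1$-quotient (via $\beta_2$) then identifies $S_2$ with $P\ast_{\G_1}S_1$ as a $\G_2$-space, and all the structure maps of the original Morita equivalence match those of Example \ref{asrepmoreq}.

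The heart of the argument is therefore proving $\psi$ is a diffeomorphism, which I expect to be the main obstacle. For injectivity: if $\psi(q)=\psi(q')$ then $\beta_1(q)=\beta_1(q')$, so there is a unique $h\in\G_2\ltimes S_2$ with $q'=q\cdot h$ (since $\beta_1$ is a principal bundle projection); the $\textrm{pr}_{\G_2}$-equivariance of $j$ gives $j(q)=j(q)\cdot\textrm{pr}_{\G_2}(h)$, so freeness of the $\G_2$-action on $P$ forces $\textrm{pr}_{\G_2}(h)$ to be a unit, hence $h$ itself is a unit. For surjectivity: given $(p,s_1)$, pick any $q_0\in\beta_1^{-1}(s_1)$; then $j(q_0)$ and $p$ lie in the same $\alpha_1$-fiber, so there is a unique $g\in\G_2$ with $p=j(q_0)\cdot g$, and the intertwining condition $\alpha_2\circ j=J_{S_2}\circ\beta_2$ ensures that $h:=(g,\beta_2(q_0))\in\G_2\ltimes S_2$ is well-defined, with $q_0\cdot h$ mapping to $(p,s_1)$. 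A dimension count using the two principal-bundle structures gives $\dim Q=\dim(P\times_{M_1}S_1)$, and smoothness of $\psi^{-1}$ follows from the surjectivity construction applied to any local section of the principal bundle $\beta_1:Q\to S_1$.

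For the Hamiltonian case, the only extra ingredient is the compatibility $j^*\omega_P=\beta_1^*\omega_1-\beta_2^*\omega_2$ of Definition \ref{moreqdefHam}. Transporting along $\psi$, this becomes
\begin{equation*}
\textrm{pr}_P^*\omega_P=\textrm{pr}_{S_1}^*\omega_1-q^*\omega_2
\end{equation*}
on $P\times_{M_1}S_1$, where $q$ denotes the quotient map to $S_2\cong P\ast_{\G_1}S_1$. Rearranging as $q^*\omega_2=\textrm{pr}_{S_1}^*\omega_1-\textrm{pr}_P^*\omega_P$ identifies $\omega_2$ with the reduced symplectic form induced by $(-\omega_P)\oplus\omega_1$, which is precisely the one produced by the construction in Example \ref{asrepmoreq}. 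This completes the identification of $\omega_2$ and closes the Hamiltonian case. If the diffeomorphism step above needs to be made more robust, one can alternatively argue that $d\psi$ is everywhere injective by combining the fact that $\beta_1$ is a submersion with the $\G_2$-equivariance of $j$ along the $\alpha_1$-fibers, reducing the statement to the corresponding statement on each principal bundle fiber.
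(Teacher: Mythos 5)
Your proposal is correct and follows essentially the same route as the paper: the paper's proof also hinges on showing that $(j,\beta_1):Q\to P\times_{M_1}S_1$ is a diffeomorphism (there deduced from Lemma \ref{acttypeprop}, whose proof is left to the reader and which your fiberwise bijectivity argument effectively supplies), then passes to the $\G_1\ltimes S_1$-quotient to identify $S_2$ with $P\ast_{\G_1}S_1$, and finally matches $\omega_2$ with the reduced form of $(-\omega_P)\oplus\omega_1$ via the relation $j^*\omega_P=\beta_1^*\omega_1-\beta_2^*\omega_2$. The only caveat is a possible source/target convention adjustment (an inverse on the $\G_2$-component) in your construction of the element $h=(g,\beta_2(q_0))$ of the action groupoid, which does not affect the argument.
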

Here, for convenience, we used the following terminology. 
\begin{defi} Let $\J:\H\to \G$ be map of Lie groupoids covering $J:S\to M$. We say that $\J$ is of \textbf{action type} if there is a smooth left action of $\G$ along $J$ and an isomorphism of Lie groupoids from $\G\ltimes S$ to $\H$ that covers the identity on $S$ and makes the diagram: 
\begin{center}
\begin{tikzcd} 
\G\ltimes S\arrow[rr,"\sim"]\arrow[dr,"\textrm{pr}_\G"'] & & \H \arrow[ld,"\J"]  \\
& \G &
\end{tikzcd}
\end{center}
commute.
\end{defi} 
This has the following more insightful characterization. 
\begin{prop}\label{acttypcharprop} A Lie groupoid map $\mathcal{J}:\H\to\G$ is of action type if and only if for every $p\in S$ the map $\mathcal{J}$ restricts to a diffeomorphism from the source-fiber of $\H$ over $p$ onto that of $\G$ over $J(p)$. \end{prop}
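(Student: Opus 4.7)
The forward direction is immediate: if $\J$ is of action type with $\H\cong \G\ltimes S$, then the source fiber of $\G\ltimes S$ over $p$ is $\{(g,p)\mid s_\G(g)=J(p)\}$, and the projection to the first factor identifies this with $s_\G^{-1}(J(p))$; by construction this identification is exactly the restriction of $\J$. So I would focus on the backward direction.

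Assume $\J$ restricts to a diffeomorphism $\J_p:s_\H^{-1}(p)\xrightarrow{\sim}s_\G^{-1}(J(p))$ for every $p\in S$. The plan is to package these fiberwise diffeomorphisms into a single global diffeomorphism and then transport the groupoid structure. Concretely, I would consider the smooth map
\begin{equation*}
\Phi:\H\to \G\times_M S,\qquad h\mapsto (\J(h),s_\H(h)),
\end{equation*}
which lies over $S$ via the obvious source/projection maps and restricts on each source fiber to $\J_p$. The first key step is to prove that $\Phi$ is a diffeomorphism. Smoothness is automatic. Bijectivity follows from the fiberwise bijectivity of $\J_p$. For the inverse to be smooth, I would verify that $\d\Phi_h$ is an isomorphism at every $h\in \H$: both $s_\H:\H\to S$ and $\mathrm{pr}_S:\G\times_M S\to S$ are submersions of the same fiber dimension, $\mathrm{pr}_S\circ\Phi=s_\H$, and $\d\Phi_h$ restricts to the iso $\d(\J_p)_h$ on vertical tangent vectors, so a short diagram chase shows $\d\Phi_h$ is bijective; then $\Phi$ is a local diffeomorphism and, being a bijection, a global one.

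With $\Phi$ in hand, I would define the candidate action of $\G$ along $J$ by
\begin{equation*}
g\cdot p := t_\H(\Phi^{-1}(g,p)),\qquad (g,p)\in \G\times_M S,
\end{equation*}
which is manifestly smooth. Checking the action axioms is essentially a translation of the groupoid axioms for $\H$ through $\Phi$: the unit axiom follows from $\Phi(1_p)=(1_{J(p)},p)$, and associativity follows by computing both sides as $t_\H$ applied to the composition $\Phi^{-1}(g,g'\cdot p)\cdot \Phi^{-1}(g',p)$ in $\H$ and then noting that the image under $\Phi$ of this product equals $(gg',p)$, using that $\J$ is a groupoid homomorphism. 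Once the action is in place, $\Phi^{-1}:\G\ltimes S\to \H$ is a smooth bijection covering the identity on $S$, and the same kind of calculation shows it is multiplicative; hence it is an isomorphism of Lie groupoids making $\J$ factor as $\mathrm{pr}_\G$ followed by $\Phi^{-1}$, which is precisely the condition for $\J$ to be of action type.

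The main obstacle I anticipate is the global-diffeomorphism step for $\Phi$: the hypothesis only supplies fiberwise diffeomorphisms, and one must upgrade these to a global statement about $\Phi$. The dimension match coming from $s_\H$ and $\mathrm{pr}_S$ being submersions with $\J_p$-diffeomorphic fibers is what makes this go through; the rest of the argument is then largely a formal translation of groupoid axioms across $\Phi$.
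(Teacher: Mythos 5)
Your proof is correct; the paper itself omits the argument (it only remarks that the statement ``is readily verified''), and what you have written is precisely the expected verification: the map $\Phi=(\J,s_\H):\H\to\G\times_M S$ is a fiberwise diffeomorphism between two submersions over $S$, hence a diffeomorphism, and transporting the groupoid structure of $\H$ across it produces the action and the required isomorphism with $\G\ltimes S$. The only points worth making explicit are that $\Phi$ lands in the fiber product because $\J$ covers $J$ (so $s_\G\circ\J=J\circ s_\H$), and that injectivity of $\d\Phi_h$ in your diagram chase uses that the two source fibers, being diffeomorphic, have equal dimension.
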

This is readily verified. To prove Proposition \ref{moreqactgpoids} we use the closely related lemma below, the proof of which is also left to the reader. 
\begin{lemma}\label{acttypeprop} Let $\J_1:\H_1\to \G_1$ and $\J_2:\H_2\to \G_2$ be maps of Lie groupoids and let a Morita equivalence between them (denoted as Definition \ref{moreqdefLie}) be given. Let $q\in Q$, and denote $p=j(q)$, $p_i=\beta_i(q)$ and $x_i=J_i(p_i)$ for $i=1,2$. Then we have a commutative square:
\begin{center} 
\begin{tikzcd} \beta_2^{-1}(p_2)\arrow[r,"j"] & \alpha_2^{-1}(x_2)\\
s_{\H_1}^{-1}(p_1)\arrow[r,"\J_1"] \arrow[u,"m_q"] & s^{-1}_{\G_1}(x_1) \arrow[u,"m_p"]
\end{tikzcd}
\end{center}
in which all vertical arrows are diffeomorphisms. In particular, $\J_1$ is of action type if and only if $j$ restricts to a diffeomorphism between the $\beta_2$- and $\alpha_2$-fibers. Analogous statements hold for $\J_2$, replacing $\alpha_2$ and $\beta_2$ by $\alpha_1$ and $\beta_1$. 
\end{lemma}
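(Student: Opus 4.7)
The plan is to identify the two vertical maps in the square as orbit maps for the principal actions on $Q$ and $P$, verify commutativity via the equivariance of $j$, and then read off the ``in particular'' statement from Proposition \ref{acttypcharprop}.

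The maps $m_q$ and $m_p$ should be the orbit maps at $q$ and $p$, namely
\[
m_q: s_{\H_1}^{-1}(p_1) \to \beta_2^{-1}(p_2), \quad h \mapsto h\cdot q, \qquad m_p: s_{\G_1}^{-1}(x_1) \to \alpha_2^{-1}(x_2), \quad g \mapsto g\cdot p.
\]
These land in the correct fibers because $\beta_2$ is $\H_1$-invariant with $\beta_2(q) = p_2$, and $\alpha_2$ is $\G_1$-invariant with $\alpha_2(p) = x_2$. Since $\beta_2: Q \to S_2$ is a principal $\H_1$-bundle and $\alpha_2: P \to M_2$ is a principal $\G_1$-bundle, each action is free and fibre-transitive, so $m_q$ and $m_p$ are diffeomorphisms onto the indicated fibers.

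Commutativity of the square is then immediate from the compatibility of $j$ with the actions through $\J_1$: for any $h \in s_{\H_1}^{-1}(p_1)$ one has
\[
j(m_q(h)) = j(h \cdot q) = \J_1(h) \cdot j(q) = \J_1(h) \cdot p = m_p(\J_1(h)).
\]
For the ``in particular'' claim, Proposition \ref{acttypcharprop} says that $\J_1$ is of action type iff, for every $p_1 \in S_1$, it restricts to a diffeomorphism from $s_{\H_1}^{-1}(p_1)$ onto $s_{\G_1}^{-1}(x_1)$. Since the vertical arrows are diffeomorphisms, the bottom arrow is a diffeomorphism exactly when the top arrow is; as $q$ (hence $p_2$, and via $\beta_1$ also $p_1$) is arbitrary, this gives the equivalence. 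The analogous statement for $\J_2$ follows by symmetry, swapping the roles of $(\alpha_1, \beta_1)$ and $(\alpha_2, \beta_2)$. I expect no real obstacle here: the content is just that principal bundles trivialise their fibres through orbit maps, combined with the equivariance built into the definition of a Morita equivalence between groupoid maps.
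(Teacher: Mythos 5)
Your proof is correct, and since the paper explicitly leaves this lemma to the reader, your argument is precisely the intended one: the vertical maps are the orbit maps of the principal $\H_1$- and $\G_1$-actions (diffeomorphisms onto the $\beta_2$- and $\alpha_2$-fibers by principality), commutativity is the equivariance of $j$ via $\J_1$, and the ``in particular'' clause follows from Proposition \ref{acttypcharprop} together with surjectivity of $\beta_1$ and $\beta_2$. No gaps.
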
 

\begin{proof}[Proof of Proposition \ref{moreqactgpoids}] Suppose that we are given Lie groupoids $\G_1\rightrightarrows M_1$ and $\G_2\rightrightarrows M_2$, together with a $\G_1$-module $J_1:S_1\to M_1$ and a $\G_2$-module $J_2:S_2\to M_2$ and a Morita equivalence between the associated Lie groupoid maps (\ref{asslgpoidmapact}), denoted as in Definition \ref{moreqdefLie}. It follows from Lemma \ref{acttypeprop} that the map:
\begin{equation}\label{isomoreqactgpoidspf} (j,\beta_1):Q\to P\times_{M_1}S_1
\end{equation} is a diffeomorphism. The diagonal action of $\G_1$ along $\alpha_1\circ \textrm{pr}_P:P\times_{M_1}S_1\to M_1$ induces an action of $\G_1\ltimes S_1$ along $\textrm{pr}_{S_1}:P\times_{M_1}S_1\to S_1$, which is the upper left action in Example \ref{asrepmoreq}. The diffeomorphism $(\ref{isomoreqactgpoidspf})$ intertwines $\beta_1$ with $\text{pr}_{S_1}$ and is equivariant with respect this action. In particular, by principality of the $\G_1\ltimes S_1$-action, there is an induced diffeomorphism:
\begin{equation}\label{isomoreqactgpoidspf2} S_2\xrightarrow{\sim} P\ast_{\G_1}S_1.
\end{equation} One readily verifies that, when identifying $Q$ with $P\times_{M_1}S_1$ via (\ref{isomoreqactgpoidspf}) and $S_2$ with $P\ast_{\G_1}S_1$ via (\ref{isomoreqactgpoidspf2}), the given Morita equivalence is identified with that in Example \ref{asrepmoreq}. Furthermore, when $(\G_1,\Omega_1)$ and $(\G_2,\Omega_2)$ are symplectic groupoids, $(S_1,\omega_1)$ and $(S_2,\omega_2)$ are symplectic manifolds, the given actions along $J_1$ and $J_2$ are Hamiltonian and the Morita equivalence between $(\G_1,\Omega_1)$ and $(\G_2,\Omega_2)$ is symplectic, then one readily verifies that (\ref{isomoreqactgpoidspf2}) is a symplectomorphism from $(S_2,\omega_2)$ to $(P\ast_{\G_1}S_1,\omega_{PS_1})$ if and only if the relation (\ref{hameqmor1}) is satisfied. This proves the proposition. 
\end{proof}
\subsubsection{The transverse local model}
In this paper we will mainly be interested in Hamiltonian Morita equivalences between Hamiltonian actions, rather than between the more general groupoid maps of Hamiltonian type (as in Definition \ref{gpoidmaphamtyp}). There is, however, one important exception to this:
\begin{ex}\label{locmodmoreq} This example gives a Hamiltonian Morita equivalence between the local model for Hamiltonian actions and a groupoid map $\J_\p$ that is built out of less data and is often easier to work with. The use of this Morita equivalence makes many of the proofs in Section \ref{canhamstratsec} both simpler and more conceptual. Let $(\G_\theta,\Omega_\theta)$ be the symplectic groupoid (\ref{locmodsymgp}) and let $J_\theta:(S_\theta,\omega_{S_\theta})\to M_\theta$ be the Hamiltonian $(\G_\theta,\Omega_\theta)$-space (\ref{locmodham2}). The Morita equivalence of Example \ref{crucexmoreq} extends to a Hamiltonian Morita equivalence between the action along $J_\theta$ and a groupoid map of Hamiltonian type from $H\ltimes (\h^0\oplus V)$ to $G\ltimes \g^*$ (restricted to appropriate opens). To see this, let $\p:\h^*\to \g^*$ be an $H$-equivariant splitting of (\ref{ses2poisabs}). Consider the $H$-equivariant map: \begin{equation}\label{slicemommap} J_\p:\h^0\oplus V\to \g^*, \quad (\alpha,v)\mapsto \alpha+\p(J_V(v)),
\end{equation} where $J_V:V\to \h^*$ is the quadratic momentum map (\ref{quadsympmommap}). By $H$-equivariance, this lifts to a groupoid map: 
\begin{equation}\label{bigJ_p} \J_\p:H\ltimes (\h^0\oplus V)\to G\ltimes \g^*,\quad (h,\alpha,v)\mapsto (h,J_\p(\alpha,v)).
\end{equation} This groupoid map is not of action type, but it is of Hamiltonian type with respect to the pre-symplectic form $0\oplus \omega_V$ on $\h^0\oplus V$ and there is a canonical Hamiltonian Morita equivalence:
\begin{center}
\begin{tikzpicture} 
\node (H1) at (-0.85,0) {$\G_\theta\ltimes S_\theta$};
\node (S1) at (-0.85,-1.3) {$(S_\theta,\omega_{S_\theta})$};
\node (Q) at (3.3,0) {$\Sigma_\theta \tensor[_{\textrm{pr}_{\h^*}}]{\times}{_{J_V}} V$};
\node (S2) at (7.45,-1.3) {$(U_\theta,0\oplus \omega_V)$};
\node (H2) at (7.45,0) {$H\ltimes (\h^0\oplus V)\vert_{U_\theta}$};

 \node (G1) at (0,-3) {$(\G_\theta,\Omega_\theta)$};
\node (M1) at (0,-4.3) {$M_\theta$};
\node (P) at (3.3,-3) {$(\Sigma_\theta,\omega_\theta)$};
\node (M2) at (6.6,-4.3) {$W_\theta$};
\node (G2) at (6.6,-3) {$(G\ltimes \g^*,-\d \lambda_{\textrm{can}})\vert_{W_\theta}$};

\draw[->, bend right=80](H1) to node[pos=0.55,below] {$\textrm{pr}_{\G_\theta}\quad\quad$} (G1);
\draw[->, bend right=20](S1) to node[pos=0.15,below] {$J_\theta\text{ }\text{ }\text{ }\text{ }$} (M1);
\draw[->, bend left=80](H2) to node[pos=0.55,below] {$\quad\quad\J_\p$} (G2);
\draw[->, bend left=20](S2) to node[pos=0.15,below] {$\text{ }\text{ }\text{ }\text{ }J_\p$} (M2);
\draw[->](Q) to node[pos=0.45,below] {$\textrm{pr}_{\Sigma_\theta}\quad\quad\quad$} (P);
 
\draw[->,transform canvas={xshift=-\shift}](H1) to node[midway,left] {}(S1);
\draw[->,transform canvas={xshift=\shift}](H1) to node[midway,right] {}(S1);
\draw[->,transform canvas={xshift=-\shift}](H2) to node[midway,left] {}(S2);
\draw[->,transform canvas={xshift=\shift}](H2) to node[midway,right] {}(S2);
\draw[->](Q) to node[pos=0.25, below] {$\text{ }\text{ }\textrm{pr}_{S_\theta}$} (S1);
\draw[->] (1.25,-0.15) arc (315:30:0.25cm);
\draw[<-] (5.35,0.15) arc (145:-145:0.25cm);
\draw[->](Q) to node[pos=0.25, below] {$\beta_\p$\text{ }} (S2); 
 
\draw[->,transform canvas={xshift=-\shift}](G1) to node[midway,left] {}(M1);
\draw[->,transform canvas={xshift=\shift}](G1) to node[midway,right] {}(M1);
\draw[->,transform canvas={xshift=-\shift}](G2) to node[midway,left] {}(M2);
\draw[->,transform canvas={xshift=\shift}](G2) to node[midway,right] {}(M2);
\draw[->](P) to node[pos=0.35, below] {$\quad\quad\textrm{pr}_{M_\theta}$} (M1);
\draw[->] (2.25,-3.15) arc (315:30:0.25cm);
\draw[<-] (4.35,-2.85) arc (145:-145:0.25cm);
\draw[->](P) to node[pos=0.25, below] {$\textrm{pr}_{\g^*}$\quad\quad} (M2);
\end{tikzpicture}
\end{center}
that relates the central orbit in $S_\theta$ to the origin in $\h^0\oplus V$. Here $W_\theta:=\textrm{pr}_\g^*(\Sigma_\theta)$ and $U_\theta:=J_\p^{-1}(W_\theta)$ are invariant open neighbourhoods of the respective origins in $\g^*$ and $\h^0\oplus V$. Furthermore, the map $\beta_\p$ is defined as:
\begin{equation*}\beta_\p: \Sigma_\theta \tensor[_{\textrm{pr}_{\h^*}}]{\times}{_{J_V}} V\to U, \quad (p,\alpha,v)\mapsto (\alpha-\p(J_V(v)),v).
\end{equation*} With this in mind, we think of the groupoid map $\J_\p$ as a local model for the ``transverse part" of a Hamiltonian action near a given orbit.  
\end{ex}

\subsubsection{Elementary Morita invariants}\label{bashammorinv}
As will be apparent in the rest of this paper, many invariants for Morita equivalence between Lie groupoids have analogues for Morita equivalence between Hamiltonian actions \textemdash in fact, the canonical Hamiltonian stratification can be thought of as an analogue of the canonical stratification on the leaf space of a proper Lie groupoid. In this subsection we give analogues of Proposition \ref{transgeomgpoid}. We start with a version for Lie groupoid maps.
\begin{prop}\label{transgeommap} Let $\J_1:\H_1\to \G_1$ and $\J_2:\H_2\to \G_2$ be maps of Lie groupoids and let a Morita equivalence between them (denoted as Definition \ref{moreqdefLie}) be given. 
\begin{itemize}
\item[a)] The induced homeomorphisms between the orbit and leaf spaces (\ref{leafsphommoreq}) intertwine the maps induced by $J_1$ and $J_2$. That is, we have a commutative square:
\begin{center}
\begin{tikzcd} 
\underline{S}_1\arrow[d,"\underline{J}_1"] \arrow[r,"h_Q"]  & \underline{S}_2 \arrow[d,"\underline{J}_2"]\\
\underline{M}_1 \arrow[r, "h_P"] & \underline{M}_2
\end{tikzcd}
\end{center}
\end{itemize}
Further, suppose that $p_1\in S_1$ and $p_2\in S_2$ belong to $Q$-related orbits and let $q\in Q$ such that $\beta_1(q)=p_1$ and $\beta_2(q)=p_2$. Let $p=j(q)$, $x_1=J_1(p_1)$ and $x_2=J_2(p_2)$.  
\begin{itemize}
\item[b)] The induced isomorphisms of isotropy groups (\ref{isotgpisomoreq}) interwine the maps induced by $\J_1$ and $\J_2$. That is, we have a commutative square:
\begin{center}
\begin{tikzcd} 
(\H_1)_{p_1} \arrow[d,"\J_1"] \arrow[r, "\Phi_q"] & (\H_2)_{p_2} \arrow[d,"\J_2"]\\
(\G_1)_{x_1} \arrow[r,"\Phi_p"]  & (\G_2)_{x_2} 
\end{tikzcd}
\end{center}
\item[c)] The induced isomorphisms of normal representations (\ref{isotrepisomoreq}) intertwine the maps induced by $J_1$ and $J_2$. That is, we have a commutative square:
\begin{center}
\begin{tikzcd} 
\No_{p_1}\arrow[d,"\underline{\d J}_1"]\arrow[r,"\phi_q"] & \No_{p_2}\arrow[d,"\underline{\d J}_2"]\\
\No_{x_1} \arrow[r,"\phi_p"] & \No_{x_2}
\end{tikzcd}
\end{center}
\end{itemize}
\end{prop}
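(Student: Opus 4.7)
The plan is to unpack the defining diagrams of the Morita equivalence from Definition \ref{moreqdefLie} and show, in each of the three parts, that the compatibility data of $j$ witnesses the required commutativity. The key ingredient throughout will be that $j:Q\to P$ satisfies $\alpha_i\circ j=J_i\circ\beta_i$ and intertwines the $\H_i$-action on $Q$ with the $\G_i$-action on $P$ via $\J_i$ (for $i=1,2$). For each part, I would pick a representative $q\in Q$ lying over the two objects/arrows/vectors in question and use $p:=j(q)$ as the corresponding point of $P$; then the two formulas for the horizontal maps in Proposition \ref{transgeomgpoid} reduce to identities that are immediate from the compatibility of $j$.

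For part (a), I would start from a point $p_1\in S_1$ and a lift $q\in Q$ with $\beta_1(q)=p_1$, setting $p_2:=\beta_2(q)$ and $p:=j(q)$. The relation $\alpha_i(p)=J_i(p_i)$ for $i=1,2$ follows directly from $\alpha_i\circ j=J_i\circ\beta_i$. Hence the leaves of $\G_1$ and $\G_2$ through $x_1:=J_1(p_1)$ and $x_2:=J_2(p_2)$ are $P$-related, which is exactly the statement that $h_P(\underline{J}_1([p_1]))=\underline{J}_2(h_Q([p_1]))$.

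For part (b), given $h\in(\H_1)_{p_1}$, the defining relation $h\cdot q=q\cdot\Phi_q(h)$ for $\Phi_q$ can be pushed through $j$. Applying $j$ to both sides and using that $j$ intertwines the $\H_i$-action with the $\G_i$-action via $\J_i$ gives $\J_1(h)\cdot p=p\cdot\J_2(\Phi_q(h))$. But this is exactly the defining relation of $\Phi_p$ applied to $\J_1(h)$, so $\Phi_p(\J_1(h))=\J_2(\Phi_q(h))$, as required. The key subtlety is that $\J_1(h)$ really does lie in $(\G_1)_{x_1}$ (which it does, because $\J_1$ covers $J_1$ and $h\in(\H_1)_{p_1}$), so that $\Phi_p(\J_1(h))$ is defined.

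Part (c) is where I expect the main bookkeeping to lie, though no genuine obstacle: I need to track tangent vectors through the formulas $\phi_q([v])=[\d\beta_2(\hat v)]$ with $\d\beta_1(\hat v)=v$ and $\phi_p([w])=[\d\alpha_2(\hat w)]$ with $\d\alpha_1(\hat w)=w$. Given $[v]\in\No_{p_1}$ and a lift $\hat v\in T_qQ$, set $\hat w:=\d j(\hat v)\in T_pP$. Differentiating $\alpha_1\circ j=J_1\circ\beta_1$ and $\alpha_2\circ j=J_2\circ\beta_2$ at $q$ yields $\d\alpha_1(\hat w)=\d J_1(v)$ and $\d\alpha_2(\hat w)=\d J_2(\d\beta_2(\hat v))$. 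The first identity says $\hat w$ is a valid lift for computing $\phi_p([\d J_1(v)])=\phi_p(\underline{\d J}_1([v]))$, and the second identifies the result with $\underline{\d J}_2([\d\beta_2(\hat v)])=\underline{\d J}_2(\phi_q([v]))$. The only care needed is to check that $\hat w$ projects to a vector in the kernel modulo tangents to the leaf, but this is automatic since we are working with classes in the normal bundles and the vertical tangents of $\beta_i$, $\alpha_i$ correspond under $\d j$ by equivariance.
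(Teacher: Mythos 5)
Your proof is correct: all three parts follow, as you show, by pushing the defining relations of $h_P$, $\Phi_p$, $\phi_p$ through the compatibility identities $\alpha_i\circ j=J_i\circ\beta_i$ and the equivariance of $j$, using that lifts through $\beta_1$ exist and that freeness of the right actions pins down $\Phi_p$ uniquely. The paper itself omits the argument as straightforward, and your verification is exactly the routine unpacking it has in mind.
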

The proof is straightforward. 
\begin{ex} The Morita equivalence in Example \ref{locmodmoreq} induces an identification (of maps of topological spaces) between the transverse momentum map $\underline{J_\theta}$ and (a restriction of) the map:
\begin{equation*} \underline{J_\p}:(\h^0\oplus V)/H\to \g^*/G.
\end{equation*}
\end{ex} 
We now turn to Morita equivalences between Hamiltonian actions. 
\begin{prop}\label{transgeomham} Let a Hamiltonian $(\G_1,\Omega_1)$-action along $J_1:(S_1,\omega_1)\to M_1$, a Hamiltonian $(\G_2,\Omega_2)$-action along $J_2:(S_2,\omega_2)\to M_2$ and a Hamiltonian Morita equivalence between them (denoted as in Definitions \ref{moreqdefLie} and \ref{moreqdefHam}) be given. Suppose that $p_1\in S_1$ and $p_2\in S_2$ belong to $Q$-related orbits and let $q\in Q$ such that $\beta_1(q)=p_1$ and $\beta_2(q)=p_2$. Let $p=j(q)$, $x_1=J(p_1)$ and $x_2=J(p_2)$. 
\begin{itemize}\item[a)]The isomorphism $\Phi_p:(\G_1)_{x_1}\xrightarrow{\sim}(\G_2)_{x_2}$ restricts to an isomorphism: 
\begin{equation*} (\G_1)_{p_1}\xrightarrow{\sim}(\G_2)_{p_2}.
\end{equation*}
\item[b)] There is a compatible symplectic linear isomorphism: 
\begin{equation*} (\S\No_{p_1},(\omega_1)_{p_1})\cong(\S\No_{p_2},(\omega_2)_{p_2})
\end{equation*} between the symplectic normal representations at $p_1$ and $p_2$. 
\end{itemize}
\end{prop}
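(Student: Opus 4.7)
The plan is to reduce to the canonical shape of a Morita equivalence between action groupoids and then extract both parts of the statement from the corresponding Lie-groupoid version, Proposition \ref{transgeommap}. First I would apply Proposition \ref{moreqactgpoids} to put the given Hamiltonian Morita equivalence in the form of Example \ref{asrepmoreq}: identify $Q$ with $P\times_{M_1}S_1$ and $S_2$ with $P\ast_{\G_1}S_1$, in such a way that $j$ becomes $\mathrm{pr}_P$, $\beta_1$ becomes $\mathrm{pr}_{S_1}$ and $\beta_2$ becomes the quotient map $\mathrm{pr}_{PS}$. Under this reduction the Hamiltonian compatibility $j^*\omega_P=\beta_1^*\omega_1-\beta_2^*\omega_2$ is precisely what makes $\omega_2$ the reduced symplectic form on $P\ast_{\G_1}S_1$ produced by Theorem \ref{sympmoreqasrep}.

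For part (a) I would simply apply Proposition \ref{transgeommap}(b) to the Morita equivalence $(Q,\beta_1,\beta_2)$ between the action groupoids $\G_1\ltimes S_1$ and $\G_2\ltimes S_2$, together with the maps $\mathrm{pr}_{\G_i}\colon \G_i\ltimes S_i\to\G_i$. This yields an isomorphism $\Phi_q\colon(\G_1\ltimes S_1)_{p_1}\xrightarrow{\sim}(\G_2\ltimes S_2)_{p_2}$ that is intertwined with $\Phi_p$ by the projections $\mathrm{pr}_{\G_i}$. Since the latter embed the action isotropy $(\G_i)_{p_i}$ canonically as $(\G_i\ltimes S_i)_{p_i}\subset(\G_i)_{x_i}$, the commuting square gives exactly the claimed restriction.

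For part (b), Proposition \ref{transgeommap}(c) applied to the same data gives a $\Phi_q$-equivariant isomorphism $\phi_q\colon\No_{p_1}\xrightarrow{\sim}\No_{p_2}$ between normal representations of the action groupoids that intertwines $\underline{\d J}_1$ with $\underline{\d J}_2$. Therefore it restricts to an isomorphism of their kernels, which by the proof of Proposition \ref{normrepham}(b) are precisely $\S\No_{p_1}$ and $\S\No_{p_2}$. The remaining and main point is to check that this restricted isomorphism is symplectic. Given representatives $v,w\in T_{p_1}\O^{\omega_1}=\ker(\d J_1)_{p_1}$ of classes in $\S\No_{p_1}$, I would lift them to $\hat v=(0,v)$ and $\hat w=(0,w)$ in $T_qQ=T_q(P\times_{M_1}S_1)$; this is a legitimate lift because the fiber-product condition $\d\alpha_1(0)=0=\d J_1(v)$ is automatic. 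Then $\d j(\hat v)=\d j(\hat w)=0$, so $j^*\omega_P(\hat v,\hat w)=0$, and the Hamiltonian compatibility identity evaluated at $(\hat v,\hat w)$ collapses to
\begin{equation*}
  \omega_1(v,w)=\omega_2\bigl(\d\beta_2(\hat v),\d\beta_2(\hat w)\bigr).
\end{equation*}
Since $\d\beta_2(\hat v)$ and $\d\beta_2(\hat w)$ represent $\phi_q([v])$ and $\phi_q([w])$ in $\S\No_{p_2}$, this is exactly the symplectic invariance we need.

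The main obstacle is the last step: both the reduction via Proposition \ref{moreqactgpoids} and the careful choice of lifts $\hat v=(0,v)$ are essential, since without fixing a model in the form of Example \ref{asrepmoreq} the condition $\beta_1^*\omega_1-\beta_2^*\omega_2=j^*\omega_P$ does not by itself pin down the pairing $\omega_2(\d\beta_2\hat v,\d\beta_2\hat w)$. Once this computation is in place, it also supplies the conceptual proof of Lemma \ref{assrepprop} that was promised earlier: Lemma \ref{assrepprop} is just the special case of Proposition \ref{transgeomham} obtained by taking the canonical Morita equivalence of action groupoids attached to a symplectic Morita equivalence of the ambient symplectic groupoids, as in Example \ref{asrepmoreq}.
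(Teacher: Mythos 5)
Your proof is correct and follows essentially the same route as the paper: both parts are deduced from Proposition \ref{transgeommap}, and the symplectic compatibility is extracted from the identity $j^*\omega_P=(\beta_1)^*\omega_1-(\beta_2)^*\omega_2$ evaluated on lifts killed by $\d j$. The only difference is cosmetic: the paper produces such lifts intrinsically (lifting $v\in\ker(\d J_2)_{p_2}$ through $\beta_2$ and correcting by a vector in $\ker(\d\beta_2)_q$ supplied by Lemma \ref{acttypeprop}), whereas you first pass to the model $Q\cong P\times_{M_1}S_1$ of Proposition \ref{moreqactgpoids}, where the lifts $(0,v)$ are explicit --- so the reduction is a convenience rather than, as your last paragraph suggests, a necessity.
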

\begin{proof} Part $a$ is immediate from Proposition \ref{transgeommap}$b$. For the proof of part $b$ observe that, by Proposition \ref{transgeommap}$c$, the isomorphism $\phi_q$ restricts to one between $\ker(\underline{\d J}_1)_{p_1}$ and $\ker(\underline{\d J}_2)_{p_2}$, so that we obtain an isomorphism of representations, compatible with part $a$, and given by:
\begin{equation}\label{indisosympnormrephammoreq} \S\No_{p_2}\to \S\No_{p_1},\quad [v]\mapsto [\d \beta_1(\hat{v})],
\end{equation} where $\hat{v}\in T_qQ$ is any vector such that $\d \beta_2(\hat{v})=v$ and $\d j(\hat{v})=0$. Note here (to see that such $\hat{v}$ exists) that, given $v\in \ker(\d J_2)_{p_2}$ and $\hat{w}\in T_qQ$ such that $\d \beta_2(\hat{w})=v$, we have $\d j(\hat{w})\in \ker(\d\alpha_2)$, hence by Lemma \ref{acttypeprop} there is a $\hat{u}\in \ker(\d\beta_2)_q$ such that $\d j(\hat{u})=\d j(\hat{w})$, so that $\hat{v}:=\hat{w}-\hat{u}$ has the desired properties. With this description of (\ref{indisosympnormrephammoreq}) it is immediate from (\ref{hameqmor1}) that $(\ref{indisosympnormrephammoreq})$ pulls $(\omega_1)_{p_1}$ back to $(\omega_2)_{p_2}$, which concludes the proof.
\end{proof}
We can now give a more conceptual proof of Lemma \ref{assrepprop}. 
\begin{proof}[Proof of Lemma \ref{assrepprop}] Apply Proposition \ref{transgeomham} to Example \ref{asrepmoreq}.
\end{proof}
For Hamiltonian Morita equivalences as in Example \ref{locmodmoreq} (where one of the two groupoid maps is not of action type) it is not clear to us whether there is a satisfactory generalization of Proposition \ref{transgeomham}. The arguments in the proof of that proposition do show the following, which will be enough for our purposes.  
\begin{prop}\label{finremhammoreq} Let a Hamiltonian Morita equivalence (denoted as in Definitions \ref{moreqdefLie} and \ref{moreqdefHam}) between groupoids maps $\J_1:\H_1\to (\G_1,\Omega_1)$ and $\J_2:\H_2\to (\G_2,\Omega_2)$ of Hamiltonian type be given. Suppose that $p_1\in S_1$ and $p_2\in S_2$ belong to $Q$-related orbits and let $q\in Q$ such that $\beta_1(q)=p_1$ and $\beta_2(q)=p_2$. Further, assume that $\J_1$ is of action type and the canonical injection:
\begin{equation*} \S\No_{p_2}:=\frac{\ker(\d J_2)_{p_2}}{\ker(\d J_2)_{p_2}\cap T_{p_2}\O}\hookrightarrow \ker(\underline{\d J}_2)_{p_2}
\end{equation*} is an isomorphism. The form $\omega_2$ on the base $S_2$ of $\H_2$ may be degenerate. Then:
\begin{itemize}\item[a)] the form $\omega_2$ descends to a linear symplectic form $(\omega_2)_{p_2}$ on $\S\No_{p_2}$, which is invariant under the $(\H_2)_{p_2}$-action defined by declaring the isomorphism with $\ker(\underline{\d J})_{p_2}$ to be equivariant,
\item[b)] there is a symplectic linear isomorphism $(\S\No_{p_1},\omega_{p_1})\cong (\S\No_{p_2},\omega_{p_2})$ that is compatible with the isomorphism of Lie groups $\Phi_q:(\H_1)_{p_1}\xrightarrow{\sim}(\H_2)_{p_2}$.
\end{itemize}
\end{prop}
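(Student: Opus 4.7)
The plan is to adapt the explicit construction of the isomorphism \eqref{indisosympnormrephammoreq} from the proof of Proposition \ref{transgeomham}, using only that $\J_1$ is of action type. The main actor will be the linear map $\psi_q : \ker(\d J_1)_{p_1} \to T_{p_2}S_2$ defined by $\psi_q(v) := \d\beta_2(\hat v)$, where $\hat v \in T_qQ$ is the unique lift satisfying $\d\beta_1(\hat v) = v$ and $\d j(\hat v) = 0$. Existence and uniqueness of $\hat v$ come from Lemma \ref{acttypeprop} applied to $\J_1$, which provides the isomorphism $\d j : \ker(\d\beta_1)_q \xrightarrow{\sim} \ker(\d\alpha_1)_p$ used to correct any initial $\beta_1$-lift of $v$. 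Since $\alpha_2 \circ j = J_2 \circ \beta_2$, we have $\psi_q(v) \in \ker(\d J_2)_{p_2}$, and directly from the definition of $\phi_q$ one checks that $[\psi_q(v)] = \phi_q([v])$ in $\No_{p_2}$.

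I would then descend $\psi_q$ to a linear isomorphism $\bar\psi : \S\No_{p_1} \xrightarrow{\sim} \S\No_{p_2}$. Well-definedness and injectivity follow from the identity $[\psi_q(v)] = \phi_q([v])$ combined with the fact (Proposition \ref{transgeommap}(c)) that $\phi_q$ restricts to an isomorphism $\ker(\underline{\d J}_1)_{p_1} \xrightarrow{\sim} \ker(\underline{\d J}_2)_{p_2}$. Surjectivity uses both the standing hypothesis $\S\No_{p_2} \cong \ker(\underline{\d J}_2)_{p_2}$ and the analogous identification on the $p_1$-side, which follows from the combination of action type and Hamiltonian type for $\J_1$ by the argument of Proposition \ref{infmomact}. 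Equivariance of $\bar\psi$ with respect to $\Phi_q$ is inherited from that of $\phi_q$, settling the group-action compatibility in (b).

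For part (a) one must show that $\omega_2$ actually descends to $\S\No_{p_2}$. I expect this to be the main subtlety, since the usual momentum-map identity $\iota_{a(\alpha)}\omega_2 = J_2^*\alpha$ is unavailable to us ($\J_2$ need not be of action type). Instead, I would evaluate the multiplicativity relation $\J_2^*\Omega_2 = t^*\omega_2 - s^*\omega_2$ at the unit $1_{p_2}$ on pairs $(\xi,\tilde w)$, where $\xi \in \ker(\d s_{\H_2})_{1_{p_2}}$ (so that $\d t\,\xi$ traverses $T_{p_2}\O_{p_2}$) and $\tilde w \in T_{1_{p_2}}\H_2$ is the unit-section extension of an arbitrary $w \in \ker(\d J_2)_{p_2}$, so that $\d s_{\H_2}\tilde w = \d t_{\H_2}\tilde w = w$. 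The right-hand side collapses to $\omega_2(\d t\,\xi, w)$, while the defining property \eqref{imsymp} of $\rho_\Omega$ identifies the left-hand side with $\langle \alpha, \d J_2(w)\rangle = 0$, where $\alpha \in T_{x_2}^*M_2$ corresponds to $\d\J_2(\xi)$. Thus $\omega_2(T_{p_2}\O_{p_2}, \ker(\d J_2)_{p_2}) = 0$, so $\omega_2$ descends to a $2$-form $(\omega_2)_{p_2}$ on $\S\No_{p_2}$.

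Finally, evaluating the Hamiltonian Morita-equivalence relation $j^*\omega_P = \beta_1^*\omega_1 - \beta_2^*\omega_2$ on pairs of lifts with $\d j = 0$ yields $\omega_1(v_1, v'_1) = \omega_2(\psi_q(v_1), \psi_q(v'_1))$, so that $\bar\psi$ pulls the descended form $(\omega_2)_{p_2}$ back to the symplectic form $(\omega_1)_{p_1}$. Since $\bar\psi$ is a linear isomorphism, this simultaneously shows that $(\omega_2)_{p_2}$ is symplectic (finishing (a), the $(\H_2)_{p_2}$-invariance being transferred through $\bar\psi$ from the invariance of $(\omega_1)_{p_1}$) and that $\bar\psi$ is the desired symplectic isomorphism (finishing (b)).
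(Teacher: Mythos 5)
There is a genuine gap at the very first step of your construction: the lift $\hat v$ with $\d\beta_1(\hat v)=v$ and $\d j(\hat v)=0$ need not exist, and your justification misreads Lemma \ref{acttypeprop}. When $\J_1$ is of action type, that lemma says $j$ restricts to a diffeomorphism $\beta_2^{-1}(p_2)\to\alpha_2^{-1}(x_2)$, i.e.\ it provides the isomorphism $\d j:\ker(\d\beta_2)_q\xrightarrow{\sim}\ker(\d\alpha_2)_p$ --- not the isomorphism $\d j:\ker(\d\beta_1)_q\xrightarrow{\sim}\ker(\d\alpha_1)_p$ that your correction step requires. The latter is equivalent to $\J_2$ being of action type, which is precisely what the proposition does not assume. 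Indeed, in Example \ref{locmodmoreq} (the only place the proposition is used) one has $\ker(\d\beta_1)_q$ tangent to an $H$-orbit and $\ker(\d\alpha_1)_{j(q)}$ tangent to a $G$-orbit, so $\d j(\ker(\d\beta_1)_q)$ is a proper subspace of $\ker(\d\alpha_1)_{j(q)}$ whenever $H\neq G$; taking $v=\d\beta_1(\hat w)$ for $\hat w$ projecting to a $\g$-orbit direction not in $\h$ gives a vector in $\ker(\d J_1)_{p_1}$ admitting no lift killed by $\d j$. Hence $\psi_q$ is undefined on part of its intended domain, and everything downstream --- the descent to $\bar\psi$, the evaluation of (\ref{hameqmor1}) on pairs of such lifts, and the transfer of nondegeneracy and invariance --- does not go through. (One could try to salvage this by showing every class in $\S\No_{p_1}$ admits a representative in $\d\beta_1(\ker(\d j)_q)$, but you neither state nor prove this, and the natural way to establish it is exactly the reversed construction below.)

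The repair is to build the isomorphism in the opposite direction, which is what the paper does in the proof of Proposition \ref{transgeomham} (formula (\ref{indisosympnormrephammoreq})): start from $v\in\ker(\d J_2)_{p_2}$, pick any $\hat w$ with $\d\beta_2(\hat w)=v$, observe $\d j(\hat w)\in\ker(\d\alpha_2)_p$, and correct by an element of $\ker(\d\beta_2)_q$ using the isomorphism that action type of $\J_1$ genuinely supplies; this yields $\hat v\in\ker(\d j)_q$ with $\d\beta_2(\hat v)=v$ and a well-defined map $\S\No_{p_2}\to\S\No_{p_1}$, $[v]\mapsto[\d\beta_1(\hat v)]$, which is bijective by Proposition \ref{transgeommap}$c$ together with the standing hypothesis that $\S\No_{p_2}\to\ker(\underline{\d J}_2)_{p_2}$ is an isomorphism, and symplectic by evaluating (\ref{hameqmor1}) on $\ker(\d j)_q$. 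I will note that your argument for part $a$ --- that $\omega_2$ vanishes on $T_{p_2}\O\times\ker(\d J_2)_{p_2}$, obtained by pairing a source-fiber vector with a unit-section lift in the multiplicativity identity $\J_2^*\Omega_2=t_{\H_2}^*\omega_2-s_{\H_2}^*\omega_2$ --- is correct and is a clean substitute for Proposition \ref{infmomact}$a$ in this degenerate setting; it is the part of your proposal that survives.
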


\section{The canonical Hamiltonian stratification}
In this part we apply our normal form results to study stratifications on orbit spaces of Hamiltonian actions. To elaborate: in Section \ref{stratsec} we give background on Whitney stratifications of reduced differentiable spaces and we discuss the canonical Whitney stratification of the leaf space of a proper Lie groupoid. A novelty in our discussion is that we point out a criterion (Lemma \ref{whitreghomprop}) for a partition into submanifolds of a reduced differentiable space to be Whitney regular, which may be of independent interest. Furthermore, we give a similar criterion (Corollary \ref{fibwhitstratcor}) for the fibers of a map between reduced differentiable spaces to inherit a natural Whitney stratification from a constant rank partition of the map. In Section \ref{canhamstratsec} we introduce the canonical Hamiltonian stratification and prove Theorem \ref{canhamstratthm} and Theorem \ref{redspstratthm}, by verifying that the canonical Hamiltonian stratification of the orbit space and the Lerman-Sjamaar stratification of the symplectic reduced spaces meet the aforementioned criteria, using basic features of Hamiltonian Morita equivalence and the normal form theorem. In Section \ref{regpartsec} we study the regular (or principal) parts of these stratifications. There we will also consider the infinitesimal analogue of the canonical Hamiltonian stratification on $S$, because its regular part turns out to be better behaved. Section \ref{poisstratthmsec} concerns the Poisson structure on the orbit space. The main theorem of this section shows that the canonical Hamiltonian stratification is a constant rank Poisson stratification of the orbit space, and describes the symplectic leaves in terms of the fibers of the transverse momentum map. Finally, in Section \ref{sympintstratsec} we construct explicit proper integrations of the Poisson strata of the canonical Hamiltonian stratification. Section \ref{regpartsec} can be read independently of Section \ref{poisstratthmsec} and Section \ref{sympintstratsec}.

\subsection{Background on Whitney stratifications of reduced differentiable spaces}\label{stratsec} 

\subsubsection{Stratifications of topological spaces}\label{stratdefsec}
In this paper, by a stratification we mean the following.
\begin{defi}\label{topstratdefi} Let $X$ be a Hausdorff, second-countable and paracompact topological space. A \textbf{stratification} of $X$ is a locally finite partition $\S$ of $X$ into smooth manifolds (called \textbf{strata}), that is required to satisfy:
\begin{itemize} \item[i)] Each stratum $\Sigma\in \S$ is a connected and locally closed topological subspace of $X$.
\item[ii)] For each $\Sigma\in \S$, the closure $\overline{\Sigma}$ in $X$ is a union of $\Sigma$ and strata of strictly smaller dimension. 
\end{itemize} The second of these is called the \textbf{frontier condition}. A pair $(X,\S)$ is called a \textbf{stratified space}. By a map of stratified spaces $\phi:(X,\S_X)\to (Y,\S_Y)$ we mean a continuous map $\phi:X\to Y$ with the property that for each $\Sigma_X\in \S_X$:
\begin{itemize}\item[i)] There is a stratum $\Sigma_Y\in \S_Y$ such that $\phi(\Sigma_X)\subset \Sigma_Y$.
\item[ii)] The restriction $\phi:\Sigma_X\to \Sigma_Y$ is smooth.
\end{itemize}
\end{defi}
Due to the connectedness assumption on the strata, the frontier condition (a priori of a global nature) can be verified locally with the lemma below.
\begin{lemma}\label{frontcondloc} Let $X$ be a topological space and $\S$ a partition of $X$ into connected manifolds (equipped with the subspace topology). Then $\S$ satisfies the frontier condition if and only if for every $x\in X$ and every $\Sigma\in \S$ such that $x\in \overline{\Sigma}$ and $x\notin\Sigma$ the following hold:
\begin{itemize}
\item[i)] there is an open neighbourhood $U$ of $x$ such that $U\cap \Sigma_x\subset \overline{\Sigma}$,
\item[ii)] $\dim(\Sigma_x)<\dim(\Sigma)$.
\end{itemize}
\end{lemma}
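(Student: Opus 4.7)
The plan is to handle the two directions of the equivalence separately, with the nontrivial content concentrated in the reverse direction, where connectedness of strata plays the crucial role.

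\textbf{Forward direction.} Assume the frontier condition holds, and fix $x\in X$ together with $\Sigma\in\S$ such that $x\in\overline{\Sigma}$ and $x\notin\Sigma$. Let $\Sigma_x$ be the stratum through $x$; then $\Sigma_x$ meets $\overline{\Sigma}$, so by the frontier condition $\Sigma_x$ is one of the strata of dimension strictly less than $\dim(\Sigma)$ that is entirely contained in $\overline{\Sigma}$. This immediately gives (ii), and (i) follows by taking $U=X$ (or any open neighbourhood of $x$).

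\textbf{Backward direction.} Conversely, assume the two local conditions hold. Fix $\Sigma\in\S$; we must show that any stratum $\Sigma'\neq\Sigma$ that meets $\overline{\Sigma}$ is entirely contained in $\overline{\Sigma}$ and has strictly smaller dimension. Consider the subset
\begin{equation*}
A:=\Sigma'\cap\overline{\Sigma}\subset\Sigma'.
\end{equation*}
Since $\overline{\Sigma}$ is closed in $X$, $A$ is closed in $\Sigma'$ for the subspace topology. I claim $A$ is also open in $\Sigma'$: for any $x\in A$ we have $\Sigma_x=\Sigma'\neq\Sigma$ and $x\in\overline{\Sigma}$, so condition (i) produces an open neighbourhood $U$ of $x$ in $X$ with $U\cap\Sigma'\subset\overline{\Sigma}$, hence $U\cap\Sigma'\subset A$. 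Because $\Sigma'$ is assumed connected and $A$ is nonempty (it contains the point where $\Sigma'$ meets $\overline{\Sigma}$), we conclude $A=\Sigma'$, i.e.\ $\Sigma'\subset\overline{\Sigma}$. Picking any $x\in\Sigma'$ and applying condition (ii) then gives $\dim(\Sigma')=\dim(\Sigma_x)<\dim(\Sigma)$, which is the frontier condition.

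\textbf{Expected obstacle.} There is essentially no obstacle: the argument is a standard clopen-subset-of-a-connected-space trick, together with the observation that in the forward direction the hypothesis supplies exactly what (i) and (ii) ask for. The only point to be slightly careful about is that the strata are equipped with the subspace topology (as stated in the hypothesis), so that ``closed in $\Sigma'$'' in the argument above really does follow from closedness of $\overline{\Sigma}$ in $X$; this is why the statement needed to record the topology used on the strata.
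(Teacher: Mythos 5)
Your proof is correct. The paper states this lemma without proof, and your argument (the partition-disjointness observation for the forward direction, and the clopen-subset-of-a-connected-stratum trick for the backward direction) is exactly the standard one the authors are implicitly relying on; in particular you correctly identify that condition (i) gives openness of $\Sigma'\cap\overline{\Sigma}$ in $\Sigma'$, closedness is automatic, and connectedness of the strata then forces $\Sigma'\subset\overline{\Sigma}$.
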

\begin{rem}\label{compremstratdefi} Throughout, we will make reference to various texts that use slightly different definitions of stratifications. After restricting attention to Whitney stratifications (Definition \ref{whitstratdef}), the differences between these definitions become significantly smaller (also see Remark \ref{passtoconncomprem}). A comparison of Definition \ref{topstratdefi} with the notion of stratification in \cite{Mat,Pfl} can be found in \cite{CrMe}. 
\end{rem}
The constructions of the stratifications in this paper follow a general pattern: one first defines a partition $\P$ of $X$ into manifolds (possibly disconnected, with connected components of varying, but bounded, dimension) which in a local model for $X$ have a particularly simple description. This partition $\P$ is often natural to the given geometric situation from which $X$ arises. Then, one passes to the partition $\S:=\P^\textrm{c}$ consisting of the connected components of the members of $\P$, and verifies that $\S$ is a stratification of $X$. 
\begin{rem} When speaking of a manifold, we always mean that its connected components are of one and the same dimension, unless explicitly stated otherwise (such as above).
\end{rem}
\begin{ex}\label{exmortyp} The leaf space of a proper Lie groupoid admits a canonical stratification. To elaborate, let $\G\rightrightarrows M$ be a proper Lie groupoid, meaning that $\G$ is Hausdorff and the map $(t,s):\G\to M\times M$ is proper. This is equivalent to requiring that $\G$ is proper at every $x\in M$ (as in Definition \ref{propatxdefi}) and that its leaf space $\underline{M}$ is Hausdorff \cite[Proposition 5.1.3]{Hoy}. In fact, $\underline{M}$ is locally compact, second countable and Hausdorff (so, in particular it is paracompact). To define the stratifications of $M$ and $\underline{M}$, first consider the partition $\P_\mathcal{M}(M)$ of $M$ by \textbf{Morita types}. This is given by the equivalence relation: $x_1\sim_\mathcal{M} x_2$ if and only if there are invariant opens $V_1$ and $V_2$ around $\L_{x_1}$ and $\L_{x_2}$, respectively, together with a Morita equivalence: 
\begin{equation*} \G\vert_{V_1}\simeq \G\vert_{V_2},
\end{equation*} that relates $\L_{x_1}$ to $\L_{x_2}$. Its members are invariant and therefore descend to a partition $\P_\mathcal{M}(\underline{M})$ of the leaf space $\underline{M}$. The partitions $\S_\textrm{Gp}(M)$ and $\S_\textrm{Gp}(\underline{M})$ obtained from $\P_\mathcal{M}(M)$ and $\P_\mathcal{M}(\underline{M})$ after passing to connected components form the so-called \textbf{canonical stratifications} of the base $M$ and the leaf space $\underline{M}$ of the Lie groupoid $\G$. These indeed form stratifications. This is proved in \cite{PfPoTa} and \cite{CrMe}, using the local description given by the linearization theorem for proper Lie groupoids (see \cite{We4,Zu,CrStr,FeHo}). There, the partition by Morita types is defined by declaring that $x,y\in M$ belong to the same Morita type if and only if there is an isomorphism of Lie groups:
\begin{equation*} \G_x\cong \G_y
\end{equation*}
together with a compatible linear isomorphism:
\begin{equation*} \No_x\cong \No_y
\end{equation*} between the normal representations of $\G$ at $x$ and $y$, as in (\ref{normreppt}). This is equivalent to the description given before, as a consequence of Proposition \ref{transgeomgpoid}$b$ and the linearization theorem. 
\end{ex} 

Often there are various different partitions that, after passing to connected components, induce the same stratification. This too can be checked locally, using the following lemma.
\begin{lemma}\label{passconncomp} Let $\P_1$ and $\P_2$ be partitions of a topological space $X$ into manifolds (equipped with the subspace topology) with connected components of possibly varying dimension. Then the partitions $\P_1^c$ and $\P_2^c$, obtained after passing to connected components, coincide if and only if every $x\in X$ admits an open neighbourhood $U$ in $X$ such that 
\begin{equation*} P_1\cap U=P_2 \cap U,
\end{equation*} where $P_1$ and $P_2$ are the members of $\P_1$ and $\P_2$ through $x$. 
\end{lemma}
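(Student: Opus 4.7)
The plan is to establish both implications by exploiting the fact that the members of the partitions, being manifolds with the subspace topology, are locally connected, so that each connected component is open in the respective member.

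For the ``only if'' direction, suppose that $\P_1^c=\P_2^c$ and fix $x\in X$. Let $P_1,P_2$ be the members of $\P_1,\P_2$ through $x$, and let $C$ be the common connected component through $x$, which by hypothesis is simultaneously a connected component of $P_1$ and of $P_2$. Since $P_1$ and $P_2$ are locally connected in their subspace topologies, $C$ is open in each. Hence there exist opens $U_1,U_2\subset X$ with $P_i\cap U_i=C$ for $i=1,2$. Set $U:=U_1\cap U_2$. Since $C\subset U_1\cap U_2$ (because $C\subset U_i$ for both $i$) one computes
\begin{equation*}
P_1\cap U=(P_1\cap U_1)\cap U_2=C\cap U_2=C,
\end{equation*}
and symmetrically $P_2\cap U=C$, so $P_1\cap U=P_2\cap U$ as required.

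For the ``if'' direction, assume the local equality and fix $y,z\in X$ with $y\sim_1 z$, meaning they lie in the same connected component $C$ of the member $P_1\in\P_1$. Let $P_2^y$ be the member of $\P_2$ through $y$. I will show that $C\subset P_2^y$ and that $C$ lies in a single connected component of $P_2^y$, which yields $y\sim_2 z$; by symmetry this proves $\P_1^c=\P_2^c$. For the first claim, consider
\begin{equation*}
A:=\{w\in C\mid w\in P_2^y\}.
\end{equation*}
It is nonempty because $y\in A$. Given $w\in A$, the local hypothesis applied to $w$ (and the fact that $\P_2$ is a partition, forcing the member of $\P_2$ through $w$ to be $P_2^y$) gives an open $U_w\subset X$ with $P_1\cap U_w=P_2^y\cap U_w$, from which $C\cap U_w\subset A$; so $A$ is open in $C$. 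The same local argument applied to $w\in C\setminus A$, whose member in $\P_2$ is some $P_2^w\neq P_2^y$, yields $C\cap U_w\subset C\setminus A$; so $C\setminus A$ is open in $C$. Connectedness of $C$ then forces $A=C$, i.e.\ $C\subset P_2^y$. Finally, since the subspace topology on $C$ inherited from $P_2^y$ agrees with that inherited from $X$ (and hence from $P_1$), the set $C$ is connected as a subset of $P_2^y$, so it lies in a single connected component of $P_2^y$, giving $y\sim_2 z$.

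The only subtle point is to carry through the identifications of subspace topologies carefully, so that the connectedness of $C$ established in $P_1$ can be transported to $P_2^y$; this is immediate because all relevant topologies are the one inherited from $X$. The rest is a clean book-keeping argument using connectedness and local connectedness of manifolds.
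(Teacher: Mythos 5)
Your proof is correct: the paper states Lemma \ref{passconncomp} without proof, and your argument (using local connectedness of manifolds to make components open for the forward direction, and a clopen/connectedness argument plus symmetry for the converse) is exactly the intended elementary verification. All the small points that need care --- uniqueness of the partition member through a point, and the agreement of the subspace topologies on $C$ --- are handled properly.
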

\begin{ex}\label{exisotyp} Given a proper Lie groupoid $\G\rightrightarrows M$, there is a coarser partition of $M$ (resp. $\underline{M}$) that yields the canonical stratification on $M$ (resp. $\underline{M}$) after passing to connected components: the partition by \textbf{isomorphism types}. On $M$, this partition is given by the equivalence relation: $x\cong y$ if and only if the isotropy groups $\G_x$ and $\G_y$ are isomorphic (as Lie groups). We denote this partition as $\P_{\cong}(M)$. Its members are invariant and therefore descend to a partition of $\P_{\cong}(\underline{M})$ of the leaf space $\underline{M}$. The fact that these indeed induce the canonical stratifications $\S_\textrm{Gp}(M)$ and $\S_\textrm{Gp}(\underline{M})$ follows from Lemma \ref{passconncomp} and the linearization theorem for proper Lie groupoids.
\end{ex}
\begin{ex}\label{exorbtyp} The canonical stratification on the orbit space of a proper Lie group action is usually defined using the partition by \textbf{orbit types}. To elaborate, let $M$ be a manifold, acted upon by a Lie group $G$ in a proper fashion. The partition $\P_\sim(M)$ by orbit types is defined by the equivalence relation: $x\sim y$ if and only if the isotropy groups $G_x$ and $G_y$ are conjugate subgroups of $G$. Its members are $G$-invariant, and hence this induces a partition $\P_\sim(\underline{M})$ of the orbit space $\underline{M}:=M/G$ as well. The partitions obtained from $\P_\sim(M)$ and $\P_\sim(\underline{M})$ after passing to connected components coincide with the canonical stratifications $\S_\textrm{Gp}(M)$ and $\S_\textrm{Gp}(\underline{M})$ of the action groupoid $G\ltimes M$ (as in Example \ref{exmortyp}). Another interesting partition that induces the canonical stratifications in this way is the partition by \textbf{local types}, defined by the equivalence relation: $x\cong y$ if and only if there is a $g\in G$ such that $G_x=gG_yg^{-1}$, together with a compatible linear isomorphism $\No_x\cong \No_y$ between the normal representations at $x$ and $y$. That these partitions induce the canonical stratifications follows from Lemma \ref{passconncomp} and the tube theorem for proper Lie group actions (see e.g. \cite{DuKo}). 
\end{ex}

\begin{rem} The discussion above is largely a recollection of parts of \cite{CrMe}. There the reader can find most details and proofs of the claims made in this subsection. A further discussion can be found in \cite{CrFeTo2}, where the canonical stratifications are studied in the context Poisson manifolds of compact types.   
\end{rem}
\subsubsection{Reduced differentiable spaces}\label{reddiffspsec} Further interesting properties of a stratified space can be defined when the space $X$ comes equipped with the structure of reduced differentiable space (a notion of smooth structure on $X$) and the stratification is compatible with this structure. We now recall what this means. Throughout, a sheaf will always mean a sheaf of $\R$-algebras. 
\begin{defi} A \textbf{reduced ringed space} is a pair $(X,\O_X)$ consisting of a topological space $X$ and a subsheaf $\O_X$ of the sheaf of continuous functions $\mathcal{C}_X$ on $X$ that contains all constant functions. We refer to $\O_X$ as the \textbf{structure sheaf}. A \textbf{morphism of reduced ringed spaces}: 
\begin{equation}\label{morphringsp} \phi:(X,\O_X)\to (Y,\O_Y)
\end{equation} is a continuous map $\phi:X\to Y$ with the property that for every open $U$ in $Y$ and every function $f\in \O_Y(U)$, it holds that $f\circ \phi\in \O_X(\phi^{-1}(U))$. Given such a morphism, we let
\begin{equation}\label{pullbackringmor1} \phi^*:\O_Y\to \phi_*\O_X
\end{equation} denote the induced map of sheaves over $Y$ and we use the same notation for the corresponding map of sheaves over $X$: \begin{equation}\label{pullbackringmor2} 
\phi^*:\phi^*\O_Y\to \O_X. 
\end{equation} 
\end{defi}
\begin{ex} Let $M$ be a smooth manifold and $\mathcal{C}^\infty_M$ its sheaf of smooth functions. Then $(M,\mathcal{C}^\infty_M)$ is a reduced ringed space. A map $M\to N$ between smooth manifolds is smooth precisely when it is a morphism of reduced ringed spaces $(M,\mathcal{C}^\infty_M)\to (N,\mathcal{C}^\infty_N)$. 
\end{ex}
\begin{ex}\label{smoothfunsubsp} Let $Y$ be a subspace of $\R^n$. We call a function defined on (an open in) $Y$ smooth if it extends to a smooth function on an open in $\R^n$. This gives rise to the sheaf of smooth functions $\mathcal{C}^\infty_Y$ on $Y$. 
\end{ex}
\begin{ex}\label{smoothfunleafsp} The leaf space $\underline{M}$ of a Lie groupoid $\G\rightrightarrows M$ is naturally a reduced ringed space, with structure sheaf $\mathcal{C}^\infty_{\underline{M}}$ given by:
\begin{equation*} \mathcal{C}^\infty_{\underline{M}}(\underline{U})=\{f\in \mathcal{C}_{\underline{M}}(\underline{U})\mid f\circ q\in \mathcal{C}^\infty_M(q^{-1}(\underline{U}))\},
\end{equation*} where $q:M\to \underline{M}$ denotes the projection onto the leaf space. We simply refer to this as the \textbf{sheaf of smooth functions on the leaf space}. Often we implicitly identify $\mathcal{C}^\infty_{\underline{M}}$ with the (push-forward of) the sheaf of $\G$-invariant smooth functions on $M$, via $q^*:\mathcal{C}_{\underline{M}}\to q_*\mathcal{C}_M$.
\end{ex}

\begin{defi}[\cite{GoSa}]\label{reddifsp} A \textbf{reduced differentiable space} is a reduced ringed space $(X,\O_X)$ with the property that for every $x\in X$ there is an open neighbourhood $U$, a locally closed subspace $Y$ of $\R^n$ (where $n$ may depend on $x$) and a homeomorphism $\chi:U\to Y$ that induces an isomorphism of reduced ringed spaces:
\begin{equation*} (U,\O_X\vert_U)\cong (Y,\mathcal{C}^\infty_Y).
\end{equation*} We call such a homeomorphism $\chi$ a \textbf{chart} of the reduced differentiable space. A \textbf{morphism of reduced differentiable spaces} is simply a morphism of the underlying reduced ringed spaces. 
\end{defi}
\begin{ex} A reduced differentiable space $(X,\O_X)$ is a $n$-dimensional smooth manifold if and only if around every $x\in X$ there is a chart for $(X,\O_X)$ that maps onto an open in $\R^n$.
\end{ex}
\begin{ex}\label{leafspreddiffspex} The leaf space $\underline{M}$ of a proper Lie groupoid $\G\rightrightarrows M$, equipped with the structure sheaf of Example \ref{smoothfunleafsp}, is a reduced differentiable space.  The proof of this will be recalled at the end of this subsection.
\end{ex}
\begin{rem}\label{partofunityreddiffbsp} A reduced differentiable space $(X,\O_X)$ is locally compact. So, if it is Hausdorff and second countable, then it is also paracompact. Moreover, it then admits $\O_X$-partitions of unity subordinate to any open cover (this can be proved as for manifolds, see e.g. \cite{GoSa}). 
\end{rem} 
To say what it means for a stratification to be compatible with the structure of reduced differentiable space, we will need an appropriate notion of submanifold. 

\begin{defi}\label{embredringdef} Let $(Y,\O_Y)$ and $(X,\O_X)$ be reduced ringed spaces and $i:Y\hookrightarrow X$ a topological embedding. We call $i$ an \textbf{embedding of reduced ringed spaces} if it is a morphism of reduced ringed spaces and $i^*:\O_X\vert_Y\to \O_Y$ is a surjective map of sheaves. In other words, $\O_Y$ coincides with the image sheaf of the map $i^*:\O_X\vert_Y\to \mathcal{C}_Y$, meaning that for every open $U$ in $Y$:
\begin{equation*} \O_Y(U)=\{f\in \mathcal{C}_Y(U)\mid \forall y\in U,\text{ }\exists (\widehat{f})_{i(y)}\in (\O_X)_{i(y)} : (f)_y=(\widehat{f}\vert_Y)_y\}.
\end{equation*} 
\end{defi}
\begin{rem}\label{indfuncstr} Let us stress that for any subspace $Y$ of a reduced ringed space $(X,\O_X)$ there is a unique subsheaf $\O_Y\subset \mathcal{C}_Y$ making $i:(Y,\O_Y)\hookrightarrow (X,\O_X)$ into an embedding of reduced ringed spaces. We will call this the \textbf{induced structure sheaf} on $Y$. Note that, if $(X,\O_X)$ is a reduced differentiable space and $Y$ is locally closed in $X$, then $Y$, equipped with its induced structure sheaf, is a reduced differentiable space as well, because charts for $X$ restrict to charts for $Y$.
\end{rem}
\begin{ex}\label{embredringspex} Here are some examples of embeddings:
\begin{itemize} \item[i)] For maps between smooth manifolds, the above notion of embedding is the usual one.
\item[ii)] In Example \ref{smoothfunsubsp}, the inclusion $i:(Y,\mathcal{C}^\infty_Y)\hookrightarrow (\R^n,\mathcal{C}^\infty_{\R^n})$ is an embedding. 
\item[iii)] Let $(X,\O_X)$ be a reduced ringed space and $U\subset X$ open. A homeomorphism $\chi:U\to Y$ onto a locally closed subspace $Y$ of $\R^n$ is a chart if and only if $\chi:(U,\O_X\vert_U)\to (\R^n,\mathcal{C}^\infty_{\R^n})$ is an embedding. 
\end{itemize}
\end{ex}
\begin{rem}\label{morphsmstrspsimp} Let $\phi:(X_1,\O_{X_1})\to (X_2,\O_{X_2})$ be a morphism of reduced ringed spaces and let $Y_1\subset X_1$ and $Y_2\subset X_2$ be subspaces such that $\phi(Y_1)\subset Y_2$. Then $\phi$ restricts to a morphism of reduced ringed spaces $(Y_1,\O_{Y_1})\to (Y_2,\O_{Y_2})$ with respect to the induced structure sheaves. 
 \end{rem}
\begin{defi} Let $(X,\O_X)$ be a reduced differentiable space and $Y$ a locally closed subspace of $X$. We call $Y$ a \textbf{submanifold} of $(X,\O_X)$, when endowed with its induced structure sheaf it is a smooth manifold. 
\end{defi} 
\begin{rem} Let $(X,\O_X)$ be a reduced differentiable space. Let $Y$ be a subspace of $X$. Then $Y$ is a $d$-dimensional submanifold of $(X,\O_X)$ if and only if for every chart $(U,\chi)$ of $(X,\O_X)$ the image $\chi(U\cap Y)$ is a $d$-dimensional submanifold of $\R^n$. 
\end{rem}
\begin{ex}\label{canstratsubmanex} Let $\G\rightrightarrows M$ be a proper Lie groupoid. Each Morita type in $\underline{M}$ is a submanifold of the leaf space $(\underline{M},\mathcal{C}^\infty_{\underline{M}})$. The same holds for each stratum of the canonical stratification. 
\end{ex}
We end this subsection by recalling proofs of the claims in Example \ref{leafspreddiffspex} and \ref{canstratsubmanex}. The following observation will be useful for this and for later reference.
\begin{prop}\label{globchar} Let $(Y,\O_Y)$ be a reduced ringed space, $(X,\O_X)$ a Hausdorff and second countable reduced differentiable space. Suppose that $i:Y\hookrightarrow X$ is both a topological embedding and a morphism of reduced ringed spaces. Then $i$ is an embedding of reduced ringed spaces if and only if every global function $f\in \O_Y(Y)$ extends to a function $g\in \O_X(U)$ defined on some open neighbourhood $U$ of $i(Y)$ in $X$. Moreover, if $i(Y)$ is closed in $X$, then $U$ can be chosen to be $X$.
\end{prop}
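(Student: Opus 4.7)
My plan is to prove both implications via $\O_X$-partitions of unity, which are available because $(X,\O_X)$ is a Hausdorff, second countable reduced differentiable space (Remark \ref{partofunityreddiffbsp}). The forward direction is a gluing argument; the backward direction uses an $\O_X$-bump function to reduce a stalk-level problem to the global extension hypothesis.

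For the forward direction, assume $i$ is an embedding and fix $f\in\O_Y(Y)$. For each $y\in Y$, the defining surjectivity of $i^*$ at stalks produces an open $U_y\subset X$ with $i(y)\in U_y$ and a section $g_y\in\O_X(U_y)$ such that $(g_y\vert_Y)_y=(f)_y$. Since $i$ is a topological embedding, the locus in $U_y\cap Y$ where $g_y\vert_Y$ and $f$ agree is the trace of an open subset of $X$, so after shrinking $U_y$ I may assume $g_y\vert_{U_y\cap Y}=f\vert_{U_y\cap Y}$ identically. Let $U:=\bigcup_{y\in Y}U_y$; this is an open neighbourhood of $i(Y)$ that is itself a Hausdorff, second countable reduced differentiable space, so I may take an $\O_U$-partition of unity $\{\rho_\alpha\}$ subordinate to a locally finite refinement $\{U_{y_\alpha}\}$ and set $g:=\sum_\alpha\rho_\alpha g_{y_\alpha}\in\O_X(U)$ (each summand extended by zero off $\mathrm{supp}(\rho_\alpha)$). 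Since each $\rho_\alpha$ is non-zero only where $g_{y_\alpha}\vert_Y=f$, a direct evaluation gives $g\vert_Y=f\cdot\sum_\alpha\rho_\alpha\vert_Y=f$, the desired global extension.

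For the converse, assume the extension property and fix $y\in Y$ together with a germ $(f)_y\in(\O_Y)_y$ represented by $f\in\O_Y(V)$. Using that $i$ is a topological embedding, write $V=W\cap Y$ for some open $W\subset X$ containing $i(y)$. Choose nested opens $W_1\subset\overline{W_1}\subset W_2\subset\overline{W_2}\subset W$ around $i(y)$ and, via a partition of unity subordinate to $\{W_2,X\setminus\overline{W_1}\}$, a bump function $\rho\in\O_X(X)$ with $\rho\equiv 1$ on $W_1$ and $\mathrm{supp}(\rho)\subset W_2$. Define $\tilde{f}:Y\to\R$ by gluing $(\rho\vert_V)\cdot f$ on $V$ with $0$ on the open set $Y\setminus\mathrm{supp}(\rho\vert_Y)$; these pieces cover $Y$ because $\mathrm{supp}(\rho\vert_Y)\subset W_2\cap Y\subset V$, and they agree on overlaps since $\rho\vert_Y$ vanishes off its support. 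Because $i$ is a morphism of reduced ringed spaces, $\rho\vert_V\in\O_Y(V)$; multiplicativity of $\O_Y$ then gives $\tilde{f}\vert_V\in\O_Y(V)$, and the sheaf axiom yields $\tilde{f}\in\O_Y(Y)$. Applying the hypothesis, $\tilde{f}$ extends to some $g\in\O_X(U')$ on an open $U'\supset i(Y)$. Since $\tilde{f}=f$ on $W_1\cap Y$, the germ $(g)_{i(y)}\in(\O_X)_{i(y)}$ restricts to $(f)_y$, establishing the required surjectivity.

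For the moreover statement, assume $i(Y)$ is closed in $X$ and let $g\in\O_X(U)$ extend $f\in\O_Y(Y)$ as produced by the forward direction. Then $\{U,X\setminus i(Y)\}$ is an open cover of $X$; choosing an $\O_X$-partition of unity $\{\rho_1,\rho_2\}$ subordinate to it, the function $\tilde{g}:=\rho_1 g$ (extended by zero off $\mathrm{supp}(\rho_1)\subset U$) lies in $\O_X(X)$ and satisfies $\tilde{g}\vert_Y=f$, since $\mathrm{supp}(\rho_2)\subset X\setminus i(Y)$ forces $\rho_1\equiv 1$ on $i(Y)$. The main obstacle I foresee is purely bookkeeping: verifying that $\tilde{f}$ in the backward direction is a bona fide $\O_Y$-section requires juggling the facts that $\O_Y$ is a multiplicative subsheaf of $\mathcal{C}_Y$, that $\rho\vert_Y$ is automatically $\O_Y$-smooth by the morphism property of $i$, and that $\mathrm{supp}(\rho\vert_Y)\subset V$ so that the extension-by-zero is compatible on overlaps. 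Once these sheaf-theoretic points are settled, no substantially new ideas beyond standard partition-of-unity techniques are required.
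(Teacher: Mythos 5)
Your proof is correct and follows essentially the same route as the paper: both directions rest on $\O_X$-partitions of unity, the forward implication by gluing local extensions and the backward one by cutting a local representative of a germ down to a global $\O_Y$-function with a pulled-back bump before invoking the extension hypothesis. The only cosmetic differences are that the paper manufactures its bump function from a chart rather than from a two-element partition of unity, and it obtains the global extension in the closed case directly from the gluing step instead of via a second cutoff.
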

\begin{proof} For the forward implication, let $f\in \O_Y(Y)$. Since $i$ is an embedding of reduced ringed spaces, for every $y\in Y$ there is a local extension of $f$, defined on an open around $i(y)$ in $X$. By Remark \ref{partofunityreddiffbsp}, any open in $X$ admits $\O_X$-partitions of unity subordinate to any open cover. So, using the standard partition of unity argument we can construct, out of the local extensions, an extension $g\in \O_X(U)$ of $f$ defined on an open neighbourhood $U$ of $i(Y)$ in $X$, which can be taken to be all of $X$ if $i(Y)$ is closed in $X$. For the backward implication, it suffices to show that every germ in $\O_Y$ can be represented by a globally defined function in $\O_Y(Y)$. For this, it is enough to show that for every $y\in Y$ and every open neighbourhood $U$ of $y$ in $Y$, there is a function $\rho\in \O_Y(Y)$, supported in $U$, such that $\rho=1$ on an open neighbourhood of $y$ in $U$. To verify the latter, let $y$ and $U$ be as above. Let $V$ be an open in $X$ around $i(y)$ such that $V\cap i(Y)=i(U)$. Using a chart for $(X,\O_X)$ around $i(y)$, we can find a function $\rho_X \in \O_X(X)$, supported in $V$, such that $\rho_X=1$ on an open neighbourhood of $i(y)$ in $V$. Now, $\rho:=i^*(\rho_X)\in \O_Y(Y)$ is supported in $U$ and equal to $1$ on an open neighbourhood of $y$ in $U$. This proves the proposition.
\end{proof}
Returning to Example \ref{leafspreddiffspex}: first consider the case of a compact Lie group $G$ acting linearly on a real finite-dimensional vector space $V$ (that is, $V$ is a representation of $G$). The algebra $P(V)^G$ of $G$-invariant polynomials on $V$ is finitely generated. Given a finite set of generators $\{\rho_1,...,\rho_n\}$ of $P(V)^G$, one can consider the polynomial map:
\begin{equation}\label{orbmaprep} \rho=(\rho_1,...,\rho_n):V\to \R^n.
\end{equation} We call this a \textbf{Hilbert map} for the representation $V$. Any such map factors through an embedding of topological spaces $\underline{\rho}:V/G\to \R^n$ onto a closed subset of $\R^n$. Furthermore:
\begin{thm}[\cite{Schw1}] Let $G$ be a compact Lie group, $V$ a real finite-dimensional representation of $G$ and $\rho:V\to \R^n$ a Hilbert map. Then the associated map (\ref{orbmaprep}) satisfies:
\begin{equation*} \rho^*(C^\infty(\R^n))=C^\infty(V)^G.
\end{equation*}
\end{thm}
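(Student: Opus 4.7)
Plan of proof. The inclusion $\rho^*(C^\infty(\R^n)) \subseteq C^\infty(V)^G$ is immediate from the $G$-invariance of each $\rho_i$. For the reverse inclusion I would follow the classical strategy, which splits into three steps: a reduction to a local problem, an algebraic (formal) factorization, and an analytic lifting from formal to smooth.

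First, reduce to a local problem on $\R^n$. Averaging against Haar measure is a surjective $\R$-linear projection $C^\infty(V) \twoheadrightarrow C^\infty(V)^G$, so it suffices to handle $G$-invariant $f$. A Stone--Weierstrass argument applied to the algebra of $G$-invariant continuous functions (which separate orbits since $G$ is compact) shows that the Hilbert map $\rho$ separates $G$-orbits; the induced $\underline{\rho}\colon V/G \to \R^n$ is therefore a topological embedding onto a closed subset. Any $f \in C^\infty(V)^G$ thus descends to a continuous function on $\rho(V)$, and it is enough to produce, for each $y \in \R^n$, an open neighbourhood $U \ni y$ and a function $h_U \in C^\infty(U)$ satisfying $h_U \circ \rho = f$ on $\rho^{-1}(U)$. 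A standard partition-of-unity argument on $\R^n$ then glues the $h_U$ to a global $h \in C^\infty(\R^n)$ with $h\circ\rho = f$: if $\{\chi_\alpha\}$ is subordinate to a cover by such $U_\alpha$, then $h := \sum_\alpha \chi_\alpha h_\alpha$ pulls back to $\sum_\alpha (\chi_\alpha\circ\rho)\, f = f$ on $V$.

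Second, the algebraic input is Hilbert's finiteness theorem for polynomial invariants of compact (hence reductive) groups: the $G$-invariant polynomial algebra on $V$ is generated as an $\R$-algebra by $\rho_1,\ldots,\rho_n$. Applied near a point $v_0 \in V$ via the slice representation of the isotropy $G_{v_0}$ on a $G_{v_0}$-invariant complement to $T_{v_0}(G\cdot v_0)$ (which, because $G$ is compact, models a tubular neighbourhood of the orbit through $v_0$), this yields the formal version of the theorem: the Taylor expansion at $v_0$ of any $f \in C^\infty(V)^G$ is a formal power series in the shifted invariants $\rho_i - \rho_i(v_0)$. On the level of jets, the desired factorization therefore exists.

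Third, and most importantly, lift this formal factorization to a genuine smooth factorization near $y = \rho(v_0)$. This is the analytic heart of the argument, and is carried out by Malgrange's preparation theorem (equivalently Mather's division theorem): one shows that $C^\infty(V)^G$, viewed as a module over $\rho^*C^\infty(\R^n)$, is finitely generated, and then an equivariant Nakayama-type argument (using the formal step to control the fiber at each $y$) forces this module to be generated by $1$ alone — which is exactly the desired local surjectivity of $\rho^*$. The main obstacle is this third step: smooth functions are not determined by their Taylor expansions, so the passage from Hilbert's formal statement to the smooth statement is genuinely nontrivial, and verifying the hypotheses of Malgrange preparation in the equivariant setting (a finiteness property of the invariant jet bundle) is the core technical contribution of Schwarz's theorem.
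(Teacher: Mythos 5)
The paper does not prove this statement at all: it is quoted as Schwarz's theorem with a citation to \cite{Schw1} and used as a black box (its only role in the paper is to produce global charts for orbit spaces via Proposition \ref{globchar}). So there is no in-paper argument to compare against; the question is only whether your outline would constitute a proof. It correctly reproduces the architecture of the classical argument — the easy inclusion, orbit separation by invariant polynomials, localization on $\R^n$ by a partition of unity, the formal (Taylor-series) factorization at each point via Hilbert's finiteness theorem and slice representations — but it stops exactly where the theorem begins. Step three is not a verification you have deferred for brevity; it \emph{is} the content of the theorem, and you say so yourself ("the core technical contribution of Schwarz's theorem"). As written, the proposal is a correct roadmap, not a proof.

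Two more concrete points. First, the "finitely generated module plus Nakayama" framing of step three has a circularity risk: to invoke Malgrange preparation you must show that the fiber $C^\infty(V)^G/\rho^*(\mathfrak{m}_y)\,C^\infty(V)^G$ is finite-dimensional, and the obstruction to seeing this is precisely the flat ($\infty$-jet zero) invariant functions that the formal step cannot detect. Schwarz's actual argument circumvents this by an induction on $\dim G+\dim V$ using slice representations off the fixed-point set, together with the fact (resting on Glaeser's composite function theorem and the division/preparation machinery) that $\rho^*C^\infty(\R^n)$ is a \emph{closed} subspace of $C^\infty(V)$ in the $C^\infty$ topology; one then concludes from closedness plus density of the polynomial invariants. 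If you intend the Nakayama route you must explain how the flat functions are absorbed. Second, a small inaccuracy: separation of orbits alone does not make $\underline{\rho}:V/G\to\R^n$ a topological embedding onto a closed subset; you also need $\rho$ to be proper, which follows because $\lVert v\rVert^2$ for a $G$-invariant inner product is an invariant polynomial and hence a polynomial in $\rho_1,\dots,\rho_n$.
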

So, in view of Proposition \ref{globchar}, the morphism of reduced ringed spaces:
\begin{equation}\label{schwarzchart} \underline{\rho}:(V/G,\mathcal{C}^\infty_{V/G})\to (\R^n,\mathcal{C}^\infty_{\R^n}).
\end{equation}
is in fact an embedding of reduced ringed spaces (Definition \ref{embredringdef}), and hence a globally defined chart for the orbit space $V/G$ (by Example \ref{embredringspex}). Next, we show how this leads to charts for the leaf space of a proper Lie groupoid. Recall:
\begin{prop}\label{moreqisoredring} The homeomorphism of leaf spaces (\ref{leafsphommoreq}) induced by a Morita equivalence of Lie groupoids is an isomorphism of reduced ringed spaces:\begin{equation*}h_P:(\underline{M}_1,\mathcal{C}^\infty_{\underline{M}_1})\xrightarrow{\sim} (\underline{M}_2,\mathcal{C}^\infty_{\underline{M}_2}).
\end{equation*}
\end{prop}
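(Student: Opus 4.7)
The plan is to unpack the structure sheaf $\mathcal{C}^\infty_{\underline{M}_i}$ via Example \ref{smoothfunleafsp} as (the pushforward along $q_i$ of) the sheaf of $\G_i$-invariant smooth functions on $M_i$, and then to show that $h_P^*$ exchanges these two sheaves by lifting invariant functions to the total space $P$ and moving them across. At the level of sections, fix an open $\underline{U}_1\subset \underline{M}_1$, set $\underline{U}_2:=h_P(\underline{U}_1)$, and write $U_i:=q_i^{-1}(\underline{U}_i)$ for the corresponding saturated opens. Since $h_P$ is a homeomorphism, $\underline{U}_2$ is open, and by Example \ref{crucexmoreq2} the preimage $P_U:=\alpha_1^{-1}(U_1)=\alpha_2^{-1}(U_2)$ gives a Morita equivalence between $\G_1\vert_{U_1}$ and $\G_2\vert_{U_2}$ via the restrictions of $\alpha_1,\alpha_2$.

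The key step is then the following bijection: for each such $\underline{U}_1$,
\begin{equation*}
\alpha_1^*\colon C^\infty(U_1)^{\G_1}\xrightarrow{\sim} C^\infty(P_U)^{\G_1\times\G_2},
\end{equation*}
and analogously for $\alpha_2^*$. The forward direction is easy: $\alpha_1$ is $\G_1$-equivariant and $\G_2$-invariant (its fibers are the $\G_2$-orbits in $P_U$), so $\alpha_1^*f$ is automatically bi-invariant when $f$ is $\G_1$-invariant. The inverse direction uses the standard fact that, because $\alpha_1\colon P_U\to U_1$ is a surjective submersion whose fibers coincide with the $\G_2$-orbits, any $\G_2$-invariant smooth function on $P_U$ descends uniquely to a smooth function on $U_1$; a one-line computation then shows that if the function on $P_U$ is moreover $\G_1$-invariant, its descent is $\G_1$-invariant on $U_1$.

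Composing these two bijections produces a natural $\R$-algebra isomorphism
\begin{equation*}
\Phi_{\underline{U}_1}:=(\alpha_1^*)^{-1}\circ \alpha_2^*\colon \mathcal{C}^\infty_{\underline{M}_2}(\underline{U}_2)\xrightarrow{\sim}\mathcal{C}^\infty_{\underline{M}_1}(\underline{U}_1),
\end{equation*}
and a brief fibrewise unwinding identifies $\Phi_{\underline{U}_1}$ with $(h_P\vert_{\underline{U}_1})^*$: for $f\in \mathcal{C}^\infty_{\underline{M}_2}(\underline{U}_2)$ and $x_1\in U_1$, picking any $p\in \alpha_1^{-1}(x_1)$ one has $\alpha_2(p)\in h_P(\L_{x_1})$ by definition of $h_P$, so the descended function sends $\L_{x_1}$ to $f(h_P(\L_{x_1}))$. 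Naturality of $\Phi_{\underline{U}_1}$ in $\underline{U}_1$ is immediate from the construction, so one obtains the desired isomorphism of structure sheaves.

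The only substantive content is the principal-bundle descent step identifying invariant smooth functions upstairs with smooth functions downstairs; everything else is a matter of carefully matching opens and chasing definitions. Since this descent statement is a standard consequence of $\alpha_i$ being a surjective submersion admitting local sections, no real obstacle is expected.
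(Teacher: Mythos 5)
Your proposal is correct and follows essentially the same route as the paper: both identify $\mathcal{C}^\infty_{\underline{M}_i}$ with invariant functions, pull back along $\alpha_1$ and $\alpha_2$ to the algebra of bi-invariant smooth functions on (the relevant saturated piece of) $P$, and define $h_P^*$ as the composite $(\alpha_1^*)^{-1}\circ\alpha_2^*$, with the principal-bundle descent along the surjective submersions $\alpha_i$ doing the real work. The paper merely displays the commutative diagram of these isomorphisms, whereas you spell out why each $\alpha_i^*$ is bijective; the content is the same.
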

\begin{proof} Suppose we are given a Morita equivalence between Lie groupoids:
\begin{center}
\begin{tikzpicture} \node (G1) at (0,0) {$\G_1$};
\node (M1) at (0,-1.3) {$M_1$};
\node (S) at (1.4,0) {$P$};
\node (M2) at (2.7,-1.3) {$M_2$};
\node (G2) at (2.7,0) {$\G_2$};
 
\draw[->,transform canvas={xshift=-\shift}](G1) to node[midway,left] {}(M1);
\draw[->,transform canvas={xshift=\shift}](G1) to node[midway,right] {}(M1);
\draw[->,transform canvas={xshift=-\shift}](G2) to node[midway,left] {}(M2);
\draw[->,transform canvas={xshift=\shift}](G2) to node[midway,right] {}(M2);
\draw[->](S) to node[pos=0.25, below] {$\text{ }\text{ }\alpha_1$} (M1);
\draw[->] (0.8,-0.15) arc (315:30:0.25cm);
\draw[<-] (1.9,0.15) arc (145:-145:0.25cm);
\draw[->](S) to node[pos=0.25, below] {$\alpha_2$\text{ }} (M2);
\end{tikzpicture}
\end{center}
Then, given two $P$-related invariant opens $U_1\subset M_1$ and $U_2\subset M_2$, we have algebra isomorphisms:
\begin{center}
\begin{tikzpicture} \node (S_1) at (0,0) {$\mathcal{C}_{M_1}^\infty(U_1)^{\G_1}$};
\node (S_2) at (8,0) {$\mathcal{C}_{M_2}^\infty(U_2)^{\G_2}$};
\node (Q) at (4,1) {$\mathcal{C}_P^\infty(\alpha_1^{-1}(U_1))^{\G_1}\cap \mathcal{C}_P^\infty(\alpha_2^{-1}(U_2))^{\G_2}$};
\node (X_1) at (0,-2) {$\mathcal{C}_{\underline{M}_1}^\infty(\underline{U}_1)$};
\node (X_2) at (8,-2) {$\mathcal{C}_{\underline{M}_2}^\infty(\underline{U}_2)$};

\draw[->](S_1) to node[pos=0.45, below] {$\text{ }\text{ }\alpha_1^*$} (Q);
\draw[->](S_2) to node[pos=0.45, below] {$\text{ }\text{ }\alpha_2^*$} (Q);
\draw[->](X_1) to node[pos=0.55, right] {$q_1^*\text{ }\text{ }$} (S_1);
\draw[->](X_2) to node[pos=0.55, left] {$\text{ }\text{ }q_2^*$} (S_2);
\draw[<-, dashed](X_1) to node[pos=0.45, below] {$\text{ }\text{ }h_P^*$} (X_2);

\end{tikzpicture}
\end{center} that complete to a commutative diagram via $h_P^*:\mathcal{C}_{\underline{M}_2}\to (h_P)_*\mathcal{C}_{\underline{M}_1}$. 
\end{proof}

Now, the linearization theorem for proper Lie groupoids implies that, given a proper Lie groupoid $\G\rightrightarrows M$ and an $x\in M$, there is an invariant open neighbourhood $U$ of $x$ in $M$ and a Morita equivalence between $\G\vert_U$ and the action groupoid $\G_x\ltimes \No_x$ of the normal representation at $x$, as in (\ref{normreppt}), that relates $\L_x$ to the origin in $\No_x$.
So, applying Proposition \ref{moreqisoredring} we find an isomorphism: 
\begin{equation}\label{locmodliegpoidchart} (\underline{U},\mathcal{C}^\infty_{\underline{M}}\vert_{\underline{U}})\cong (\No_x/\G_x,\mathcal{C}^\infty_{\No_x/\G_x}),
\end{equation} which composes with the embedding (\ref{schwarzchart}) to a chart for $(\underline{M},\mathcal{C}^\infty_{\underline{M}})$, as desired. We conclude that $(X,\mathcal{C}^\infty_X)$ is a reduced differentiable space, as claimed in Example \ref{leafspreddiffspex}. To see why the claims in Example \ref{canstratsubmanex} hold true, let $\underline{\Sigma}\in \P_\mathcal{M}(\underline{M})$ be a Morita type. Suppose that $\L_x\in \underline{\Sigma}$. The isomorphism (\ref{locmodliegpoidchart}) identifies $\underline{U}\cap \underline{\Sigma}$ with the Morita type of $\G_x\ltimes \No_x$ through the origin, which is the fixed point set $\No_x^{\G_x}$ \textemdash a submanifold of $\No_x/\G_x$. Therefore $\underline{\Sigma}$ is a submanifold of $\underline{M}$ near $\L_x$. This being true for all points in $\underline{\Sigma}$, it follows that $\underline{\Sigma}$ is a submanifold with connected components of possibly varying dimension. The dimension of the connected component through $\L_x$ is $\dim(\No_x^{\G_x})$, hence it follows from Proposition \ref{transgeomgpoid}$b$ that all connected components of $\underline{\Sigma}$ in fact have the same dimension. So, the Morita types are indeed submanifolds of the leaf space, and so are their connected components.
 \subsubsection{Whitney stratifications of reduced differentiable spaces}\label{whitneysec} 
\begin{defi}\label{smoothstratspdefi} Let $(X,\O_X)$ be a Hausdorff and second countable reduced differentiable space. A \textbf{stratification} $\S$ of $(X,\O_X)$ is a stratification of $X$ by submanifolds of $(X,\O_X)$. That is, $\S$ is a stratification of $X$ with the property that the given smooth structure on each stratum coincides with its induced structure sheaf. We call the triple $(X,\O_X,\S)$ a \textbf{smooth stratified space}. A \textbf{morphism of smooth stratified spaces} is a morphism of the underlying stratified spaces that is simultaneously a morphism of the underlying reduced ringed spaces. 
\end{defi}
\begin{rem} As noted in \cite{PfPoTa}, the notion of smooth stratified space is equivalent (up to the slight difference pointed out in Remark \ref{compremstratdefi}) to the notion of stratified space with smooth structure in \cite{Pfl}, which is defined starting from an atlas of compatible singular charts, rather than a structure sheaf. 
\end{rem}
 % \begin{lemma}\label{compchlem} Let $Y_1,Y_2\subset \R^n$ and $h:(Y_1,\mathcal{C}^\infty_{Y_1})\to (Y_2,\mathcal{C}^\infty_{Y_2})$ an isomorphism of reduced ringed spaces. Then for every $y\in Y_1$ there is a diffeomorphism $H:O_1\to O_2$ from an open $O_1$ in $\R^n$ around $y$ onto an open $O_2$ in $\R^n$ around $h(y)$, such that $H\vert_{O_1\cap Y_1}=h\vert_{O_1\cap Y_1}$. 
% \end{lemma}
 %\begin{prop} Let $Y_1,Y_2\subset \R^n$ and let $h:(Y_1,\mathcal{C}^\infty_{Y_1})\to (Y_2,\mathcal{C}^\infty_{Y_2})$ be an isomorphism of reduced ringed spaces. Then for every $x\in Y_1$, there is a diffeomorphism $H:O_1\to O_2$, defined on opens $O_1$ and $O_2$ in $\R^n$, around $x$ and $h(x)$ respectively, such that: 
% \begin{equation*} H\vert_{Y_1\cap O_1}=h\vert_{Y_1\cap O_1}.
% \end{equation*}
% \end{prop}
On stratifications of reduced differentiable spaces, we can impose an important extra regularity condition: Whitney's condition (b). We now recall this, starting with: 
\begin{defi} Let $R$ and $S$ be disjoint submanifolds of $\R^n$, and let $y\in S$. Then $R$ is called \textbf{Whitney regular} over $S$ at $y$ if the following is satisfied. For any two sequences $(x_n)$ in $R$ and $(y_n)$ in $S$ that both converge to $y$ and satisfy:
\begin{itemize}\item[i)] $T_{x_n}R$ converges to some $\tau$ in the Grassmannian of $\dim(R)$-dimensional subspaces of $\R^n$,
\item[ii)] the sequence of lines $[x_n-y_n]$ in $\R P^{n-1}$ converges to some line $\ell$,
\end{itemize} it must hold that $\ell \subset \tau$. 
\end{defi} 
Using charts, this generalizes to reduced differentiable spaces, as follows. 
\begin{defi} Let $(X,\O_X)$ be a reduced differentiable space and let $R$ and $S$ be disjoint submanifolds. Then $R$ is called \textbf{Whitney regular} over $S$ at $y\in S$ if for every chart $(U,\chi)$ around $y$, the submanifold $\chi(R\cap U)$ of $\R^n$ is Whitney regular over $\chi(S\cap U)$ at $\chi(y)$. We call $R$ Whitney regular over $S$ if it is so at every $y\in S$. Moreover, we call a partition $\P$ of $(X,\O_X)$ into submanifolds Whitney regular if every member of $\P$ is Whitney regular over each other member. 
\end{defi} 
\begin{defi}\label{whitstratdef} A smooth stratified space $(X,\O_X,\S)$ is called a \textbf{Whitney stratified space} when the partition $\S$ of $(X,\O_X)$ is Whitney regular. 
\end{defi}
To verify Whitney regularity of $R$ over $S$ at $y$, it is enough to do so in a single chart around $y$. To see this, the key remark is the proposition below, combined with the fact that  Whitney regularity is invariant under smooth local coordinate changes of the ambient space $\R^n$.
\begin{prop} Let $(X,\O_X)$ be a reduced differentiable space. Any two charts $(U_1,\chi_1)$ and $(U_2,\chi_2)$ onto locally closed subsets of $\R^n$ are smoothly compatible, in the sense that: for any $y\in U_1\cap U_2$, there is a diffeomorphism $H:O_1\to O_2$ from an open neighbourhood $O_1$ of $\chi_1(y)$ in $\R^n$ onto an open neighbourhood $O_2$ of $\chi_2(y)$ in $\R^n$ such that:
\begin{equation*} H\vert_{O_1\cap\chi_1(U_1\cap U_2)}=\chi_2\circ (\chi_1^{-1})\vert_{O_1\cap\chi_1(U_1\cap U_2)}.
\end{equation*}
\end{prop}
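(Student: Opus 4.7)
The plan is to construct $H$ by first producing a smooth local extension $F:O_1\to\R^n$ of the transition map $\chi_2\circ\chi_1^{-1}$ near $x_0:=\chi_1(y)$, and then modifying $F$ by a smooth map that vanishes on $\chi_1(U_1\cap U_2)$ so that the modified map has invertible differential at $x_0$; the inverse function theorem then produces the desired diffeomorphism $H$. The extension step is straightforward: since $\chi_2:(U_2,\O_X\vert_{U_2})\to(\R^n,\mathcal{C}^\infty_{\R^n})$ is a morphism of reduced ringed spaces, each component $\chi_2^i$ lies in $\O_X(U_1\cap U_2)$, and because $\chi_1$ is itself an embedding of reduced ringed spaces, the induced sheaf on $\chi_1(U_1\cap U_2)$ is the one of Example \ref{smoothfunsubsp}. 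Hence each $\chi_2^i\circ\chi_1^{-1}$ extends locally near $x_0$ to a smooth function on an open of $\R^n$; assembling the components gives $F:O_1\to\R^n$ with $F\vert_{O_1\cap\chi_1(U_1\cap U_2)}=\chi_2\circ\chi_1^{-1}$. A symmetric construction produces $G:O_2\to\R^n$ extending $\chi_1\circ\chi_2^{-1}$ near $y_0:=\chi_2(y)$.

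Differentiating the identity $G\circ F=\mathrm{id}$ along smooth curves in $\chi_1(U_1\cap U_2)$ passing through $x_0$ shows that $DF(x_0)$ maps the Zariski tangent space $T\subset\R^n$ of $\chi_1(U_1\cap U_2)$ at $x_0$ bijectively onto the Zariski tangent space $T'\subset\R^n$ of $\chi_2(U_1\cap U_2)$ at $y_0$, with inverse $DG(y_0)\vert_{T'}$. In particular $\dim T=\dim T'$, so one can extend $DF(x_0)\vert_T$ to a linear isomorphism $L:\R^n\xrightarrow{\sim}\R^n$ by choosing any bijection between complements. One then seeks a smooth map $\phi:O_1\to\R^n$ vanishing on $O_1\cap\chi_1(U_1\cap U_2)$ with $D\phi(x_0)=L-DF(x_0)$; then $\widetilde F:=F+\phi$ still extends $\chi_2\circ\chi_1^{-1}$, has invertible differential $L$ at $x_0$, and after shrinking $O_1$ restricts to a diffeomorphism onto an open $O_2$ around $y_0$, yielding $H:=\widetilde F$.

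The main obstacle is the existence of such a correction $\phi$, and this is where the reduced differentiable space structure enters crucially. By construction $L-DF(x_0)$ annihilates $T$, so each row of this matrix lies in the conormal $T^0\subset(\R^n)^*$. By the finite-dimensional double-annihilator identity, $T^0$ coincides with the image in $\mathfrak{m}_{x_0}/\mathfrak{m}_{x_0}^2$ of the ideal $I\subset C^\infty_{\R^n,x_0}$ of germs vanishing on $\chi_1(U_1\cap U_2)$; indeed this is just the description of $T$ as the Zariski tangent space of the reduced differentiable space $\chi_1(U_1\cap U_2)$ at $x_0$. Hence each row of $L-DF(x_0)$ arises as the $1$-jet at $x_0$ of some germ in $I$, and choosing a germ-representative for each yields the components $\phi^i$ of a smooth map $\phi$ with the required properties, completing the argument.
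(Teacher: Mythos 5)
Your overall strategy---extend the transition map to a smooth $F$ near $x_0:=\chi_1(y)$, correct it by a map vanishing on $\chi_1(U_1\cap U_2)$ so as to make the differential at $x_0$ invertible, and finish with the inverse function theorem---is sound, and in the end it rests on the same linear algebra as the paper's proof, namely the exact sequence relating $\mathfrak{m}_{x_0}^{\R^n}/(\mathfrak{m}_{x_0}^{\R^n})^2$, the vanishing ideal $I$ of the chart image, and the cotangent space of the reduced differentiable subspace. (The paper packages this differently: it builds adapted coordinate systems $f$ and $g$ around $\chi_2(y)$ and $\chi_1(y)$ from bases of these quotients and sets $H=f^{-1}\circ g$; your ``extend and correct'' variant is an equally legitimate organization.)

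There is, however, one genuinely broken step: the justification that $DF(x_0)$ restricts to an isomorphism $T\to T'$. You propose to differentiate $G\circ F=\mathrm{id}$ along smooth curves lying in $Y_1:=\chi_1(U_1\cap U_2)$, but for a general locally closed subset of $\R^n$ (which is all a chart image is guaranteed to be) the velocities at $x_0$ of smooth curves contained in $Y_1$ may span a strictly smaller space than the Zariski tangent space $T=\bigl(\mathrm{im}(I\to T^*_{x_0}\R^n)\bigr)^0$ that your final step requires. For the cusp $\{y^2=x^3\}$ at the origin, every smooth curve in the set through $0$ has zero velocity there, while every smooth function vanishing on the set has vanishing differential at $0$, so $T=\R^2$; thus the curve argument says nothing about $DF(x_0)\vert_T$, and your two steps tacitly use two inequivalent notions of tangent space. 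The gap is repairable with the machinery you already invoke: since $F$ maps $Y_1$ into $Y_2$ near $x_0$, for any germ $h$ vanishing on $Y_2$ the germ $h\circ F$ vanishes on $Y_1$, so $\d h_{y_0}\circ DF(x_0)$ annihilates $T$, i.e.\ $DF(x_0)T\subset T'$; and since the components of $G\circ F-\mathrm{id}$ vanish on $Y_1$, their differentials at $x_0$ annihilate $T$, i.e.\ $DG(y_0)DF(x_0)\vert_T=\mathrm{id}_T$. Together with the symmetric statements this gives the bijection $T\cong T'$, after which the remainder of your argument (the correction $\phi$ with the rows of $\d\phi_{x_0}$ drawn from $\mathrm{im}(I)=T^0$, followed by the inverse function theorem) goes through.
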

\begin{proof} Although this is surely known, we could not find a proof in the literature. The argument here is closely inspired by that of \cite[Proposition 1.3.10]{Pfl}. Turning to the proof: it is enough to show that, given two subspaces $Y_1,Y_2\subset \R^n$ and an isomorphism of reduced ringed spaces:
\begin{equation*} \phi:(Y_1,\mathcal{C}^\infty_{Y_1})\xrightarrow{\sim}(Y_2,\mathcal{C}^\infty_{Y_2}),
\end{equation*} there are, for every $y\in Y_1$, an open $U_1$ in $\R^n$ around $y$ and a smooth open embedding $\widehat{\phi}:U_1\to \R^n$ such that $\widehat{\phi}\vert_{U_1\cap Y_1}=\phi\vert_{U_1\cap Y_1}$. To this end, let us first make a general remark. Given $Y\subset \R^n$ and $y\in Y$, let $\mathfrak{m}^Y_y$ and $\mathfrak{m}^{\R^n}_y$ denote the respective maximal ideals in the stalks $(\mathcal{C}^\infty_Y)_y$ and $(\mathcal{C}^\infty_{\R^n})_y$, consisting of germs of those functions that vanish at $y$. Further, let $(\mathcal{I}_Y)_y$ denote the ideal in $(\mathcal{C}^\infty_{\R^n})_y$ consisting of germs of those functions that vanish on $Y$. Notice that we have a canonical short exact sequence:
\begin{equation*} 0\to \left((\mathcal{I}_Y)_y+(\mathfrak{m}_y^{\R^n})^2\right)/(\mathfrak{m}_y^{\R^n})^2 \to \mathfrak{m}_y^{\R^n}/(\mathfrak{m}_y^{\R^n})^2 \xrightarrow{(i_Y)_y^*} \mathfrak{m}_y^{Y}/(\mathfrak{m}_y^{Y})^2\to 0.
\end{equation*} Furthermore, recall that there is a canonical isomorphism of vector spaces: 
\begin{equation*} \mathfrak{m}_y^{\R^n}/(\mathfrak{m}_y^{\R^n})^2\xrightarrow{\sim} T^*_y\R^n,\quad (f)_y\mod (\mathfrak{m}_y^{\R^n})^2\mapsto \d f_y.
\end{equation*} It follows that, for any $(h_1)_y,...,(h_k)_y\in \mathfrak{m}^{\R^n}_y$ that project to a basis of $\mathfrak{m}_y^{Y}/(\mathfrak{m}_y^{Y})^2$, we can find $(h_{k+1})_y,...,(h_n)_y\in (\mathcal{I}_Y)_y$ such that $\d (h_1)_y,...,\d (h_n)_y\in T^*_y\R^n$ form a basis, or in other words, such that $(h_1,...,h_n)_y$ is the germ of a diffeomorphism from an open neighbourhood of $y$ in $\R^n$ onto an open neighbourhood of the origin in $\R^n$. Now, we return to the isomorphism $\phi$. Let $k$ be the dimension of $\mathfrak{m}_{\phi(y)}^{Y_2}/(\mathfrak{m}_{\phi(y)}^{Y_2})^2$. Using the above remark we can, first of all, find a diffeomorphism: 
\begin{equation*} f=(f_1,...,f_n):U_2\xrightarrow{\sim} V_2
\end{equation*} from an open $U_2$ in $\R^n$ around $\phi(y)$ onto an open $V_2$ in $\R^n$ around the origin, such that: 
\begin{equation*} (f_1)_{\phi(y)},...,(f_k)_{\phi(y)}\in \m_{\phi(y)}^{\R^n}
\end{equation*} project to a basis of $\mathfrak{m}_{\phi(y)}^{Y_2}/(\mathfrak{m}_{\phi(y)}^{Y_2})^2$ and such that $f_{k+1},...,f_n$ vanish on $U_2\cap Y_2$. Since $\phi$ is an isomorphism of reduced ringed spaces, it induces an isomorphism:
\begin{equation*} (\phi^*)_y: \mathfrak{m}_{\phi(y)}^{Y_2}/(\mathfrak{m}_{\phi(y)}^{Y_2})^2 \xrightarrow{\sim} \mathfrak{m}_{y}^{Y_1}/(\mathfrak{m}_{y}^{Y_1})^2,
\end{equation*} which maps the above basis to a basis of $\mathfrak{m}_{y}^{Y_1}/(\mathfrak{m}_{y}^{Y_1})^2$. Using this and the remark above once more, we can find a diffeomorphism:
\begin{equation*} g=(g_1,...,g_n):U_1\xrightarrow{\sim} V_1,
\end{equation*} from an open $U_1$ in $\R^n$ around $y$ such that $\phi(U_1\cap Y_1)\subset U_2$, onto an open $V_1\subset V_2$ around the origin in $\R^n$, with the property that:
\begin{equation*} g_j\vert_{U_1\cap Y_1}=f_j\circ (\phi\vert_{U_1\cap Y_1}), \quad \forall j=1,...,k,
\end{equation*} and that $g_{k+1},...,g_n$ vanish on $U_1\cap Y_1$. Then, in fact $g\vert_{U_1\cap Y_1}=f\circ (\phi\vert_{U_1\cap Y_1})$, so that the smooth open embedding:
\begin{equation*} \widehat{\phi}:=f^{-1}\circ g:U_1\to \R^n,
\end{equation*} restricts to $\phi$ on $U_1\cap Y_1$, as desired. 
\end{proof}
 %(whereas before one was forced to assume that $X$ was a closed subset of some ambient smooth manifold; see for instance \cite{Mat}). 

\begin{rem}\label{passtoconncomprem} Contuining Remark \ref{compremstratdefi}: 
\begin{itemize} \item[i)] Let $(X,\O_X)$ be a Hausdorff and second countable reduced differentiable space and let $\P$ be a locally finite partition of $(X,\O_X)$ into submanifolds. In the terminology of \cite{GWPL}, such a partition $\P$ would be called a stratification. If $\P$ is Whitney regular, then the partition $\P^c$ (obtained after passing to connected components) is locally finite and satisfies the frontier condition. Hence, $\P^c$ is then a Whitney stratification of $(X,\O_X)$. In the case that $(X,\O_X)$ is a locally closed subspace of $\R^n$ equipped with its induced structure sheaf, this statement is proved in \cite{GWPL} using the techniques developed in \cite{Th,Mat,Mat1}. The general statement follows from this case by using charts and Lemma \ref{frontcondloc}. 
\item[ii)] Combined with the discussion in \cite[Section 4.1]{CrMe} and \cite[Proposition 1.2.7]{Pfl}, the previous remark shows that the notion of Whitney stratified space used here is actually equivalent to that in \cite{Pfl}. 
%\item[iii)] As is explained in \cite{LeSj}, a Whitney stratification of a reduced differentiable space is in particular a stratification in their sense. 
\end{itemize}
\end{rem}
%\begin{rem} An important result in stratification theory says that, if a locally finite partition $\P_X$ of $(X,\O_X)$ into submanifolds (i.e. a stratification in the sense of \cite{GWPL}) is Whitney regular, then it is \textbf{topologically locally trivial}. This means that for any $x\in X$ if there is an open neighbourhood $U$ of $x$, a connected manifold $M$, a topological space $Y$ together with a partition $\P_Y$ into manifolds with a unique zero-dimensional member $\{y\}\in \P_Y$, and a partition preserving homeomorphism:
%\begin{equation*} (U,\P_X\vert_U)\cong (M,\{M\})\times (Y,\P_Y),
%\end{equation*}
%that identifies the member of $\P_X\vert_U$ through $x$ with the member $M\times \{y\}$, and that restricts to diffeomorphisms between the members of the partitions. Here, by $\P_X\vert_U$ we mean the partition $\{\Sigma\cap U\mid \Sigma\in \P_X\}$ of $U$ and by the product of two partitioned spaces we mean the product of the underlying topological spaces, partitioned by the products of their members. For a proof of this fact see \cite{Mat,GWPL}.
%\end{rem} 

\subsubsection{Semi-algebraic sets and homogeneity} 
For proofs of the facts on semi-algebraic sets that we use throughout, we refer to \cite{BoCoRo}; further see \cite{GWPL} for a concise introduction. By a semi-algebraic subset of $\R^n$, we mean a finite union of subsets defined by real polynomial equalities and inequalities. Semi-algebraic sets are rather rigid geometric objects. For instance, any semi-algebraic set $A\subset \R^n$ has a finite number of connected components and admits a canonical Whitney stratification with finitely many strata (in contrast: any closed subset of $\R^n$ is the zero-set of some smooth function). As remarked in \cite{GWPL}, there is a useful criterion for stratifications in $\R^n$ to be Whitney regular, when the strata are semi-algebraic. This criterion can be extended to smooth stratified spaces, as follows. 
\begin{defi}\label{locsemalgdefi} We call a partition $\P$ of a reduced differentiable space $(X,\O_X)$ \textbf{locally semi-algebraic} at $x\in X$ if there is a chart $(U,\chi)$ around $x$ that maps every member of $\P\vert_U$ onto a semi-algebraic subset of $\R^n$. We call the partition locally semi-algebraic if it is so at every $x\in X$. 
\end{defi}
\begin{defi}\label{homdefi} We call a partition $\P$ of a topological space $X$ \textbf{homogeneous} if for any two $x_1,x_2\in X$ that belong to the same member of $\P$, there is a homeomorphism: 
\begin{equation*} h:U_1\xrightarrow{\sim} U_2
\end{equation*} from an open $U_1$ around $x_1$ onto an open $U_2$ around $x_2$ in $X$, with the property that $h(x_1)=x_2$ and for every $\Sigma\in \P$: 
\begin{equation*} h(U_1\cap \Sigma)=U_2\cap \Sigma.
\end{equation*} If $(X,\O_X)$ is a reduced differentiable space and the members of $\P$ are submanifolds, then we call $\P$ \textbf{smoothly homogeneous} if the homeomorphisms $h$ can in fact be chosen to be isomorphisms of reduced differentiable spaces:
\begin{equation*} h:(U_1,\O_X\vert_{U_1})\xrightarrow{\sim} (U_2,\O_X\vert_{U_2}).
\end{equation*}
\end{defi}

\begin{rem} Notice that: \begin{itemize}\item[i)] Homogeneity of a partition $\P$ of $X$ implies that $\P$ satisfies the topological part of the frontier condition: the closure of any member $\Sigma\in\P$ is a union of $\Sigma$ with other members.
\item[ii)] If $\P$ is smoothly homogeneous, then a map $h$ as above restricts to diffeomorphisms between the members of $\P\vert_{U_1}$ and $\P\vert_{U_2}$ (by Remark \ref{morphsmstrspsimp}). 
\end{itemize}
\end{rem}
Together the above conditions give a criterion for Whitney regularity.
\begin{lemma}\label{whitreghomprop} Let $(X,\O_X)$ be a reduced differentiable space and let $\P$ be a partition of $(X,\O_X)$ into submanifolds. If $\P$ is smoothly homogeneous and locally semi-algebraic, then it is Whitney regular.  
\end{lemma}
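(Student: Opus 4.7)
The plan is to reduce Whitney regularity, which is a pointwise condition that can be checked in any chart (by the chart-compatibility proposition proven just above and the invariance of Whitney's condition (b) under smooth diffeomorphisms of $\R^n$), to a classical statement about semi-algebraic sets, and then to spread the resulting generic regularity to every point of every stratum by smooth homogeneity. Fix two distinct members $R,S\in\P$ and a point $y\in S\cap\overline{R}$; the condition is vacuous if $y\notin\overline{R}$, so we lose nothing. Using the locally semi-algebraic hypothesis, I would pick a chart $(U,\chi)$ around $y$ in which every member of $\P\vert_U$ maps to a semi-algebraic submanifold of $\R^n$, so that the question becomes whether $\chi(R\cap U)$ is Whitney regular over $\chi(S\cap U)$ at $\chi(y)$.

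The first main ingredient is the classical semi-algebraic theorem (see e.g.\ \cite{BoCoRo}, or \cite{GWPL}): for any two disjoint semi-algebraic submanifolds $A,B\subset\R^n$ the locus of points of $B$ at which $A$ fails to be Whitney regular over $B$ is a semi-algebraic subset of $B$ of dimension strictly less than $\dim B$, hence nowhere dense. Applied to $A=\chi(R\cap U)$ and $B=\chi(S\cap U)$ this produces a dense subset of $S\cap U$ on which $R$ is already Whitney regular over $S$. The second ingredient is smooth homogeneity: pick any such generic regular point $y'\in S\cap U$ arbitrarily close to $y$ and invoke smooth homogeneity for the pair $y',y\in S$ to obtain an isomorphism of reduced differentiable spaces $h:(U_1,\O_X\vert_{U_1})\xrightarrow{\sim}(U_2,\O_X\vert_{U_2})$ with $h(y')=y$ that sends $U_1\cap\Sigma$ onto $U_2\cap\Sigma$ for every $\Sigma\in\P$. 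Expressing $h$ in charts and applying the chart-compatibility proposition (extending ambient dimensions by zeros if needed so both charts map into a common $\R^n$), $h$ is locally realized by a smooth diffeomorphism of open subsets of Euclidean space that intertwines the chart images of $R$ and of $S$. Since Whitney's condition (b) is preserved by smooth diffeomorphisms of the ambient $\R^n$, regularity at $\chi(y')$ transports to regularity at $\chi(y)$, which is what we wanted.

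I expect the main obstacle to be less any single step than their combination: the semi-algebraic density of the Whitney-regular locus is a standard but nontrivial off-the-shelf input, and smooth homogeneity is the essential mechanism that upgrades this generic regularity to universal regularity. One minor point to verify carefully is that the ambient dimensions of the two charts at $y$ and $y'$ need not agree, but this is harmless: embedding both targets diagonally into a common $\R^N$ brings the chart-compatibility proposition into play, and the Whitney condition depends only on the germ of the pair $(\chi(R\cap U),\chi(S\cap U))$ in the chart. Since $R,S$ and $y$ were arbitrary, the lemma follows.
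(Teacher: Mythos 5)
Your argument is correct and is essentially the paper's own proof: both rest on the classical fact that the Whitney-bad locus of a pair of semi-algebraic submanifolds has empty interior (so regular points are dense in a semi-algebraic chart), combined with smooth homogeneity and the invariance of Whitney's condition (b) under the smooth chart-compatibility isomorphisms to spread regularity from a generic point to every point. The paper phrases this as an ``all or nothing'' dichotomy (regular everywhere or nowhere by homogeneity, and the latter is excluded), which is the same mechanism as your transport from a nearby regular point.
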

\begin{proof} Let $R,S\in \P$ be two distinct members. Since $\P$ is smoothly homogeneous, either $R$ is Whitney regular over $S$ at all points in $S$, or at no points at all. Indeed, this follows from the simple fact that Whitney regularity is invariant under isomorphisms of reduced differentiable spaces. As $\P$ is locally semi-algebraic, the latter option cannot happen, and hence the partition must be Whitney regular. In order to explain this, suppose first that $R,S\subset \R^n$ are semi-algebraic and submanifolds of $\R^n$ (also called Nash submanifolds of $\R^n$). Consider the set of bad points: \begin{equation*} \mathcal{B}(R,S),
\end{equation*} which consists of those $y\in S$ at which $R$ is not Whitney regular over $S$. The key fact is now that, because $R$ and $S$ are semi-algebraic, the subset $\mathcal{B}(R,S)$ has empty interior in $S$ (see \cite{Wall} for a concise proof), hence it cannot be all of $S$. In general, we can pass to a chart around any $y\in S$ in which the strata $R$ and $S$ are semi-algebraic and the same argument applies, because Whitney regularity can be verified in a single chart. \end{proof}
To exemplify the use of Lemma \ref{whitreghomprop}, let us point out how it leads to a concise proof of:
\begin{thm}[\cite{PfPoTa}]\label{stratleafspthm} The canonical stratification of the leaf space of a proper Lie groupoid is a Whitney stratification.
\end{thm}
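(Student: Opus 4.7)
The plan is to apply Lemma \ref{whitreghomprop} to the partition $\P_\mathcal{M}(\underline{M})$ by Morita types, deducing that it is Whitney regular, and then invoke Remark \ref{passtoconncomprem} to conclude that $\S_\textrm{Gp}(\underline{M})=\P_\mathcal{M}(\underline{M})^{\mathrm{c}}$ is a Whitney stratification. So I must verify that $\P_\mathcal{M}(\underline{M})$ is both smoothly homogeneous and locally semi-algebraic.

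Smooth homogeneity is essentially built into the definition of the Morita type partition. If $\L_{x_1}$ and $\L_{x_2}$ belong to the same Morita type, then by definition there exist invariant opens $V_1,V_2\subset M$ and a Morita equivalence $\G\vert_{V_1}\simeq \G\vert_{V_2}$ relating $\L_{x_1}$ to $\L_{x_2}$. By Proposition \ref{moreqisoredring} this produces an isomorphism of reduced differentiable spaces $h:(\underline{V}_1,\mathcal{C}^\infty_{\underline{M}}\vert_{\underline{V}_1})\xrightarrow{\sim} (\underline{V}_2,\mathcal{C}^\infty_{\underline{M}}\vert_{\underline{V}_2})$ with $h(\L_{x_1})=\L_{x_2}$. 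Because Morita equivalences can be restricted to invariant opens and composed, $h$ sends Morita types to Morita types (and, since $h(\L_{x_1})=\L_{x_2}$, preserves the given one), as required.

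For local semi-algebraicity, I use the charts coming from the linearization theorem: around each leaf $\L_x$, the isomorphism (\ref{locmodliegpoidchart}) composed with the Hilbert embedding (\ref{schwarzchart}) yields a chart $\chi:\underline{U}\to \R^n$ in which the members of $\P_\mathcal{M}(\underline{M})\vert_{\underline{U}}$ correspond to the images of the Morita types of the action groupoid $\G_x\ltimes \No_x$. Since $\G_x$ is a compact Lie group acting linearly on $\No_x$, for each closed subgroup $H\leq \G_x$ the fixed-point set $\No_x^H$ is a linear subspace, and $\{v:(\G_x)_v=H\}=\No_x^H\setminus\bigcup_{H\subsetneq K}\No_x^K$ is semi-algebraic; the orbit type $\{v:(\G_x)_v\sim H\}=\G_x\cdot\{v:(\G_x)_v=H\}$, being the image of a semi-algebraic set under a polynomial map, is semi-algebraic by Tarski--Seidenberg, and there are only finitely many such types. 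Each Morita type in $\No_x$ is a union of (connected components of) orbit types cut out by the further algebraic condition that the isotropy representation be isomorphic to a fixed one, hence is semi-algebraic; and its image in $\R^n$ under the polynomial Hilbert map is again semi-algebraic by Tarski--Seidenberg.

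Having verified both hypotheses, Lemma \ref{whitreghomprop} shows that $\P_\mathcal{M}(\underline{M})$ is Whitney regular. Whitney regularity is a local condition on pairs of submanifolds and is preserved on passage to connected components, so the refinement $\S_\textrm{Gp}(\underline{M})$ is likewise Whitney regular, completing the proof via Remark \ref{passtoconncomprem}. The main technical obstacle is the semi-algebraicity step, which relies crucially on the linear local model furnished by the linearization theorem and on the compactness of the isotropy groups (guaranteed by properness of $\G$).
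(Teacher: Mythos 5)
Your proposal is correct and follows essentially the same route as the paper: verify that $\P_\mathcal{M}(\underline{M})$ is smoothly homogeneous (via Proposition \ref{moreqisoredring} and the definition of Morita types) and locally semi-algebraic (via the linearization theorem and Hilbert maps), then apply Lemma \ref{whitreghomprop} and Remark \ref{passtoconncomprem}. The only divergence is in the semi-algebraicity step: the paper cites Bierstone's result (Proposition \ref{semialgstratrep}) together with the finiteness of strata of $\S_\textrm{Gp}(\No_x/\G_x)$, whereas you argue directly via Tarski--Seidenberg; this is a legitimate, more self-contained alternative, provided you note that a compact Lie group and its linear action can be realized (semi-)algebraically so that $\G_x\cdot\{v:(\G_x)_v=H\}$ is indeed the image of a semi-algebraic set under a semi-algebraic map. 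One phrase should be tightened: the Morita type is not literally ``cut out by an algebraic condition'' on the isotropy representation; rather, since (by the tube theorem, as in Examples \ref{exmortyp}--\ref{exorbtyp}) orbit types and Morita types induce the same partition after passing to connected components, each Morita type is a finite union of connected components of orbit types, and connected components of semi-algebraic sets are semi-algebraic --- which yields the conclusion you want.
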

To verify the criteria of Lemma \ref{whitreghomprop}, we use:
\begin{prop}[\cite{Bi}]\label{semialgstratrep} Let $G$ be a compact Lie group and let $V$ be a real finite-dimensional representation of $G$. Then any Hilbert map $\rho:V\to \R^n$ (see Subsection \ref{reddiffspsec}) identifies the strata of the canonical stratification $\S_\textrm{Gp}(V/G)$ with semi-algebraic subsets of $\R^n$. 
\end{prop}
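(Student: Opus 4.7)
The plan is to exploit the real algebraic nature of the situation and to apply the Tarski--Seidenberg theorem repeatedly. Recall that this theorem ensures that the image of a semi-algebraic set under a polynomial map is semi-algebraic, and that semi-algebraic subsets of $\R^n$ are closed under boolean operations and projections. I will also need the classical fact that every semi-algebraic set has finitely many connected components, each of which is itself semi-algebraic.

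First I would show that each orbit type $V_{(H)}\subset V$ is semi-algebraic. Choosing a $G$-invariant inner product on $V$ realises $G$ as a real algebraic subgroup of $O(V)$ (every compact Lie group is real algebraic), and the action on $V$ is then polynomial. For a fixed closed subgroup $H\subset G$ the fixed set $V^H$ is a linear subspace. By the Mostow--Palais slice theorem only finitely many conjugacy classes of stabilisers occur in $V$, so there are only finitely many representatives $H_1,\dots,H_N$ of the conjugacy classes of stabilisers properly containing $H$. Then
\[
V_H=\{v\in V^H : G_v=H\}=V^H\setminus \bigcup_{i=1}^N G\cdot V^{H_i}
\]
is semi-algebraic, since each $G\cdot V^{H_i}$ is the image of the polynomial action map $G\times V^{H_i}\to V$ and hence semi-algebraic by Tarski--Seidenberg. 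Consequently $V_{(H)}=G\cdot V_H$ is semi-algebraic in $V$.

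Next I would transfer this conclusion to the orbit space. Since $V_{(H)}$ is $G$-invariant, $\rho(V_{(H)})=\underline{\rho}(\underline{V_{(H)}})$, where $\underline{V_{(H)}}\subset V/G$ denotes the corresponding member of the partition by orbit types (Example \ref{exorbtyp}). Applying Tarski--Seidenberg to the polynomial Hilbert map $\rho$ shows that this image is semi-algebraic in $\R^n$, and because $\underline{\rho}$ is a homeomorphism onto its (closed, semi-algebraic) image, the subset $\underline{V_{(H)}}$ is identified with a semi-algebraic subset of $\R^n$. Finally, since $\S_\textrm{Gp}(V/G)$ is obtained from the partition by orbit types by passing to connected components, and connected components of semi-algebraic sets are themselves semi-algebraic, each stratum of $\S_\textrm{Gp}(V/G)$ is identified with a semi-algebraic subset of $\R^n$, which is what had to be shown.

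The main obstacle is the first step, namely the semi-algebraicity of the orbit type partition on $V$. This rests on two classical inputs: the algebraicity of the action of a compact Lie group on a finite-dimensional representation, and the Mostow--Palais finiteness of stabiliser conjugacy types. Once these are granted the argument is essentially bookkeeping with Tarski--Seidenberg and the structure theorem for connected components of semi-algebraic sets; the only subtlety is the need to remove the orbits with strictly larger stabiliser inside $V^H$, which the finiteness result reduces to a finite union of linear subspaces.
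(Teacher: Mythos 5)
Your argument is correct, but note that the paper itself offers no proof of this proposition: it is quoted from Bierstone \cite{Bi}, with \cite[Theorem 1.5.2]{Schw2} indicated as a more elementary alternative, so there is no in-text argument to compare against. What you have written is essentially the standard proof underlying those references: realize (the image of) $G$ as a real algebraic subgroup of $O(V)$, use finiteness of orbit types to write each orbit type as a boolean combination of saturations $G\cdot V^{K}$ of fixed-point subspaces, apply Tarski--Seidenberg to the polynomial action and Hilbert maps, and finish with the fact that a semi-algebraic set has finitely many, themselves semi-algebraic, connected components (this last step is what connects the orbit-type partition of Example \ref{exorbtyp} to the strata of $\S_\textrm{Gp}(V/G)$). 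The only step worth making explicit is the identity $V_H=V^H\setminus\bigcup_i G\cdot V^{H_i}$: the inclusion $V^H\cap G\cdot V^{H_i}\subset\{v\in V^H: G_v\neq H\}$ uses that a compact Lie group cannot be conjugated into a proper closed subgroup of itself (the same fact the paper invokes in the proof of Lemma \ref{techlemisotype}), since otherwise $gH_ig^{-1}\subset H\subsetneq H_i$ would force $gH_ig^{-1}=H_i$. With that spelled out, the proof is complete and faithful to the cited sources.
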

See also \cite[Theorem 1.5.2]{Schw2} for a more elementary proof. 
\begin{proof}[Proof of Theorem \ref{stratleafspthm}] Let $\G\rightrightarrows M$ be a proper Lie groupoid. We return to the discussion at the end of Subsection \ref{reddiffspsec}. As recalled there, for any $x\in M$ there is an open $\underline{U}$ around the leaf $\L_x\in \underline{M}$ and an isomorphism (\ref{locmodliegpoidchart}) that identifies $\underline{U}$, as a reduced differentiable space, with $\No_x/\G_x$. Furthermore, (\ref{locmodliegpoidchart}) identifies the partition $\P_\mathcal{M}(\underline{M})\vert_{\underline{U}}$ by Morita types of $\G\vert_U$ with the partition of $\No_x/\G_x$ by Morita types of $\G_x\ltimes \No_x$. Recall that the canonical stratification on the orbit space of a real, finite-dimensional representation of a compact Lie group has finitely many strata (see e.g. \cite[Proposition 2.7.1]{DuKo}). In combination with Proposition \ref{semialgstratrep}, this implies that a Hilbert map $\rho:\No_x\to \R^n$ for the normal representation $\No_x$ maps the Morita types in $\No_x/\G_x$ onto semi-algebraic subsets of $\R^n$. This shows that $\P_\mathcal{M}(\underline{M})$ is locally semi-algebraic. Secondly, Proposition \ref{moreqisoredring} implies that the partition by Morita types is homogeneous. To see this, note that by the very definition of the partition by Morita types on $\underline{M}$, for any two leaves $\L_1$ and $\L_2$ in the same Morita type, there are invariant opens $V_1$ around $\L_1$, $V_2$ around $\L_2$ in $M$ and a Morita equivalence $\G\vert_{V_1}\simeq \G\vert_{V_2}$ relating $\L_1$ to $\L_2$. The homeomorphism of leaf spaces induced by this Morita equivalence is an isomorphism of reduced differentiable spaces:
\begin{equation*} (\underline{V}_1,\mathcal{C}^\infty_{\underline{M}} \vert_{\underline{V}_1})\cong (\underline{V}_2,\mathcal{C}^\infty_{\underline{M}} \vert_{\underline{V}_2})
\end{equation*} that identifies $\L_1$ with $\L_2$ and $\underline{V}_1\cap \underline{\Sigma}$ with $\underline{V}_2\cap \underline{\Sigma}$ for every Morita type $\underline{\Sigma}$. So, the partition by Morita types is indeed smoothly homogeneous. In light of Lemma \ref{whitreghomprop}, it follows that the partition by Morita types is Whitney regular. Hence, passing to connected components, we find that $\S_\textrm{Gp}(\underline{M})$ is a Whitney stratification of the leaf space $(\underline{M},\mathcal{C}^\infty_{\underline{M}})$ (as in Remark \ref{passtoconncomprem}). 
\end{proof}

\subsubsection{Constant rank stratifications of maps}\label{constrkstratsec} Finally, we turn to constant rank stratifications of maps between reduced differentiable spaces. In this subsection, let $(X,\O_X)$ and $(Y,\O_Y)$ be Hausdorff, second countable reduced differentiable spaces. 
\begin{defi}\label{constrkstratdefi} By a \textbf{partition of a morphism $f:(X,\O_X)\to (Y,\O_Y)$ into submanifolds} we mean a pair $(\P_X,\P_Y)$ consisting of a partition $\P_X$ of $(X,\O_X)$ and a partition $\P_Y$ of $(Y,\O_Y)$ into submanifolds, such that $f$ maps every member of $\P_X$ into a member of $\P_Y$. We call this a \textbf{constant rank partition} of $f$ if in addition, for every $\Sigma_X\in \P_X$ and $\Sigma_Y\in \P_Y$ such that $f(\Sigma_X)\subset \Sigma_Y$, the smooth map $f:\Sigma_X\to \Sigma_Y$ has constant rank. Furthermore, by a \textbf{constant rank stratification} of $f$ we mean a constant rank partition for which both partitions are stratifications. 
\end{defi}
In the remainder of this subsection we focus on the partition induced on the fibers of a morphism $f:(X,\O_X)\to (Y,\O_Y)$ by a constant rank partition. The fibers of such a morphism are the reduced differentiable spaces $(f^{-1}(y),\O_{f^{-1}(y)})$, equipped with the induced structure sheaf as in Remark \ref{indfuncstr}. Given a constant rank partition $(\P_X,\P_Y)$ of $f$, its fibers have an induced partition: 
\begin{equation}\label{indpartfibconstrk} \P_X\vert_{f^{-1}(y)}=\{ \Sigma_X\cap f^{-1}(y) \mid \Sigma_X\in \P_X\},
\end{equation} the members of which are submanifolds, being the fibers of the constant rank maps obtained by restricting $f$ to the members of $(\P_X,\P_Y)$. The example below shows that the connected components of the members of (\ref{indpartfibconstrk}) need not form a stratification, even if $\P_X^c$ and $\P_Y^c$ are Whitney stratifications.
\begin{ex} Consider the polynomial map: 
\begin{equation*} f:\R^3\to \R,\quad f(x,y,z)=x^2-zy^2.
\end{equation*} The fiber of $f$ over the origin in $\R$ is the Whitney umbrella. Consider the stratification of $\R^3$ by the five strata $\{y<0\}$, $\{y>0\}$, $\{y=0,x<0\}$, $\{y=0,x>0\}$ and the $z$-axis $\{x=y=0\}$. Together with the stratification of $\R$ consisting of a single stratum, this forms a constant rank stratification of $f$. The induced partition (\ref{indpartfibconstrk}) of the fiber of $f$ over the origin consists of two connected surfaces and the $z$-axis. This does not satisfy the frontier condition, because the negative part of the $z$-axis is not contained in the closure of these surfaces. 
\end{ex}
We will now give a criterion that does ensure that the induced partitions (\ref{indpartfibconstrk}) of the fibers form stratifications. Recall that a map between semi-algebraic sets is called semi-algebraic when its graph is a semi-algebraic set. Below, let $f:(X,\O_X)\to (Y,\O_Y)$ be a morphism and $(\P_X,\P_Y)$ a partition $f$ into submanifolds. 
\begin{defi}\label{locsemalgdefimap} We call $(f,\P_X,\P_Y)$ \textbf{locally semi-algebraic} at $x\in X$ if there are a chart $(U,\chi)$ around $x$ and a chart $(V,\phi)$ around $f(x)$ with $f(U)\subset V$, that map the respective members of $\P_X\vert_U$ and $\P_Y\vert_V$ onto semi-algebraic sets, and have the property that the coordinate representation $\phi\circ f\circ \chi^{-1}$ restricts to semi-algebraic maps between the members of $\chi(\P_X\vert_U)$ and $\phi(\P_Y\vert_V)$. We call $(f,\P_X,\P_Y)$ locally semi-algebraic if it is so at every $x\in X$. 
\end{defi} 
\begin{defi}\label{homdefimap} We call $(f,\P_X,\P_Y)$ \textbf{smoothly homogeneous} if for any two $x_1,x_2\in X$ that belong to the same member of $\P_X$, there are isomorphisms of reduced differentiable spaces: 
\begin{equation*} h_X:(U_1,\O_X\vert_{U_1})\xrightarrow{\sim} (U_2,\O_X\vert_{U_2}) \quad \& \quad h_Y:(V_1,\O_Y\vert_{V_1})\xrightarrow{\sim} (V_2,\O_Y\vert_{V_2})
\end{equation*}
from an open $U_1$ around $x_1$ onto an open $U_2$ around $x_2$ in $X$, and from an open $V_1$ around $f(x_1)$ onto an open $V_2$ around $f(x_2)$ in $Y$, that fit in a commutative diagram:
\begin{center}
\begin{tikzcd} 
(U_1,\O_X\vert_{U_1},x_1)\arrow[d,"f"]\arrow[r,"h_X"] & (U_2,\O_X\vert_{U_2},x_2) \arrow[d,"f"] \\
 (V_1,\O_Y\vert_{V_1},f(x_1)) \arrow[r,"h_Y"] & (V_2,\O_Y\vert_{V_2},f(x_2))
\end{tikzcd}
\end{center}
and have the property that for all $\Sigma_X\in \P_X$, $\Sigma_Y\in \P_Y$: 
\begin{equation*} h_X(U_1\cap \Sigma_X)=U_2\cap \Sigma_X \quad \& \quad h_Y(V_1\cap \Sigma_Y)=V_2\cap \Sigma_Y.
\end{equation*} 
\end{defi} 
\begin{rem}\label{smoothhomimpliesconstrk} Notice that if $(f,\P_X,\P_Y)$ is smoothly homogeneous, then $(\P_X,\P_Y)$ is necessarily a constant rank partition of $f$. 
\end{rem}
The following shows that, if both of these criteria are met, then the fibers of $f$ meet the criteria of Lemma \ref{whitreghomprop}.
\begin{prop}\label{whitreghompropmap} Let $f:(X,\O_X)\to (Y,\O_Y)$ be a morphism and $(\P_X,\P_Y)$ a constant rank partition of $f$. If $(f,\P_X,\P_Y)$ is smoothly homogeneous and locally semi-algebraic, then so are the induced partitions (\ref{indpartfibconstrk}) of the fibers of $f$.
\end{prop}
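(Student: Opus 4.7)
The plan is to verify each of the two properties directly by restricting to the fiber the data provided by the hypotheses on $(f,\P_X,\P_Y)$. Throughout, fix $y\in Y$ and note that, since $(Y,\O_Y)$ is Hausdorff, $\{y\}$ is closed in $Y$ and hence $f^{-1}(y)$ is closed in $X$.

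For the locally semi-algebraic property, fix $x\in f^{-1}(y)$ and pick charts $(U,\chi)$ around $x$ and $(V,\phi)$ around $y$ with $f(U)\subset V$, provided by Definition \ref{locsemalgdefimap}. I first show that the restriction $\chi\vert_{U\cap f^{-1}(y)}$ is a chart of $(f^{-1}(y),\O_{f^{-1}(y)})$ around $x$: since $f^{-1}(y)$ is closed in $X$, the image $\chi(U\cap f^{-1}(y))$ is closed in $\chi(U)$, hence locally closed in $\R^n$; and transitivity of induced structure sheaves on locally closed subspaces shows that $\chi$ identifies $(U\cap f^{-1}(y),\O_{f^{-1}(y)}\vert_{U\cap f^{-1}(y)})$ with the subspace $\chi(U\cap f^{-1}(y))\subset\R^n$ equipped with its induced sheaf of smooth functions. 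Next, for each $\Sigma_X\in\P_X$, one has
\[
\chi\bigl(\Sigma_X\cap f^{-1}(y)\cap U\bigr)=\bigl\{z\in \chi(\Sigma_X\cap U):(\phi\circ f\circ\chi^{-1})(z)=\phi(y)\bigr\},
\]
i.e.\ the fiber over the point $\phi(y)$ of a semi-algebraic map between semi-algebraic sets. Since the fiber of such a map at a point is itself semi-algebraic, the chart $\chi\vert_{U\cap f^{-1}(y)}$ sends the members of the induced partition on $f^{-1}(y)$ near $x$ to semi-algebraic subsets of $\R^n$, as required.

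For smooth homogeneity, fix $x_1,x_2\in f^{-1}(y)$ lying in the same member $\Sigma_X\cap f^{-1}(y)$ of the induced partition; in particular $x_1,x_2$ are in the same member $\Sigma_X$ of $\P_X$. Apply Definition \ref{homdefimap} to obtain isomorphisms $h_X:(U_1,\O_X\vert_{U_1})\xrightarrow{\sim}(U_2,\O_X\vert_{U_2})$ and $h_Y:(V_1,\O_Y\vert_{V_1})\xrightarrow{\sim}(V_2,\O_Y\vert_{V_2})$ fitting in the commutative square of that definition. Evaluating $h_Y\circ f=f\circ h_X$ at $x_1$ gives $h_Y(y)=f(x_2)=y$, so $h_X$ restricts to a homeomorphism $U_1\cap f^{-1}(y)\xrightarrow{\sim}U_2\cap f^{-1}(y)$. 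By the general fact that an isomorphism of reduced ringed spaces restricts to an isomorphism between any pair of corresponding subspaces equipped with their induced structure sheaves, this restriction is an isomorphism of reduced differentiable spaces; and the relation $h_X(U_1\cap\Sigma_X)=U_2\cap\Sigma_X$ for each $\Sigma_X\in\P_X$ descends to the fibers, giving the desired smooth homogeneity of the induced partition.

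The argument is essentially a bookkeeping exercise, with no serious analytic or algebraic obstacle. The only steps demanding care are the check that the restriction of a chart of $X$ to a fiber is a chart of the fiber with its induced structure sheaf (which reduces to transitivity of induced sheaves on locally closed subspaces, together with closedness of $f^{-1}(y)$ in $X$), and the observation that semi-algebraicity is preserved by taking fibers of semi-algebraic maps; both of these are standard.
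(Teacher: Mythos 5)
Your proof is correct; the paper omits the argument entirely (``The proof of this is straightforward''), and your direct verification --- restricting the charts from Definition \ref{locsemalgdefimap} and the isomorphisms from Definition \ref{homdefimap} to the closed fiber $f^{-1}(y)$ --- is exactly the intended bookkeeping. The two points that needed care (that a chart of $(X,\O_X)$ restricts to a chart of the fiber with its induced structure sheaf, and that the fiber of a semi-algebraic map over a point is semi-algebraic) are both handled correctly.
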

The proof of this is straightforward. Appealing to Lemma \ref{whitreghomprop} and Remark \ref{passtoconncomprem} we obtain:
\begin{cor}\label{fibwhitstratcor} Let $f:(X,\O_X)\to (Y,\O_Y)$ be a morphism and $(\P_X,\P_Y)$ a constant rank partition of $f$. Suppose that $\P_X$ is locally finite and that $(f,\P_X,\P_Y)$ is smoothly homogeneous and locally semi-algebraic. Then the partitions of the fibers of $f$ obtained from (\ref{indpartfibconstrk}) after passing to connected components are Whitney stratifications of the fibers. 
\end{cor}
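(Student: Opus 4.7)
The plan is essentially to string together the tools developed earlier in the subsection. First, observe that each fiber $(f^{-1}(y),\O_{f^{-1}(y)})$ is, as a locally closed subspace of $(X,\O_X)$ equipped with its induced structure sheaf, itself a Hausdorff and second countable reduced differentiable space (by Remark \ref{indfuncstr}). The induced partition $\P_X\vert_{f^{-1}(y)}$ consists of submanifolds, because each $\Sigma_X\cap f^{-1}(y)$ is a fiber of the constant rank map $f\vert_{\Sigma_X}:\Sigma_X\to \Sigma_Y$ (where $\Sigma_Y\in\P_Y$ contains $f(\Sigma_X)$), so it is a submanifold of $\Sigma_X$, and hence of $(X,\O_X)$, and thus of $(f^{-1}(y),\O_{f^{-1}(y)})$. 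Local finiteness of $\P_X\vert_{f^{-1}(y)}$ is inherited from $\P_X$.

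Next, I would invoke Proposition \ref{whitreghompropmap} to conclude that, since $(f,\P_X,\P_Y)$ is smoothly homogeneous and locally semi-algebraic, the same is true for the induced partition (\ref{indpartfibconstrk}) of each fiber $f^{-1}(y)$. Concretely, a smoothly homogeneous equivalence $(h_X,h_Y)$ relating two points $x_1,x_2$ in the same member of $\P_X$ has $h_Y(f(x_1))=f(x_2)$, and if both $x_1,x_2\in f^{-1}(y)$ then the map $h_X$ restricts to an isomorphism of reduced differentiable spaces between neighbourhoods of $x_1$ and $x_2$ in $f^{-1}(y)$ that preserves the induced partition. Similarly, given charts $(U,\chi)$ on $X$ and $(V,\phi)$ on $Y$ witnessing the local semi-algebraicity of $(f,\P_X,\P_Y)$ at a point $x\in f^{-1}(y)$, the restricted chart on $U\cap f^{-1}(y)$ sends each member of $\P_X\vert_{U\cap f^{-1}(y)}$ onto the intersection of a semi-algebraic set with the semi-algebraic preimage of the point $\phi(y)$ under a semi-algebraic map, which is again semi-algebraic.

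Now I would apply Lemma \ref{whitreghomprop} to the partition of $(f^{-1}(y),\O_{f^{-1}(y)})$, concluding that it is Whitney regular. Finally, passing to connected components and using Remark \ref{passtoconncomprem}(i), the resulting partition is a locally finite Whitney regular partition into connected submanifolds satisfying the frontier condition, i.e.\ a Whitney stratification of $(f^{-1}(y),\O_{f^{-1}(y)})$.

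The only mild subtlety \textemdash which I do not really expect to be an obstacle, but which deserves to be checked carefully \textemdash is that the charts used to witness local semi-algebraicity of $\P_X\vert_{f^{-1}(y)}$ genuinely arise as restrictions of charts of $(X,\O_X)$, i.e.\ that restricting a chart of $(X,\O_X)$ to $f^{-1}(y)$ gives a chart of the fiber. This follows from the definition of the induced structure sheaf on a locally closed subspace (Remark \ref{indfuncstr}) together with the fact that semi-algebraicity is preserved under intersection with the (semi-algebraic) fiber of a semi-algebraic map. Once this verification is in place, the proof is a direct concatenation of Proposition \ref{whitreghompropmap}, Lemma \ref{whitreghomprop}, and Remark \ref{passtoconncomprem}.
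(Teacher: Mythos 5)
Your proposal is correct and follows essentially the same route as the paper: the paper derives this corollary by combining Proposition \ref{whitreghompropmap} (whose proof it declares straightforward, and whose content you spell out) with Lemma \ref{whitreghomprop} and Remark \ref{passtoconncomprem}. The details you fill in — that restricting a chart to a fiber gives a chart for the induced structure sheaf, and that the fiber of a semi-algebraic map over a point intersected with a semi-algebraic member is semi-algebraic — are exactly the verifications implicitly delegated to Proposition \ref{whitreghompropmap}.
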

\subsection{The stratifications associated to Hamiltonian actions}\label{canhamstratsec}
\subsubsection{The canonical Hamiltonian stratification and Hamiltonian Morita types}
Throughout, let $(\G,\Omega)$ be a proper symplectic groupoid and suppose that we are given a Hamiltonian $(\G,\Omega)$-action along $J:(S,\omega)\to M$. Let $\underline{S}:=S/\G$ denote the orbit space of the action and $\underline{M}:=M/\G$ the leaf space of the groupoid. The construction of the canonical Hamiltonian stratifications on $S$ and $\underline{S}$ is of the sort outlined in Subsection \ref{stratdefsec}. To begin with, we give a natural partition that, after passing to connected components, will induce the desired stratification.

\begin{defi} The partition $\P_{\textrm{Ham}}(S)$ of $S$ by \textbf{Hamiltonian Morita types} is defined by the equivalence relation: $p_1\sim p_2$ if and only if there are invariant opens $V_i$ around $\L_{J(p_i)}$ in $M$, invariant opens $U_i$ around $\O_{p_i}$ in $J^{-1}(V_i)$, together with a Hamiltonian Morita equivalence (as in Definition \ref{moreqdefHamacttyp}) that relates $\O_{p_1}$ to $\O_{p_2}$: 
\begin{center}
\begin{tikzpicture} \node (G1) at (0,0) {$(\G,\Omega)\vert_{V_1}$};
\node (M1) at (0,-1.3) {$V_1$};
\node (O) at (2,0) {$(U_1,\omega)$};

\node[transparent] (A) at (2.2,-0.65) {$A$};
\node[transparent] (B) at (3.4,-0.65) {$B$};

\node (G) at (4,0) {$(\G,\Omega)\vert_{V_2}$};
\node (M) at (4,-1.3) {$V_2$};
\node (S) at (6,0) {$(U_2,\omega)$};
 
\draw[->,transform canvas={xshift=-\shift}](G1) to node[midway,left] {}(M1);
\draw[->,transform canvas={xshift=\shift}](G1) to node[midway,right] {}(M1);
\draw[->](O) to node[pos=0.25, below] {$\text{ }\text{ }J$} (M1);
\draw[->] (1.3,-0.15) arc (315:30:0.25cm);

\draw[transparent] (A) edge node[opacity=1] {\resizebox{0.8cm}{0.2cm}{$\simeq$}} (B);

\draw[->,transform canvas={xshift=-\shift}](G) to node[midway,left] {}(M);
\draw[->,transform canvas={xshift=\shift}](G) to node[midway,right] {}(M);
\draw[->](S) to node[pos=0.25, below] {$\text{ }\text{ }J$} (M);
\draw[->] (5.3,-0.15) arc (315:30:0.25cm);

\end{tikzpicture} 
\end{center}
The members of $\P_\textrm{Ham}(S)$ are invariant with respect to the $\G$-action, so that $\P_{\textrm{Ham}}(S)$ descends to a partition $\P_{\textrm{Ham}}(\underline{S})$ of $\underline{S}$. 
\end{defi} 
\begin{rem}\label{improphammortyp} Let us point out some immediate properties of these partitions.
\begin{itemize} \item[i)] They are invariant under Hamiltonian Morita equivalence, meaning that the homeomorphism of orbit spaces induced by a Hamiltonian Morita equivalence (Proposition \ref{transgeommap}$a$) identifies the partitions by Hamiltonian Morita types. 
\item[ii)] The transverse momentum map sends each member of $\P_\textrm{Ham}(\underline{S})$ into a member of $\P_\mathcal{M}(\underline{M})$ (the partition of $\underline{M}$ by Morita types of $\G$; see Example \ref{exmortyp}). 
\end{itemize}
\end{rem} 
In analogy with Example \ref{exmortyp}, the partition by Hamiltonian Morita types has the following alternative characterization. 
\begin{prop}\label{eqcharhammortyp} Two points $p,q\in S$ belong to the same Hamiltonian Morita type if and only if there is an isomorphism of pairs of Lie groups:
\begin{equation*} (\G_{J(p)},\G_p)\cong(\G_{J(q)},\G_q)
\end{equation*} together with a compatible symplectic linear isomorphism:
\begin{equation*} (\S\No_p,\omega_p)\cong (\S\No_q,\omega_q).
\end{equation*}
\end{prop}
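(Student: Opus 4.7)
The forward direction is essentially immediate from the Morita invariants developed in Subsection \ref{bashammorinv}. Suppose $p$ and $q$ lie in the same Hamiltonian Morita type, so that there is a Hamiltonian Morita equivalence between the $(\G,\Omega)$-actions restricted to invariant opens around $\O_p$ and $\O_q$ relating those orbits. Moving along the free $\H_1$- and $\H_2$-actions on the bi-bundle (or equivalently using that the isotropy groups and symplectic normal representation are canonically identified along an orbit), we may arrange a $Q$-related pair whose projections are $p$ and $q$. Proposition \ref{transgeomham}(a) then produces an isomorphism $\G_{J(p)}\cong \G_{J(q)}$ that restricts to an isomorphism $\G_p\cong \G_q$, i.e.\ an isomorphism of the pairs $(\G_{J(p)},\G_p)\cong (\G_{J(q)},\G_q)$, while Proposition \ref{transgeomham}(b) provides the compatible symplectic linear isomorphism of the symplectic normal representations.

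For the backward direction, the strategy is to pass through the simpler transverse model $\J_\p$. Suppose we are given the two isomorphisms of the statement. Since $(\G,\Omega)$ is proper, Theorem \ref{normhamthm} furnishes neighbourhood equivalences between the $(\G,\Omega)$-action near $\O_p$ (resp.\ $\O_q$) and its local model constructed in Section \ref{locmodsec}. Next, apply the canonical Hamiltonian Morita equivalence of Example \ref{locmodmoreq} to each of these local models, relating it to the groupoid map of Hamiltonian type
\begin{equation*} \J_\p : H\ltimes (\h^0\oplus V) \to G\ltimes \g^*,
\end{equation*}
with $G=\G_{J(\cdot)}$, $H=\G_\cdot$, $(V,\omega_V)=(\S\No_\cdot,\omega_\cdot)$, and $\p$ an auxiliary $H$-equivariant splitting of (\ref{ses2poisabs}). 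The key observation is that the construction of $\J_\p$ involves neither the leaf nor the principal bundle in the zeroth-order data, so it depends only on the isomorphism class of the quadruple $(G,H,V,\omega_V)$, up to the choice of splitting; any two $H$-equivariant splittings differ by an $H$-equivariant map $\h^*\to\h^0$, which induces an explicit isomorphism between the corresponding $\J_\p$'s by affinely twisting the $\h^0$-factor. By hypothesis these data agree at $p$ and $q$, so the two copies of $\J_\p$ are Hamiltonian isomorphic. Concatenating all of the above equivalences yields a Hamiltonian Morita equivalence between the original actions near $\O_p$ and $\O_q$ relating these two orbits.

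The one delicate point is that $\J_\p$ is not of action type, so the composite is a priori only a Hamiltonian Morita equivalence in the generalized sense of Definition \ref{moreqdefHam}, between the two action-type groupoid maps $\textrm{pr}_\G:\G\ltimes U_i\to \G$ at either end. This is remedied by Proposition \ref{moreqactgpoids}, which asserts that every Hamiltonian Morita equivalence between two groupoid maps of action type is automatically of the action form of Example \ref{asrepmoreq}, and hence qualifies as a Hamiltonian Morita equivalence between the Hamiltonian actions themselves in the sense of Definition \ref{moreqdefHamacttyp}. The main obstacle I expect is precisely this bookkeeping: keeping track of how the various invariant opens shrink along the chain of equivalences, reconciling different auxiliary splittings $\p$ at $p$ and $q$, and making sure the identifications assemble into a single Morita bi-bundle; the essential geometric content has already been packaged in Theorem \ref{normhamthm} and Example \ref{locmodmoreq}.
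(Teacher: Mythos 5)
Your proof is correct and follows essentially the same route as the paper's: the forward implication via Proposition \ref{transgeomham}, and the converse by using the normal form theorem together with Examples \ref{locmodmoreq} and \ref{crucexmoreq2} to Morita-relate the action near each orbit to the transverse model $\J_\p$, which is built solely out of the data $(G,H,V,\omega_V)$ and a splitting. The extra care you take in reconciling the two splittings and in invoking Proposition \ref{moreqactgpoids} is sound but not strictly needed, since one may simply transport the splitting at $p$ through the given isomorphism of pairs, and Definition \ref{moreqdefHamacttyp} already declares a Hamiltonian Morita equivalence between the associated action-type groupoid maps to be one between the actions.
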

\begin{proof} The forward implication is immediate from Proposition \ref{transgeomham}. For the converse, notice the following. Let $p\in S$, write $G=\G_{J(p)}$, $H=\G_p$ and $(V,\omega_V)=(\S\No_p,\omega_p)$, and let $\p:\h^*\to \g^*$ be any choice of $H$-equivariant splitting of (\ref{ses2poisabs}). Then from the normal form theorem, Example \ref{locmodmoreq} and Example \ref{crucexmoreq2}, it follows that there are invariant opens $W$ around $\L_x$ in $M$ and $U$ around $\O_p$ in $J^{-1}(W)$, together with a Hamiltonian Morita equivalence between the Hamiltonian $(\G,\Omega)\vert_W$-action along $J:(U,\omega)\to W$ and (a restriction of) the groupoid map of Hamiltonian type (\ref{bigJ_p}) (to invariant opens around the respective origins in $\h^0\oplus V$ and $\g^*$), that relates $\O_p$ to the origin in $\h^0\oplus V$. With this at hand the backward implication is clear, for (\ref{bigJ_p}) is built naturally out of the pair $(G,H)$, the symplectic representation $(V,\omega_V)$ and the splitting $\p$. 
\end{proof}
We now turn to the stratifications induced by the Hamiltonian Morita types.
\begin{defi} Let $\S_\textrm{Ham}(S)$ and $\S_\textrm{Ham}(\underline{S})$ denote the partitions obtained from the Hamiltonian Morita types on $S$ and $\underline{S}$, respectively, by passing to connected components. We call $\S_\textrm{Ham}(S)$ and $\S_\textrm{Ham}(\underline{S})$ the \textbf{canonical Hamiltonian stratifications}.  
\end{defi} 
The main aim of this section will be to prove:
\begin{thm}\label{canhamstratthm} Let $(\G,\Omega)\rightrightarrows M$ be a proper symplectic groupoid and suppose we are given a Hamiltonian $(\G,\Omega)$-action along $J:(S,\omega)\to M$. 
\begin{itemize}
\item[a)] The partition $\S_\textrm{Ham}(\underline{S})$ is a Whitney stratification of the orbit space $(\underline{S},\mathcal{C}^\infty_{\underline{S}})$.
 \item[b)] The pair consisting of the canonical Hamiltonian stratification of the orbit space $\underline{S}$ and the canonical stratification of the leaf space $\underline{M}$ of $\G$:
 \begin{equation*} \left(\S_\textrm{Ham}(\underline{S}),\S_\textrm{Gp}(\underline{M})\right)
 \end{equation*} is a constant rank stratification (as in Definition \ref{constrkstratdefi}) of the transverse momentum map:
\begin{equation}\label{transmommap} \underline{J}:(\underline{S},\mathcal{C}^\infty_{\underline{S}})\to (\underline{M},\mathcal{C}^\infty_{\underline{M}}).
\end{equation}
\end{itemize}
\end{thm}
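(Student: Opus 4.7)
The plan is to verify the criteria of Lemma \ref{whitreghomprop} and Corollary \ref{fibwhitstratcor} for the triple $(\underline{J}, \P_{\textrm{Ham}}(\underline{S}), \P_\mathcal{M}(\underline{M}))$ and then pass to connected components via Remark \ref{passtoconncomprem}. The whole argument is run through a single local model. Given $p\in S$, set $x=J(p)$, $G=\G_x$, $H=\G_p$, $(V,\omega_V)=(\S\No_p,\omega_p)$; by properness at $x$, the groups $G$ and $H$ are compact. Fix an $H$-equivariant splitting $\p:\h^*\to\g^*$ of (\ref{ses2poisabs}). By Theorem \ref{normhamthm} combined with Example \ref{locmodmoreq}, on $\G$-invariant neighbourhoods of $\O_p$ and $\L_x$ the Hamiltonian action is Hamiltonian-Morita equivalent to (a restriction of) the groupoid map of Hamiltonian type $\J_\p:H\ltimes(\h^0\oplus V)\to G\ltimes\g^*$. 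Passing to orbit/leaf spaces (Proposition \ref{moreqisoredring}, extended to the Hamiltonian setting as in the proof of Theorem \ref{stratleafspthm}), this yields a chart of $(\underline{S},\mathcal{C}^\infty_{\underline S})$ near $\underline{\O_p}$ that identifies $\underline{J}$ with $\underline{J_\p}:(\h^0\oplus V)/H\to\g^*/G$ and, by Remark \ref{improphammortyp}, identifies the partition by Hamiltonian Morita types on both sides.

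Smooth homogeneity of $\P_\textrm{Ham}(\underline{S})$ and of the triple with $\underline J$ is essentially tautological: two points $p_1,p_2$ in the same Hamiltonian Morita type admit, by definition, a Hamiltonian Morita equivalence between invariant opens around $\O_{p_1}$ and $\O_{p_2}$, and this induces an isomorphism of reduced differentiable spaces between the corresponding opens in $\underline{S}$ which preserves the Hamiltonian Morita types (Remark \ref{improphammortyp}(i)) and intertwines the transverse momentum maps (Proposition \ref{transgeommap}(a)). Constant rank of $\underline{J}$ on each Hamiltonian Morita type is then a consequence of this smooth homogeneity (Remark \ref{smoothhomimpliesconstrk}), and the fact that $\underline{J}$ sends Hamiltonian Morita types into Morita types of $\G$ is Remark \ref{improphammortyp}(ii).

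For local semi-algebraicity and local finiteness it suffices, by the previous paragraph, to work in the linear model $\underline{J_\p}$. Hilbert maps $\rho_1:\h^0\oplus V\to\R^N$ and $\rho_2:\g^*\to\R^{N'}$ for the compact linear actions of $H$ and $G$ provide global semi-algebraic charts of $(\h^0\oplus V)/H$ and $\g^*/G$ by Proposition \ref{semialgstratrep}, and the map $J_\p(\alpha,v)=\alpha+\p(J_V(v))$ is polynomial, so it is represented in these charts by a semi-algebraic map. The Hamiltonian Morita type through $(\alpha,v)$ is determined, by Proposition \ref{eqcharhammortyp}, by the conjugacy class of the pair $(G_{J_\p(\alpha,v)},H_{(\alpha,v)})$ together with the isomorphism class of the symplectic normal representation; for compact linear actions each of these conditions cuts out a semi-algebraic subset (the orbit-type loci and local-type loci of Example \ref{exorbtyp} are semi-algebraic, and the symplectic normal space is built polynomially out of $(\alpha,v)$), and there are only finitely many such loci. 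The same input refines the canonical stratification of $\underline{M}$ as in Theorem \ref{stratleafspthm}.

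Combining these ingredients, Lemma \ref{whitreghomprop} gives that $\P_\textrm{Ham}(\underline{S})$ is Whitney regular on $(\underline{S},\mathcal{C}^\infty_{\underline{S}})$ (its members are submanifolds by the same local model, where they become the fixed-point-type loci of a linear compact $H$-action, hence smooth), and Corollary \ref{fibwhitstratcor} combined with Proposition \ref{whitreghompropmap} ensures compatibility with the map to $\underline{M}$; passage to connected components then yields both (a) and (b). The main obstacle I anticipate is the clean verification in the local model that the Hamiltonian Morita types \emph{refined by the symplectic normal representation} are semi-algebraic submanifolds of $(\h^0\oplus V)/H$ — the analogous statement for plain orbit types of a compact linear action is classical, but here one must simultaneously track the stabilizer in $G$ of the image point, the stabilizer in $H$ of the source point, and the linear symplectic data on $\S\No_{(\alpha,v)}$, and show that these combined conditions are cut out by real polynomial equalities and inequalities; everything else is a relatively formal consequence of the Morita framework developed in Part 1.
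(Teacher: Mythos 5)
Your overall architecture (smooth homogeneity from the Morita framework, plus local semi-algebraicity and local finiteness in the linear model, then Lemma \ref{whitreghomprop}, Proposition \ref{whitreghompropmap} and Remark \ref{passtoconncomprem}) is exactly the paper's, and the smooth-homogeneity and constant-rank parts are handled correctly. The genuine gap is the step you yourself flag as the ``main obstacle'': you need the Hamiltonian Morita types to be locally finite, semi-algebraic submanifolds in the local model, and your proposed route --- cutting them out directly by tracking the conjugacy class of $(G_{J_\p(\alpha,v)},H_{(\alpha,v)})$ \emph{together with} the isomorphism class of the symplectic normal representation $(\S\No_{(\alpha,v)},\omega_{(\alpha,v)})$ by polynomial conditions --- is not carried out and is not obviously feasible: the symplectic normal space varies in dimension from point to point, and ``isomorphic as a symplectic $H$-representation'' is not visibly a semi-algebraic condition on $(\alpha,v)$, let alone on its image under a Hilbert map.

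The paper circumvents this entirely by introducing the coarser partition by $J$-isomorphism types $\P_{\cong_J}$, which records only the isomorphism classes of the pair of isotropy groups and makes sense for any groupoid map (so it can be transported across the Morita equivalence with $\J_\p$). Lemma \ref{techlemisotype} computes the $J_\p$-isomorphism type through the origin to be the linear subspace $(\h^0)^G\oplus V^H$ --- manifestly a semi-algebraic submanifold --- and local finiteness and semi-algebraicity then follow from the classical finiteness of isomorphism types for compact linear actions together with the polynomiality of $P$. The essential missing ingredient in your proposal is then Proposition \ref{partprop}$b$: the Hamiltonian Morita types and the $J$-isomorphism types induce the \emph{same} partition after passing to connected components. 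This is proved by showing that every point $(\alpha,v)\in(\h^0)^G\oplus V^H$ lies in the same Hamiltonian Morita type as the origin, using that $J_V(w+v)=J_V(w)$ for $v\in V^H$ (so that $\S\No_{(\alpha,v)}=V$ as a symplectic $H$-representation) and Proposition \ref{finremhammoreq} to transport this back along the Morita equivalence. In other words, the symplectic normal representation is automatically locally constant along the $J$-isomorphism types, so one never needs to encode it semi-algebraically; without this reduction your argument does not close.
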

The fiber of (\ref{transmommap}) over a leaf $\L$ of $(\G,\Omega)$, is (as topological space) the quotient $J^{-1}(\L)/\G$. This is the reduced space at $\L$ appearing in the procedure of symplectic reduction. Throughout, we will denote this as:
\begin{equation*} \underline{S}_\L:=\underline{J}^{-1}(\L),
\end{equation*} and we will simply denote the induced structure sheaf on the fiber space as $\mathcal{C}^\infty_{\underline{S}_\L}$. As we will show, $(\P_\textrm{Ham}(\underline{S}),\P_\mathcal{M}(\underline{M}))$ is a constant rank partition of the transverse momentum map (\ref{transmommap}), so that (as discussed in Subsection \ref{constrkstratsec}) the fiber $(\underline{S}_\L,\mathcal{C}^\infty_{\underline{S}_\L})$ has a natural partition into submanifolds:
\begin{equation}\label{indpartfibtrmommap} \P_\textrm{Ham}(\underline{S}_\L):=\{P\cap \underline{S}_\L\mid P\in \mathcal{P}_\textrm{Ham}(\underline{S})\}.
\end{equation} 
Besides Theorem \ref{canhamstratthm}, in this section we will prove:
\begin{thm}\label{redspstratthm} The fibers $(\underline{S}_{\L},\mathcal{C}^\infty_{\underline{S}_{\L}})$ of the transverse momentum map, endowed with the partition $\S_\textrm{Ham}(\underline{S}_{\L})$ obtained from (\ref{indpartfibtrmommap}) after passing to connected components, are Whitney stratified spaces. 
\end{thm}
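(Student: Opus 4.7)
The plan is to invoke Corollary \ref{fibwhitstratcor} for the transverse momentum map $\underline{J}:(\underline{S},\mathcal{C}^\infty_{\underline{S}})\to (\underline{M},\mathcal{C}^\infty_{\underline{M}})$, equipped with the partitions $\P_{\textrm{Ham}}(\underline{S})$ and $\P_{\mathcal{M}}(\underline{M})$. It suffices to verify four things: $\P_{\textrm{Ham}}(\underline{S})$ is locally finite; the triple $(\underline{J},\P_{\textrm{Ham}}(\underline{S}),\P_{\mathcal{M}}(\underline{M}))$ forms a constant rank partition; it is smoothly homogeneous; and it is locally semi-algebraic. All four will be reduced, via the normal form theorem (Theorem \ref{normhamthm}) and the Hamiltonian Morita equivalence to the transverse local model $\J_{\p}:H\ltimes (\h^0\oplus V)\to G\ltimes \g^*$ of Example \ref{locmodmoreq}, to statements about the linear/polynomial model $\underline{J_{\p}}:(\h^0\oplus V)/H\to \g^*/G$.

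For smooth homogeneity, if $\underline{p}_1,\underline{p}_2\in\underline{S}$ lie in the same Hamiltonian Morita type, the defining Hamiltonian Morita equivalence induces, by the same mechanism as in Proposition \ref{moreqisoredring}, an isomorphism of reduced differentiable spaces between invariant neighborhoods of the two orbits and similarly on the leaf space side; by Remark \ref{improphammortyp}(i)--(ii) these isomorphisms intertwine $\underline{J}$ and preserve both partitions. Constant rank of the partition is then automatic by Remark \ref{smoothhomimpliesconstrk}. Local finiteness is a local statement, so by the normal form it reduces to the local model, where there are only finitely many $H$-orbit types on the finite-dimensional $H$-representation $\h^0\oplus V$ (since $H$ is compact), and hence only finitely many Hamiltonian Morita types in a neighborhood of any given orbit, by the characterization in Proposition \ref{eqcharhammortyp}.

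For local semi-algebraicity, we again pass to the local model. Choose Hilbert maps for the $H$-action on $\h^0\oplus V$ and the $G$-action on $\g^*$; these produce charts of the form (\ref{schwarzchart}) in which both orbit spaces embed as closed semi-algebraic subsets of Euclidean spaces. In these coordinates the map
\[
J_{\p}:\h^0\oplus V\to \g^*,\quad (\alpha,v)\mapsto \alpha+\p(J_V(v)),
\]
is polynomial (linear in $\alpha$, quadratic in $v$), hence $H$-equivariant and semi-algebraic, so it descends to a semi-algebraic map between the orbit-space images. Moreover, by Proposition \ref{eqcharhammortyp}, membership in a fixed Hamiltonian Morita type is cut out by algebraic conditions on the isotropies $H_{(\alpha,v)}$ and $G_{J_{\p}(\alpha,v)}$ together with isomorphism type of the symplectic normal representation; these conditions translate into semi-algebraic conditions on $(\alpha,v)$ and on $J_{\p}(\alpha,v)$, exactly as the orbit-type partitions for the compact group actions are semi-algebraic in Hilbert coordinates (Proposition \ref{semialgstratrep}).

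The main obstacle is this last step: one must track carefully how the Hamiltonian Morita type invariants from Proposition \ref{eqcharhammortyp} translate into concrete polynomial/semi-algebraic conditions on the local model coordinates $(\alpha,v)\in\h^0\oplus V$, and verify that both the restrictions of $J_{\p}$ and the Hilbert-coordinate images of each Morita type remain semi-algebraic. Once these verifications are in place, Corollary \ref{fibwhitstratcor} applies and delivers, for each leaf $\L$, the claimed Whitney stratification $\S_{\textrm{Ham}}(\underline{S}_{\L})$ on the reduced space $(\underline{S}_{\L},\mathcal{C}^\infty_{\underline{S}_{\L}})$.
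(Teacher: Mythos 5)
Your overall strategy coincides with the paper's: both proofs reduce everything to Corollary \ref{fibwhitstratcor} applied to the constant rank partition $(\P_\textrm{Ham}(\underline{S}),\P_\mathcal{M}(\underline{M}))$ of $\underline{J}$, obtain smooth homogeneity from Hamiltonian Morita invariance together with Proposition \ref{moreqisoredring} and Proposition \ref{transgeommap}, and verify local finiteness and local semi-algebraicity in the transverse local model $\underline{J_\p}:(\h^0\oplus V)/H\to \g^*/G$ via Hilbert maps and the induced polynomial map $P$ between their images.

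There is, however, one genuine gap, and it is exactly the step you flag as the ``main obstacle'': you propose to show directly that each Hamiltonian Morita type, as characterized in Proposition \ref{eqcharhammortyp}, is cut out by semi-algebraic conditions. This requires encoding not only the isotropy pair $(\G_{J(p)},\G_p)$ up to isomorphism but also the isomorphism class of the symplectic normal representation $(\S\No_p,\omega_p)$ as a semi-algebraic condition on $(\alpha,v)$, and it is not clear how to do this; Proposition \ref{semialgstratrep} only covers the orbit/isomorphism type partitions of a linear compact group action, not the finer datum of the symplectic normal representation. The paper avoids this entirely by interposing the coarser partition by $J$-isomorphism types, $\P_{\cong_J}(\underline{S})=\P_{\cong}(\underline{S})\cap \underline{J}^{-1}(\P_{\cong}(\underline{M}))$, and proving (Proposition \ref{partprop}$b$, via the local computation of Lemma \ref{techlemisotype} and Proposition \ref{finremhammoreq}) that it induces the same stratification as the Hamiltonian Morita types after passing to connected components. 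In Hilbert coordinates the members of this coarser partition are of the form $\rho(\Sigma_{\h^0\oplus V})\cap P^{-1}(\sigma(\Sigma_{\g^*}))$ with $\Sigma_{\h^0\oplus V}$ and $\Sigma_{\g^*}$ isomorphism types of linear compact group actions, so semi-algebraicity and finiteness follow immediately from Proposition \ref{semialgstratrep}; since semi-algebraic sets have finitely many connected components, each again semi-algebraic, and the Hamiltonian Morita types are locally unions of these components, the required properties transfer. Incorporating Proposition \ref{partprop}$b$ (or an equivalent comparison of partitions) is what is needed to close your argument; without it, the semi-algebraicity claim for the Morita types themselves remains unjustified.
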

In the case of a Hamiltonian action of a compact Lie group, the stratification $\S_\textrm{Ham}(\underline{S}_{\L})$ coincides with that in \cite{LeSj} (see also Remark \ref{hamliegpactdiffpartex}). \\

The partition $\S_\textrm{Ham}(S)$ of the smooth manifold $S$ turns out to be a Whitney stratification as well. Furthermore, in contrast to the stratification $\S_\textrm{Gp}(S)$ associated to the action groupoid, it is a constant rank stratification of the momentum map $J:S\to M$. This can be proved using the normal form theorem. Here we will not go into details on this, but rather focus on the proof of the theorems concerning the transverse momentum map. We can already give an outline of this. 
\begin{proof}[Outline of the proof of Theorem \ref{canhamstratthm} and \ref{redspstratthm}] In the coming subsection we will show that the Hamiltonian Morita types are submanifolds of the orbit space. By part ii) of Remark \ref{improphammortyp}, it then follows that the pair $(\P_\textrm{Ham}(\underline{S}),\P_\mathcal{M}(\underline{M}))$ is a partition of the transverse momentum map (\ref{transmommap}) into submanifolds (as in Definition \ref{constrkstratdefi}). In complete analogy with our proof of Theorem \ref{stratleafspthm},  Proposition \ref{transgeommap}$a$ and \ref{moreqisoredring} imply that the triple $(\underline{J},\P_\textrm{Ham}(\underline{S}),\P_\mathcal{M}(\underline{M}))$ is smoothly homogeneous (as in Definition \ref{homdefimap}). In particular, $(\P_\textrm{Ham}(\underline{S}),\P_\mathcal{M}(\underline{M}))$ is a constant rank partition of (\ref{transmommap}) (see Remark \ref{smoothhomimpliesconstrk}). In Subsection \ref{locpropcanhamstratpropsec} we further prove that $\P_\textrm{Ham}(\underline{S})$ is locally finite and that $(\underline{J},\P_\textrm{Ham}(\underline{S}),\P_\mathcal{M}(\underline{M}))$ is locally semi-algebraic (as in Definition \ref{locsemalgdefimap}). Combining Lemma \ref{whitreghomprop} with part i) of Remark \ref{passtoconncomprem}, it then follows that $\S_\textrm{Ham}(\underline{S})$ is indeed a Whitney stratification of the orbit space, completing the proof of Theorem \ref{canhamstratthm}. Furthermore, Theorem \ref{redspstratthm} is then a consequence of Corollary \ref{fibwhitstratcor}. 
\end{proof}

In the coming subsections we will address the remaining parts of the proof.

\subsubsection{Different partitions inducing the canonical stratifications} In this and the next subsection we study various local properties of the partition by Hamiltonian Morita types. To this end, it will be useful to consider the coarser partitions:
\begin{equation*} \P_{\cong_J}(S):=\P_{\cong}(S)\cap J^{-1}(\P_{\cong}(M))\quad \&\quad \P_{\cong_J}(\underline{S}):=\P_{\cong}(\underline{S})\cap \underline{J}^{-1}(\P_{\cong}(\underline{M})),
\end{equation*} where we take memberwise pre-images and intersections. Explicitly: $p,q\in S$ belong to the same member of $\P_{\cong_J}(S)$ if and only if $\G_{p}\cong \G_q$ and $\G_{J(p)}\cong \G_{J(q)}$. 
\begin{defi} We call $\P_{\cong_J}(S)$ and $\P_{\cong_J}(\underline{S})$ the partitions by \textbf{$J$-isomorphism types}. 
\end{defi}
In the remainder of this subsection, we will prove:
\begin{prop}\label{partprop} Both on $S$ and $\underline{S}$, the following hold. 
\begin{itemize}\item[a)] Each $J$-isomorphism type is a submanifold with connected components of possibly varying dimension.
\item[b)] The $J$-isomorphism types and the Hamiltonian Morita types yield the same partition after passing to connected components.
\item[c)] Each Hamiltonian Morita type is (in fact) a submanifold with connected components of a single dimension.
\end{itemize} Moreover, the orbit projection $S\to \underline{S}$ restricts to a submersion between the Hamiltonian Morita types (respectively the $J$-isomorphism types).
\end{prop}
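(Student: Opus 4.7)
The plan is to reduce every claim to a local calculation in the transverse local model via the Hamiltonian Morita equivalence of Example \ref{locmodmoreq}. Fix $p\in S$ and set $G=\G_{J(p)}$, $H=\G_p$, $V=\S\No_p$; by properness, both $G$ and $H$ are compact. Combining Theorem \ref{normhamthm} with Example \ref{locmodmoreq} gives a Hamiltonian Morita equivalence between the restriction of the Hamiltonian action to a $\G$-invariant open around $\O_p$ and (a restriction of) the local model associated with the groupoid map of Hamiltonian type $\J_\p\colon H\ltimes(\h^0\oplus V)\to G\ltimes\g^*$. By Remark \ref{improphammortyp}(i), Proposition \ref{transgeommap}(b), and Proposition \ref{transgeomham}, both $\P_\textrm{Ham}$ and $\P_{\cong_J}$ are invariant under Hamiltonian Morita equivalence, so it suffices to verify the proposition in the local model, both on $\h^0\oplus V$ and on its $H$-quotient.

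Two elementary observations drive the local calculation: since $H$ is compact, any closed subgroup of $H$ abstractly isomorphic to $H$ must equal $H$, and similarly for $G$; and the quadratic momentum map $J_V\colon V\to\h^*$ vanishes identically on $V^H$, because $\h\cdot v=0$ for $v\in V^H$. Combining these observations, the $J_\p$-isomorphism type of the origin in $\h^0\oplus V$ coincides, near the origin, with the linear subspace
\[ \Sigma_0=\bigl((\h^0)^H\cap(\g^*)^G\bigr)\oplus V^H, \]
because $H_{(\alpha,v)}=H$ forces $(\alpha,v)\in(\h^0\oplus V)^H$, and then $\J_\p(\alpha,v)=\alpha$, so $G_{\J_\p(\alpha,v)}=G$ becomes $\alpha\in(\g^*)^G$. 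This is visibly a submanifold. The same argument, applied via the local model centered at an arbitrary point of a $J$-iso type, yields (a). Different base points may produce local models with different values of $\dim\Sigma_0$, which accounts for the possibly varying dimension of connected components of a single $J$-iso type. The submersion claim for the orbit projection is immediate in the local model, since the $H$-action on $\Sigma_0$ is trivial and the quotient map is a diffeomorphism onto its image in $(\h^0\oplus V)/H$.

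For (b), by Proposition \ref{eqcharhammortyp} the Hamiltonian Morita type refines the $J$-iso type by the additional requirement of a compatible isomorphism of symplectic normal representations. A direct computation of this representation at a point $(\alpha,v_0)\in\Sigma_0$ via Proposition \ref{finremhammoreq}, combined with the fact that $dJ_V$ vanishes at points of $V^H$, shows that $(\S\No_{(\alpha,v_0)},\omega_{(\alpha,v_0)})\cong(V,\omega_V)$ as symplectic $H$-representations. Thus the symplectic normal representation is constant along $\Sigma_0$, and hence on every connected component of a $J$-iso type the $J$-iso type and the Hamiltonian Morita type coincide, proving (b). Statement (c) then follows from (a) and (b): each Hamiltonian Morita type is a union of connected components of $J$-iso types, each of which is a submanifold of well-defined dimension, and this dimension is preserved across components of the same Hamiltonian Morita type because Hamiltonian Morita equivalence preserves the local model data up to isomorphism.

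The main technical task is the explicit identification of the symplectic normal representation along $\Sigma_0$; once this computation is carried out rigorously via the Morita equivalence from Example \ref{locmodmoreq} and Proposition \ref{finremhammoreq}, the remaining statements follow from the linear-algebraic observations above and Lemma \ref{passconncomp}.
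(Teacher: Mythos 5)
Your strategy is the paper's own: reduce to the transverse local model of Example \ref{locmodmoreq}, compute the $J_\p$-isomorphism type there as $(\h^0)^G\oplus V^H$ (your $\Sigma_0$ is the same set) using compactness of the isotropy groups and the vanishing of $J_V$ on $V^H$, identify the symplectic normal representation along $\Sigma_0$ with $(V,\omega_V)$ via Proposition \ref{finremhammoreq}, and conclude with Lemma \ref{passconncomp} and a dimension count. For the statements about the orbit space $\underline{S}$ this is complete and correct, since Hamiltonian Morita equivalence does identify $\underline{S}$ near $\O_p$ with an open in $(\h^0\oplus V)/H$ as reduced differentiable spaces.

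The gap is in the statements about $S$ itself. You assert that "it suffices to verify the proposition in the local model, both on $\h^0\oplus V$ and on its $H$-quotient," but a Hamiltonian Morita equivalence does not identify $S$ with $\h^0\oplus V$ \textemdash\ these have different dimensions in general, and Morita invariance of the partitions only controls their images in the orbit spaces. To handle $S$ one must work in the actual local model $S_\theta=(\Sigma_\theta\times V)\sslash H$ and check that the $J$-isomorphism type through the central orbit $\O=P/H$ is an open subset of $\O\times\bigl((\h^0)^G\oplus V^H\bigr)$; this uses that $H$ acts trivially on $(\h^0)^G\oplus V^H$ and that $J_V$ vanishes on $V^H$, so that the relevant subset of $P\times_H(\h^0\oplus V)$ is an honest product. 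This explicit description is also what the submersion claim rests on: the orbit projection restricted to the $J$-isomorphism types is then the projection $\O\times\bigl((\h^0)^G\oplus V^H\bigr)\to(\h^0)^G\oplus V^H$. Your justification of the submersion claim instead addresses the quotient map $\h^0\oplus V\to(\h^0\oplus V)/H$ of the transverse model, which is not the orbit projection $S\to\underline{S}$ of the proposition. Everything else in your argument goes through as written.
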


To prove this proposition we will compute the Hamiltonian Morita types and the $J$-isomorphism types in the local model for Hamiltonian actions. There are two important remarks here that simplify this computation: first of all, the partitions by $J$-isomorphism types introduced above make sense for any groupoid map and, secondly they are invariant under Morita equivalence of Lie groupoid maps. Therefore, the computation of these reduces to that for the groupoid map $\J_\p$ of Example \ref{locmodmoreq}, which is the content of the lemma below.

\begin{lemma}\label{techlemisotype} Let $G$ be a compact Lie group, $H\subset G$ a closed subgroup and $(V,\omega_V)$ a symplectic $H$-representation. Fix an $H$-equivariant splitting $\p:\h^*\to \g^*$ of (\ref{ses2poisabs}). Consider the groupoid map $\J_\p$ defined in (\ref{bigJ_p}). 
\begin{itemize}
\item[a)] The $J_\p$-isomorphism type through the origin in $\h^0\oplus V$ is equal to the linear subspace: \begin{equation*} (\h^0)^G\oplus V^H
\end{equation*} where $(\h^0)^G$ and $V^H$ are the sets of points in $\h^0$ and $V$ that are fixed by $G$ and $H$.
\item[b)] The $G$-isomorphism type through the origin in $\g^*$ is equal to $(\g^*)^G$.
\item[c)] The restriction of $J_\p$ to these isomorphism types is given by:
\begin{equation}\label{mommaplocmodcentisotyp} (\h^0)^G\oplus V^H\to (\g^*)^G, \quad (\alpha,v)\mapsto \alpha.
\end{equation} 
\item[d)] Considered as subspace of the reduced differentiable space $(\h^0\oplus V)/H$ (resp. $\g^*/G$), the $J_\p$-isomorphism type $(\h^0)^G\oplus V^H$ (resp. $G$-isomorphism type $(\g^*)^G$) is a closed submanifold. 
\end{itemize}
\end{lemma}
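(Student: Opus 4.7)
The plan is to read off the $J_\p$-isomorphism type and $G$-isomorphism type directly from the definitions, reducing (a) and (b) to a basic algebraic fact about compact Lie groups, and then to establish (d) via averaging together with Proposition \ref{globchar}.

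The key algebraic observation is: if $K$ is a compact Lie group and $K' \subset K$ is a closed subgroup with $K' \cong K$ as Lie groups, then $K' = K$. Indeed, $K'^0 \subset K^0$ are connected Lie subgroups of equal dimension, hence have the same Lie algebra, hence coincide; and since the component group is an isomorphism invariant, $|K'/K'^0| = |K/K^0|$ forces $K' = K$. For (a), this means the isotropy in the action groupoid $H \ltimes (\h^0 \oplus V)$ satisfies $H_{(\alpha, v)} \cong H$ iff $H_{(\alpha, v)} = H$, iff $(\alpha, v) \in (\h^0)^H \oplus V^H$; and $G_{J_\p(\alpha, v)} \cong G$ iff $J_\p(\alpha, v) \in (\g^*)^G$. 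When $v \in V^H$, the $\h$-action kills $v$, so from $\langle J_V(v), \xi\rangle = \tfrac{1}{2}\omega_V(\xi \cdot v, v)$ we get $J_V(v) = 0$, whence $J_\p(\alpha, v) = \alpha$; the second condition then becomes $\alpha \in (\g^*)^G$. Combining with $\alpha \in (\h^0)^H$ yields $\alpha \in \h^0 \cap (\g^*)^G = (\h^0)^G$, proving (a) and simultaneously (c). Part (b) is the same observation applied to the coadjoint $G$-action: $G_\alpha \cong G$ iff $G_\alpha = G$ iff $\alpha \in (\g^*)^G$.

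For (d), set $W := (\h^0)^G \oplus V^H$. Averaging an arbitrary inner product over $H$ produces an $H$-invariant inner product on $\h^0 \oplus V$; let $W^\perp$ be the resulting orthogonal complement, which is automatically $H$-invariant. Then $\h^0 \oplus V = W \oplus W^\perp$ as $H$-representations, with $H$ acting trivially on $W$, so the orbit projection restricts to a homeomorphism from $W$ onto a closed subset of $(\h^0 \oplus V)/H$. To upgrade this to a closed embedding of reduced differentiable spaces, I would apply Proposition \ref{globchar}: any smooth function on $W$ extends $H$-invariantly to $\h^0 \oplus V$ by precomposition with the $H$-equivariant linear projection $\h^0 \oplus V \to W$, and therefore defines an element of $\mathcal{C}^\infty_{(\h^0 \oplus V)/H}$ restricting to the given function. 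The argument for $(\g^*)^G \subset \g^*/G$ is identical, using compactness of $G$.

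The main obstacle is (d): parts (a), (b), (c) are essentially formal once the subgroup-isomorphism fact is in hand, but (d) requires translating the pointwise averaging trick cleanly into the language of structure sheaves on orbit spaces, and this is precisely what Proposition \ref{globchar} is designed for.
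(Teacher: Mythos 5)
Your proposal is correct and follows essentially the same route as the paper: the same reduction of (a)--(c) to the fact that a compact Lie group has no proper closed subgroup isomorphic to itself, combined with the vanishing of $J_V$ on $V^H$, and the same invariant-complement argument for (d) (the paper extends functions using an $H$-invariant complement rather than spelling out the projection, but this is the identical idea). Your explicit appeal to Proposition \ref{globchar} in (d) just makes precise what the paper leaves implicit.
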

\begin{proof} We use a standard fact: given a compact Lie group $H$ and a closed subgroup $K$, if $K$ is diffeomorphic to $H$, then $K=H$. Since the origin is fixed by $H$ it follows from this fact that for $(\alpha,v)\in \h^0\oplus V$ we have: 
\begin{align*} (\alpha,v)\cong (0,0) &\iff H_{(\alpha,v)}\cong H\\
&\iff H_{(\alpha,v)}=H\\
&\iff \alpha\in (\h^0)^H \quad \&\quad v\in V^H.
\end{align*} 
Similarly, for $\alpha\in \g^*$, it follows that: 
\begin{equation*} \alpha\cong 0\iff \alpha\in (\g^*)^G.
\end{equation*} Moreover, (\ref{quadsympmommap}) implies that $J_V$ vanishes on $V^H$ and hence $J_\p(\alpha,v)=\alpha$ for $v\in V^H$. Therefore: \begin{align*} 
(\alpha,v)\cong_J(0,0)&\iff (\alpha,v)\cong (0,0)\quad \&\quad \alpha\cong 0, \\
&\iff \alpha\in (\h^0)^G\quad \&\quad v\in V^H,
\end{align*} and we conclude that both $a$ and $b$ hold. Since $J_V$ vanishes on $V^H$, part $c$ follows as well. As for part $d$, it is clear that the canonical inclusion $(\h^0)^G\oplus V^H\hookrightarrow (\h^0\oplus V)/H$ is a closed topological embedding and a morphism of reduced differentiable spaces with respect to the standard manifold structure on the vector space $(\h^0)^G\oplus V^H$. Furthermore, choosing an $H$-invariant linear complement to $(\h^0)^G\oplus V^H$, we can extend any smooth function defined on an open in the vector space $(\h^0)^G\oplus V^H$ (by zero) to an $H$-invariant smooth function defined on an open in $\h^0\oplus V$. So, $(\h^0)^G\oplus V^H$ is indeed a closed submanifold of $(\h^0\oplus V)/H$. The argument for $(\g^*)^G$ in $\g^*/G$ is the same. 
\end{proof}
\begin{proof}[Proof of Proposition \ref{partprop}] Near a given orbit in $S$, we can identify the member of $\P_{\cong_J}(S)$ (resp. $\P_\textrm{Ham}(S)$) through this orbit (via the normal form theorem) with the corresponding member through the orbit $\O:=P/H$ in the local model for the Hamiltonian action (in the notation of Subsection \ref{hamlocmodconsec}). Using the Morita equivalence of Example \ref{locmodmoreq}, combined with Lemma \ref{techlemisotype} and the Morita invariance of the partitions by isomorphism types, we find that the $J_\theta$-isomorphism type through the orbit $\O$ in $S_\theta$ is a submanifold, being an open around $\O$ in: 
\begin{equation}\label{centhamtyp} \O\times \left((\h^0)^G\oplus V^H\right).
\end{equation} Therefore, the $J$-isomorphism types are submanifolds of $S$ with connected components of possibly varying dimension. Passing to the orbit space of the local model, we can again use the Morita equivalence of Example \ref{locmodmoreq} to identify the orbit space of the local model with an open neighbourhood of the origin in $(\h^0\oplus V)/H$, as reduced differentiable spaces (see Corollary \ref{moreqisoredring}). By Lemma \ref{techlemisotype} and Morita invariance of the partitions by isomorphism types, the $J_\theta$-isomorphism type through $\O$ is identified with an open neighbourhood of the origin in the submanifold $(\h^0)^G\oplus V^H$ of $(\h^0\oplus V)/H$. Therefore, the $J$-isomorphism types are submanifolds of $(\underline{S},\mathcal{C}^\infty_{\underline{S}})$ with connected components of possibly varying dimension. This proves part $a$. For part $b$, it suffices to prove that the Hamiltonian Morita type through the orbit $\O$ in the local model coincides with the $J_\theta$-isomorphism type computed above (by Lemma \ref{passconncomp} and the normal form theorem). That is, we have to verify that all $[p,\alpha,v]\in S_\theta$ such that $(\alpha,v)\in (\h^0)^G\oplus V^H$ belong to the same Hamiltonian Morita type. To this end, we again use the Hamiltonian Morita equivalence of Example \ref{locmodmoreq}. Let $[p,\alpha,v]$ be as above. Then the Morita equivalence relates $[p,\alpha,v]$ to $(\alpha,v)$. Since $v\in V^H$, it holds for all $w\in V$ that:
\begin{equation}\label{quadmomfixset} 
J_V(w+v)=J_V(w),
\end{equation} as follows from (\ref{quadsympmommap}). This implies that $\S\No_{(\alpha,v)}=V$ and therefore the conditions in Proposition \ref{finremhammoreq} are satisfied for the aforementioned Morita equivalence, at the points $[p,\alpha,v]$ and $(\alpha,v)$. Moreover, we have $H_{(\alpha,v)}=H$, $G_{J_\p(\alpha,v)}=G$ and, by linearity of the $H$-action, $\S\No_{(\alpha,v)}$ and $V$ in fact coincide as $H$-representations. So, applying the proposition, we obtain an isomorphism $\G_{J_\theta([p,\alpha,v])}\cong G$ that restricts to an isomorphism $\G_{[p,\alpha,v]}\cong H$, and we obtain a compatible isomorphism of symplectic representations: \begin{equation*} (\S\No_{[p,\alpha,v]},\omega_{[p,\alpha,v]})\cong (V,\omega_V).\end{equation*} So, all such $[p,\alpha,v]$ indeed belong to the same Hamiltonian Morita type. For part $c$ it remains to show for each Hamiltonian Morita type in $S$ or $\underline{S}$, the connected components have the same dimension. This follows from Proposition \ref{transgeomham} and a dimension count. Finally, in the above description of the Hamiltonian Morita types and $J$-isomorphism types in $S$ and $\underline{S}$ through $\O$, the orbit projection is identified (near $\O$) with the projection $\O\times (\h^0)^G\oplus V^H\to (\h^0)^G\oplus V^H$. This shows that it restricts to a submersion between the members in $S$ and $\underline{S}$. 
\end{proof}

\begin{rem}\label{hamliegpactdiffpartex} Let $G$ be a compact Lie group and $J:(S,\omega)\to \g^*$ a Hamiltonian $G$-space. The partition in Example \ref{hamGspex}, which is an analogue of the partition by orbit types for proper Lie group actions (cf. Example \ref{exorbtyp}), induces the canonical Hamiltonian stratification as well after passing to connected components. Another interesting partition that induces the canonical Hamiltonian stratification in this way can be defined by the equivalence relation: $p\sim q$ if and only if there is a $g\in G$ such that $G_p=gG_qg^{-1}$ and $G_{J(p)}=gG_{J(q)}g^{-1}$, together with a compatible symplectic linear isomorphism $(\S\No_p,\omega_p)\cong (\S\No_q,\omega_q)$. This is an analogue of the partition by local types for proper Lie group actions. The fact that these indeed induce the canonical Hamiltonian stratification follows from the same arguments as in the proof above, using the normal form theorem with the explicit isomorphism of symplectic groupoids (\ref{isolocmodmgsmod}) (see Remark \ref{sharphamliegpactrem}). Similarly, the partition (\ref{indpartfibtrmommap}) of $\underline{S}_\L$ and the partition used in \cite{LeSj} (given by: $\O_p\sim \O_q$ if and only if there is a $g\in G$ such that $G_p=gG_qg^{-1}$) yield the same partition after passing to connected components. 
\end{rem}

\begin{ex} Let $G$ be a compact Lie group and $J:(S,\omega)\to \g^*$ a Hamiltonian $G$-space. The fixed point set $M^G$ is a member of the partition in Example \ref{hamGspex} (provided it is non-empty). From the above remark we recover the well-known fact that for any two points $p,q\in S$ belonging to the same connected component of $M^G$, the symplectic $G$-representations $(T_pM,\omega_p)$ and $(T_qM,\omega_q)$ are isomorphic.
\end{ex}

\begin{ex} Let $T$ be a torus and $J:(S,\omega)\to \t^*$ a Hamiltonian $T$-space. In this case, the partition in Example \ref{hamGspex} coincides with the partition by orbit types of the $T$-action. Furthermore, the above remark implies that for any two points $p,q\in S$ belonging to the same connected component of an orbit type with isotropy group $H$, the symplectic normal representations at $p$ and $q$ are isomorphic as symplectic $H$-representations.
\end{ex}

\subsubsection{End of the proof}\label{locpropcanhamstratpropsec}
To complete the proof of Theorem \ref{canhamstratthm} and \ref{redspstratthm}, it remains to show:
\begin{prop}\label{weakwhitstratthm} The partition by Hamiltonian Morita types $\P_\textrm{Ham}(\underline{S})$ is locally finite and the triple $(\underline{J},\P_\textrm{Ham}(\underline{S}),\P_{\mathcal{M}}(\underline{M}))$ is locally semi-algebraic (as in Definition \ref{locsemalgdefimap}). 
\end{prop}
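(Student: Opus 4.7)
The plan is to reduce both statements to computations in the linear model $\J_\p\colon H\ltimes(\h^0\oplus V)\to G\ltimes\g^*$ from Example \ref{locmodmoreq}, where the group actions involved are linear actions of compact groups. Fix $\underline{p}\in\underline{S}$ with representative $p$, and set $H=\G_p$, $G=\G_{J(p)}$ (both compact by properness of $\G$ at $J(p)$) and $(V,\omega_V)=(\S\No_p,\omega_p)$. Combining Theorem \ref{normhamthm} with Example \ref{locmodmoreq} furnishes a Hamiltonian Morita equivalence between (the restriction of) the given Hamiltonian action around $\O_p$ and (the restriction of) $\J_\p$ around the origin. By Remark \ref{improphammortyp}(i), Proposition \ref{transgeommap}(a) and Proposition \ref{moreqisoredring}, this equivalence identifies a neighbourhood of $\underline{p}$ in $(\underline{S},\mathcal{C}^\infty_{\underline{S}})$ together with its partition by Hamiltonian Morita types with a neighbourhood of the origin in $((\h^0\oplus V)/H,\mathcal{C}^\infty_{(\h^0\oplus V)/H})$ together with the analogous partition, and similarly identifies a neighbourhood of $\underline{J}(\underline{p})$ in $(\underline{M},\mathcal{C}^\infty_{\underline{M}})$ with a neighbourhood of the origin in $(\g^*/G,\mathcal{C}^\infty_{\g^*/G})$, intertwining $\underline{J}$ with (a restriction of) $\underline{J_\p}$.

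For local finiteness, I would invoke Proposition \ref{partprop}(b) together with Lemma \ref{passconncomp} to work with the coarser partition $\P_{\cong_J}$ in place of $\P_\textrm{Ham}$ in a neighbourhood of $\underline{p}$. In the linear model, a $J_\p$-isomorphism type is determined by the isomorphism classes of $H_{(\alpha,v)}$ and $G_{J_\p(\alpha,v)}$, and these are controlled by the orbit types of the compact linear actions of $H$ on $\h^0\oplus V$ and of $G$ on $\g^*$. Since compact linear actions on finite-dimensional vector spaces admit only finitely many orbit types (see e.g.\ \cite[Proposition 2.7.1]{DuKo}), the $J_\p$-isomorphism-type partition of $(\h^0\oplus V)/H$ is finite, and local finiteness of $\P_\textrm{Ham}(\underline{S})$ at $\underline{p}$ follows.

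For local semi-algebraicity, I would take Hilbert maps $\rho\colon\h^0\oplus V\to\R^m$ and $\rho'\colon\g^*\to\R^n$ for the $H$- and $G$-representations, obtaining via (\ref{schwarzchart}) closed chart embeddings $\underline{\rho}$ and $\underline{\rho}'$ of the orbit spaces; composing with the identifications from the first paragraph yields the required charts around $\underline{p}$ and $\underline{J}(\underline{p})$. By Proposition \ref{semialgstratrep}, combined with the observation that each orbit type on $\h^0\oplus V$ (resp.\ on $\g^*$) is a finite union of strata of the canonical stratification and hence has semi-algebraic image under $\rho$ (resp.\ $\rho'$), the members of $\P_{\cong_J}$ on each side correspond to intersections of an $H$-orbit type on $\h^0\oplus V$ with the $J_\p$-preimage of a $G$-orbit type on $\g^*$; since $J_\p$ is polynomial, these intersections are semi-algebraic in $\h^0\oplus V$, and by Tarski--Seidenberg their $\rho$-images are semi-algebraic in $\R^m$. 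The graph of $\rho'\circ J_\p$ in $(\h^0\oplus V)\times\R^n$ is polynomial, and projecting it along $\rho\times\mathrm{id}$ and applying Tarski--Seidenberg once more shows that the coordinate representation $\underline{\rho}'\circ\underline{J_\p}\circ\underline{\rho}^{-1}$ has semi-algebraic graph, hence restricts to semi-algebraic maps between the corresponding strata. Passing back between $\P_\textrm{Ham}$ and $\P_{\cong_J}$ locally via Proposition \ref{partprop}(b) yields the same conclusion for the Hamiltonian Morita types.

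The main obstacle I anticipate is the verification that the reduction via Example \ref{locmodmoreq} faithfully transports not only the underlying partitions of topological spaces but also the chart structure in the sense of Definition \ref{locsemalgdefimap}; that is, that the isomorphism of reduced differentiable spaces from Proposition \ref{moreqisoredring} is compatible, on both sides, with the composition of Hilbert maps and with $\underline{J}$. Once this compatibility is in place, the semi-algebraicity of orbit types for compact linear actions and the polynomial character of $J_\p$ combine via standard Tarski--Seidenberg arguments to give the result; the only remaining delicate point is to check the semi-algebraicity of the coordinate representation of $\underline{J_\p}$ uniformly on each stratum, which is handled by the graph projection argument sketched above.
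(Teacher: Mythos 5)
Your proposal is essentially the paper's proof: reduce to the linear model $\J_\p$ via the normal form theorem and the Hamiltonian Morita equivalence of Example \ref{locmodmoreq}, take Hilbert maps as charts (composed with the identification of reduced differentiable spaces from Proposition \ref{moreqisoredring}), describe the $J$-isomorphism types as intersections of orbit-type data on $\h^0\oplus V$ with $J_\p$-preimages of orbit-type data on $\g^*$, and invoke Proposition \ref{semialgstratrep} together with finiteness of orbit types of compact linear representations. The one step you handle differently is the coordinate representation of $\underline{J_\p}$: the paper, following \cite[Example 6.5]{LeSj}, lifts the $H$-equivariant polynomial map $J_\p$ to an actual polynomial map $P:\R^n\to\R^m$ between the Hilbert-map targets (each component of $\sigma\circ J_\p$ being an invariant polynomial, hence a polynomial in the generators $\rho_1,\dots,\rho_n$), whereas you only show the graph of the coordinate representation is semi-algebraic by projecting the polynomial graph of $\rho'\circ J_\p$ via Tarski--Seidenberg; this is weaker but suffices for Definition \ref{locsemalgdefimap}. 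One small ordering issue: finiteness of the $J_\p$-isomorphism-type partition does not by itself give local finiteness of $\P_\textrm{Ham}(\underline{S})$, since the Morita types are regroupings of \emph{connected components} of the isomorphism types, and one must know each isomorphism type has finitely many components in the chart --- this is exactly what semi-algebraicity supplies, so your local-finiteness claim should be stated after (or together with) the semi-algebraicity argument, as the paper does.
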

\begin{proof}[Proof of Proposition \ref{weakwhitstratthm}] Let $p\in S$, let $\O_p$ be the orbit through $p$ and let $\L_x=J(\O_p)$ be the corresponding leaf through $x=J(p)$. Further, let $G=\G_x$ denote the isotropy group of $\G$ at $x$, $H=\G_p$ the isotropy group of the action at $p$, and let $(V,\omega_V)=(\S\No_p,\omega_p)$ denote the symplectic normal representation at $p$. As in the proof of Proposition \ref{eqcharhammortyp}, there are invariant opens $W$ around $\L_x$ in $M$ and $U$ around $\O_p$ in $J^{-1}(W)$, together with a Hamiltonian Morita equivalence between the action of $(\G,\Omega)\vert_W$ along $J:U\to W$ and a restriction of the groupoid map (\ref{bigJ_p}), that relates $\O_p$ to the origin in $\h^0\oplus V$. Here, we can arrange the opens in $\h^0\oplus V$ and $\g^*$ to which $(\ref{bigJ_p})$ is restricted to be invariant open balls $B_{\g^*}\subset \g^*$ and $B_{\h^0\oplus V}\subset J_\p^{-1}(B_{\g^*})$ (with respect to a choice of invariant inner products) centered around the respective origins. Let $\rho:\h^0\oplus V\to \R^n$ and $\sigma:\g^*\to \R^m$ be Hilbert maps (see Subsection \ref{reddiffspsec}). By the same reasoning as in \cite[Example 6.5]{LeSj}, since $J_\p:\h^0\oplus V\to \g^*$ is an $H$-equivariant and polynomial map, there is a polynomial map $P:\R^n\to \R^m$ that fits into a commutative square:
\begin{center}
\begin{tikzcd} 
 (\h^0\oplus V)/H \arrow[d,"\underline{J_\p}"] \arrow[r,"\underline{\rho}"] & \R^n\arrow[d,"P"] \\
 \g^*/G \arrow[r,"\underline{\sigma}"] & \R^m
\end{tikzcd}
\end{center}
In view of Proposition \ref{transgeommap}$a$, Corollary \ref{moreqisoredring} and the discussion at the end of Subsection \ref{reddiffspsec}, we obtain a diagram of reduced differentiable spaces:
\begin{center}
\begin{tikzcd} 
 \left(\underline{U},\mathcal{C}^\infty_{\underline{S}} \vert_{\underline{U}}\right) \arrow[r,"\sim"] \arrow[d,"\underline{J}"] & \left(\underline{B}_{\h^0\oplus V},\mathcal{C}^\infty_{(\h^0\oplus V)/H}\vert_{\underline{B}_{\h^0\oplus V}}\right) \arrow[d,"\underline{J_\p}"] \arrow[r,"\sim"] & \left(\rho(B_{\h^0\oplus V}), \mathcal{C}^\infty_{\rho(B_{\h^0\oplus V})}\right)\arrow[d,"P"]  \\
 \left(\underline{W},\mathcal{C}^\infty_{\underline{M}} \vert_{\underline{W}}\right) \arrow[r,"\sim"] & \left(\underline{B}_{\g^*},\mathcal{C}^\infty_{\g^*/G}\vert_{\underline{B}_{\g^*}}\right) \arrow[r,"\sim"] & \left(\sigma(B_{\g^*}),\mathcal{C}^\infty_{\sigma(B_{\g^*})}\right)
\end{tikzcd}
\end{center}
in which all horizontal arrows are isomorphisms. Due to Morita invariance of the partitions by isomorphism types, the partition of $\underline{U}$ by $J$-isomorphism types is identified with the partition of $\rho(B_{\h^0\oplus V})$ consisting of the subsets of the form:
\begin{equation*} \rho(B_{\h^0\oplus V})\cap\rho(\Sigma_{\h^0\oplus V})\cap P^{-1}(\sigma(\Sigma_{\g^*})) , \quad\quad \Sigma_{\h^0\oplus V}\in \P_{\cong}(\h^0\oplus V), \quad \Sigma_{\g^*}\in \P_{\cong}(\g^*).
\end{equation*} 
Recall from the proof of Theorem \ref{stratleafspthm} that the canonical stratification of the orbit space of a real, finite-dimensional representation of a compact Lie group has finitely many strata, each of which is mapped onto a semi-algebraic set by any Hilbert map. The same must then hold for the partition by isomorphism types of such a representation. The above partition of $\rho(B_{\h^0\oplus V})$ therefore also has finitely many members, each of which is semi-algebraic, for $P$ is polynomial and $\rho(B_{\h^0\oplus V})$ is semi-algebraic (being the image of a semi-algebraic set under a semi-algebraic map). The same then holds for the partition obtained after passing to connected components, because any semi-algebraic set has finitely many connected components, each of which is again semi-algebraic. By Proposition \ref{partprop}$b$, the members of $\P_\textrm{Ham}(\underline{S})\vert_{\underline{U}}$ are unions of the connected components of the $J$-isomorphism types in $\underline{U}$. So, $\P_\textrm{Ham}(\underline{S})\vert_{\underline{U}}$ has finitely many members, each of which is mapped onto a semi-algebraic set by the above chart for $(\underline{S},\mathcal{C}^\infty_{\underline{S}})$, the image being a finite union of semi-algebraic sets. By similar reasoning, the above chart for $(\underline{M},\mathcal{C}^\infty_{\underline{M}})$ maps the members of $\P_\mathcal{M}(\underline{M})\vert_{\underline{W}}$ onto semi-algebraic sets. Since $P$ is polynomial, it restricts to semi-algebraic maps between the images of these members under the above charts. So, this proves the proposition.   
\end{proof}

We end this section with a concrete example, similar to that in \cite{ArCuGot}.  
\begin{ex}\label{concreteex} Let $G=\textrm{SU}(2)\times\textrm{SU}(2)$ and consider the circle in $G$ given by the closed subgroup:
\begin{equation*} H=\left\{ \left(\begin{pmatrix} e^{i\theta} & 0 \\ 0 & e^{-i\theta} \end{pmatrix}, \begin{pmatrix} e^{i\theta} & 0 \\ 0 & e^{-i\theta}  \end{pmatrix}\right)\in \textrm{SU}(2)\times \textrm{SU}(2) \textrm{ }\middle\vert\textrm{ }\theta\in \R\right\}.
\end{equation*} The cotangent bundle $T^*(G/H)$ of the homogeneous space $G/H$ is naturally a Hamiltonian $G$-space, and the canonical Hamiltonian strata can be realized as concrete semi-algebraic submanifolds of $\R^5$, as follows. The orbit space of the $G$-action on $T^*(G/H)$ can be canonically identified with the orbit space of the linear $H$-action on $\h^0$ induced by the coadjoint action of $G$ on $\g^*$, and the transverse momentum map becomes the map $\underline{J}:\h^0/H\to \g^*/G$ induced by the inclusion $\h^0\hookrightarrow \g^*$. To find Hilbert maps for $\g^*$ and $\h^0$, consider the $\text{SU}(2)$-invariant inner product on $\mathfrak{su}(2)$ given by:
\begin{equation}\label{invinprodsu(2)} \langle A, B \rangle_{\mathfrak{su}(2)}=-\textrm{Trace}(AB)\in \R,
\end{equation} and notice that under the identification of $\mathfrak{su}(2)$ with $\R\times \C$ obtained by writing:
\begin{equation*} \mathfrak{su}(2)=\left\{\begin{pmatrix} i\theta & -\bar{z} \\ z & -i\theta \end{pmatrix}\in \mathfrak{gl}(2,\C) \text{ }\middle\vert\text{ } \theta \in \R,\text{ } z\in \C\right\},
\end{equation*} (\ref{invinprodsu(2)}) corresponds (up to a factor) to the standard Euclidean inner product. Using the induced $G$-invariant inner product on $\g=\mathfrak{su}(2)\times \mathfrak{su}(2)$, we identify $\g^*$ with $\g$. The orbits of the adjoint $\textrm{SU}(2)$-action on $\mathfrak{su}(2)$ are the origin and the concentric spheres centered at the origin. Using this, one readily sees that the algebra of $\textrm{SU}(2)$-invariant polynomials on $\mathfrak{su}(2)$ is generated by the single polynomial given by the square of the norm induced by (\ref{invinprodsu(2)}). So, the algebra of $G$-invariant polynomials on $\g^*$ is generated by:
\begin{equation*} \sigma_1(\theta_1,z_1,\theta_2,z_2)=\theta_1^2+|z_1|^2, \quad \sigma_2(\theta_1,z_1,\theta_2,z_2)=\theta_2^2+|z_2|^2, \quad \theta_1,\theta_2\in \R, \quad z_1,z_2\in \C.
\end{equation*} 
On the other hand, $\h^0$ is identified with the orthogonal complement:
\begin{equation*} \h^{\perp}=\left\{ \left(\begin{pmatrix} i\theta & -\bar{z}_1 \\ z_1 & -i\theta \end{pmatrix}, \begin{pmatrix} -i\theta & -\bar{z}_2 \\ z_2 & i\theta  \end{pmatrix}\right)\in \mathfrak{su}(2)\times \mathfrak{su}(2) \textrm{ }\middle\vert\textrm{ }\theta\in \R,\text{ }z_1,z_2\in \C\right\}.
\end{equation*} Identifying $\h^\perp$ with $\R\times \C^2$ accordingly, the $H$-orbits are identified with those of the $\mathbb{S}^1$-action: 
\begin{equation*} \lambda\cdot (\theta,z_1,z_2)=(\theta,\lambda z_1,\lambda z_2), \quad \lambda\in \mathbb{S}^1, \quad (\theta,z_1,z_2)\in \R\times \C^2.
\end{equation*} In light of this, the algebra of $H$-invariant polynomials on $\h^0$ is generated by:
\begin{alignat*}{2} \rho_1(\theta,&z_1,z_2)=\theta,\quad\quad\quad \rho_2(\theta,z_1,z_2)=&&|z_1|^2,\quad\quad\quad \rho_3(\theta,z_1,z_2)=|z_2|^2,\\
&\rho_4(\theta,z_1,z_2)=\textrm{Re}(z_1\bar{z}_2),\quad\quad\quad &&\rho_5(\theta,z_1,z_2)=\textrm{Im}(z_1\bar{z}_2).
\end{alignat*} 
Now, consider the polynomial map: 
\begin{equation*} P:\R^5\to \R^2, \quad P(x_1,...,x_5)=\left(x_1^2+x_2,x_1^2+x_3\right).
\end{equation*} Then we have a commutative square:
\begin{center}
\begin{tikzcd} \h^0/H\arrow[r,"\underline{\rho}"]\arrow[d,"\underline{J}"] & \R^5 \arrow[d,"P"]\\
\g^*/G \arrow[r,"\underline{\sigma}"] & \R^2
\end{tikzcd} 
\end{center} The image of $\h^0/H$ under $\underline{\rho}$ is the semi-algebraic subset of $\R^5$ given by: 
\begin{equation*}\{x_2\geq 0,\textrm{ }x_3\geq 0, \textrm{ }x_4^2+x_5^2=x_2x_3\}, 
\end{equation*} whereas the image of $\g^*/G$ under $\underline{\sigma}$ is the semi-algebraic subset of $\R^2$ given by: 
\begin{equation*} \{y_1\geq 0,\text{ }y_2\geq 0\}.
\end{equation*} The canonical stratification of the orbit space of the $G$-action on $T^*(G/H)$ has two strata, corresponding to the semi-algebraic submanifolds of $\R^5$ given by: 
\begin{align}\label{orbtypstratex1} &\{x_2=x_3=x_4=x_5=0\}, \\
\label{orbtypstratex2} &\{x_4^2+x_5^2=x_2x_3\}\cap (\{x_2> 0\}\cup\{x_3>0\}).
\end{align} On the other hand, the canonical stratification of $\g^*/G$ has four strata, corresponding to the semi-algebraic submanifolds of $\R^2$ given by: \begin{equation*} \{y_1=y_2=0\},\quad \{y_1>0,\text{ }y_2=0\},\quad \{y_1=0,\text{ }y_2>0\}, \quad\{y_1>0,\text{ }y_2>0\}.\end{equation*} From this we see that the canonical Hamiltonian stratification of the orbit space of the Hamiltonian $G$-space $T^*(G/H)$ has six strata, three of which correspond to the semi-algebraic submanifolds of (\ref{orbtypstratex1}) given by the respective intersections of (\ref{orbtypstratex1}) with $\{x_1<0\}$, $\{x_1=0\}$ and $\{x_1>0\}$, and the other three of which correspond to the semi-algebraic submanifolds of (\ref{orbtypstratex2}) given by:
\begin{align*} \{&x_1=0,\text{ }x_2>0,\text{ }x_3=x_4=x_5=0\},\quad \{x_1=x_2=0,\text{ }x_3>0,\text{ }x_4=x_5=0\},\\
&\{x_1^2+x_2>0,\text{ }x_1^2+x_3>0,\text{ }x_4^2+x_5^2=x_2x_3\}\cap (\{x_2> 0\}\cup\{x_3>0\}).
\end{align*} The restriction of $P$ to any of the first five strata is injective, hence its fibers are points. The restriction of $P$ to the last stratum has $2$-dimensional fibers. In fact, given $y_1,y_2>0$ the fiber of this restricted map over $(y_1,y_2)\in \R^2$ is projected diffeomorphically onto the semi-algebraic submanifold of $\R^3$ given by:
\begin{equation*} \{(x_1,x_4,x_5)\in \R^3\mid x_4^2+x_5^2=(y_1-x_1^2)(y_2-x_1^2),\text{ }x_1^2<\max(y_1,y_2)\},
\end{equation*} which is semi-algebraically diffeomorphic to a $2$-sphere if $y_1\neq y_2$, whereas it is semi-algebraically diffeomorphic to a $2$-sphere with two points removed if $y_1=y_2$. 
%Given a compact Lie group $G$ and a closed subgroup $H$, the cotangent bundle $T^*(G/H)$ of the homogeneous space $G/H$ is a naturally a Hamiltonian $G$-space. This can be realized as the associated bundle $G\times_H\h^0$ (where $\h^0\subset \g^*$ is the annihilator of $\h$ in $\g$), with symplectic form constructed via symplectic reduction of the cotangent bundle $T^*G$. In fact, this is a particular case of the construction in Subsection \ref{hamlocmodconsec}, which implies that there is a Morita equivalence between the Hamiltonian $G$-action on $T^*(G/H)$ and the groupoid map $H\ltimes \h^0\to G\ltimes \g^*$ (of Hamiltonian type) lifting the inclusion $\h^0\hookrightarrow \g^*$. 
\end{ex}
\subsection{The regular parts of the stratifications}\label{regpartsec} 
\subsubsection{The regular part of a stratification}
To start with, we give a reminder on the regular part of a stratification, mostly following the exposition in \cite{CrMe}. A stratification $\S$ of a space $X$ comes with a natural partial order given by:
\begin{equation}\label{partordstrat} \Sigma\leq \Sigma' \quad \iff \quad \Sigma \subset \overline{\Sigma'}.
\end{equation} We say that a stratum $\Sigma\in \S$ is \textbf{maximal} if it is maximal with respect to this partial order. Maximal strata can be characterized as follows.
\begin{prop}\label{regpartopen} Let $(X,\S)$ be a stratified space. Then $\Sigma \in\S$ is maximal if and only if it is open in $X$. Moreover, the union of all maximal strata is open and dense in $X$. 
\end{prop}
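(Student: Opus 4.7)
The plan is to prove both directions of the characterization using only local finiteness of $\S$ and the frontier condition, then derive density of the maximal part by a dimension-maximizing argument inside local patches.

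For the easier direction, suppose $\Sigma\in\S$ is open in $X$. If $\Sigma\leq\Sigma'$, meaning $\Sigma\subset\overline{\Sigma'}$, then any point $x\in\Sigma$ is a limit of points in $\Sigma'$; since $\Sigma$ is an open neighbourhood of $x$, we get $\Sigma\cap\Sigma'\neq\emptyset$, and disjointness of the partition forces $\Sigma=\Sigma'$. So $\Sigma$ is maximal.

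For the converse, fix a maximal $\Sigma$ and $x\in\Sigma$. By local finiteness, choose an open neighbourhood $U$ of $x$ meeting only finitely many strata $\Sigma=\Sigma_0,\Sigma_1,\ldots,\Sigma_n$. For each $i\geq 1$ with $x\in\overline{\Sigma_i}$, the frontier condition applied to $\overline{\Sigma_i}$ gives $\Sigma\subset\overline{\Sigma_i}$ (because $\overline{\Sigma_i}$ is a union of strata, and the stratum containing $x$ is $\Sigma$). Maximality of $\Sigma$ then forces $\Sigma=\Sigma_i$, a contradiction. Thus $x\notin\overline{\Sigma_i}$ for every $i\geq 1$, so after shrinking $U$ away from the finitely many closed sets $\overline{\Sigma_i}$ we obtain a neighbourhood of $x$ contained in $\Sigma$. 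Hence $\Sigma$ is open.

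For the second assertion, let $U$ denote the union of all maximal strata; by the first part $U$ is open. To show density, fix $x\in X$ and any open neighbourhood $V$ of $x$. Shrinking $V$ by local finiteness, we may assume $V$ meets only finitely many strata; among those, pick one $\Sigma^\ast$ of maximal dimension. I claim $\Sigma^\ast$ is maximal in $\S$: if $\Sigma^\ast\subset\overline{\Sigma'}$ with $\Sigma'\neq\Sigma^\ast$, the frontier condition yields $\dim\Sigma'>\dim\Sigma^\ast$, and picking any $y\in\Sigma^\ast\cap V$ we have $y\in\overline{\Sigma'}\cap V$, so $\Sigma'$ also meets $V$, contradicting the choice of $\Sigma^\ast$. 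Thus $\Sigma^\ast\subset U$ and $V\cap U\neq\emptyset$, proving density.

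The only nontrivial point is the forward implication in the first part: one must simultaneously use local finiteness (to reduce to finitely many nearby strata) and the frontier condition (to upgrade ``$x$ lies in a closure'' to ``$\Sigma$ lies in that closure''). Everything else is a direct unwinding of definitions, so I do not anticipate a genuine obstacle.
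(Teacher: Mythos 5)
Your proof is correct: both directions of the equivalence and the density argument are complete, and the one delicate step (upgrading $x\in\overline{\Sigma_i}$ to $\Sigma\subset\overline{\Sigma_i}$ via the frontier condition, so that maximality can be invoked) is handled properly. The paper states this proposition without proof, recalling it from the literature, and your argument is exactly the standard one that is being omitted, so there is nothing to reconcile.
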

\begin{defi} The union of all maximal strata of a stratified space $(X,\S)$ is called the \textbf{regular part} of the stratified space. 
\end{defi}
Given a stratification $\S$, an interesting question is whether it admits a greatest element with respect to the partial order (\ref{partordstrat}). This is equivalent to asking whether the regular part of $\S$ is connected. 
\begin{ex}\label{prinorbthm} Let $G$ be a Lie group acting properly on a manifold $M$. The partition by orbit types $\P_\sim(\underline{M})$ (see Example \ref{exorbtyp}) comes with a partial order of its own. Namely, if $\underline{M}_x$ and $\underline{M}_y$ denote the orbit types containing the respective orbits $\O_x$ and $\O_y$, then by definition:
\begin{equation*} \underline{M}_x\leq \underline{M}_y\quad \iff G_y \text{ is conjugate in $G$ to a subgroup of }G_x.
\end{equation*} The principal orbit type theorem states that, if $\underline{M}$ is connected, then there is a greatest element with respect to this partial order, called the principal orbit type, which is connected, open and dense in $\underline{M}$. In this case, the regular part of $\S_\textrm{Gp}(\underline{M})$ coincides with the principal orbit type; in particular, it is connected. On the other hand, the regular part of $\S_\textrm{Gp}(M)$ need not be connected, even if $M$ is connected.
\end{ex}
\begin{ex}\label{princtypliegpoidex} Let $\G\rightrightarrows M$ be a proper Lie groupoid. We denote the respective regular parts of $\S_\textrm{Gp}(M)$ and $\S_\textrm{Gp}(\underline{M})$ as $M^\textrm{princ}$ and $\underline{M}^\textrm{princ}$. From the linearization theorem it follows that a point $x$ in $M$ belongs to $M^\textrm{princ}$ if and only if the action of $\G_x$ on $\No_x$ is trivial. From this it is clear that $M^\textrm{princ}$ and $\underline{M}^\textrm{princ}$ are unions of Morita types. The analogue of the principal orbit type theorem for Lie groupoids \cite[Theorem 15]{CrMe} states that, if $\underline{M}$ is connected, then $\underline{M}^\textrm{princ}$ is connected.
\end{ex}
The lemma below gives a useful criterion for the regular part to be connected. 
\begin{lemma}\label{cod1stratlem} Let $M$ be a connected manifold and $\S$ a stratification of $M$ by submanifolds. If $\S$ has no codimension one strata, then the regular part of $\S$ is connected.  
\end{lemma}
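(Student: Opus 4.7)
The plan is to reduce the claim to a standard transversality/codimension count: $M^{\textrm{reg}}$ is an open submanifold of $M$, hence connected if and only if path-connected, and its complement $M^{\textrm{sing}} := M \setminus M^{\textrm{reg}}$ will be seen to be a locally finite union of submanifolds of codimension at least $2$, which cannot disconnect a connected manifold.

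First I would identify the regular and singular parts in terms of codimension. By Proposition \ref{regpartopen} every maximal stratum is open in $M$, hence of codimension $0$. Conversely, if $\Sigma \in \S$ is of codimension $0$ and $\Sigma \subset \overline{\Sigma'}$ for some other stratum $\Sigma'$, then by the frontier condition $\overline{\Sigma'} = \Sigma' \cup \bigcup_i \Sigma_i$ with $\dim \Sigma_i < \dim \Sigma'$; disjointness of strata forces $\Sigma$ to coincide with one of these pieces, which contradicts $\dim \Sigma = \dim M$. Thus maximal strata are exactly the codimension $0$ strata, so under the hypothesis every stratum of $M^{\textrm{sing}}$ has codimension at least $2$. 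Local finiteness of $\S$ then makes $M^{\textrm{sing}}$ closed in $M$ and exhibited as a locally finite union of submanifolds of codimension $\geq 2$.

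Next I would prove connectedness of $M^{\textrm{reg}}$ by a path-perturbation argument. Fix $p, q \in M^{\textrm{reg}}$ and, using connectedness (hence path-connectedness) of the manifold $M$, choose a continuous path $\gamma_0 : [0,1] \to M$ from $p$ to $q$. Its compact image meets only finitely many strata of $\S$, and therefore only finitely many strata $\Sigma_1, \ldots, \Sigma_k$ lying in $M^{\textrm{sing}}$. Smooth $\gamma_0$ to a smooth path in $M$ with fixed endpoints in $M^{\textrm{reg}}$ (which is possible because $M^{\textrm{reg}}$ is open), then successively apply the transversality theorem with fixed endpoints to perturb the resulting path into one transverse to each $\Sigma_i$. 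Since $\dim [0,1] + \dim \Sigma_i \leq 1 + (\dim M - 2) < \dim M$, transversality forces the perturbed path to miss $\Sigma_i$ entirely, and after finitely many perturbations the path lies in $M^{\textrm{reg}}$.

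The main technical point is to organise the perturbations so that they are small enough to remain within a fixed neighbourhood of $\gamma_0$ on which only the finitely many strata $\Sigma_1,\dots,\Sigma_k$ of $M^{\textrm{sing}}$ occur, keep the endpoints in $M^{\textrm{reg}}$, and preserve the transversality (hence disjointness from) the strata already handled at previous steps. This is a routine application of the standard transversality and smoothing results; once in place, the codimension count $\dim \Sigma_i \leq \dim M - 2$ immediately yields $\gamma([0,1]) \subset M^{\textrm{reg}}$, finishing the proof.
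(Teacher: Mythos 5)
Your proposal is correct and follows essentially the same route as the paper, which also argues by perturbing a path joining two regular points to be transverse to (hence, by the codimension count, disjoint from) all strata of codimension at least two, citing the transversality principle in Guillemin--Pollack and the argument of Duistermaat--Kolk. You simply fill in more of the routine details (identifying maximal strata with codimension-zero strata and organising the finitely many perturbations) that the paper leaves to the references.
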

\begin{proof} As in the proof of \cite[Proposition 2.8.5]{DuKo}, by a transversality principle \cite[pg. 73]{GuPo} any smooth path $\gamma$ that starts and ends in the regular part is homotopic in $M$ to a path $\widetilde{\gamma}$ that intersects only strata of codimension at most $1$ and starts and ends at the same points as $\gamma$. 
\end{proof}
\begin{ex}\label{infstratex} Although $\S_\textrm{Gp}(M)$ may have codimension one strata, the base $M$ of a proper Lie groupoid $\G$ admits a second interesting Whitney stratification that does not have codimension one strata: the \textbf{infinitesimal stratification} $\S_\textrm{Gp}^\textrm{inf}(M)$. As for the canonical stratification, the infinitesimal stratification is induced by various different partitions of $M$. Indeed, each of the partitions mentioned in Subsection \ref{stratdefsec} has an infinitesimal analogue, obtained by replacing the Lie groups in their defining equivalence relations by the corresponding Lie algebras. Yet another partition that induces the infinitesimal stratification on $M$ is the partition $\P_\textrm{dim}(M)$ of $M$ by \textbf{dimension types}, defined by the equivalence relation: $x\sim y$ if and only if $\dim(\L_x)=\dim(\L_y)$, or equivalently, $\dim(\g_x)=\dim(\g_y)$. The members of each of these partitions are invariant. Therefore, each of these descends to a partition of $\underline{M}$. However, the members of $\S_\textrm{Gp}^\textrm{inf}(\underline{M})$ may fail to be submanifolds of the leaf space. For this reason we only consider the stratification on $M$.  
We let $M^\textrm{reg}$ denote the regular part of the infinitesimal stratification $\S_\textrm{Gp}^\textrm{inf}(M)$. As for the canonical stratification, this has a Lie theoretic description: a point $x$ in $M$ belongs to $M^\textrm{reg}$ if and only if the action of $\g_x$ on $\No_x$ is trivial. Since the infinitesimal stratification has no codimension one strata, Lemma \ref{cod1stratlem} applies. Therefore, $M^\textrm{reg}$ is connected if $M$ is connected. 
\end{ex}
\subsubsection{The infinitesimal Hamiltonian stratification}
In the remainder of this section we will study the regular part of both the canonical Hamiltonian stratification and of a second stratification associated to a Hamiltonian action of a proper symplectic groupoid, that we call the \textbf{infinitesimal Hamiltonian stratification}. We include the latter in this section, because a particularly interesting property of this stratification is that its regular part is better behaved than that of the canonical Hamiltonian stratification. To introduce the infinitesimal Hamiltonian stratification, let $(\G,\Omega)\rightrightarrows M$ be a proper symplectic groupoid and suppose that we are given a Hamiltonian $(\G,\Omega)$-action along $J:(S,\omega)\to M$. Each of the partitions of $S$ defined in Section \ref{canhamstratsec} has an infinitesimal counterpart, obtained by replacing the role of the isotropy Lie groups by the corresponding isotropy Lie algebras. For example, by definition two points $p,q\in S$ belong to the same \textbf{infinitesimal Hamiltonian Morita type} if there is an isomorphism of pairs of Lie algebras:
\begin{equation*} (\g_{J(p)},\g_p)\cong (\g_{J(q)},\g_q)
\end{equation*} together with a compatible symplectic linear isomorphism:
\begin{equation*} (\S\No_p,\omega_p)\cong (\S\No_q,\omega_q),
\end{equation*} where compatibility is now meant with respect to the Lie algebra actions. These partitions induce, after passing to connected components, one and the same Whitney stratification $\S_\textrm{Ham}^\text{inf}(S)$ of $S$: the infinitesimal Hamiltonian stratification. There is in fact an even simpler partition that induces this stratification, obtained from the partitions by dimensions of the orbits on $S$ and the leaves of $M$ (see Example \ref{infstratex}):
\begin{equation}\label{J-dimtypdef} \P_{\textrm{dim}_J}(S):=\P_\textrm{dim}(S)\cap J^{-1}(\P_\textrm{dim}(M)),
\end{equation} where we take memberwise intersections. Explicitly, two points $p,q\in S$ belong to the same member of (\ref{J-dimtypdef}) if and only if $\dim(\O_p)=\dim(\O_q)$ and $\dim(\L_{J(p)})=\dim(\L_{J(q)})$. That the members of the above partitions are submanifolds of $S$ (with connected components of possibly varying dimension) and that all of these partitions indeed yield one and the same partition $\S_\textrm{Ham}^\textrm{inf}(S)$ after passing to connected components follows from the same type of arguments as in the proof of Proposition \ref{partprop}. From the normal form theorem it further follows that $\S_\textrm{Ham}^\textrm{inf}(S)$ is a constant rank stratification of the momentum map. 
  
\subsubsection{Lie theoretic description of the regular parts}\label{liedescrpregpartsec}
Given a proper symplectic groupoid $(\G,\Omega)$ and a Hamiltonian $(\G,\Omega)$-action along $J:(S,\omega)\to M$, we will use the following notation for the regular parts of the various stratifications that we consider. 
\begin{itemize} \item For the canonical Hamiltonian stratifications $\S_\textrm{Ham}(S)$ and $\S_\textrm{Ham}(\underline{S})$, and the infinitesimal Hamiltonian stratification $\S_\textrm{Ham}^\text{inf}(S)$ of the Hamiltonian $(\G,\Omega)$-action:
\begin{equation*} S^\textrm{princ}_\textrm{Ham}, \quad \quad \underline{S}^\textrm{princ}_\textrm{Ham},\quad\quad S^\textrm{reg}_\textrm{Ham}.
\end{equation*} 
\item For the canonical stratifications $\S_\textrm{Gp}(S)$ and $\S_\textrm{Gp}(\underline{S})$ and the infinitesimal stratification $\S^\textrm{inf}_\textrm{Gp}(S)$ of the $\G$-action: 
\begin{equation*} S^\textrm{princ}, \quad \quad \underline{S}^\textrm{princ},\quad\quad S^\textrm{reg}.
\end{equation*}
\item For the stratification $\S_\textrm{Ham}(\underline{S}_\L)$ on the reduced space over a leaf $\L$:
\begin{equation*}
\underline{S}_\L^\textrm{princ}.
\end{equation*}
\end{itemize}

\begin{rem} Proposition \ref{regpartopen}, together with the fact that the orbit projection $q$ is open, implies: 
\begin{equation*} S^\textrm{princ}=q^{-1}(\underline{S}^\textrm{princ})\quad\& \quad S^\textrm{princ}_\textrm{Ham}=q^{-1}(\underline{S}^\textrm{princ}_\textrm{Ham}).
\end{equation*} Furthermore, there are obvious inclusions:
\begin{center}
\begin{tikzcd} &  S^\textrm{princ} \arrow[hookrightarrow]{rd} & \\
S^\textrm{princ}_\textrm{Ham} \arrow[hookrightarrow]{ru} \arrow[hookrightarrow]{rd} & & S^\textrm{reg} \\
& S^\textrm{reg}_\textrm{Ham} \arrow[hookrightarrow]{ru} &
\end{tikzcd}
\end{center}
\end{rem} We have the following Lie theoretic description of the regular parts. 
\begin{prop}\label{regpartalg} Let $p\in S$ and denote $x=J(p)\in M$. Then the following hold.
\begin{itemize}\item[a)] $p\in S^\textrm{princ}$ if and only if the actions of $\G_p$ on both $\g_p^0$ and on $\S\No_p$ are trivial.
\item[b)] $p\in S^\textrm{reg}$ if and only if the actions of $\g_p$ on both $\g_p^0$ and on $\S\No_p$ are trivial.
\item[c)] $p\in S^\textrm{princ}_\textrm{Ham}$ if and only if $p\in S^\textrm{princ}$ and $\G_x$ fixes $\g_p^0$.
\item[d)] $p\in S^\textrm{reg}_\textrm{Ham}$ if and only if $p\in S^\textrm{reg}$ and $\g_x$ fixes $\g_p^0$.
\item[e)] $\O_p\in \underline{S}^\textrm{princ}_\L$ if and only if the action of $\G_p$ on $(J_{\S\No_p})^{-1}(0)$ is trivial. 
\end{itemize}
\end{prop}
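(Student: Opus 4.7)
The plan is to deduce each part from the normal form theorem: via the Hamiltonian Morita equivalence of Example~\ref{locmodmoreq} and the local model computation already carried out in the proof of Proposition~\ref{partprop}, it suffices to identify the principal points at the origin of $\h^0\oplus V$ for the groupoid map $\J_\p$ built from $G=\G_x$, $H=\G_p$, $V=\S\No_p$, and a choice of $H$-equivariant splitting $\p:\h^*\to\g^*$.

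For (a) I would apply the characterization of Example~\ref{princtypliegpoidex} to the proper action groupoid $\G\ltimes S$: $p\in S^\textrm{princ}$ iff $\G_p$ acts trivially on $\No_p$. The short exact sequence $0\to\S\No_p\to\No_p\to\g_p^0\to 0$ of $\G_p$-representations from Proposition~\ref{normrepham}(b) splits equivariantly because $\G_p$ is compact, so triviality on $\No_p$ is equivalent to triviality on both summands. Part (b) is identical, using Example~\ref{infstratex} in place of Example~\ref{princtypliegpoidex} and noting that any $\G_p$-equivariant splitting of the sequence is automatically $\g_p$-equivariant.

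For (c) and (d), the local computation in the proof of Proposition~\ref{partprop} identifies the Hamiltonian Morita type through $\O_p$ locally with an open neighbourhood of $\O_p$ in $\O_p\times\bigl((\g_p^0)^{\G_x}\oplus(\S\No_p)^{\G_p}\bigr)$, and analogously for the infinitesimal stratification with $\g_x,\g_p$ in place of $\G_x,\G_p$. By Proposition~\ref{partprop}(c), this stratum is open in $S$ iff it has the full dimension $\dim\O_p+\dim\g_p^0+\dim\S\No_p=\dim S$; equivalently, iff $\G_x$ (resp.~$\g_x$) fixes $\g_p^0$ pointwise and $\G_p$ (resp.~$\g_p$) fixes $\S\No_p$ pointwise. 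Since $\G_p\subset\G_x$, combining this with~(a) yields~(c), and~(d) follows identically from~(b).

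For~(e), I would observe first that, in the local model, $\underline{S}_\L$ corresponds near $\O_p$ to $J_V^{-1}(0)/H$ (writing $J_V=J_{\S\No_p}$): since $\g^*=\h^0\oplus\p(\h^*)$ and $J_\p(\alpha,v)=\alpha+\p(J_V(v))$ has these two pieces in complementary summands, the equation $J_\p(\alpha,v)=0$ forces $\alpha=0$ and $v\in J_V^{-1}(0)$. Under this identification the Hamiltonian Morita type through $\O_p$ restricted to $\underline{S}_\L$ corresponds to $V^H$, which is always contained in $J_V^{-1}(0)$ by~(\ref{quadmomfixset}). Hence $\O_p\in\underline{S}_\L^\textrm{princ}$ iff $V^H$ is open in $J_V^{-1}(0)/H$ near the origin which, by openness of the orbit projection together with the $H$-invariance of $V^H$, amounts to the inclusion $J_V^{-1}(0)\subset V^H$ on a neighbourhood of the origin. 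The main technical point, which I expect to be the trickiest step, is to upgrade this local inclusion to the global equality $J_V^{-1}(0)=V^H$; this is handled by the observation that $V^H$ is a linear subspace and $J_V$ is quadratic, so both sets are $H$-invariant cones at the origin, and a local inclusion of cones at the vertex automatically extends globally by rescaling. The resulting global equality says precisely that $H=\G_p$ acts trivially on $J_V^{-1}(0)$.
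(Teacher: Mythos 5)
Your proof is correct and follows essentially the same route as the paper: for parts (c)--(e) you identify the (Hamiltonian/infinitesimal) Morita type through $\O_p$ with $(\g_p^0)^{\G_x}\oplus(\S\No_p)^{\G_p}$ (respectively its intersection $(\S\No_p)^{\G_p}$ with $J_{\S\No_p}^{-1}(0)$) inside the local model and detect openness by the scaling/cone argument, which is precisely the "invariance under scaling" step in the paper's proof of part (c), the only part it writes out. Your derivation of (a) and (b) from the characterizations in Examples \ref{princtypliegpoidex} and \ref{infstratex} together with the $\G_p$-equivariant splitting of $0\to\S\No_p\to\No_p\to\g_p^0\to 0$ is a mildly slicker shortcut than rerunning the local-model computation, but it is fully consistent with the paper's setup.
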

\begin{proof} We will only prove statement $c$, as the other statements follow by entirely similar reasoning. In view of the above remark, we may as well work on the level of $\underline{S}$. Let $G=\G_x$, $H=\G_p$ and $V=\S\No_p$. As in the proof of Proposition \ref{partprop}, near $\O_p$ we can identify the orbit space $\underline{S}$ with an open neighbourhood of the origin in $(\h^0\oplus V)/H$, in such a way that $\O_p$ is identified with the origin and the stratum $\Sigma\in \S_\textrm{Ham}(\underline{S})$ through $\O_p$ is identified (near $\O_p$) with an open in $(\h^0)^G\oplus V^H$. By invariance under scaling, the origin lies in the interior of $(\h^0)^G\oplus V^H$ in $(\h^0\oplus V)/H$ if and only if $(\h^0)^G=\h^0$ and $V=V^H$. So, statement $c$ follows. 
\end{proof}
%\begin{rem} The conditions on the pair $(\g_x,\g_p)$ have a more Lie algebraic formulation. Indeed, for a pair of Lie algebras $(\g,\h)$, the coadjoint action of $\h$ on $\h^0$ is trivial if and only if $[\h,\g]\subset \h$ (in other words: $\h$ is an ideal of $\g$). Similarly, $\g$ acts trivially on $\h^0$ if and only if $[\g,\g]\subset \h$. 
%\end{rem}
Proposition \ref{regpartalg} has the following direct consequence.
%\begin{cor}\label{prinpartinv} Each of the regular parts in the previous proposition is invariant with respect to the action. Moreover, $X^\textrm{$J$-prin}$ (resp. $X^\textrm{prin}$) is the image of $S^\textrm{$J$-prin}$ (resp. $S^\textrm{prin}$) under the orbit projection. Furthermore, we have the following inclusions: \textbf{insert tikzpicture}
%\end{cor}

\begin{cor}\label{prinJstrat} The canonical Hamiltonian stratification $\S_\textrm{Ham}(S^\textrm{princ})$ of the restriction of the Hamiltonian $(\G,\Omega)$-action on $S$ to $S^\textrm{princ}$ consists of strata of $\S_\textrm{Ham}(S)$. In particular, the regular part of $\S_\textrm{Ham}(S^\textrm{princ})$ coincides with $S^\textrm{princ}_\textrm{Ham}$. The same goes for the stratifications on $\underline{S}$ and the infinitesimal counterparts on $S$. 
\end{cor}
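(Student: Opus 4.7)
The plan is to derive everything from the Lie-theoretic characterization in Proposition~\ref{regpartalg}(a) together with the equivalence-relation description of Hamiltonian Morita types in Proposition~\ref{eqcharhammortyp}. First I would observe that, by Proposition~\ref{regpartopen}, the regular part $S^{\textrm{princ}}$ is open in $S$; moreover it is saturated, since each maximal stratum of $\S_{\textrm{Gp}}(S)$ is $\G$-invariant. Consequently the Hamiltonian $(\G,\Omega)$-action restricts cleanly to $S^{\textrm{princ}}$.

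Next I would establish the key closure property: the subset $S^{\textrm{princ}}\subset S$ is a union of Hamiltonian Morita types of $S$. Indeed, by Proposition~\ref{regpartalg}(a), belonging to $S^{\textrm{princ}}$ depends only on the isomorphism class of the pair $(\G_p,\S\No_p)$ together with the $\G_p$-action on $\g_p^0$. These are precisely the invariants controlling Hamiltonian Morita equivalence by Proposition~\ref{eqcharhammortyp}, so if $p\in S^{\textrm{princ}}$ and $q$ lies in the same Hamiltonian Morita type of $S$, then $q\in S^{\textrm{princ}}$ as well. In particular, restricting $\P_{\textrm{Ham}}(S)$ to the open saturated set $S^{\textrm{princ}}$ yields a partition into members of $\P_{\textrm{Ham}}(S)$.

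Then I would identify this restricted partition with $\P_{\textrm{Ham}}(S^{\textrm{princ}})$. Applying Proposition~\ref{eqcharhammortyp} inside $S^{\textrm{princ}}$, two points $p,q\in S^{\textrm{princ}}$ are Hamiltonian Morita equivalent with respect to the restricted action iff there are compatible isomorphisms $(\G_{J(p)},\G_p)\cong(\G_{J(q)},\G_q)$ and $(\S\No_p,\omega_p)\cong(\S\No_q,\omega_q)$; this criterion is manifestly the same as in $S$, so the two partitions coincide on $S^{\textrm{princ}}$. Passing to connected components, each stratum of $\S_{\textrm{Ham}}(S^{\textrm{princ}})$ is a stratum of $\S_{\textrm{Ham}}(S)$ (connectedness is unaffected since $S^{\textrm{princ}}$ is open in $S$).

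Finally, for the statement about regular parts, I would apply Proposition~\ref{regpartopen} in both directions: a maximal stratum of $\S_{\textrm{Ham}}(S^{\textrm{princ}})$ is open in $S^{\textrm{princ}}$, hence open in $S$, hence maximal in $\S_{\textrm{Ham}}(S)$, and so lies in $S^{\textrm{princ}}_{\textrm{Ham}}$; conversely any maximal stratum of $\S_{\textrm{Ham}}(S)$ that meets $S^{\textrm{princ}}$ is (by the previous step) a stratum of $\S_{\textrm{Ham}}(S^{\textrm{princ}})$ which is still open there, hence maximal. The very same argument proves the analogous statements for the partitions on $\underline{S}$ (using that the orbit projection is open and $\G$-invariant) and for their infinitesimal counterparts on $S$ (using Proposition~\ref{regpartalg}(b),(d) in place of (a),(c), together with the analogous characterization of infinitesimal Hamiltonian Morita types by $(\g_{J(p)},\g_p)$ and $(\S\No_p,\omega_p)$). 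I do not foresee a real obstacle: the whole argument is a formal consequence of Propositions~\ref{regpartopen}, \ref{regpartalg}, and \ref{eqcharhammortyp}, and the only mildly subtle point is verifying that $S^{\textrm{princ}}$ is saturated by Hamiltonian Morita types, which follows immediately once one reads off the relevant local invariants.
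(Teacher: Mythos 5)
Your proposal is correct and follows exactly the route the paper intends: the corollary is stated there as a direct consequence of Proposition~\ref{regpartalg}, and your elaboration — that by Proposition~\ref{regpartalg} membership in $S^\textrm{princ}$ is determined by the invariants of Proposition~\ref{eqcharhammortyp}, so $S^\textrm{princ}$ is an invariant open union of Hamiltonian Morita types on which the two partitions agree, after which the statement about regular parts follows from Proposition~\ref{regpartopen} and the inclusion $S^\textrm{princ}_\textrm{Ham}\subset S^\textrm{princ}$ — is precisely the missing argument. The same reasoning transfers verbatim to $\underline{S}$ and to the infinitesimal case via parts (b) and (d) of Proposition~\ref{regpartalg}, as you indicate.
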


\subsubsection{Principal type theorems}
Next, for each of the stratifications listed before, we address the question of whether the regular part is connected. As in \cite[Section 2.8]{DuKo}, our strategy to answer this will be to study the occurence of codimension one strata. First of all, we have:
\begin{thm}\label{printypthminf} The infinitesimal Hamiltonian stratification $\S_\textrm{Ham}^\text{inf}(S)$ has no codimension one strata. In particular, if $S$ is connected, then $S^\textrm{reg}_\textrm{Ham}$ is connected as well. 
\end{thm}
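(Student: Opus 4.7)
The strategy is to combine the normal form with a structural fact about compact Lie algebras; the second sentence then follows from Lemma~\ref{cod1stratlem} applied to the Whitney stratification $\S_\textrm{Ham}^\textrm{inf}(S)$ on the connected manifold $S$, whose regular part is exactly $S^\textrm{reg}_\textrm{Ham}$. To analyse a stratum near an arbitrary $p \in S$, I set $x = J(p)$, $G = \G_x$, $H = \G_p$, $V = \S\No_p$, fix an $H$-equivariant splitting $\p$ of $(\ref{ses2poisabs})$, and invoke the normal form theorem together with the Hamiltonian Morita equivalence of Example~\ref{locmodmoreq}. Because infinitesimal $J$-isomorphism types are Morita invariant, and because Morita equivalence preserves the transverse codimension of strata (as in the proof of Proposition~\ref{partprop}), it suffices to compute the codimension of the corresponding stratum for the transverse model $\J_\p \colon H \ltimes (\h^0 \oplus V) \to G \ltimes \g^*$ at the origin.

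An infinitesimal analogue of Lemma~\ref{techlemisotype} identifies the stratum through the origin with the linear subspace
\[\bigl(\h^0 \cap (\g^*)^\g\bigr) \oplus V^\h \subset \h^0 \oplus V,\]
where $(\g^*)^\g = [\g,\g]^0$ is the coadjoint-fixed set; the first factor uses that $J_V$ vanishes on $V^\h$ by $(\ref{quadmomfixset})$. Writing $\dim\bigl(\h^0 \cap [\g,\g]^0\bigr) = \dim\g - \dim(\h + [\g,\g])$ and applying the second isomorphism theorem, its codimension is $a+b$ with
\[a = \dim\bigl([\g,\g]\big/(\h \cap [\g,\g])\bigr), \qquad b = \dim\bigl(V/V^\h\bigr).\]

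I would then rule out $a+b=1$ by two independent arguments. Since $\G$ is proper at $x$, the group $H$ is compact, so $V^\h$ equals the fixed subspace of the identity component $H^0$; a standard averaging of $\omega_V$ over $H^0$ shows that $V^\h$ is a symplectic subspace of $(V,\omega_V)$, and hence $b$ is even. Only $a=1$ therefore remains to be excluded. But $\g$ is a compact Lie algebra and $[\g,\g]$ is consequently compact semisimple, which admits no codimension-one subalgebra: for any such $\mathfrak{k} \subset [\g,\g]$, an $\mathrm{Ad}$-invariant inner product forces the action of $\mathfrak{k}$ on the one-dimensional orthogonal complement (a skew form on $\R$) to vanish, so that $\mathfrak{k}$ is in fact an ideal with a one-dimensional, necessarily abelian, complementary ideal, contradicting semisimplicity. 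Thus $a \neq 1$, no stratum of $\S_\textrm{Ham}^\textrm{inf}(S)$ has codimension one, and Lemma~\ref{cod1stratlem} yields the connectedness of $S^\textrm{reg}_\textrm{Ham}$.

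The main obstacle is the algebraic step $a\neq 1$: the normal form reduction and the parity of $b$ are essentially bookkeeping, but this is the step where compactness (inherited from properness of $\G$) and the structure theory of compact Lie algebras must intervene in an essential way. Everything else is controlled by the Hamiltonian Morita equivalence of Example~\ref{locmodmoreq} together with the explicit description of the infinitesimal stratum in $\h^0 \oplus V$.
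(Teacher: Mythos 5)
Your proof is correct, and it follows the paper's overall strategy: reduce via the normal form and the Hamiltonian Morita equivalence of Example \ref{locmodmoreq} to the transverse model, identify the stratum through the origin with $\bigl(\h^0\cap[\g,\g]^0\bigr)\oplus V^{\h}$ (the paper writes this as $(\h^0)^{G}\oplus V^{H}$ with $G$, $H$ the identity components of $\G_{J(p)}$, $\G_p$, which is the same subspace since $(\g^*)^{\g}=[\g,\g]^0$), and kill the $V$-contribution by parity because $V^{\h}$ is a symplectic subspace. Where you genuinely diverge is in excluding the remaining case $a=1$. The paper argues by contradiction in two stages: it applies Lemma \ref{onedimreplem} to the $H$-action on a one-dimensional invariant complement of $(\h^0)^{G}$ in $\h^0$ to conclude that $H$ fixes $\h^0$, deduces that $\h$ is an ideal so that $\h^0$ is $G$-invariant, and then applies the same lemma to the $G$-action to reach a contradiction. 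You instead compute the codimension of $\h^0\cap[\g,\g]^0$ in $\h^0$ directly as $\dim\bigl([\g,\g]/(\h\cap[\g,\g])\bigr)$ via the second isomorphism theorem and invoke the single structural fact that a compact semisimple Lie algebra admits no codimension-one subalgebra. Both arguments ultimately rest on the same mechanism (a skew-symmetric operator on a one-dimensional invariant complement vanishes), but yours isolates the Lie-algebraic content more cleanly and avoids the intermediate step of showing $\h$ is an ideal, at the cost of the explicit codimension bookkeeping. One cosmetic remark: $\omega_V$ is already $H$-invariant, so there is nothing to average; the fixed-point complement of $V^{H^0}$ is automatically its symplectic orthogonal, which gives the evenness of $b$ directly. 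This does not affect the argument.
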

The following will be useful to prove this.
\begin{lemma}\label{onedimreplem} Let $H$ be a compact Lie group and $W$ a real one-dimensional representation of $H$. Then $H$ acts by reflection in the origin. In particular, if $H$ is connected, then $H$ acts trivially. 
\end{lemma}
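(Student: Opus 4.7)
The plan is to identify the representation with a continuous homomorphism $\rho : H \to \GL(W) \cong \GL(1,\R) = \R^*$ and then exploit compactness of the image. First I would note that, because $H$ is compact, the image $\rho(H)$ is a compact subgroup of the multiplicative group $\R^*$. The compact subgroups of $\R^*$ are easy to classify: any such subgroup is bounded and closed, and a bounded subgroup of $\R^*$ cannot contain an element $\lambda$ with $|\lambda| \neq 1$ (otherwise the powers $\lambda^n$ or $\lambda^{-n}$ would escape any bound). Hence $\rho(H) \subset \{\pm 1\}$, and this is exactly what ``acts by reflection in the origin'' means: each $h \in H$ acts on $W$ either as the identity or as $v \mapsto -v$.

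For the second statement, I would use that $\rho : H \to \{\pm 1\}$ is continuous and that $\{\pm 1\}$ is discrete. If $H$ is connected, then $\rho(H)$ is a connected subset of $\{\pm 1\}$, which forces $\rho(H) = \{1\}$, i.e.\ the action is trivial.

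There is no genuine obstacle here; the argument is a two-line application of the classification of compact subgroups of $\R^*$ together with connectedness. The only point worth being slightly careful about is phrasing the statement ``reflection in the origin'' so that it correctly covers both possibilities (trivial action and the sign action), which the dichotomy $\rho(h) \in \{\pm 1\}$ handles uniformly.
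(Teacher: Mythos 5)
Your proof is correct, but it reaches the key conclusion $\rho(H)\subset\{\pm 1\}$ by a different mechanism than the paper. The paper averages to produce an $H$-invariant inner product $g$ on $W$, so that the representation lands in $\textrm{O}(W,g)=\{\pm 1\}$; you instead observe directly that $\rho(H)$ is a compact, hence bounded, subgroup of $\R^*\cong \GL(W)$, and that boundedness rules out any element of absolute value different from $1$ (its powers would escape every bound). Your argument is slightly more elementary, since it does not invoke the existence of invariant inner products on representations of compact groups, only that continuous images of compact sets are compact. The paper's argument has the advantage of generalizing immediately to higher-dimensional representations (image contained in an orthogonal group), which is the standard reflex in this context, but for the one-dimensional case the two are equally short and both complete. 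Your handling of the connected case via discreteness of $\{\pm 1\}$ matches the intent of the paper's "in particular" clause.
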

\begin{proof} By compactness of $H$, there is an $H$-invariant inner product $g$ on $W$. Therefore the representation $H\to \textrm{GL}(W)$ takes image in the orthogonal group $\textrm{O}(W,g)=\{\pm 1\}$. 
\end{proof}
\begin{proof}[Proof of Theorem \ref{printypthminf}] We will argue by contradiction. Suppose that $p\in S$ belongs to a codimension one stratum. Let $H$ and $G$ denote the respective identity components of $\G_p$ and $\G_{J(p)}$, and let $V=\S\No_p$. The normal form theorem and a computation analogous to the one for Lemma \ref{techlemisotype} show that $(\h^0)^{G}\oplus V^{H}$ must have codimension one in $\h^0\oplus V$. Since $H$ is compact, $V^{H}\subset V$ is a symplectic linear subspace, and so it has even codimension. Therefore, it must be so that $V^H=V$ and $(\h^0)^G$ has codimension one in $\h^0$. Appealing to Lemma \ref{onedimreplem}, we find that $H$ acts trivially on any $H$-invariant linear complement to $(\h^0)^G$ in $\h^0$. By compactness of $H$ we can always find such a complement, hence $H$ fixes all of $\h^0$. Therefore, $\h$ is a Lie algebra ideal in $\g$. Since $G$ is connected, this means that $\h^0$ is invariant under the coadjoint action of $G$. As for $H$, it now follows that $G$ must actually fix all of $\h^0$,  contradicting the fact that $(\h^0)^G$ has positive codimension in $\h^0$. 
\end{proof}

The situation for $\S_\textrm{Ham}(\underline{S})$ and $\S_\textrm{Ham}(S)$ is more subtle. Indeed, the regular parts of the canonical Hamiltonian stratification on both $S$ and $\underline{S}$ can be disconnected, even if both $S$, as well as the source-fibers and the base of $\G$ are connected. This is shown by the example below. 
\begin{ex}\label{prinpartdiscon} Consider the circle $\mathbb{S}^1$, the real line $\R$ and the $2$-dimensional torus $\mathbb{T}^2$ equipped with the $\Z_2$-actions given by:  
\begin{equation*} (\pm1)\cdot e^{i\theta}=e^{\pm i\theta}, \quad\quad (\pm1)\cdot x=\pm x, \quad\quad (\pm 1)\cdot (e^{i\theta_1},e^{i\theta_2})=(\pm e^{i\theta_1}, e^{i\theta_2}).
\end{equation*} Now, consider the proper Lie groupoid: 
\begin{equation}\label{gpoiddisconprincpartex} (\T^2\times \T^2)\times_{\Z_2} (\mathbb{S}^1\times \R)\rightrightarrows \T^2\times_{\Z_2}\R
\end{equation} with source, target and multiplication given by:
\begin{align*} s([e^{i\theta_1},e^{i\theta_2},e^{i\theta_3}, e^{i\theta_4},e^{i\theta},x])&=[e^{i\theta_3},e^{i\theta_4},x],\\
t([e^{i\theta_1},e^{i\theta_2},e^{i\theta_3}, e^{i\theta_4},e^{i\theta},x])&=[e^{i\theta_1},e^{i\theta_2},x], \\
m([e^{i\theta_1},e^{i\theta_2},e^{i\theta_3}, e^{i\theta_4},e^{i\theta},x],[e^{i\theta_3},e^{i\theta_4},e^{i\theta_5}, e^{i\theta_6},e^{i\phi},x])&=[e^{i\theta_1},e^{i\theta_2},e^{i\theta_5}, e^{i\theta_6}, e^{i(\theta+\phi)},x].
\end{align*}
This becomes a symplectic groupoid when equipped with the symplectic form induced by: 
\begin{equation*} \d\theta_1\wedge\d\theta_2-\d\theta_3\wedge\d\theta_4-\d\theta\wedge\d x\in \Omega^2(\T^2\times \T^2\times \mathbb{S}^1\times \R).
\end{equation*} Furthermore, this symplectic groupoid acts in a Hamiltonian fashion along:
\begin{equation*} J:(\T^2\times \mathbb{S}^1\times \R, \d\theta_1\wedge \d\theta_2-\d\theta\wedge \d x)\to \T^2\times_{\Z_2}\R, \quad (e^{i\theta_1},e^{i\theta_2},e^{i\theta},x)\mapsto [e^{i\theta_1},e^{i\theta_2},x],
\end{equation*} with the action given by:
\begin{equation*} [e^{i\theta_1},e^{i\theta_2},e^{i\theta_3}, e^{i\theta_4},e^{i\theta},x]\cdot (e^{i\theta_3},e^{i\theta_4},e^{i\phi},x)=(e^{i\theta_1},e^{i\theta_2},e^{i(\theta+\phi)},x).
\end{equation*}
This action is free and its orbit space is canonically diffeomorphic to $\R$. The canonical Hamiltonian stratification on the orbit space consists of three strata: $\{x>0\}$, $\{x<0\}$ and the origin $\{x=0\}$, because the isotropy groups of (\ref{gpoiddisconprincpartex}) at points in $\T^2\times_{\Z_2}\R$ with $x\neq 0$ are isomorphic to $\mathbb{S}^1$, whilst those at points with $x=0$ are isomorphic to $\Z_2\ltimes \mathbb{S}^1$. So, we see that its regular part is disconnected. \end{ex}
The following theorem provides a criterion that does ensure connectedness of the regular part.
\begin{thm}\label{printypthm} Let $(\G,\Omega)\rightrightarrows M$ be a proper symplectic groupoid and suppose that we are given a Hamiltonian $(\G,\Omega)$-action along $J:(S,\omega)\to M$. The following conditions are equivalent.
\begin{itemize}\item[a)] For every $p\in S$ that belongs to a codimension one stratum of the canonical Hamiltonian stratification $\S_\textrm{Ham}(S)$, the action of $\G_p$ on $\g_p^0$ is non-trivial.
\item[b)] The regular part $S^\textrm{princ}$ of $\S_\textrm{Gp}(S)$ (as in Subsection \ref{liedescrpregpartsec}) does not contain codimension one strata of $\S_\textrm{Ham}(S)$. 
\end{itemize} Furthermore, if $\underline{S}$ is connected and the above conditions hold, then $\underline{S}^\textrm{princ}_\textrm{Ham}$ is connected as well. If in addition the orbits of the action are connected, then $S^\textrm{princ}_\textrm{Ham}$ is also connected.
\end{thm}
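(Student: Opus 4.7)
The plan is to first prove the equivalence (a)$\iff$(b) by a codimension analysis in the local model entirely analogous to the proof of Theorem \ref{printypthminf}, and then derive the two connectedness statements by restricting the canonical Hamiltonian stratification to the principal part and invoking Lemma \ref{cod1stratlem}.

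For (a)$\iff$(b), let $p\in S$ belong to a codimension one stratum of $\S_\textrm{Ham}(S)$, and set $G=\G_{J(p)}$, $H=\G_p$, $V=\S\No_p$. By the normal form theorem together with Lemma \ref{techlemisotype} and the computation in the proof of Proposition \ref{partprop}, a neighbourhood of $\O_p$ in its stratum is modeled on $\O_p\times((\h^0)^G\oplus V^H)$ inside $\O_p\times(\h^0\oplus V)$, so the codimension one condition forces $(\h^0)^G\oplus V^H$ to have codimension one in $\h^0\oplus V$. Since $H$ is compact, $V^H$ is a symplectic subspace of $V$ and hence has even codimension; this forces $V^H=V$ and, in turn, $(\h^0)^G$ to be of codimension one in $\h^0$. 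In particular, for any such $p$, the action of $\G_p$ on $\S\No_p$ is automatically trivial, so by Proposition \ref{regpartalg}(a) we have $p\in S^\textrm{princ}$ if and only if $\G_p$ fixes $\g_p^0$. The equivalence of (a) and (b) is then immediate by contrapositive.

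Now suppose $\underline{S}$ is connected and that (a) (equivalently (b)) holds. By the principal type theorem for proper Lie groupoids (Example \ref{princtypliegpoidex}) applied to $\G\ltimes S\rightrightarrows S$, the set $\underline{S}^\textrm{princ}$ is a connected submanifold of $\underline{S}$, open in it. By Corollary \ref{prinJstrat} the induced partition $\S_\textrm{Ham}(\underline{S}^\textrm{princ})$ consists of strata of $\S_\textrm{Ham}(\underline{S})$, with regular part equal to $\underline{S}^\textrm{princ}_\textrm{Ham}$. Since $S^\textrm{princ}=q^{-1}(\underline{S}^\textrm{princ})$ lies in a single Morita type of the action groupoid, orbit dimensions are constant there, and the orbit projection restricts to a submersion between Hamiltonian strata by Proposition \ref{partprop}; consequently the codimensions of Hamiltonian strata inside $\underline{S}^\textrm{princ}$ and inside $S^\textrm{princ}$ coincide, and condition (b) ensures that $\S_\textrm{Ham}(\underline{S}^\textrm{princ})$ has no codimension one strata. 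Lemma \ref{cod1stratlem} then yields that its regular part $\underline{S}^\textrm{princ}_\textrm{Ham}$ is connected. For the final assertion, assuming in addition that the orbits are connected, $S^\textrm{princ}_\textrm{Ham}=q^{-1}(\underline{S}^\textrm{princ}_\textrm{Ham})$ and the restriction of $q$ is a submersion (Proposition \ref{partprop}) with fibers equal to the connected orbits; a submersion with connected base and connected fibers has connected total space, so $S^\textrm{princ}_\textrm{Ham}$ is connected.

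The principal technical point requiring care is the codimension-one analysis in the first paragraph: one must combine the symplectic structure on $V$ (to force $V^H=V$ via even codimension of $V^H$) with the structure of $\h^0$ as a quotient of the coadjoint representation to pinpoint that the trivial-action condition falls on $\S\No_p$ while the non-triviality is concentrated on $\g_p^0$. Once this is in place, the remaining steps are formal consequences of the principal type theorem for proper Lie groupoids, Corollary \ref{prinJstrat}, and Lemma \ref{cod1stratlem}.
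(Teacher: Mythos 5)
Your proof is correct and follows essentially the same route as the paper: the codimension-one analysis in the local model (forcing $V^H=V$ since $V^H$ is a symplectic, hence even-codimensional, subspace) combined with Proposition \ref{regpartalg}$a$ gives the equivalence of (a) and (b), and the connectedness claims then follow from the principal type theorem for proper Lie groupoids, Corollary \ref{prinJstrat} and Lemma \ref{cod1stratlem}. The only differences are expository: you spell out the comparison of codimensions between $S^\textrm{princ}$ and $\underline{S}^\textrm{princ}$ and the final fibration argument for $S^\textrm{princ}_\textrm{Ham}$, which the paper leaves implicit.
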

\begin{proof} As in the proof of Theorem \ref{printypthminf} it follows that if $p\in S$ belongs to a codimension one stratum of $\S_\textrm{Ham}(S)$, then the action of $\G_p$ on $\S\No_p$ is trivial. So, by Proposition \ref{regpartalg}$a$, for such $p\in S$ the action of $\G_p$ on $\g_p^0$ is trivial if and only if $p\in S^\textrm{princ}$. From this it is clear that the two given conditions are equivalent. Furthermore, if $\underline{S}$ is connected, then by the principal type theorem for proper Lie groupoids (see Example \ref{princtypliegpoidex}), $\underline{S}^\textrm{princ}$ is connected. So, in light of Corollary \ref{prinJstrat} and Lemma \ref{cod1stratlem}, $\underline{S}^\textrm{princ}_\textrm{Ham}$ will be connected if in addition $\underline{S}^\textrm{princ}$ does not contain codimension one strata of $\S_\textrm{Ham}(\underline{S})$, or equivalently, if in addition condition $b$ holds.
\end{proof}

The proposition below gives a criterion for the conditions in the previous theorem to hold. 
\begin{prop} If $p\in S$ belongs to a codimension one stratum of $\S_\textrm{Ham}(S)$ and the coadjoint orbits of $\G_{J(p)}$ are connected, then the action of $\G_p$ on $\g_p^0$ is non-trivial.
\end{prop}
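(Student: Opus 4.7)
The plan is to argue by contraposition: assume $H := \G_p$ acts trivially on $\h^0 := \g_p^0$ and derive a disconnected coadjoint orbit of $G := \G_{J(p)}$. Throughout I will write $\h = \textrm{Lie}(H)$, $\g = \textrm{Lie}(G)$, $V = \S\No_p$.

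First I will extract from the codimension-one hypothesis what the local model allows. Combining Proposition \ref{partprop} with Lemma \ref{techlemisotype}, the stratum of $\S_\textrm{Ham}(S)$ through $p$ has codimension $(\dim V - \dim V^H) + (\dim \h^0 - \dim (\h^0)^G)$ in $S$. Since $H$ is compact, a short averaging argument shows $V^H$ is a symplectic subspace of $V$, so the first summand is even; the hypothesis then forces $V^H = V$ and $(\h^0)^G$ to have codimension exactly one in $\h^0$.

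Next I will turn the triviality assumption into the statement that $\h$ is an ideal of $\g$. The identity $\alpha(\textrm{Ad}_h X - X) = 0$ for all $h \in H$, $X \in \g$, $\alpha \in \h^0$ forces $\textrm{Ad}_h X - X \in \bigcap_{\alpha \in \h^0} \ker \alpha = \h$; differentiating at the identity of $H_0$ then gives $[\h, \g] \subset \h$. Consequently $G_0$ preserves $\h^0$ under the coadjoint action, and I then plan to show $G_0$ acts \emph{trivially} on all of $\h^0$: the inclusions $(\h^0)^G \subset (\h^0)^{G_0} \subset \h^0$ together with the codim-one condition leave only $(\h^0)^{G_0} = \h^0$ or $(\h^0)^{G_0} = (\h^0)^G$, and I rule out the second possibility by picking a $G_0$-invariant one-dimensional complement $L$ to $(\h^0)^{G_0}$ in $\h^0$ and invoking Lemma \ref{onedimreplem} with the connectedness of $G_0$ to force $L \subset (\h^0)^{G_0}$, which is absurd.

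For the final contradiction, pick any $\alpha \in \h^0 \setminus (\h^0)^G$. Normality of $G_0$ in $G$ together with $G_0 \cdot \alpha = \{\alpha\}$ gives $G_0 \cdot (g \cdot \alpha) = g \cdot G_0 \cdot \alpha = \{g \cdot \alpha\}$ for every $g \in G$, so the orbit $G \cdot \alpha$ is parametrised by the finite quotient $G/G_0$ (finite by compactness of $G$) and by the choice of $\alpha$ contains at least two points. A finite set of at least two points is disconnected, contradicting the hypothesis. The main obstacle will be the step converting ``$H$ trivial on $\h^0$'' into ``$\h$ is an ideal'': $(\h^0)^G$ lives a priori only as a subset of $\h^0$ without $G$ preserving $\h^0$, and it is this step that upgrades the situation so that $G_0$ acts genuinely on $\h^0$, leaving only the action of the component group $G/G_0$ for the connected-orbit hypothesis to handle.
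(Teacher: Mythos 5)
Your argument is correct and follows essentially the same route as the paper: the codimension-one hypothesis together with the local model forces $V^H=V$ and $(\h^0)^G$ of codimension one, triviality of the $\G_p$-action makes $\g_p$ an ideal so that the identity component of $\G_{J(p)}$ preserves $\h^0$ and (via Lemma \ref{onedimreplem}) fixes it, and connectedness of the coadjoint orbits then yields the contradiction. Your phrasing by contraposition, exhibiting a finite orbit with at least two points, is only a cosmetic variant of the paper's conclusion that the full group would have to fix $\g_p^0$.
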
  
\begin{proof} The same reasoning as in the proof of Theorem \ref{printypthminf} shows that if the action of $\G_p$ on $\g_p^0$ would be trivial, then the identity component of $\G_{J(p)}$ would fix all of $\g_p^0$. By connectedness of its coadjoint orbits, the entire group $\G_{J(p)}$ would then fix all of $\g_p^0$, which, as in the aforementioned proof, leads to a contradiction. 
\end{proof}
\begin{cor}\label{printypHamGspacethm} Let $G$ be a compact and connected Lie group and let $J:(S,\omega)\to \g^*$ be a connected Hamiltonian $G$-space. Then $\underline{S}^\textrm{princ}_\textrm{Ham}$ is connected.
\end{cor}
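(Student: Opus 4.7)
The plan is to reduce the statement to Theorem \ref{printypthm} via the Proposition immediately preceding it. Since $S$ is connected, so is its quotient $\underline{S}$, so the connectedness hypothesis of Theorem \ref{printypthm} is satisfied; it therefore suffices to verify condition $a$ of that theorem, i.e.\ that for every $p\in S$ belonging to a codimension one stratum of $\S_\textrm{Ham}(S)$, the action of $G_p=\G_p$ on $\g_p^0$ is non-trivial.

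To establish this, I would invoke the proposition just above Corollary \ref{printypHamGspacethm}: whenever the coadjoint orbits of $\G_{J(p)}$ are connected, any $p$ lying in a codimension one stratum of $\S_\textrm{Ham}(S)$ automatically has non-trivial $\G_p$-action on $\g_p^0$. In the present setting $\G=G\ltimes\g^*$ (see Example \ref{exhamGsp}), so $\G_{J(p)}$ is simply the coadjoint isotropy group $G_{J(p)}\subset G$. Hence the corollary reduces to the classical fact that, for a compact connected Lie group $G$, every coadjoint isotropy group $G_\alpha$ is connected, and in particular all its orbits are connected.

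The key step is therefore this classical Lie-theoretic fact, which I would either quote directly or briefly justify as follows: fix a $G$-invariant inner product on $\g$ to identify $\g^*\cong\g$ equivariantly; then $G_\alpha$ is the centralizer in $G$ of the element $\alpha\in\g$, and this centralizer coincides with the centralizer of the closure of the one-parameter subgroup $\exp(\R\alpha)$, which is a torus in $G$. The centralizer of a torus in a compact connected Lie group is connected (a standard consequence of the maximal torus theorem), so $G_\alpha$ is connected.

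Combining these ingredients yields condition $a$ of Theorem \ref{printypthm}, whence $\underline{S}^\textrm{princ}_\textrm{Ham}$ is connected, as claimed. I do not foresee any substantive obstacle here: the only non-formal input is the connectedness of coadjoint stabilizers in compact connected Lie groups, which is entirely classical; the rest is a direct chaining of Theorem \ref{printypthm} with the preceding Proposition.
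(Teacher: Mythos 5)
Your proposal is correct and follows essentially the same route as the paper: verify condition $a$ of Theorem \ref{printypthm} via the preceding Proposition, using the classical fact that coadjoint stabilizers of a compact connected Lie group are connected (hence have connected coadjoint orbits). The only difference is that you spell out the standard centralizer-of-a-torus argument for that classical fact, which the paper simply quotes.
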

\begin{proof} For $G$ compact and connected, the isotropy groups of the coadjoint $G$-action are connected. So, the previous proposition ensures that condition $a$ in Theorem \ref{printypthm} is satisfied.
\end{proof} 
%\begin{proof} It is enough to show that if $G$ is a compact and connected Lie group, then for each $\alpha\in \g^*$ the isotropy group $G_\alpha$ has connected coadjoint orbits. To see this recall that, since $G$ is compact, there is a $G$-equivariant embedding of $G\times_{G_\alpha}{\g^*_\alpha}$ onto a $G$-invariant open neighbourhood of the coadjoint orbit $\O_\alpha$ of $G$ in $\g^*$ (this can be obtained by adjusting the map (\ref{lincoadform})). In particular, given $\beta\in \g_\alpha^*$, the $G$-orbit $\O_{[1,\beta]}$ is diffeomorphic to a coadjoint orbit of $G$, hence it is connected. Moreover, the coadjoint orbit $\O_\alpha$ coincides with the coadjoint orbit of the identity component of $G$, so that it is in fact $1$-connected. Seeing as $\O_{[1,\beta]}=G\times_{G_\alpha}\O_\beta$ is a fiber bundle over $\O_\alpha$ with fiber $\O_\beta$, it follows from the long exact sequence of homotopy groups that the coadjoint $G_\alpha$-orbit $\O_\beta$ is connected, as was to be shown. 
%\end{proof}

\begin{ex} Let $G$ be a compact and connected Lie group and let $J:(S,\omega)\to \g^*$ be a connected Hamiltonian $G$-space. We return to the partition in Example \ref{hamGspex}. This comes with a partial order, defined as follows. If $\underline{S}_p$ and $\underline{S}_q$ denote the members through the respective orbits $\O_p$ and $\O_q$, then by definition: 
\begin{equation*} \underline{S}_{p}\leq \underline{S}_{q} \iff (G_{J(q)},G_{q})\text{ is conjugate in $G$ to pair of subgroups of }(G_{J(p)},G_{p}). 
\end{equation*}
In analogy with the principal orbit type theorem (see Example \ref{prinorbthm}), this partial order has a greatest element, namely $\underline{S}^\textrm{princ}_\textrm{Ham}$. To see this, notice that from the normal form theorem as in Remark \ref{sharphamliegpactrem} it follows that every $\O_p\in \underline{S}$ admits an open neighbourhood $\underline{U}$ with the property that $\underline{S}_{p}\leq \underline{S}_{q}$ for all $\O_q\in \underline{U}$. From this and the fact that $\underline{S}^\textrm{princ}_\textrm{Ham}$ is connected and dense in $\underline{S}$, it follows that it is indeed a member of the partition in Example \ref{hamGspex}, and that it is the greatest element with respect to the above partial order.
\end{ex}

To end with, we note that the following generalization of \cite[Theorem 5.9, Remark 5.10]{LeSj} holds.
\begin{thm} Let $\L$ be a leaf of $\G$ and suppose that $\underline{S}_\L$ is connected. Then the regular part $\underline{S}^\textrm{princ}_\L$ of $\S_\textrm{Ham}(\underline{S}_\L)$ is connected as well. %If moreover the orbits of the action on $X_\L$ are connected, then $\S_J(S_\L)$ has a single maximal stratum. 
\end{thm}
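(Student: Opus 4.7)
The plan is to reduce the statement to Lerman--Sjamaar's Theorem 5.9 in \cite{LeSj} for symplectic representations of compact Lie groups, via the normal form theorem together with the Hamiltonian Morita equivalence of Example \ref{locmodmoreq}.

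Since $\underline{S}^\textrm{princ}_\L$ is open and dense in $\underline{S}_\L$ (Proposition \ref{regpartopen}), I would first invoke the following topological principle: if $A$ is an open dense subset of a connected space $X$ such that every $x\in X$ has arbitrarily small neighborhoods $U$ with $U\cap A$ connected, then $A$ is connected. (Indeed, if $A=A_1\sqcup A_2$ were a decomposition into disjoint nonempty opens, density would give $\overline{A_1}\cup \overline{A_2}=X$, and connectedness of $X$ would produce a point in $\overline{A_1}\cap \overline{A_2}$ with no neighborhood having connected intersection with $A$.) Applied to the pair $(\underline{S}_\L,\underline{S}^\textrm{princ}_\L)$, this reduces the theorem to the following local statement: every $\O_p\in \underline{S}_\L$ has a neighborhood basis in $\underline{S}_\L$ consisting of opens $\underline{U}$ with $\underline{U}\cap \underline{S}^\textrm{princ}_\L$ connected.

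To establish the local statement, set $H:=\G_p$ (compact, as $\G$ is proper) and $V:=(\S\No_p,\omega_p)$. Combining Theorem \ref{normhamthm} with Example \ref{locmodmoreq}, and using that a choice of $H$-equivariant splitting $\p:\h^*\to \g^*$ of (\ref{ses2poisabs}) yields $J_\p^{-1}(0)=\{0\}\times J_V^{-1}(0)$, I obtain a stratified-space isomorphism between a neighborhood of $\O_p$ in $\underline{S}_\L$ and a neighborhood of $[0]$ in the Lerman--Sjamaar reduced space $J_V^{-1}(0)/H$, where $J_V:V\to \h^*$ is the quadratic momentum map (\ref{quadsympmommap}). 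Under this identification, $\underline{S}^\textrm{princ}_\L$ corresponds locally to the principal stratum of $J_V^{-1}(0)/H$ in the sense of \cite{LeSj}, using Remark \ref{hamliegpactdiffpartex} and the fact that maximality of strata is a local property. Since $J_V$ is homogeneous of degree two, $J_V^{-1}(0)$ is a cone through $0$ and hence connected; as $V$ is also connected, \cite[Thm 5.9]{LeSj} then gives that the full principal stratum of $J_V^{-1}(0)/H$ is connected.

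To produce a basis of connected neighborhoods at $[0]$, I would fix an $H$-invariant inner product on $V$, whereupon the open balls $B_\epsilon\subset V$ are $H$-invariant and their images $\underline{B}_\epsilon$ form a neighborhood basis of $[0]$ in $J_V^{-1}(0)/H$. Positive scaling preserves both $J_V^{-1}(0)$ and $H$-isotropy types, so the principal stratum is a cone in $V$: either $0$ lies in it, forcing $V^H=J_V^{-1}(0)$ and making the principal stratum all of the linear subspace $V^H$ with connected ball-intersections; or else the cone splits as $(0,\infty)\cdot L$ with $L$ its intersection with the unit sphere, itself connected as a continuous image of the connected principal stratum under $v\mapsto v/\|v\|$, so that each truncation is homeomorphic to $(0,\epsilon)\times L$ and therefore connected. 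In either case $\underline{B}_\epsilon\cap \underline{S}^\textrm{princ}_\L$ is connected, delivering the local statement and hence the theorem. The step requiring most care is matching up the canonical Hamiltonian stratification with the Lerman--Sjamaar orbit-type stratification in the local model, which is where Remark \ref{hamliegpactdiffpartex} together with the locality of maximality enter crucially.
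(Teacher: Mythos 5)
Your proof is correct and follows essentially the same route as the paper: reduce, via the normal form theorem and the Hamiltonian Morita equivalence of Example \ref{locmodmoreq}, to the reduced space $J_V^{-1}(0)/H$ of the symplectic normal representation, and invoke \cite[Theorem 5.9, Remark 5.10]{LeSj}. The one step you handle differently is the localization to a small neighbourhood: the paper simply restricts the Hamiltonian $H$-space to an $H$-invariant ball $B\subset V$, notes that $(J_V\vert_B)^{-1}(0)$ is star-shaped about the origin so that $(J_V\vert_B)^{-1}(0)/H$ is connected, and applies Lerman--Sjamaar to that restricted space directly; this produces the connected neighbourhood in one stroke and avoids your cone analysis. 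Your alternative (apply the theorem to all of $V$ and then truncate the principal cone) also works, but the sentence asserting that $L=C\cap S(V)$ is ``connected as a continuous image of the connected principal stratum under $v\mapsto v/\|v\|$'' does not parse as written: the principal stratum whose connectedness Lerman--Sjamaar provides lives in the quotient $J_V^{-1}(0)/H$, whereas $v\mapsto v/\|v\|$ is defined on the preimage $C$, whose connectedness you have not established (and do not need, since $H$ may be disconnected). The repair is immediate: the $H$-invariant norm and the scaling action descend to $J_V^{-1}(0)/H$, so the principal stratum $P$ is either all of $V^H$ (if $[0]\in P$) or homeomorphic to $(0,\infty)\times P_1$ with $P_1=P\cap\bigl(S(V)/H\bigr)$ a retract of the connected space $P$, hence connected; each truncation $\underline{B}_\epsilon\cap P\cong (0,\epsilon)\times P_1$ is then connected. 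With that adjustment your argument is complete.
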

\begin{proof} Since $\underline{S}^\textrm{princ}_\L$ is dense in $\underline{S}_\L$ and $\underline{S}_\L$ is connected, it is enough to show that every point in $\underline{S}_\L$ admits an open neighbourhood that intersects $\underline{S}_\L^\textrm{princ}$ in a connected subspace. To this end, let $\O_p\in \underline{S}_\L$, let $H=\G_p$ and $V=\S\No_p$. Consider a Hamiltonian Morita equivalence as in the proof of Proposition \ref{weakwhitstratthm}, so that the induced homeomorphism of orbit spaces identifies an open $\underline{U}$ around $\O_p$ in $\underline{S}$ with an open $\underline{B}_{\h^0\oplus V}$ around the origin in $(\h^0\oplus V)/H$. Let $B$ be the intersection of $B_{\h^0\oplus V}$ with $V$ and consider the Hamiltonian $H$-space:
\begin{equation*} J_B=J_V\vert_B:(B,\omega_V)\to \h^*.
\end{equation*} Then $\underline{U}\cap \underline{S}_\L$ is identified with $J_B^{-1}(0)/H$, and $\underline{U}\cap \underline{S}_\L^\textrm{princ}$ is identified with the principal part of $J_B^{-1}(0)/H$ (as follows from Morita invariance of the partitions by isomorphism types). Since $J_B^{-1}(0)$ is star-shaped with respect to the origin, $J_B^{-1}(0)/H$ is connected and hence, by \cite[Theorem 5.9, Remark 5.10]{LeSj}, so is its principal part. So, we have found the desired neighbourhood of $\O_p$. 
\end{proof}
\subsubsection{Relations amongst the regular parts}
In this last subsection we discuss another relationship between the regular parts of the various stratifications, starting with the following observation.
\begin{prop} Suppose that $J$ is a submersion on $S^\textrm{reg}$. Then the various regular and principal parts on $S$, $M$, $\underline{S}$ and $\underline{M}$ are related as:
\begin{equation*} S^\textrm{reg}_\textrm{Ham}=S^\textrm{reg}\cap J^{-1}(M^\textrm{reg}), \quad \quad S^\textrm{princ}_\textrm{Ham}=S^\textrm{princ}\cap J^{-1}(M^\textrm{princ}), \quad\quad \underline{S}^\textrm{princ}_\textrm{Ham}=\underline{S}^\textrm{princ}\cap (\underline{J})^{-1}(\underline{M}^\textrm{princ}).
\end{equation*}  
\end{prop}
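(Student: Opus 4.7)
The plan is to exploit the Lie-theoretic descriptions in Proposition \ref{regpartalg} and show that, under the submersion hypothesis, the extra fixing condition on $\g_p^0$ that distinguishes $S^\textrm{reg}_\textrm{Ham}$ inside $S^\textrm{reg}$ (and $S^\textrm{princ}_\textrm{Ham}$ inside $S^\textrm{princ}$) is controlled purely by what happens on the base. The key point is the following consequence of Proposition \ref{infmomact}$b$ combined with Proposition \ref{normrepsymp}$b$: at any point $p\in S$ where $\d J_p$ is surjective, the isotropy $\g_p$ (viewed in $T^*_xM$ via $\rho_\Omega$) is the annihilator of $\im(\d J_p)=T_xM$, so $\g_p=0$, and consequently its annihilator $\g_p^0\subset \g_x^*$ equals all of $\g_x^*\cong \No_x$.

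First I would treat the first equality. Take $p\in S^\textrm{reg}$; by hypothesis $J$ is a submersion at $p$, so by the observation above $\g_p^0=\g_x^*$ where $x=J(p)$. By Proposition \ref{regpartalg}$d$, $p\in S^\textrm{reg}_\textrm{Ham}$ iff in addition $\g_x$ fixes $\g_p^0$; since $\g_p^0=\g_x^*$, this is precisely the condition that $\g_x$ fix $\g_x^*$, which by the Lie-theoretic description of $M^\textrm{reg}$ (the infinitesimal analogue of Example \ref{princtypliegpoidex}, using $\No_x\cong\g_x^*$) is equivalent to $x\in M^\textrm{reg}$. This gives $S^\textrm{reg}_\textrm{Ham}=S^\textrm{reg}\cap J^{-1}(M^\textrm{reg})$. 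The second equality is proved by the same argument, replacing $\g_x$ by $\G_x$ throughout and invoking Proposition \ref{regpartalg}$c$ together with the description of $M^\textrm{princ}$ in Example \ref{princtypliegpoidex}; here we use that $S^\textrm{princ}\subset S^\textrm{reg}$ so the submersion hypothesis still applies.

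Finally, the third equality descends the second one to the orbit space. Let $q:S\to\underline{S}$ and $q_M:M\to\underline{M}$ be the orbit/leaf projections, so that $q_M\circ J=\underline{J}\circ q$. We already have $S^\textrm{princ}_\textrm{Ham}=q^{-1}(\underline{S}^\textrm{princ}_\textrm{Ham})$, $S^\textrm{princ}=q^{-1}(\underline{S}^\textrm{princ})$, and $M^\textrm{princ}=q_M^{-1}(\underline{M}^\textrm{princ})$ from the discussion preceding Proposition \ref{regpartalg}. Combining these with the second equality gives
\begin{equation*}
q^{-1}(\underline{S}^\textrm{princ}_\textrm{Ham})=S^\textrm{princ}\cap J^{-1}(q_M^{-1}(\underline{M}^\textrm{princ}))=q^{-1}\bigl(\underline{S}^\textrm{princ}\cap \underline{J}^{-1}(\underline{M}^\textrm{princ})\bigr),
\end{equation*}
and surjectivity of $q$ yields the claim.

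There is no real obstacle here; the only subtle point is that the submersion hypothesis is used exactly once, to force $\g_p^0=\g_x^*$, which is the mechanism that converts ``$\g_x$ fixes $\g_p^0$'' into ``$\g_x$ fixes $\g_x^*$''. Without this hypothesis $\g_p^0$ can be a proper $\G_x$-invariant subspace of $\g_x^*$ and the inclusions $\subset$ in the three identities can be strict.
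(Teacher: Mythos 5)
Your proof is correct, but it takes a genuinely different route from the paper's. The paper argues topologically: it notes that the stratum $\Sigma^\textrm{Ham}_p$ of $\S^\textrm{inf}_\textrm{Ham}(S)$ through $p$ sits as an open subset inside $\Sigma^\textrm{Gp}_p\cap J^{-1}(\Sigma^\textrm{Gp}_x)$, uses the submersion hypothesis only to ensure that $J:S^\textrm{reg}\to M$ is an open map, and then concludes from Proposition \ref{regpartopen} (a stratum is maximal if and only if it is open) that $\Sigma^\textrm{Ham}_p$ is open at $p$ precisely when $\Sigma^\textrm{Gp}_p$ is open at $p$ and $\Sigma^\textrm{Gp}_x$ is open at $x$; all three identities are handled by this one argument. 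You instead go through the Lie-theoretic dictionary of Proposition \ref{regpartalg}, with the hypothesis entering via Proposition \ref{infmomact}$b$: surjectivity of $\d J_p$ forces $\g_p=0$, hence $\g_p^0=\g_x^*\cong\No_x$ (Proposition \ref{normrepsymp}$b$), which turns the extra condition ``$\G_x$ (resp.\ $\g_x$) fixes $\g_p^0$'' into exactly the characterizations of $M^\textrm{princ}$ and $M^\textrm{reg}$ from Examples \ref{princtypliegpoidex} and \ref{infstratex}. Both arguments are sound; yours makes the mechanism more transparent (it identifies which representation $\g_p^0$ actually is under the hypothesis), whereas the paper's is shorter, treats the three statements uniformly, and never needs to compute isotropy data. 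Two small citation points, neither a gap: the description of $M^\textrm{reg}$ you use is stated in Example \ref{infstratex} (you need not construct an ``infinitesimal analogue'' yourself), and the identity $M^\textrm{princ}=q_M^{-1}(\underline{M}^\textrm{princ})$ is not literally in the remark you cite, which concerns $S$ and $\underline{S}$; it follows by the same appeal to Proposition \ref{regpartopen} and openness of the leaf space projection.
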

\begin{proof} We prove the equality for $S^\textrm{reg}_\textrm{Ham}$; the others are proved similarly. Let $p\in S$, $x=J(p)$ and consider the strata 
$\Sigma^\textrm{Ham}_p\in \S_\textrm{Ham}^\textrm{inf}(S)$, $\Sigma^\textrm{Gp}_p\in \S_\textrm{Gp}^\textrm{inf}(S)$ and $\Sigma^\textrm{Gp}_x\in \S_\textrm{Gp}^\textrm{inf}(M)$ through $p$ and $x$. Then
\begin{equation}\label{eqregpartrel} \Sigma^\textrm{Ham}_p\subset \Sigma^\textrm{Gp}_p\cap J^{-1}\left(\Sigma^\textrm{Gp}_{x}\right)
\end{equation} is open in the right-hand space. This, combined with the fact that $J:S^\textrm{reg}\to M$ is open and continuous, implies that $\Sigma^\textrm{Ham}_p$ is open at $p$ in $S$ if and only if $\Sigma^\textrm{Gp}_p$ is open at $p$ in $S$ and $\Sigma^\textrm{Gp}_{x}$ is open at $x$ in $M$. In light of Proposition \ref{regpartopen} this means that:
\begin{equation*} S^\textrm{reg}_\textrm{Ham}=S^\textrm{reg}\cap J^{-1}(M^\textrm{reg}),
\end{equation*} as claimed.
\end{proof}
In general (that is, if $J$ is not submersive on $S^\textrm{reg}$) one would hope for a similar result. Since the image of $J$ need not intersect $M^\textrm{reg}$, one however needs an appropriate replacement for it. The proposition below gives a sufficient condition for the existence of such a replacement. 
\begin{prop}\label{printypdescr} Suppose that $S^\textrm{reg}_\textrm{Ham}$ is connected. Then there is a unique stratum $\Sigma\in \S_\textrm{Gp}^\textrm{inf}(M)$ with the property that $\Sigma\cap J(S)$ is open and dense in $J(S)$. Moreover, it holds that:
\begin{equation*} S^\textrm{reg}_\textrm{Ham}=S^\textrm{reg}\cap J^{-1}(\Sigma),
\end{equation*} and $J^{-1}(\Sigma)$ is connected, open and dense in $S$. Similar conclusions hold for the principal part on $S$ (resp. $\underline{S}$), under the assumption that $S^\textrm{princ}_\textrm{Ham}$ (resp. $\underline{S}^\textrm{princ}_\textrm{Ham}$) is connected. 
\end{prop}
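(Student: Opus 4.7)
The plan is to exploit the constant-rank structure of the relevant stratifications, trading dimensional bounds for frontier arguments when passing from the infinitesimal to the canonical setting.

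First, assuming $S^\textrm{reg}_\textrm{Ham}$ connected, I would reduce to a single-stratum situation. Since the regular part of a Whitney stratification is the disjoint union of its open strata, connectedness of $S^\textrm{reg}_\textrm{Ham}$ forces it to be a single stratum of $\S^\textrm{inf}_\textrm{Ham}(S)$. By the constant-rank property of the pair $(\S^\textrm{inf}_\textrm{Ham}(S),\S^\textrm{inf}_\textrm{Gp}(M))$, which is the infinitesimal analogue of Theorem \ref{canhamstratthm}$b$ and is proved by the same arguments, $J$ sends $S^\textrm{reg}_\textrm{Ham}$ into a unique stratum $\Sigma\in\S^\textrm{inf}_\textrm{Gp}(M)$. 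Density of $\Sigma\cap J(S)$ in $J(S)$ is immediate from the density of $S^\textrm{reg}_\textrm{Ham}$ in $S$ and continuity of $J$, and uniqueness of $\Sigma$ with this density property is automatic, since two disjoint open subsets of $J(S)$ cannot both be dense.

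The central step is openness. By the dimension-type description of $\S^\textrm{inf}_\textrm{Gp}(M)$ (Example \ref{infstratex}), $\Sigma$ coincides near each of its points with $\{y\in M : \dim\g_y=g_0\}$, where $g_0:=\dim\g_{J(p)}$ for any $p\in S^\textrm{reg}_\textrm{Ham}$. The density of $S^\textrm{reg}_\textrm{Ham}$ in $S$ together with upper semi-continuity of $y\mapsto\dim\g_y$ shows that $g_0$ is the global minimum of $\dim\g_{J(\cdot)}$ on $S$; by upper semi-continuity again, any $p\in J^{-1}(\Sigma)$ has a neighbourhood in $S$ on which $\dim\g_{J(\cdot)}\le g_0$, which combined with the global lower bound gives $\dim\g_{J(\cdot)}=g_0$ throughout the neighbourhood. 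Hence $J^{-1}(\Sigma)$ is open in $S$; the same neighbourhood argument carried out in $M$ shows that $\Sigma\cap J(S)$ is open in $J(S)$. Connectedness and density of $J^{-1}(\Sigma)$ in $S$ follow from its containing the connected dense subset $S^\textrm{reg}_\textrm{Ham}$. The equality $S^\textrm{reg}_\textrm{Ham}=S^\textrm{reg}\cap J^{-1}(\Sigma)$ is the same minimisation: for $p$ in the right-hand side, $\dim\g_p$ is locally constant (as $p\in S^\textrm{reg}$) and $\dim\g_{J(p)}=g_0$ is globally, hence locally, minimal, so the $J$-dimension type at $p$ is locally constant and $p$ sits in the unique open stratum $S^\textrm{reg}_\textrm{Ham}$ of $\S^\textrm{inf}_\textrm{Ham}(S)$.

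For the principal versions on $S$ and $\underline{S}$ (the latter using Theorem \ref{canhamstratthm}$b$ directly), the same outline applies with $\S_\textrm{Gp}(M)$ in place of $\S^\textrm{inf}_\textrm{Gp}(M)$, but upper semi-continuity of dimensions must be replaced by the frontier condition and local finiteness: density of $\Sigma\cap J(S)$ in $J(S)$ yields $J(S)\subset\overline\Sigma$, so by the frontier condition every $y\in J(S)$ lies either in $\Sigma$ or in a strictly lower stratum; by local finiteness only finitely many such lower strata meet a small neighbourhood of a given $y_0\in\Sigma\cap J(S)$, and since $y_0\notin\overline{\Sigma'}$ for each strictly lower $\Sigma'$, these can be avoided by further shrinking to produce the required neighbourhood $W$ with $W\cap J(S)\subset\Sigma$. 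The hard step is the equality $S^\textrm{princ}_\textrm{Ham}=S^\textrm{princ}\cap J^{-1}(\Sigma)$, which is not a dimension count; I would handle it by a local model computation. Given $p\in S^\textrm{princ}\cap J^{-1}(\Sigma)$, transport to the local model of Example \ref{locmodmoreq} with $G=\G_{J(p)}$, $H=\G_p$ and symplectic $H$-representation $V=\S\No_p$. By Lemma \ref{techlemisotype}, $\Sigma$ corresponds near the origin in $\g^*$ to the fixed set $(\g^*)^G$, and openness of $J^{-1}(\Sigma)$ at $p$ translates into $J_\p^{-1}((\g^*)^G)$ being open at the origin in $\h^0\oplus V$. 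Along the $\alpha$-direction $J_\p(\alpha,0)=\alpha$, so this preimage contains a neighbourhood of the origin in $\h^0$ only if $(\h^0)^G=\h^0$, i.e., $\G_{J(p)}$ fixes $\g_p^0$; combined with $p\in S^\textrm{princ}$, Proposition \ref{regpartalg}$c$ then places $p$ in $S^\textrm{princ}_\textrm{Ham}$.
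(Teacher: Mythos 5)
Your proof is correct, and it reaches the conclusion by a genuinely different mechanism than the paper's. The paper argues uniformly in all three cases: it sets $R:=\{x\in J(S)\mid \Sigma^\textrm{Gp}_x\cap J(S)\text{ is open in }J(S)\}$, so that openness is built into the definition, then uses the containment (\ref{eqregpartrel}) from the proof of the preceding proposition to see that $S^\textrm{reg}\cap J^{-1}(R)$ is an open union of strata of $\S^\textrm{inf}_\textrm{Ham}(S)$ contained in $S^\textrm{reg}_\textrm{Ham}$, and lets connectedness force the equality; density, connectedness of $J^{-1}(R)$, and the identification of $\Sigma$ as the unique stratum containing $R$ all fall out afterwards. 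You instead pin down $\Sigma$ up front as the image stratum and verify openness and the equality by hand: upper semi-continuity of $y\mapsto\dim\g_y$ in the infinitesimal case, the frontier condition plus local finiteness in the canonical case, and a local-model computation (via Lemma \ref{techlemisotype} and Proposition \ref{regpartalg}$c$) for the inclusion $S^\textrm{princ}\cap J^{-1}(\Sigma)\subset S^\textrm{princ}_\textrm{Ham}$. The paper's route is shorter and needs no case distinction; yours gives an explicit description of $\Sigma$ from the outset and is self-contained, essentially unpacking (\ref{eqregpartrel}) rather than citing it. Two small points to tidy up: the semi-continuity sandwich only shows that a neighbourhood of $p$ maps into the dimension type $\{y:\dim\g_y=g_0\}$, so you should shrink once more, using that $\Sigma$ is open in that submanifold, to land in the connected component $\Sigma$ itself; and in the canonical case you should record the easy inclusion $S^\textrm{princ}_\textrm{Ham}\subset S^\textrm{princ}\cap J^{-1}(\Sigma)$, which follows from the inclusion diagram of regular parts together with $J(S^\textrm{princ}_\textrm{Ham})\subset\Sigma$.
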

\begin{proof} Again, we prove the result only for $S^\textrm{reg}_\textrm{Ham}$ since the other proofs are analogous. We use the notation introduced in the proof of the previous proposition. Consider $R\subset J(S)$ defined as:
\begin{equation*} R:=\{x\in J(S)\mid \Sigma^\textrm{Gp}_x\cap J(S)\text{ is open in }J(S)\}.
\end{equation*} We claim that $S^\textrm{reg}_\textrm{Ham}=S^\textrm{reg}\cap J^{-1}(R)$, that $R$ is connected, open and dense in $J(S)$ and that $J^{-1}(R)$ is connected, open and dense in $S$. The desired stratum $\Sigma$ is then the unique stratum containing $R$. To see that our claim holds, notice first $R$ is clearly open in $J(S)$, and so $J^{-1}(R)$ is open in $S$. Moreover, by continuity of $J$ and $(\ref{eqregpartrel})$ we find that $S^\textrm{reg}\cap J^{-1}(R)$ is a union of strata of $\S_\textrm{Ham}^\textrm{inf}(S)$ contained in $S^\textrm{reg}_\textrm{Ham}$. So, if $S^\textrm{reg}_\textrm{Ham}$ is connected, then  $S^\textrm{reg}\cap J^{-1}(R)$ must coincide with $S^\textrm{reg}_\textrm{Ham}$. Then since $S^\textrm{reg}_\textrm{Ham}$ is dense in $S$, so is $J^{-1}(R)$, and furthermore, $R$ must be dense in $J(S)$. Finally, because $S^\textrm{reg}_\textrm{Ham}$ is connected and dense in $J^{-1}(R)$, it follows that $J^{-1}(R)$ is connected and hence $R$ is connected as well. This proves our claim.
\end{proof}

\begin{ex} Let $G$ be a compact and connected Lie group and let $J:(S,\omega)\to \g^*$ be a connected Hamiltonian $G$-space. 
Let $T\subset G$ be a maximal torus, $\t^*_+$ a choice of closed Weyl chamber in $\t^*$ and $J_+(S):=J(S)\cap \t^*_+$, where $\t^*$ is canonically identified with the $T$-fixed point set $(\g^*)^T$ in $\g^*$. According to \cite[Theorem 3.1]{LeMeToWo}, there is a unique open face of the Weyl chamber (called the principal face) that intersects $J_+(S)$ in a dense subset of $J_+(S)$. Combining Corollary \ref{printypHamGspacethm} with Proposition \ref{printypdescr}, we recover the existence of the principal face. 
\end{ex}

\subsection{The Poisson structure on the orbit space}\label{poisstratthmsec}
\subsubsection{Poisson structures on reduced differentiable spaces and Poisson stratifications} In this section we discuss the Poisson structure on the orbit space of a Hamiltonian action and discuss basic Poisson geometric properties of the various stratifications associated to such an action. First, we give some more general background. 
%Let $(X,\O_X)$ be a reduced differentiable space. By a Poisson structure on $(X,\O_X)$ we mean a Poisson bracket on the sheaf $\O_X$. Since $\O_X$ admits partitions of unity subordinate to any given open cover, the data of such a Poisson bracket is equivalent to the data of a Poisson bracket $\{\cdot,\cdot\}_X$ on the algebra $\O_X(X)$ of global sections. With this in mind, we define:
\begin{defi} A \textbf{Poisson reduced ringed space} is a reduced ringed space $(X,\O_X)$ together with a Poisson bracket $\{\cdot,\cdot\}$ on the structure sheaf $\O_X$. A \textbf{morphism of Poisson reduced ringed spaces} is a morphism of reduced ringed spaces: \begin{equation*} \phi:(X,\O_X)\to (Y,\O_Y)\end{equation*} with the property that for every open $U$ in $Y$:
\begin{equation*} \phi^*:\left(\O_Y(U),\{\cdot,\cdot\}_U\right)\to \left(\O_X(\phi^{-1}(U)),\{\cdot,\cdot\}_{\phi^{-1}(U)}\right)
\end{equation*} is a Poisson algebra map. We will also call such $\phi$ simply a \textbf{Poisson map}. When $(X,\O_X)$ is a reduced differentiable space, we call $(X,\O_X,\{\cdot,\cdot\})$ a \textbf{Poisson reduced differentiable space}.
\end{defi} 
\begin{rem}\label{globpoisbrackrem} The Poisson reduced ringed spaces in this paper will all be Hausdorff and second countable reduced differentiable spaces. For such reduced ringed spaces $(X,\O_X)$ the data of a Poisson bracket on the sheaf $\O_X$ is the same as the data of a Poisson bracket on the $\R$-algebra $\O_X(X)$, so that when convenient we can restrict attention to the Poisson algebra of globally defined functions. This follows as for manifolds, using bump functions in $\O_X(X)$ (cf. Remark \ref{partofunityreddiffbsp}). 
\end{rem}
Next, we turn to subspaces and stratifications of Poisson reduced differentiable spaces. 
\begin{defi}\label{poissubmandef} Let $(X,\O_X,\{\cdot,\cdot\}_X)$ be a Poisson reduced differentiable space. A locally closed subspace $Y$ of $(X,\O_X)$ is a \textbf{Poisson reduced differentiable subspace} if the induced structure sheaf $\O_Y$ admits a (necessarily unique) Poisson bracket for which the inclusion of $Y$ into $X$ becomes a Poisson map. If $Y$ is also a submanifold of $(X,\O_X)$, then we call it a \textbf{Poisson submanifold}. 
\end{defi} 
As in \cite{FeOrRa}, we use the following definition.
\begin{defi}\label{poisstratdef} Let $(X,\O_X,\{\cdot,\cdot\}_X)$ be a Hausdorff and second countable Poisson reduced differentiable space. A \textbf{Poisson stratification} of $(X,\O_X,\{\cdot,\cdot\}_X)$ is a stratification $\S$ of $(X,\O_X)$ with the property that every stratum is a Poisson submanifold. We call $(X,\O_X,\{\cdot,\cdot\}_X,\S)$ a \textbf{Poisson stratified space}.  A \textbf{Symplectic stratified space} is a Poisson stratified space for which the strata are symplectic. A \textbf{morphism of Poisson stratified spaces} is a morphism of the underlying stratified spaces that is simultaneously a morphism of the underlying Poisson reduced ringed spaces.
\end{defi}
As for manifolds, we have the following useful characterization.
\begin{prop}\label{poisidealcharprop} Let $(X,\O_X,\{\cdot,\cdot\}_X)$ be a Hausdorff and second countable Poisson reduced differentiable space and let $Y$ be a locally closed subspace. Then $Y$ is a Poisson reduced differentiable subspace if and only if the vanishing ideal $\mathcal{I}_Y(X)$ in $\O_X(X)$ (consisting of $f\in \O_X(X)$ such that $f\vert_Y=0$) is a Poisson ideal (meaning that: if $f,h\in \O_X(X)$ and $h\vert_Y=0$, then $\{f,h\}_X\vert_Y=0$).
\end{prop}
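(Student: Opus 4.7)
The forward direction is immediate from the definition of a Poisson map. Indeed, suppose $\O_Y$ carries a Poisson bracket $\{\cdot,\cdot\}_Y$ making the inclusion $i:(Y,\O_Y)\hookrightarrow (X,\O_X)$ a morphism of Poisson reduced ringed spaces. If $f,h\in \O_X(X)$ and $h\vert_Y=0$, then applying $i^*$ and using that $i^*$ is a Poisson algebra map gives $\{f,h\}_X\vert_Y=i^*\{f,h\}_X=\{i^*f,i^*h\}_Y=\{f\vert_Y,0\}_Y=0$, so $\mathcal{I}_Y(X)$ is a Poisson ideal.

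For the backward direction, the plan is as follows. First I would reduce to the case where $Y$ is closed in $X$. Since $Y$ is locally closed, it is closed in some open $U\subset X$, and $(U,\O_X\vert_U,\{\cdot,\cdot\}_U)$ is a Poisson reduced differentiable space with respect to the restricted Poisson bracket. The key point is that the hypothesis carries over: if $f,h\in \O_X(U)$ with $h\vert_{U\cap Y}=0$, then for any $y\in U\cap Y$, using a bump function $\rho\in \O_X(X)$ supported in $U$ and equal to $1$ on a neighbourhood of $y$ (which exists by Remark \ref{partofunityreddiffbsp}), the functions $\tilde f:=\rho f$ and $\tilde h:=\rho h$ extend by zero to elements of $\O_X(X)$, and $\tilde h\in \mathcal{I}_Y(X)$; hence $\{\tilde f,\tilde h\}_X\vert_Y=0$, which by locality of the Poisson bracket forces $\{f,h\}_U(y)=0$. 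So we may assume $Y$ is closed in $X$.

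Under this reduction, the strategy is to define the bracket on global sections using Proposition \ref{globchar} and then invoke Remark \ref{globpoisbrackrem} to obtain a sheaf-level Poisson bracket. Concretely, given $f_Y,h_Y\in \O_Y(Y)$, Proposition \ref{globchar} gives extensions $f,h\in \O_X(X)$ (here using that $Y$ is closed in $X$), and I would set
\begin{equation*}
  \{f_Y,h_Y\}_Y:=\{f,h\}_X\vert_Y,
\end{equation*}
which lies in $\O_Y(Y)$ since $i^*$ maps $\O_X(X)$ into $\O_Y(Y)$. Well-definedness is exactly where the Poisson ideal hypothesis enters: if $f',h'$ are other extensions, then $f-f',h-h'\in \mathcal{I}_Y(X)$, and writing $\{f,h\}_X-\{f',h'\}_X=\{f-f',h\}_X+\{f',h-h'\}_X$ shows the difference lies in $\mathcal{I}_Y(X)$. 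Antisymmetry, the Leibniz rule and the Jacobi identity are inherited from $\{\cdot,\cdot\}_X$, and the inclusion $i$ is Poisson by construction.

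The main technical point to verify is that this global Poisson bracket on $\O_Y(Y)$ really corresponds, via Remark \ref{globpoisbrackrem}, to a Poisson bracket on the sheaf $\O_Y$ (so that the inclusion is a Poisson morphism of reduced ringed spaces, not merely of global function algebras). The standard way to see this is to observe that the bracket is a bi-derivation on $\O_Y(Y)$ and, by the bump function argument used in Proposition \ref{globchar}, germs at any point of $Y$ are represented by globally defined functions; locality of $\{\cdot,\cdot\}_X$ in $X$ then forces the induced bracket on germs, and hence on every $\O_Y(V)$, to be well-defined and compatible with restriction. I expect this sheaf-theoretic bookkeeping to be the only mildly delicate step, but it is entirely analogous to the corresponding statement for smooth manifolds and closed Poisson submanifolds.
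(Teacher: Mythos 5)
Your proposal is correct and follows essentially the same route as the paper: extend functions from $Y$ via Proposition \ref{globchar}, define the bracket by restriction of the ambient bracket, use bump functions (Remark \ref{partofunityreddiffbsp}) to localize the Poisson-ideal hypothesis for well-definedness, and pass from global sections to the sheaf via Remark \ref{globpoisbrackrem}. The only cosmetic difference is the order of operations — you first restrict to an open in which $Y$ is closed and then extend globally, whereas the paper extends to an open neighbourhood of $Y$ and localizes the ideal condition there — but the ingredients and the argument are the same.
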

\begin{proof} The forward implication is immediate. For the backward implication the same argument as for manifolds applies: given $f,h\in \O_Y(Y)$, by Proposition \ref{globchar} we can choose extensions $\widehat{f},\widehat{h}\in \O_X(U)$ of $f$ and $h$ defined on some open neighbourhood $U$ of $Y$ and set:
\begin{equation*} \{f,h\}_Y:=\{\widehat{f},\widehat{h}\}_U\vert_Y.
\end{equation*} This does not depend on the choice of extensions, because for any open $U$ in $X$ the ideal $\mathcal{I}_Y(U)$ in $\O_X(U)$, consisting of functions that vanish on $U\cap Y$, is a Poisson ideal. Indeed, this follows from the assumption that $\mathcal{I}_Y(X)$ is a Poisson ideal in $\O_X(X)$, using bump functions (cf. Remark \ref{partofunityreddiffbsp}). By construction, $\{\cdot,\cdot\}_Y$ defines a Poisson bracket on $\O_Y(Y)$ (and hence on $\O_Y$, by Remark \ref{globpoisbrackrem}) for which the inclusion of $Y$ into $X$ becomes a Poisson map.
\end{proof}
\subsubsection{The Poisson algebras of invariant functions} Next, we turn to the definition of the Poisson bracket on the orbit space of a Hamiltonian action, starting with the following observation. 
\begin{prop}\label{poisalginvfunprop} Let $(\G,\Omega)$ be a symplectic groupoid and suppose that we are given a Hamiltonian $(\G,\Omega)$-action along $J:(S,\omega)\to M$. The algebra of invariant smooth functions:
\begin{equation*} C^\infty(S)^\G=\{f\in C^\infty(S)\mid f(g\cdot p)=f(p),\text{ } \forall (g,p)\in \G\times_MS\}.
\end{equation*} is a Poisson subalgebra of $(C^\infty(S),\{\cdot,\cdot\}_\omega)$.  
\end{prop}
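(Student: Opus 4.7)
The plan is straightforward: closure of $C^\infty(S)^\G$ under pointwise sums, products, and scalar multiplication is immediate from the definition, so the entire content is to verify that $\{f,g\}_\omega$ is $\G$-invariant whenever $f$ and $g$ are. The approach I would take is to exhibit the $\G$-action on $(S,\omega)$ as locally by symplectomorphisms; since symplectomorphisms preserve Poisson brackets, the invariance of $\{f,g\}_\omega$ then follows at once from that of $f$ and $g$.

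Concretely, for any arrow $\gamma\in\G$ I would invoke a standard fact about symplectic groupoids: $\gamma$ lies in the image of a local Lagrangian bisection of the source, i.e.\ a smooth map $b:U\to\G$ with $s\circ b=\textrm{id}_U$, $b(s(\gamma))=\gamma$ and $b^*\Omega=0$. (The existence rests on the fact that the unit section is itself Lagrangian, which follows from applying $1^*$ to the multiplicativity identity for $\Omega$ and obtaining $1^*\Omega = 2\cdot 1^*\Omega$, combined with a dimension count; translating a Lagrangian section through $1_{s(\gamma)}$ by $\gamma$ then produces one through $\gamma$.) Such a $b$ defines a diffeomorphism
\[ \phi_b:J^{-1}(U)\to S, \qquad \phi_b(p)=b(J(p))\cdot p, \]
onto an open subset of $S$, sending $p$ to $\gamma\cdot p$ when $J(p)=s(\gamma)$. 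Pulling back the Hamiltonian multiplicativity identity (\ref{hammultcond}) along the section $\sigma:p\mapsto(b(J(p)),p)$ of $\textrm{pr}_S:\G\ltimes S\to S$, which satisfies $m_S\circ\sigma=\phi_b$ and $\textrm{pr}_\G\circ\sigma=b\circ J$, gives
\[ \phi_b^*\omega - \omega = J^*(b^*\Omega) = 0, \]
so $\phi_b$ is a symplectomorphism and hence preserves $\{\cdot,\cdot\}_\omega$. Since $f,g\in C^\infty(S)^\G$ give $f\circ\phi_b=f$ and $g\circ\phi_b=g$ on $J^{-1}(U)$, we obtain
\[ \{f,g\}_\omega(\gamma\cdot p) = \{f\circ\phi_b,g\circ\phi_b\}_\omega(p) = \{f,g\}_\omega(p) \]
for every $p\in J^{-1}(s(\gamma))$, which is exactly $\G$-invariance of $\{f,g\}_\omega$.

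The only real obstacle is the existence of Lagrangian bisections through arbitrary arrows; once this is taken as given, the argument is purely formal. A bisection-free alternative establishes only the infinitesimal statement: by the momentum map condition (\ref{mommapcond}), $a_J(dh)=X_{J^*h}$ for $h\in C^\infty(M)$, these Hamiltonian vector fields are tangent to and span the $\G$-orbits at every point, and they annihilate $\G$-invariant functions; the Jacobi identity then yields $X_{J^*h}(\{f,g\}_\omega) = \{\{J^*h,f\}_\omega,g\}_\omega + \{f,\{J^*h,g\}_\omega\}_\omega = 0$, so $\{f,g\}_\omega$ is constant along connected components of orbits. The bisection step is what promotes this to full $\G$-invariance, and is where the essential use of the symplectic (rather than merely Poisson) structure on the groupoid enters.
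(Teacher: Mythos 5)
Your argument is correct but takes a genuinely different route from the paper's. The paper proves invariance of $\{f,h\}_\omega$ by first showing (Lemma \ref{hamflowinvfunlem}) that the Hamiltonian flow $\Phi_f^t$ of an invariant function is $\G$-equivariant — the multiplicativity identity (\ref{hammultcond}) is used there to prove $X_f(g\cdot p)=\left.\frac{\d}{\d t}\right|_{t=0}g\cdot \Phi_f^t(p)$ — and then computing $\{f,h\}_\omega(g\cdot p)=\left.\frac{\d}{\d t}\right|_{t=0}h(g\cdot\Phi_f^t(p))=\{f,h\}_\omega(p)$. You instead use (\ref{hammultcond}) to show that local Lagrangian bisections act by symplectomorphisms; your pull-back computation $\phi_b^*\omega-\omega=J^*(b^*\Omega)$ along the section $\sigma$ is correct, and the rest is indeed formal. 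What your route buys is a one-line conceptual reason (open symplectic embeddings preserve brackets); what it costs is the external input that every arrow of $(\G,\Omega)$ lies on a local Lagrangian bisection. That fact is true and standard (Coste--Dazord--Weinstein), but your parenthetical justification — translating a Lagrangian section through $1_{s(\gamma)}$ by $\gamma$ — does not literally work: multiplication by the single arrow $\gamma$ is defined only on a source- or target-fibre, not on a neighbourhood of $1_{s(\gamma)}$ in $\G$, so it cannot push forward a bisection. The correct argument chooses at $\gamma$ a Lagrangian subspace of $T_\gamma\G$ transverse to both $\ker \d s_\gamma$ and $\ker \d t_\gamma$ (which are symplectic orthogonal complements of one another) and extends it to a Lagrangian submanifold, locally a bisection. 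The paper's flow-based lemma avoids this input entirely and, moreover, is reused several times afterwards (Proposition \ref{poisidealpropredsp} and the proof of Theorem \ref{poisstratthm}), which is presumably why that route was chosen. Your closing observation — that the infinitesimal argument via $X_{J^*h}$ and the Jacobi identity only yields constancy of $\{f,g\}_\omega$ along connected components of orbits — is accurate, and both your bisection argument and the paper's flow argument correctly handle disconnected orbits.
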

\begin{proof} Although this is surely known, let us give a proof. Let $f,h\in C^\infty(S)^\G$ and let $\Phi_f$ denote the Hamiltonian flow of $f$. Using the lemma below we find that for all $(g,p)\in \G\times_MS$:
\begin{align*} \{f,h\}_\omega(g\cdot p)&=\left.\frac{\d}{\d t}\right|_{t=0} h(\Phi_f^t(g\cdot p))\\
&=\left.\frac{\d}{\d t}\right|_{t=0} h(g\cdot \Phi_f^t(p))\\
&=\left.\frac{\d}{\d t}\right|_{t=0} h(\Phi_f^t(p))=\{f,h\}_\omega(p),\end{align*} so that $\{f,g\}_\omega\in C^\infty(S)^\G$, as required. 
\end{proof}
Here we used the following lemma, which will also be useful later.
\begin{lemma}\label{hamflowinvfunlem} Let $f\in C^\infty(S)^\G$ and let $\Phi_f$ denote its Hamiltonian flow. Then, for every $t\in \R$, the domain $U_t$ and the image $V_t$ of $\Phi_f^t$ are $\G$-invariant and $\Phi^t_f$ is an isomorphism of Hamiltonian $(\G,\Omega)$-spaces:  
\begin{center}\begin{tikzcd} (U_t,\omega)\arrow[rr,"\Phi_f^t"]\arrow[rd,"J"'] & & (V_t,\omega) \arrow[ld,"J"]\\
& M & 
\end{tikzcd}
\end{center} 
\end{lemma}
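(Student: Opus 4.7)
The plan is to establish three compatible assertions about the Hamiltonian flow $\Phi_f^t$ of $X_f$: it preserves $J$, it commutes with the $\G$-action, and it is a symplectomorphism between its domain and image. Together with $\G$-invariance of $U_t$ and $V_t$, these facts amount precisely to the claim that $\Phi_f^t$ is an isomorphism of Hamiltonian $(\G,\Omega)$-spaces.

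First I would show that $X_f$ is tangent to the fibers of $J$, so that $J\circ \Phi_f^t=J$ wherever $\Phi_f^t$ is defined. For this, pair $\d J(X_f)$ with an arbitrary $\alpha\in\Omega^1(M)$ and use the momentum map condition (\ref{mommapcond}) together with the associated Lie algebroid action $a:\Omega^1(M)\to \X(S)$ to rewrite
\[
\langle \alpha, \d J(X_f)\rangle \;=\; (J^*\alpha)(X_f) \;=\; \omega\bigl(a(\alpha),X_f\bigr) \;=\; -\d f\bigl(a(\alpha)\bigr) \;=\; -a(\alpha)(f).
\]
Since $f\in C^\infty(S)^\G$ is constant along orbits, $a(\alpha)(f)=0$, so $\d J(X_f)=0$.

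Next I would upgrade $\G$-invariance of $f$ to $\G$-equivariance of the vector field $X_f$ via local bisections. Every $g\in \G$ lies in an open $W\subset \G$ on which $s$ restricts to a diffeomorphism, giving a local bisection $\sigma:=(s|_W)^{-1}$ and an associated local diffeomorphism
\[
L_\sigma:J^{-1}(s(W))\to J^{-1}(t(\sigma(s(W)))),\qquad L_\sigma(p)=\sigma(J(p))\cdot p.
\]
The multiplicativity condition (\ref{hammultcond}) implies $L_\sigma^*\omega=\omega$, while $\G$-invariance of $f$ gives $L_\sigma^*f=f$. Since the pull-back of a Hamiltonian vector field along a symplectomorphism is the Hamiltonian vector field of the pulled-back function, $L_\sigma^*X_f=X_f$. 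By uniqueness of integral curves this equivariance of $X_f$ transfers to the flow: $L_\sigma\circ\Phi_f^t=\Phi_f^t\circ L_\sigma$ on their common domain, which, as $g$ varies, gives the pointwise equivariance identity $\Phi_f^t(g\cdot p)=g\cdot \Phi_f^t(p)$ whenever either side is defined.

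Finally, I would use this identity to conclude: if $p\in U_t$, then the curve $s\mapsto g\cdot \Phi_f^s(p)$ is well-defined for $s\in[0,t]$ and realizes $g\cdot p$ as an element of $U_t$; the argument for $V_t$ is analogous, so both are $\G$-invariant. Because $\Phi_f^t$ is a Hamiltonian flow it satisfies $(\Phi_f^t)^*\omega=\omega$, and combined with $J\circ \Phi_f^t=J$ and the equivariance identity this is exactly the statement that $\Phi_f^t:(U_t,\omega)\to(V_t,\omega)$ is an isomorphism of Hamiltonian $(\G,\Omega)$-spaces over $M$. The only step that requires any real thought is the second one, namely producing enough local bisections and verifying $L_\sigma^*\omega=\omega$ from multiplicativity; everything else is a formal consequence of the momentum map condition together with invariance of $f$.
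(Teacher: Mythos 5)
Your overall strategy (show $X_f$ is tangent to the $J$-fibers, show the flow commutes with the action, conclude) is sound, and your first step is a correct variant of the paper's (which phrases it as $X_f(p)\in T_p\O^\omega=\ker(\d J_p)$ via Proposition \ref{infmomact}$a$). However, the step you single out as the only one requiring real thought contains a genuine error: for an arbitrary local bisection $\sigma$, the multiplicativity condition (\ref{hammultcond}) does \emph{not} give $L_\sigma^*\omega=\omega$. Writing $L_\sigma=m_S\circ(\sigma\circ J,\mathrm{id})$ and pulling (\ref{hammultcond}) back along $(\sigma\circ J,\mathrm{id})$ yields
\begin{equation*}
L_\sigma^*\omega=\omega+J^*(\sigma^*\Omega),
\end{equation*}
and the correction term vanishes only when $\sigma$ is a \emph{Lagrangian} bisection (in the pair groupoid $(M\times M,\omega\oplus-\omega)$, for instance, a bisection is the graph of a diffeomorphism and is Lagrangian exactly when that diffeomorphism is symplectic). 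So $L_\sigma$ is in general not a symplectomorphism, and the deduction ``$L_\sigma^*X_f=X_f$ because symplectomorphisms pull back Hamiltonian vector fields to Hamiltonian vector fields'' is unjustified as written.

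The conclusion $L_\sigma^*X_f=X_f$ is nevertheless true, and there are two ways to close the gap. Either work only with local Lagrangian bisections (these do pass through every arrow of a symplectic groupoid, but that existence statement itself needs an argument), or — more cheaply — reuse your own first step: since $J\circ L_\sigma=(t\circ\sigma)\circ J$ with $t\circ\sigma$ a diffeomorphism, $\d L_\sigma$ maps $\ker(\d J)$ isomorphically onto $\ker(\d J)$, so $L_\sigma^*X_f\in\ker(\d J)$ and hence $\iota_{L_\sigma^*X_f}J^*(\sigma^*\Omega)=0$; combined with $\iota_{L_\sigma^*X_f}(L_\sigma^*\omega)=\d(L_\sigma^*f)=\d f$ this gives $\iota_{L_\sigma^*X_f}\omega=\d f$, whence $L_\sigma^*X_f=X_f$ by nondegeneracy. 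For comparison, the paper sidesteps bisections entirely: it computes $\omega\bigl(\tfrac{\d}{\d t}\big|_{t=0}\,g\cdot\Phi_f^t(p),v\bigr)$ directly by lifting $v$ to the action groupoid and applying (\ref{hammultcond}) there, obtaining $X_f(g\cdot p)=\tfrac{\d}{\d t}\big|_{t=0}\,g\cdot\Phi_f^t(p)$ pointwise; that is the infinitesimal version of your bisection argument and never needs $L_\sigma$ to be symplectic.
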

\begin{proof} Invariance of $f$ implies that $X_f(p)\in T_p\O^\omega$ for all $p\in S$. From this and Proposition \ref{infmomact}$a$ it follows that $J(\Phi_f^t(p))=J(p)$ for any $p\in S$ and any time $t$ at which the flow through $p$ is defined. So, for any $(g,p)\in \G\times_MS$ we can consider the curve: 
\begin{equation*} t\mapsto \left(g,\Phi_f^t(p)\right) \in \G\times_MS.
\end{equation*} Given such $(g,p)$, let $v\in T_{g\cdot p}S$ and take a tangent vector $\hat{v}$ to $\G\times_M S$ at $(g,p)$ such that $\d m(\widehat{v})=v$. Then we find:
\begin{equation*} \omega\left(\left.\frac{\d}{\d t}\right|_{t=0} g\cdot\Phi^t_f(p),v\right)=(m_S^*\omega)\left(\left.\frac{\d}{\d t}\right|_{t=0}\left(g,\Phi_f^t(p)\right),\widehat{v}\right).
\end{equation*} Using (\ref{hammultcond}) this is further seen to be equal to:
\begin{equation*} \omega\left(X_f(p),\d (\textrm{pr}_S)(\widehat{v})\right)=\d (f\circ \textrm{pr}_S)(\widehat{v})=\d f(v),
\end{equation*} where in the last step we used invariance of $f$. As this holds for all such $v$, we deduce that:
\begin{equation*} X_f(g\cdot p)=\left.\frac{\d}{\d t}\right|_{t=0} g\cdot \Phi_f^t(p).
\end{equation*} This being true for all $p$ in the fiber of $J$ over $s(g)$, and in particular for all points on a maximal integral curve of $X_f$ starting in this fiber, it follows that the maximal integral curve of $X_f$ through $g\cdot p$ is given by $t\mapsto g\cdot \Phi_f^t(p)$. The lemma readily follows from this.   
\end{proof}
%Another direct consequence of Lemma \ref{hamflowinvfunlem}, which will be key in the proof of the main theorem in this section, is the following.
%\begin{lemma}\label{keylempoisstrat} Let $f\in C^\infty(S)^\G$. Then each of the canonical action strata on $S$ is invariant under the Hamiltonian flow of $f$. The same holds for the canonical Hamiltonian strata and the canonical strata of the reduced spaces. 
%\end{lemma}

Given a proper symplectic groupoid $(\G,\Omega)$ and a Hamiltonian $(\G,\Omega)$-action along $J:(S,\omega)\to M$, the Poisson bracket $\{\cdot,\cdot\}_\omega$ on the algebra $C^\infty(S)^\G$ in Proposition \ref{poisalginvfunprop} gives the orbit space $(\underline{S},\mathcal{C}^\infty_{\underline{S}})$ the structure of a Poisson reduced differentiable space, with Poisson bracket determined by the fact that the orbit projection becomes a Poisson map. Moreover, for each leaf $\L$ of $\G$ in $M$, the reduced space $\underline{S}_\L$ is a Poisson reduced differentiable subspace. Indeed, identifying the algebra of globally defined smooth functions on $\underline{S}$ with $C^\infty(S)^\G$, the vanishing ideal of $\underline{S}_\L$ is identified with the ideal $\mathcal{I}^{\G}_\L$ of invariant smooth functions that vanish on $J^{-1}(\L)$, which is a Poisson ideal by the proposition below. This observation is due to \cite{ArCuGot} in the setting of Hamiltonian group actions. 
\begin{prop}\label{poisidealpropredsp} The ideal $\mathcal{I}_\L^\G$ is a Poisson ideal of $(C^\infty(S)^\G,\{\cdot,\cdot\}_\omega)$. 
\end{prop}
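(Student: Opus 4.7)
The plan is to use Proposition \ref{poisidealcharprop}, which reduces the claim to showing the following: for any $f,h\in C^\infty(S)^\G$ with $h$ vanishing on $J^{-1}(\L)$, the Poisson bracket $\{f,h\}_\omega$ also vanishes on $J^{-1}(\L)$. The entire argument will be a straightforward consequence of Lemma \ref{hamflowinvfunlem}; the work has already been done there.

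First I would recall that, since $f$ is $\G$-invariant, Lemma \ref{hamflowinvfunlem} asserts that the (local) Hamiltonian flow $\Phi_f^t$ of $f$ intertwines the momentum maps, i.e.\ $J\circ \Phi_f^t=J$ on the domain of definition of $\Phi_f^t$. In particular, the flow preserves every fiber of $J$, and hence preserves the saturated subset $J^{-1}(\L)$.

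Now fix $p\in J^{-1}(\L)$. Since $\Phi_f^t(p)\in J^{-1}(\L)$ for all sufficiently small $t$, and since $h$ vanishes identically on $J^{-1}(\L)$, we have $h(\Phi_f^t(p))=0$ for all such $t$. Differentiating at $t=0$ gives
\begin{equation*}
\{f,h\}_\omega(p)\;=\;X_f(h)(p)\;=\;\left.\tfrac{\d}{\d t}\right|_{t=0}h(\Phi_f^t(p))\;=\;0.
\end{equation*}
As $p\in J^{-1}(\L)$ was arbitrary, $\{f,h\}_\omega\in \mathcal{I}_\L^\G$, so $\mathcal{I}_\L^\G$ is indeed a Poisson ideal.

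There is no real obstacle here, since Lemma \ref{hamflowinvfunlem} packages the only nontrivial input, namely that the Hamiltonian vector field of a $\G$-invariant function is tangent to the fibers of $J$ (equivalently, using Proposition \ref{infmomact}$a$, that $X_f(p)\in T_p\O^\omega=\ker(\d J_p)$ because invariance of $f$ forces $X_f$ to annihilate tangent vectors to orbits). One could alternatively give a direct infinitesimal proof without invoking flows, by noting that $h|_{J^{-1}(\L)}=0$ implies $\d h_p$ vanishes on $\ker(\d J_p)=T_p\O^\omega$, hence $\{f,h\}_\omega(p)=\omega_p(X_f(p),X_h(p))$ with $X_f(p)\in T_p\O^\omega$ pairs to zero with a vector on which $\d h$ vanishes, reaching the same conclusion.
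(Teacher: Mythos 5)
Your proof is correct and is essentially identical to the paper's: both invoke Lemma \ref{hamflowinvfunlem} to see that the Hamiltonian flow of the invariant function $f$ preserves the fibers of $J$, hence preserves $J^{-1}(\L)$, and then differentiate $h\circ\Phi_f^t\equiv 0$ at $t=0$. (Your closing "alternative" infinitesimal argument is less safe, since $J^{-1}(\L)$ need not be a submanifold and so the vanishing of $\d h_p$ on $\ker(\d J_p)$ requires exactly the kind of curve inside $J^{-1}(\L)$ that the flow provides; but this is only an aside and the main argument stands.)
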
 
\begin{proof} If $f\in C^\infty(S)^\G$ and $h\in\mathcal{I}_\L^\G$ then by Lemma \ref{hamflowinvfunlem} the Hamiltonian flow of $f$ starting at $p\in J^{-1}(\L)$ is contained in a single fiber of $J$, and hence in $J^{-1}(\L)$, so that $\{f,h\}_\omega(p)=0$. 
\end{proof}
We will denote the respective Poisson structures on $(\underline{S},\mathcal{C}^\infty_{\underline{S}})$ and $(\underline{S}_\L,\mathcal{C}^\infty_{\underline{S}_\L})$ by $\{\cdot,\cdot\}_{\underline{S}}$ and $\{\cdot,\cdot\}_{\underline{S}_\L}$. 
\subsubsection{The Poisson stratification theorem} Now, we move to the main theorem of this section. 
\begin{thm}\label{poisstratthm} Let $(\G,\Omega)\rightrightarrows M$ be a proper symplectic groupoid and suppose that we are given a Hamiltonian $(\G,\Omega)$-action along $J:(S,\omega)\to M$. Then the following hold.
 \begin{itemize} \item[a)] The stratification $\S_\textrm{Gp}(\underline{S})$ is a Poisson stratification of the orbit space: \begin{equation*} (\underline{S},\mathcal{C}^\infty_{\underline{S}},\{\cdot,\cdot\}_{\underline{S}}).\end{equation*}
 \item[b)] The stratification $\S_\textrm{Ham}(\underline{S})$ is a Poisson stratification of the orbit space: \begin{equation*}(\underline{S},\mathcal{C}^\infty_{\underline{S}},\{\cdot,\cdot\}_{\underline{S}}),\end{equation*} the strata of which are regular Poisson submanifolds.
\item[c)] For each leaf $\L$ of $\G$ in $M$, the stratification $\S_\textrm{Ham}(\underline{S}_\L)$ is a symplectic stratification of the reduced space at $\L$: \begin{equation*} (\underline{S}_\L,\mathcal{C}^\infty_{\underline{S}_\L},\{\cdot,\cdot\}_{\underline{S}_\L}).
\end{equation*}
\end{itemize}
These are related as follows. First of all, the inclusions of smooth stratified spaces:
\begin{equation*} \left(\underline{S}_\L,\mathcal{C}^\infty_{\underline{S}_\L}, \{\cdot,\cdot\}_{\underline{S}_\L},\S_\textrm{Ham}(\underline{S}_\L)\right)\hookrightarrow \left(\underline{S},\mathcal{C}^\infty_{\underline{S}},\{\cdot,\cdot\}_{\underline{S}},\S_\textrm{Ham}(\underline{S})\right)\hookrightarrow \left(\underline{S},\mathcal{C}^\infty_{\underline{S}},\{\cdot,\cdot\}_{\underline{S}},\S_\textrm{Gp}(\underline{S})\right)
\end{equation*} are Poisson maps that map symplectic leaves onto symplectic leaves. 
Moreover, for each stratum $\underline{\Sigma}_S\in \S_\textrm{Ham}(\underline{S})$, the symplectic leaves in $\underline{\Sigma}_S$ are the connected components of the fibers of the constant rank map $\underline{J}:\underline{\Sigma}_S\to \underline{\Sigma}_M$, where $\underline{\Sigma}_M\in \S_\textrm{Gp}(\underline{M})$ is the stratum such that $\underline{J}(\underline{\Sigma}_S)\subset \underline{\Sigma}_M$.
\end{thm}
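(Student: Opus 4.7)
The plan is to establish a Poisson-geometric enhancement of Proposition \ref{moreqisoredring} and then reduce all three parts (together with the final compatibility statements) to an explicit computation in the normal form. First I would verify that any Hamiltonian Morita equivalence between Hamiltonian actions of symplectic groupoids upgrades the homeomorphism of orbit spaces to an isomorphism of Poisson reduced differentiable spaces. The argument mirrors that for Proposition \ref{moreqisoredring}, identifying $C^\infty(S_1)^{\G_1}$ and $C^\infty(S_2)^{\G_2}$ via bi-invariant functions on $P\times_{M_1}S_1$; that Poisson brackets match is an infinitesimal consequence of Lemma \ref{hamflowinvfunlem} applied on both sides of the bi-bundle, which forces the Hamiltonian flow of an invariant function to lift uniquely to $Q$ and project compatibly.

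Combined with Theorem \ref{normhamthm} and Example \ref{locmodmoreq} (routing through the local model $(S_\theta,\omega_{S_\theta})$, so that Poisson Morita invariance applies to honest Hamiltonian actions), this reduces the theorem to a computation in the model: near any orbit $\underline{\O}\in\underline{S}$ one obtains an isomorphism of Poisson reduced differentiable spaces between a neighborhood of $\underline{\O}$ and a neighborhood of the origin in $(\h^0\oplus V)/H$, under which the transverse momentum map becomes $\underline{J_\p}:(\h^0\oplus V)/H\to\g^*/G$, where $G=\G_x$, $H=\G_p$ and $V=\S\No_p$. For part (a), I would apply Lemma \ref{hamflowinvfunlem} once more to show that Hamiltonian flows of $H$-invariant functions on $\h^0\oplus V$ preserve each $H$-isotropy type; since (by Morita invariance of Morita types) these locally recover the strata of $\S_\textrm{Gp}(\underline{S})$, Proposition \ref{poisidealcharprop} yields the Poisson submanifold property. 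For part (b), Lemma \ref{techlemisotype} identifies the Hamiltonian Morita type through $\O$ locally with $(\h^0)^G\oplus V^H$; the $(\h^0)^G$-coordinates are $G$-invariant linear functionals and so are Poisson-central in the model (their Hamiltonian flows act by translation on $\h^0$ and trivially on $V$, preserving every stratum), while $V^H$ carries the symplectic form $\omega_V|_{V^H}$. Hence the induced Poisson structure on the stratum is regular, with symplectic leaves given by the connected components of $\{\alpha\}\times V^H$ for $\alpha\in(\h^0)^G$. Lemma \ref{techlemisotype}(c) identifies $\underline{J}$ restricted to this stratum with projection onto $(\g^*)^G$, so these leaves are exactly the connected components of the fibers of $\underline{J}|_{\underline{\Sigma}_S}$, giving both (b) and the final leaf description in the theorem.

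Part (c) follows by intersecting with $\underline{J}^{-1}(\L)$: locally this cuts out $\{0\}\times V^H$ inside each Hamiltonian stratum, a symplectic vector space, so $\S_\textrm{Ham}(\underline{S}_\L)$ is a symplectic stratification; the compatibility of the three inclusions as Poisson maps of stratified spaces is then automatic from Proposition \ref{poisidealpropredsp}, Proposition \ref{poisidealcharprop} and the explicit local model, and the matching of symplectic leaves under the three stratifications drops out of the same computation. The main obstacle I expect is the Poisson Morita invariance in the first step, in particular cleanly tracking Hamiltonian flows of invariant functions through the bi-bundle and verifying bracket-preservation without fuss. A fallback that bypasses the non-action-type groupoid map $\J_\p$ of Example \ref{locmodmoreq} is to compute directly on $(S_\theta,\omega_{S_\theta})=(\Sigma_\theta\times V)\sslash H$ and push the Poisson bracket through its symplectic-reduction presentation, at the cost of heavier bookkeeping but avoiding the need to enhance Morita invariance beyond the action-type setting.
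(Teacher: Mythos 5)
Your global skeleton for the ``Poisson submanifold'' assertions is sound, and in fact simpler than you make it: Lemma \ref{hamflowinvfunlem} already shows that the Hamiltonian flow of any $\G$-invariant function on $S$ acts by isomorphisms of Hamiltonian spaces, hence preserves every Hamiltonian Morita type and every Morita type of the action groupoid; combined with Proposition \ref{poisidealcharprop} this gives parts (a), (c) and the first half of (b) directly and globally, with no normal form needed. Your step establishing Poisson invariance under Hamiltonian Morita equivalence between two genuine Hamiltonian actions is also correct (it is essentially Proposition \ref{hammorinvpoisbrack}, whose proof does not depend on the present theorem).

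The genuine gap is in the step where you identify the symplectic leaves and the regularity of $\pi_{\underline{\Sigma}_S}$ by computing in the transverse local model $(\h^0\oplus V)/H$. The equivalence of Example \ref{locmodmoreq} is \emph{not} of action type on the $\J_\p$ side, and the paper explicitly does not establish that Poisson-type data transfers across such equivalences; more concretely, the $2$-form $0\oplus\omega_V$ on $\h^0\oplus V$ is degenerate, so an $H$-invariant function of the $\h^0$-variables has no Hamiltonian vector field with respect to it, and your assertion that the flows of the $(\h^0)^G$-coordinates ``act by translation on $\h^0$'' is not meaningful for this form. The reduced bracket on $(\h^0\oplus V)/H$ obtained by transport from $\underline{S_\theta}$ is not the naive bracket of $0\oplus\omega_V$, and until you compute it (e.g.\ via your fallback of working on $(S_\theta,\omega_{S_\theta})=(\Sigma_\theta\times V)\sslash H$ directly, which you do not carry out) the claims that the $(\h^0)^G$-coordinates are Casimirs and that the leaves are $\{\alpha\}\times V^H$ are unsupported. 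The paper avoids this entirely by a short Dirac-geometric argument on the original space: the orbit projection $q:(\Sigma_S,\omega\vert_{\Sigma_S})\to(\underline{\Sigma}_S,\pi_{\underline{\Sigma}_S})$ is forward Dirac (invariant Hamiltonians descend with $q$-related Hamiltonian vector fields), so the leaf tangent space at $\O_p$ is $T_p\O^{\omega_{\Sigma_S}}/(T_p\O\cap T_p\O^{\omega_{\Sigma_S}})$, and Proposition \ref{infmomact}$a$ identifies $T_p\O^{\omega_{\Sigma_S}}$ with $\ker(\d J\vert_{\Sigma_S})_p$, giving $\ker(\d\underline{J}\vert_{\underline{\Sigma}_S})$ as the leaf distribution; regularity and the symplectic nature of the strata of $\S_\textrm{Ham}(\underline{S}_\L)$ then follow. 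You should either adopt this argument or honestly carry out the reduced-bracket computation on $S_\theta$.
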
 

\begin{proof}[Proof of Theorem \ref{poisstratthm}] Let $\underline{\Sigma}\in \S_\textrm{Ham}(\underline{S})$ be a stratum and let $\Sigma:=q^{-1}(\underline{\Sigma})$ where $q:S\to \underline{S}$ denotes the orbit projection. Identifying the algebra of globally defined smooth functions on $\underline{S}$ with $C^\infty(S)^\G$, the vanishing ideal of $\underline{\Sigma}$ is identified with the ideal: \begin{equation*} \mathcal{I}_\Sigma^\G=\{f\in C^\infty(S)^\G\mid f\vert_{\Sigma}=0\}.
\end{equation*} This is a Poisson ideal of $C^\infty(S)^\G$, for if $f\in C^\infty(S)^\G$ and $h\in \mathcal{I}_\Sigma^\G$, then as an immediate consequence of Lemma \ref{hamflowinvfunlem}, the Hamiltonian flow of $f$ leaves $\Sigma$ invariant and therefore: \begin{equation*}
\{f,h\}_\omega\vert_{\Sigma}=(\L_{X_f} h)\vert_{\Sigma}=0.
\end{equation*} By Proposition \ref{poisidealcharprop} this means that $\underline{\Sigma}$ is a Poisson submanifold (in the sense of Definition \ref{poissubmandef}). So, $\S_\textrm{Ham}(\underline{S})$ is a Poisson stratification of the orbit space. By the same reasoning it follows that the stratifications in statements $a$ and $c$ are Poisson stratifications. From the construction of the Poisson brackets on the orbit space and the reduced spaces, it is immediate that the inclusions given in the statement of the theorem are Poisson. Hence, each stratum of $\S_\textrm{Gp}(\underline{S})$ is partitioned into Poisson submanifolds by strata of $\S_\textrm{Ham}(\underline{S})$ and each stratum of $\S_\textrm{Ham}(\underline{S})$ is partitioned into Poisson submanifolds by strata of $\S_\textrm{Ham}(\underline{S}_\L)$, for varying $\L\in \underline{M}$. If $(N,\pi)$ is a Poisson manifold partitioned by Poisson submanifolds, then the symplectic leaves of each of the Poisson submanifolds in the partition are symplectic leaves of $(N,\pi)$. This follows from the fact that each symplectic leaf of a Poisson submanifold is an open inside a symplectic leaf of the ambient Poisson manifold. Therefore, each of the inclusions given in the statement of the theorem indeed maps symplectic leaves onto symplectic leaves. \\

 It remains to see that for each stratum $\underline{\Sigma}_S\in \S_\textrm{Ham}(\underline{S})$ the foliation by symplectic leaves of the Poisson structure $\pi_{\underline{\Sigma}_S}$ on $\underline{\Sigma}_S$ coincides with that by the connected components of the fibers of the constant rank map $\underline{J}:\underline{\Sigma}_S\to \underline{\Sigma}_M$, because the claims on regularity and non-degeneracy made in statements $b$ and $c$ follow from this as well. To this end, we have to show that for every orbit $\O\in \underline{\Sigma}_S$ the tangent space to the symplectic leaf at $\O$ coincides with $\ker(\d\underline{J}\vert_{\underline{\Sigma}_S})_\O$. Here the language of Dirac geometry comes in useful. We refer the reader to \cite{Cou,Bu} for background on this. Let $\Sigma_S=q^{-1}(\underline{\Sigma}_S)$ and consider the pre-symplectic form:\begin{equation*} \omega_{{\Sigma}_S}:=\omega\vert_{\Sigma_S}\in \Omega^2(\Sigma_S).
\end{equation*} We claim that the orbit projection: 
\begin{equation}\label{orbprojdirac} q:(\Sigma_S,\omega_{\Sigma_S})\to (\underline{\Sigma}_S,\pi_{\underline{\Sigma}_S})
\end{equation} is a forward Dirac map. To see this, we will use the fact that a map $\phi:(Y,\omega_Y)\to (N,\pi_N)$ from a pre-symplectic manifold into a Poisson manifold is forward Dirac if for every $f\in C^\infty(N)$ there is a vector field $X_{\phi^*f}\in \X(Y)$ such that:  
\begin{equation*} \iota_{X_{\phi^*f}}\omega_Y=\d (\phi^*f) \quad\&\quad \phi_*(X_{\phi^*f})=X_f. 
\end{equation*}
 Given an $f\in C^\infty(\underline{\Sigma}_S)$, choose a smooth extension $\widehat{f}$ defined an open $\underline{U}$ around $\underline{\Sigma}_S$ in $\underline{S}$. Because $q^*\widehat{f}$ is $\G$-invariant, its Hamiltonian flow leaves $\Sigma_S$ invariant (as before). Therefore, we can consider: \begin{equation*} X_{q^*f}:=(X_{q^*\widehat{f}})\vert_{\Sigma_S}\in \X(\Sigma_S)\end{equation*} and as is readily verified this satisfies: 
\begin{equation*} \iota_{(X_{q^*f})}\omega_{\Sigma_S}=\d (q^*f) \quad \& \quad q_*(X_{q^*f})=X_f. 
\end{equation*} So (\ref{orbprojdirac}) is indeed a forward Dirac map. From the equality of Dirac structures $L_{\pi_{\underline{\Sigma}_S}}=q_*(L_{\omega_{\Sigma_S}})$ we read off that the tangent space to the symplectic leaf at an orbit $\O$ through $p\in S$ is given by:
\begin{equation}\label{tansplfeq} \frac{T_p\O^{(\omega_{\Sigma_S})}}{T_p\O\cap T_p\O^{(\omega_{\Sigma_S})}}\subset \frac{T_p{\Sigma_S}}{T_p\O}=T_\O(\underline{\Sigma}_S).
\end{equation} It follows from Proposition \ref{infmomact}$a$ that $T_p\O^{(\omega_{\Sigma_S})}=\ker(\d J\vert_{\Sigma_S})_p$. This implies that (\ref{tansplfeq}) equals:
\begin{equation*} \frac{\ker(\d J\vert_{\Sigma_S})_p}{T_p\O\cap \ker(\d J\vert_{\Sigma_S})_p}=\ker(\d\underline{J}\vert_{\underline{\Sigma}_S})_\O\subset T_\O(\underline{\Sigma}_S),
\end{equation*} as we wished to show. 
\end{proof}
From the proof we also see:
\begin{cor}\label{orbprojstratdirac} For every stratum $\underline{\Sigma}_S\in \S_\textrm{Ham}(\underline{S})$, the orbit projection $(\ref{orbprojdirac})$ is forward Dirac. The same holds for the strata of $\S_\textrm{Gp}(\underline{S})$.
\end{cor}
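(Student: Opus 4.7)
The plan is to observe that the first half of this corollary is essentially already proved inside the proof of Theorem \ref{poisstratthm}, so I would simply extract that portion and then run the same argument for strata of $\S_\textrm{Gp}(\underline{S})$. Recall the argument: for a stratum $\underline{\Sigma}_S \in \S_\textrm{Ham}(\underline{S})$ with saturation $\Sigma_S = q^{-1}(\underline{\Sigma}_S)$ and pre-symplectic form $\omega_{\Sigma_S} = \omega\vert_{\Sigma_S}$, given $f \in C^\infty(\underline{\Sigma}_S)$ we pick a local extension $\widehat{f} \in C^\infty(\underline{U})$ around $\underline{\Sigma}_S$ in $\underline{S}$, observe that $q^*\widehat{f}$ is $\G$-invariant, and appeal to Lemma \ref{hamflowinvfunlem} to see that the Hamiltonian flow of $q^*\widehat{f}$ preserves $\Sigma_S$; hence $X_{q^*f} := X_{q^*\widehat{f}}\vert_{\Sigma_S}$ is a well-defined vector field on $\Sigma_S$ which is $q$-related to $X_f$ and satisfies $\iota_{X_{q^*f}}\omega_{\Sigma_S} = \d(q^*f)$. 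This is exactly the criterion recalled in the proof of Theorem \ref{poisstratthm} for $q$ to be forward Dirac.

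For the analogous statement about strata $\underline{\Sigma}_S \in \S_\textrm{Gp}(\underline{S})$, the strategy is identical; the only point to verify is that the saturation $\Sigma_S = q^{-1}(\underline{\Sigma}_S)$ is still invariant under the Hamiltonian flow of any $\G$-invariant function $F \in C^\infty(S)^\G$. By Lemma \ref{hamflowinvfunlem} this flow acts by local isomorphisms of the Hamiltonian $(\G,\Omega)$-space $J\colon(S,\omega) \to M$, and in particular intertwines the isotropy groups $\G_p$ of the action. Consequently it preserves the partition by $\G$-isomorphism types on $S$, and therefore also the partition $\S_\textrm{Gp}(S)$ obtained after passing to connected components. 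Pushing down via $q$, the flow on $\underline{S}$ preserves $\S_\textrm{Gp}(\underline{S})$, so in particular each saturation $\Sigma_S$ is invariant.

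With this invariance in hand, the construction $X_{q^*f} := X_{q^*\widehat{f}}\vert_{\Sigma_S}$ goes through verbatim and produces a vector field on $\Sigma_S$ that is $q$-related to $X_f$ and satisfies the Hamiltonian equation with respect to $\omega_{\Sigma_S}$; this yields the forward Dirac property. No genuine obstacle arises beyond identifying the correct invariance of the stratification under Hamiltonian flows of invariant functions, which is a direct consequence of Lemma \ref{hamflowinvfunlem}.
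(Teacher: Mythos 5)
Your proposal is correct and follows essentially the same route as the paper: Corollary \ref{orbprojstratdirac} is stated there precisely as a byproduct of the proof of Theorem \ref{poisstratthm}, whose key step is exactly the construction $X_{q^*f}:=(X_{q^*\widehat{f}})\vert_{\Sigma_S}$ you describe. Your added observation for the $\S_\textrm{Gp}(\underline{S})$ case --- that Lemma \ref{hamflowinvfunlem} makes the flow $\G$-equivariant and $J$-preserving, hence it preserves the (connected components of the) isomorphism types --- is the same invariance the paper invokes implicitly via ``by the same reasoning''.
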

%\begin{ex} To give a concrete description of the Poisson structure on the orbit space in Example \ref{concreteex}, consider the Poisson structure on $\R^5$ given by the relations:
%\begin{alignat*}{3} &\{x_1,x_2\}=0,\quad\quad \{x_1,x_3\}=0,\quad\quad \{x_1,x_4\}=-x_5,\quad\quad \{x_1,x_5\}=x_4,\quad\quad \{x_2,x_3\}=0,& \\
%\{x_2,&x_4\}=\{x_3,x_4\}=2x_1x_5,\quad\quad\quad \{x_2,x_5\}=\{x_3,x_5\}=-2x_1x_4,\quad\quad\quad \{x_4,x_5\}=-x_1(x_2+&x_3).
%\end{alignat*}
%The Poisson structure on the orbit space is uniquely determined by the fact that the Hilbert map $(\rho_1,...,\rho_5)$ induces a Poisson map into $\R^5$ with respect to the above Poisson bracket. 
%\end{ex}
\subsubsection{Dimension of the symplectic leaves} In the remainder of this section we make some further observations on the Poisson geometry of the orbit space, starting with:
\begin{prop}\label{dimsymplvslocnondecrprop} Let $(\G,\Omega)\rightrightarrows M$ be a proper symplectic groupoid and suppose that we are given a Hamiltonian $(\G,\Omega)$-action along $J:(S,\omega)\to M$. The dimension of the symplectic leaves in the orbit space $\underline{S}$ is locally non-decreasing. That is, every $\O\in \underline{S}$ admits an open neighbourhood $\underline{U}$ in $\underline{S}$ such that any symplectic leaf intersecting $\underline{U}$ has dimension greater than or equal to that of the symplectic leaf through $\O$. 
\end{prop}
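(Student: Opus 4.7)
The plan is to apply the normal form theorem (Theorem \ref{normhamthm}) and then carry out a direct computation of symplectic leaf dimensions in the local model, leveraging the description of the leaves given by Theorem \ref{poisstratthm}.

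Fix $\O \in \underline{S}$ with representative $p \in S$, and set $H = \G_p$, $G = \G_{J(p)}$, $(V, \omega_V) = (\S\No_p, \omega_p)$. By Theorem \ref{normhamthm} combined with the Hamiltonian Morita equivalence of Example \ref{locmodmoreq}, and invoking the Morita invariance of both the Poisson reduced differentiable structure on the orbit space (Corollary \ref{moreqisoredring}) and the Hamiltonian Morita type partition (Remark \ref{improphammortyp}), the orbit space near $\O$ is identified as a Poisson stratified reduced differentiable space with a neighbourhood of the origin in $(\h^0 \oplus V)/H$. Under this identification, $\O$ corresponds to the origin, and by Theorem \ref{poisstratthm}$b$ together with Lemma \ref{techlemisotype} the symplectic leaf through the origin has dimension $\dim V^H$.

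For a nearby orbit $\O_q$, the second step is to apply Theorem \ref{normhamthm} at $q$: this yields a local model at $q$ with data $H_q = \G_q$, $V_q = \S\No_q$, and by the same reasoning the leaf through $\O_q$ has dimension $\dim V_q^{H_q}$. Thus the proof reduces to establishing the inequality $\dim V_q^{H_q} \geq \dim V^H$ for $q$ sufficiently close to $p$. To this end, I would parameterize nearby orbits by points $(\alpha_0, v_0) \in \h^0 \oplus V$ close to the origin in the MGS-style local model $S_\theta$ of Subsection \ref{hamlocmodconsec}, and compute $H_q$ and the symplectic $H_q$-representation $V_q$ explicitly in terms of these parameters. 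Semi-continuity of stabilizers under the proper $H$-action on $\h^0 \oplus V$ gives that $H_q$ is (isomorphic to) a subgroup of $H$, and hence $V^{H_q} \supseteq V^H$ as linear subspaces of $V$.

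Finally, I would establish the inequality $\dim V_q^{H_q} \geq \dim V^{H_q}$ by exhibiting an $H_q$-equivariant embedding $V^{H_q} \hookrightarrow V_q^{H_q}$. Concretely, working in the MGS-style description of Subsection \ref{MGSrelsec}, tangent vectors in the ``slice'' $V$-direction at $(\alpha_0, v_0)$ are fixed by $H_q$ and lie in the $\omega$-orthogonal to the new orbit; a direct verification, using the explicit formulas for $\omega_\theta$ and the quadratic momentum map $J_V$, shows that their classes in $\S\No_q$ are non-zero and $H_q$-invariant, realising the desired embedding. Combining with $\dim V^{H_q} \geq \dim V^H$ yields $\dim V_q^{H_q} \geq \dim V^H$, as required.

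The main obstacle will be carrying out this last step rigorously: one must perform the explicit linear-algebraic computation of $\S\No_q$ at a nearby orbit in the local model and identify the natural $H_q$-invariant subspace coming from $V$. The remainder of the argument is routine bookkeeping via the normal form theorem, Theorem \ref{poisstratthm}, and Morita invariance of the Hamiltonian Morita type stratification.
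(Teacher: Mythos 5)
Your overall reduction is the right one, and the identification of the symplectic leaf dimensions through $\O$ and $\O_q$ with $\dim V^H$ and $\dim V_q^{H_q}$ (via Theorem \ref{poisstratthm} and the Morita invariance statements) matches the paper. However, the two-step decomposition you propose for the key inequality contains a genuine error: the intermediate inequality $\dim V_q^{H_q}\geq \dim V^{H_q}$ is false in general. Take the standard Hamiltonian $\mathbb{S}^1$-space $J(z)=\tfrac{1}{2}|z|^2$ on $(\C,\omega_{\textrm{st}})$, i.e.\ $G=H=\mathbb{S}^1$ acting on $V=\C$ by rotation with $\h^0=0$. At a point $q=v\neq 0$ one has $H_q=\{1\}$, so $V^{H_q}=\C$ is $2$-dimensional, while $T_q\O^\omega=T_q\O$ and hence $V_q=\S\No_q=0$. (The proposition itself survives, since $\dim V^H=0$ here.) The failure is already one of well-definedness of your proposed embedding: for $w\in V^{H_q}$ the slice vector $\left.\frac{\d}{\d t}\right|_{t=0}(v+tw)$ lies in $\ker(\d J_V)_v$ if and only if $\omega_V(\xi\cdot v,w)=0$ for all $\xi\in\h$. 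This holds automatically for $w\in V^H$, since then $\omega_V(\xi\cdot v,w)=-\omega_V(v,\xi\cdot w)=0$, but it fails for general $w\in V^{H_q}$ (in the example, $w=v$ gives $\omega_V(iv,v)\neq 0$); such vectors do not define classes in $\S\No_q$ at all, so there is no natural map out of $V^{H_q}$.

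The repair is to drop the intermediate space $V^{H_q}$ entirely and inject $V^H$ directly, which is exactly what the paper does. Working in the transverse model of Example \ref{locmodmoreq}, the linear map
\begin{equation*}
V^H\to \ker(\underline{\d J_\p})_{(\alpha,v)}^{H_{(\alpha,v)}},\qquad w\mapsto \left[\left.\frac{\d}{\d t}\right|_{t=0}(\alpha,v+tw)\right],
\end{equation*}
is well defined because $J_V(v+tw)=J_V(v)$ for $w\in V^H$, its image is $H_{(\alpha,v)}$-fixed since $w$ is $H$-fixed, and it is injective because $V^H$ admits an $H$-invariant complement in $V$ and therefore meets the orbit directions trivially. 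Combined with the Morita-invariant identification of the leaf dimension through $\O_q$ with $\dim\ker(\underline{\d J_\p})_{(\alpha,v)}^{H_{(\alpha,v)}}$, this gives $\dim V^H\leq\dim V_q^{H_q}$ in one step, with no need for the semicontinuity of stabilizers. Your first inequality $V^{H_q}\supseteq V^H$ is correct but becomes superfluous.
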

\begin{proof} First, let us make a more general remark. Let $p\in S$, let $\underline{\Sigma}_S\in \S_\textrm{Ham}(\underline{S})$ be the stratum through $\O_p$ and let $\underline{\Sigma}_M\in \S_\textrm{Gp}(\underline{M})$ be such that $\underline{J}(\underline{\Sigma}_S)\subset \underline{\Sigma}_M$. From a Hamiltonian Morita equivalence as in the proof of Proposition \ref{eqcharhammortyp} we obtain (via Proposition \ref{transgeommap}$a$) an identification of smooth maps between $\underline{J}:\underline{\Sigma}_S\to \underline{\Sigma}_M$ near $\O_p$ and the map (\ref{mommaplocmodcentisotyp}) near the origin. Therefore, the dimension of the fibers of the former map is equal to that of the latter, which is $\dim(\S\No_p^{\G_p})$, or equivalently: $\dim(\ker(\underline{\d J}_p)^{\G_p})$ (see the proof of Proposition \ref{normrepham}$b$). In view of Theorem \ref{poisstratthm}, this is also the dimension of the symplectic leaf through $\O_p$. To prove the proposition, it is therefore enough to show that each $p\in S$ admits an invariant open neighbourhood $U$ with the property that $\ker(\underline{\d J}_p)^{\G_p}$ has dimension less than or equal to that of $\ker(\underline{\d J}_q)^{\G_q}$ for each $q\in U$. To this end, given $p\in S$, choose an invariant open neighbourhood $U$ for which there is a Hamiltonian Morita equivalence as in the proof of Proposition \ref{eqcharhammortyp}. Then $U$ has the desired property. Indeed, in light of Proposition \ref{transgeommap}$c$, it suffices to show (using the notation of the proof of Proposition \ref{eqcharhammortyp}) that for each $\alpha\in \h^0$ and $v\in V$:
\begin{equation*} \dim(V^H)\leq \dim(\ker(\underline{\d J}_\p)_{(\alpha,v)}^{H_{(\alpha,v)}}).
\end{equation*} To this end, consider the linear map:
\begin{equation}\label{incldimcountsymleavesmap} V^H\to \ker(\underline{\d J}_\p)_{(\alpha,v)}^{H_{(\alpha,v)}}, \quad w\mapsto \left[\left.\frac{\d}{\d t}\right\vert_{t=0} (\alpha,v+tw)\right]. 
\end{equation} Note here that this indeed takes values in $\ker(\underline{\d J}_\p)_{(\alpha,v)}$, because for all $w\in V^H$:
\begin{equation*} \left.\frac{\d}{\d t}\right\vert_{t=0} v+tw \in \ker(\d J_V)_{v},
\end{equation*} as follows from (\ref{quadmomfixset}). To complete the proof, we will now show that (\ref{incldimcountsymleavesmap}) is injective. Suppose that: 
\begin{equation*} \left.\frac{\d}{\d t}\right\vert_{t=0} (\alpha,v+tw) \in T_{(\alpha,v)}\O.  
\end{equation*} Then:
\begin{equation*} \left.\frac{\d}{\d t}\right\vert_{t=0} v+tw \in T_{v}\O\cap T_{v}(v+V^H).
\end{equation*} Because $H$ is compact, $V^H$ admits an $H$-invariant linear complement in $V$, which implies that: 
\begin{equation*} T_{v}\O\cap T_{v}(v+V^H)=0.
\end{equation*} Therefore $w=0$, proving that (\ref{incldimcountsymleavesmap}) is indeed injective.
\end{proof}
\begin{rem}\label{vectspsymplfisofxpt} In the above proof we have seen that the dimension of the symplectic leaf $(\L,\omega_\L)$ through $\O_p$ is $\dim(\S\No_p^{\G_p})$. In fact, there is a canonical isomorphism of symplectic vector spaces:
\begin{equation*} (T_{\O_p}\L,(\omega_{\L})_{\O_p})\cong(\S\No_p^{\G_p},\omega_p).
\end{equation*} 
\end{rem}
\subsubsection{Morita invariance of the Poisson stratifications} We end this section with:

\begin{prop}\label{hammorinvpoisbrack} Each of the stratifications in Theorem \ref{poisstratthm} is invariant under Hamiltonian Morita equivalence, as Poisson stratification.  
\end{prop}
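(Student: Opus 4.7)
The plan is as follows. By Proposition \ref{moreqactgpoids}, every Hamiltonian Morita equivalence between two Hamiltonian actions arises from the construction in Example \ref{asrepmoreq}. So it suffices to fix a symplectic Morita equivalence $(P,\omega_P,\alpha_1,\alpha_2)$ between proper symplectic groupoids $(\G_1,\Omega_1)$ and $(\G_2,\Omega_2)$, a Hamiltonian $(\G_1,\Omega_1)$-space $J_1:(S_1,\omega_1)\to M_1$, and to show that the induced homeomorphism $h_Q:\underline{S}_1\to\underline{S}_2$ between $\underline{S}_1$ and the orbit space $\underline{S}_2$ of the associated Hamiltonian $(\G_2,\Omega_2)$-space $J_2=P_*(J_1):(S_2:=P\ast_{\G_1}S_1,\omega_{PS})\to M_2$ identifies the three Poisson stratifications of Theorem \ref{poisstratthm}.

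I will first note that all three stratifications are already known to be Morita invariant as smooth stratifications: for $\S_\textrm{Gp}(\underline{S})$ this is the Morita invariance of the canonical stratification of the leaf space of a Lie groupoid (applied to the action groupoids in Example \ref{asrepmoreq}); for $\S_\textrm{Ham}(\underline{S})$ it is part i) of Remark \ref{improphammortyp}; and for $\S_\textrm{Ham}(\underline{S}_\L)$ it is a consequence of Proposition \ref{transgeommap}a, which gives that $h_Q$ intertwines the transverse momentum maps and hence sends $\underline{S}_{1,\L_1}$ onto $\underline{S}_{2,\L_2}$ for $P$-related leaves. Moreover $h_Q$ is an isomorphism of reduced differentiable spaces by Proposition \ref{moreqisoredring}. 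Since the Poisson bracket on any stratum in Theorem \ref{poisstratthm} is determined by the ambient Poisson structure (via Proposition \ref{poisidealcharprop}), and since the symplectic form on a leaf of a Poisson manifold is determined by its Poisson bracket, the whole proposition reduces to showing that $h_Q$ is a Poisson isomorphism.

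Under the identification of the algebra of global smooth functions on $\underline{S}_i$ with $C^\infty(S_i)^{\G_i}$, the pullback $h_Q^*$ corresponds to the map $f\mapsto P_*(f)$ where $P_*(f)([p,s]):=f(s)$. The key step will be to verify the identity
\[ \{P_*(f),P_*(g)\}_{\omega_{PS}}([p,s])=\{f,g\}_{\omega_1}(s), \qquad \forall (p,s)\in Q:=P\times_{M_1}S_1,\text{ } f,g\in C^\infty(S_1)^{\G_1}. \]
To this end, I will use the intermediate space $Q$, on which $\textrm{pr}_{PS}^*\omega_{PS}=\textrm{pr}_{S_1}^*\omega_1-\textrm{pr}_P^*\omega_P$ by the Hamiltonian Morita equivalence condition (\ref{hameqmor1}) applied to Example \ref{asrepmoreq}. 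The Hamiltonian vector field $X_f\in \X(S_1)$ takes values in $\ker(\d J_1)=T\O^\omega$ by $\G_1$-invariance of $f$ together with Proposition \ref{infmomact}a, so the vector field $(0,X_f)$ on $P\times S_1$ restricts to a $\G_1$-invariant vector field on $Q$. A direct computation using the above relation yields $\iota_{(0,X_f)|_Q}\textrm{pr}_{PS}^*\omega_{PS}=\d(f\circ \textrm{pr}_{S_1})=\d(P_*(f)\circ\textrm{pr}_{PS})$, so $(0,X_f)|_Q$ projects under $\textrm{pr}_{PS}$ to $X_{P_*(f)}$. Evaluating $X_{P_*(f)}$ on $P_*(g)$ and using $P_*(g)\circ\textrm{pr}_{PS}=g\circ\textrm{pr}_{S_1}$ then yields the displayed identity.

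The main (and essentially only) obstacle is the clean execution of the lifting argument above; once it is in place, Poisson invariance of $h_Q$ and hence Hamiltonian Morita invariance of the three Poisson stratifications of Theorem \ref{poisstratthm} follow at once.
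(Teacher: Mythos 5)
Your proposal is correct and follows essentially the same route as the paper: both reduce the statement to showing that $h_Q$ is a Poisson map and verify this by lifting the Hamiltonian vector field of an invariant function to the bi-module $Q$ via a tangent vector killed by $\d j$, using $X_f\in\ker(\d J_1)$ and the relation (\ref{hameqmor1}). The only difference is cosmetic: you first normalize via Proposition \ref{moreqactgpoids} so that $Q=P\times_{M_1}S_1$ and the lift $(0,X_f)$ becomes an explicit global vector field, whereas the paper performs the same computation pointwise on an abstract equivalence.
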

\begin{proof} Suppose we are given a Morita equivalence between two Hamiltonian actions of two proper symplectic groupoids; we use the notation of Definition \ref{moreqdefLie} and \ref{moreqdefHam}. It is immediate that the induced homeomorphism $h_Q$ (see Proposition \ref{transgeommap}$a$) maps strata of $\S_\textrm{Ham}(\underline{S}_1)$ onto strata of $\S_\textrm{Ham}(\underline{S}_2)$, and the same goes for $\S_\textrm{Gp}(\underline{S}_1)$ and $\S_\textrm{Gp}(\underline{S}_2)$. So, in view of Proposition \ref{moreqisoredring} $h_Q$ is an isomorphism of smooth stratified spaces, for both of these stratifications. By Proposition \ref{transgeommap}$a$, $h_Q$ identifies the reduced space at a leaf $\L_1$ with the reduced space at the leaf $\L_2:=h_P(\L_1)$ (these being the fibers of $\underline{J}_1$ and $\underline{J}_2$) and it is clear that it maps strata of $\S_\textrm{Ham}(\underline{S}_{\L_1})$ onto strata of $\S_\textrm{Ham}(\underline{S}_{\L_2})$. So, by Remark \ref{morphsmstrspsimp} it restricts to an isomorphism of smooth stratified spaces between these reduced spaces. To prove the proposition we are left to show that $h_Q$ is a Poisson map, for it will then restrict to a Poisson map between the reduced spaces and between the strata as well. To this end, let $U_1$ and $U_2$ be $Q$-related invariant opens in $S_1$ and $S_2$. By the proof of Proposition \ref{moreqisoredring}, the Hamiltonian Morita equivalence induces isomorphisms:
\begin{center}
\begin{tikzpicture} \node (S_1) at (0,0) {$\mathcal{C}_{S_1}^\infty(U_1)^{\G_1}$};
\node (S_2) at (8,0) {$\mathcal{C}_{S_2}^\infty(U_2)^{\G_2}$};
\node (Q) at (4,1) {$\mathcal{C}_Q^\infty(\beta_1^{-1}(U_1))^{\G_1}\cap \mathcal{C}_Q^\infty(\beta_2^{-1}(U_2))^{\G_2}$};

\draw[->](S_1) to node[pos=0.45, below] {$\text{ }\text{ }\beta_1^*$} (Q);
\draw[->](S_2) to node[pos=0.45, below] {$\text{ }\text{ }\beta_2^*$} (Q);
\end{tikzpicture}
\end{center} and to prove that $h_Q$ is a Poisson map we have to show that $(\beta_2^*)^{-1}\circ \beta_1^*$ is an isomorphism of Poisson algebras. To see this, let $f_1,h_1\in \mathcal{C}_{S_1}^\infty(U_1)^{\G_1}$ and $f_2,h_2\in \mathcal{C}_{S_2}^\infty(U_2)^{\G_2}$ such that $\beta_1^*f_1=\beta_2^*f_2$ and $\beta_1^*h_1=\beta_2^*h_2$. Let $p_1\in U_1$, $p_2\in U_2$ and $q\in Q$ such that $p_1=\beta_1(q)$ and $p_2=\beta_2(q)$. As we have seen in Lemma \ref{hamflowinvfunlem} it holds that $X_{f_1}(p_1)\in \ker(\d J_1)$. So, as in the proof of Proposition \ref{transgeomham} we can find $\widehat{v}\in \ker(\d j_q)$ such that $\d \beta_1(\widehat{v})=X_{f_1}(p_1)$. It follows from (\ref{hameqmor1}) that:
\begin{align*} \omega_2(X_{f_2}(p_2),\d \beta_2(\cdot))&=\d(\beta_2^*f_2)_q\\
&=\d (\beta_1^*f_1)_q\\
&=(\beta_1^*\omega_1)(\widehat{v},\cdot)\\
&=(\beta_2^*\omega_2)(\widehat{v},\cdot)=\omega_2(\d \beta_2(\widehat{v}),\d \beta_2(\cdot)),
\end{align*} so that, since $\beta_2$ is a submersion, we find that $\d \beta_2(\widehat{v})=X_{f_2}(p_2)$. Using this we see that:
\begin{align*} \{f_1,h_1\}_{\omega_1}(p_1)&=\d h_1(X_{f_1}(p_1))\\
&=\d(\beta_1^*h_1)(\widehat{v})\\
&=\d(\beta_2^*h_2)(\widehat{v})\\
&=\d h_2(X_{f_2}(p_2))=\{f_2,h_2\}_{\omega_2}(p_2),
\end{align*} which proves that $(\beta_2^*)^{-1}\circ \beta_1^*$ is indeed an isomorphism of Poisson algebras. 
\end{proof} 

 \begin{rem} From the above proposition it follows that $\mathcal{P}_\textrm{Ham}(\underline{S})$ and $\mathcal{P}_\textrm{Ham}(\underline{S}_\L)$ are in fact Poisson homogeneous, meaning that they are smoothly homogeneous as in Definition \ref{homdefi}, with the extra requirement that the isomorphisms $h$ can be chosen to be Poisson maps. This gives another proof of the fact that the Poisson structures on the strata of $\S_\textrm{Ham}(\underline{S})$ must be regular.  
\end{rem}

\subsection{Symplectic integration of the canonical Hamiltonian strata}\label{sympintstratsec}
\subsubsection{The integration theorem}
The main theorem of this section is:
\begin{thm}\label{poisstratintgrthm} Let $(\G,\Omega)$ be a proper symplectic groupoid and suppose that we are given a Hamiltonian $(\G,\Omega)$-action along $J:(S,\omega)\to M$. Let $\underline{\Sigma}_S\in \S_\textrm{Ham}(\underline{S})$ and let $\pi_{\underline{\Sigma}_S}$ be the Poisson structure on $\underline{\Sigma}_S$ of Theorem \ref{poisstratthm}. There is a naturally associated proper symplectic groupoid (the symplectic leaves of which may be disconnected) that integrates $(\underline{\Sigma}_S,\pi_{\underline{\Sigma}_S})$.
\end{thm}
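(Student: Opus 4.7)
The plan is to produce the integration as a submersion groupoid over the image of the transverse momentum map, pinned down locally by the normal form of Part 1 and assembled globally via Hamiltonian Morita equivalence. Let $\Sigma_S = q^{-1}(\underline{\Sigma}_S) \subset S$, let $\Sigma_M \in \S_\textrm{Gp}(M)$ be the stratum with $J(\Sigma_S) \subset \Sigma_M$, and set $\underline{\Sigma}_M = \Sigma_M / \G$. Theorem~\ref{canhamstratthm} ensures that $\underline{J}|_{\underline{\Sigma}_S} : \underline{\Sigma}_S \to \underline{\Sigma}_M$ has constant rank, so its image $B := \underline{J}(\underline{\Sigma}_S)$ is a smooth (possibly only immersed) submanifold of $\underline{\Sigma}_M$ and $\underline{J}|_{\underline{\Sigma}_S}$ factors as a surjective submersion $\underline{\Sigma}_S \twoheadrightarrow B$ followed by an immersion $B \hookrightarrow \underline{\Sigma}_M$. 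The local Morita equivalence with the transverse model $\J_\p$ (Example~\ref{locmodmoreq}) confirms this: the restriction of the transverse momentum map to the Hamiltonian stratum corresponds to the projection $(\h^0)^G \oplus V^H \to (\h^0)^G$.

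The candidate integration is the submersion groupoid
\[
\tilde{\G}_{\underline{\Sigma}_S} \;:=\; \underline{\Sigma}_S \times_B \underline{\Sigma}_S \;\rightrightarrows\; \underline{\Sigma}_S,
\]
whose source and target are the two projections and whose multiplication is the fiberwise pair-groupoid multiplication. Its orbits are exactly the fibers of $\underline{J}|_{\underline{\Sigma}_S}$, which by Theorem~\ref{poisstratthm}$b$ are disjoint unions of symplectic leaves of $\pi_{\underline{\Sigma}_S}$ --- so the symplectic leaves of $\tilde{\G}_{\underline{\Sigma}_S}$ may indeed be disconnected, as announced in the statement. Under the local Morita equivalence of Example~\ref{locmodmoreq}, a neighborhood of the identity section of $\tilde{\G}_{\underline{\Sigma}_S}$ is identified with an open in the fibered pair groupoid
\[
(\h^0)^G \times V^H \times V^H \;\rightrightarrows\; (\h^0)^G \times V^H,
\]
carrying the natural multiplicative symplectic form induced by the symplectic form $\omega_V|_{V^H}$ on each $V^H \times V^H$ fiber.

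The core of the argument is then to glue these local data into a globally defined multiplicative symplectic form $\tilde{\Omega}_{\underline{\Sigma}_S}$. For this I would invoke the Morita invariance of the Poisson stratification (Proposition~\ref{hammorinvpoisbrack}) together with the uniqueness statement of the rigidity Theorem~\ref{righamthm} to check that the local descriptions of $\tilde{\Omega}_{\underline{\Sigma}_S}$ agree on overlaps. Properness of $\tilde{\G}_{\underline{\Sigma}_S}$ then follows locally: the local pair-groupoid model has trivial isotropy, and properness of $\G$ translates, via the normal form, into compactness (in fact discreteness) of the isotropy groups of $\tilde{\G}_{\underline{\Sigma}_S}$ --- these discrete groups encoding the transverse integral affine monodromy alluded to in the introduction. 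That $(\tilde{\G}_{\underline{\Sigma}_S}, \tilde{\Omega}_{\underline{\Sigma}_S})$ integrates $(\underline{\Sigma}_S, \pi_{\underline{\Sigma}_S})$ is then immediate from Theorem~\ref{poisstratthm}$b$, since by construction the orbits of $\tilde{\G}_{\underline{\Sigma}_S}$ are the fibers of $\underline{J}|_{\underline{\Sigma}_S}$ and the induced Poisson structure on the base is precisely the one whose symplectic foliation is by connected components of these fibers.

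The main obstacle is the global gluing of the multiplicative symplectic form, which must be carried out in the presence of possibly nontrivial transverse monodromy. This is a richer version of the corresponding patching carried out in \cite{CrFeTo2} for the canonical stratification on the base of a proper Lie groupoid (the case of Example~\ref{identityactionex}), because it must now be compatible with the additional symplectic data along the $V^H$-direction of each Hamiltonian Morita type. The expected resolution rests on the fact that the Hamiltonian Morita equivalences gluing the local models induce symplectic isomorphisms of the corresponding local pair groupoids --- a cocycle compatibility that should follow, in principle, from the explicit form of the Morita equivalence in Example~\ref{locmodmoreq} together with the rigidity Theorem~\ref{righamthm}.
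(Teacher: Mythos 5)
Your candidate groupoid cannot work, for a reason visible already in a dimension count. A symplectic groupoid integrating $(\underline{\Sigma}_S,\pi_{\underline{\Sigma}_S})$ must have dimension $2\dim\underline{\Sigma}_S$ (its Lie algebroid must be $T^*\underline{\Sigma}_S$ with anchor $\pi_{\underline{\Sigma}_S}^\sharp$), whereas the fiber product $\underline{\Sigma}_S\times_B\underline{\Sigma}_S$ has dimension $2\dim\underline{\Sigma}_S-\dim B$ and its Lie algebroid is $\ker(\d\underline{J}\vert_{\underline{\Sigma}_S})$, which has the right orbits but the wrong rank whenever $\underline{J}\vert_{\underline{\Sigma}_S}$ has positive rank. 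This is exactly what you see in your own local model: the form $\omega_V\vert_{V^H}\oplus-\omega_V\vert_{V^H}$ on $(\h^0)^G\times V^H\times V^H$ is degenerate along the $(\h^0)^G$-direction, so it is not a symplectic structure on your groupoid. The proposal is also internally inconsistent: the submersion groupoid of a map has at most one arrow between any two objects, hence trivial isotropy, yet you appeal to ``discrete isotropy groups encoding the transverse integral affine monodromy'' to get properness. Precisely those missing isotropy/gauge directions (of total dimension $\dim B$ fiberwise) are what your construction discards. A secondary problem is that the image $B$ of a constant rank map need not be a manifold at all, so even the set-up of the fiber product is not justified.

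The repair is to build the groupoid upstairs rather than on the orbit space, following Xu's gauge construction. The paper's proof proceeds in two steps that are both absent from your proposal. First, a reduction step: on $\Sigma_S=q^{-1}(\underline{\Sigma}_S)$ the restricted form $\omega_{\Sigma_S}$ is only pre-symplectic; its null foliation is generated by the action of the bundle of compact groups $C_{\g_x}$ integrating the fixed-point complements $\c_{\g_x}\subset\g_x$, and quotienting by this bundle produces a Hamiltonian action \emph{of principal type} on a genuine symplectic manifold $(S_\Sigma,\omega_{S_\Sigma})$ with the same orbit space $\underline{\Sigma}_S$. Second, for a principal-type action one takes
\begin{equation*}
\mathcal{R}=\{(p_1,p_2)\in S\times S\mid J(p_1)=J(p_2)\ \text{and}\ \G_{p_1}=\G_{p_2}\}
\end{equation*}
inside the symplectic pair groupoid $(S\times S,\omega\oplus-\omega)$ and quotients by the diagonal $\G$-action; the isotropy condition $\G_{p_1}=\G_{p_2}$ is what makes $\mathcal{R}$ an open and closed Lie subgroupoid of $S\times_M S$ with submersive source and target (via $\im(\d J_{p_1})=\im(\d J_{p_2})$, Proposition \ref{infmomact}$b$), and the quotient $\underline{\mathcal{R}}=\mathcal{R}/\G$ has exactly dimension $2\dim\underline{\Sigma}_S$ because $\operatorname{rank}(\d J_p)=\dim\O_p$. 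With this construction the multiplicative symplectic form is obtained globally by descent of $\omega\oplus-\omega$, so the ``gluing of local forms'' that you identify as the main obstacle never arises.
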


Our proof consists of two main steps: first we prove the theorem for Hamiltonian actions of principal type (defined below), and then we show how to reduce to actions of this type. 
\subsubsection{Hamiltonian actions of principal type} 
\begin{defi}\label{princtypdefi} We say that:
\begin{itemize}\item[i)] a proper Lie groupoid $\G\rightrightarrows M$ of \textbf{principal type} if $M^\textrm{princ}=M$ (see Example \ref{princtypliegpoidex}),
\item[ii)] a Hamiltonian action of a proper symplectic groupoid $(\G,\Omega)$ along $J:(S,\omega)\to M$ is of \textbf{principal type} if $S^\textrm{princ}_\textrm{Ham}=S$ and $M^\textrm{princ}=M$ (see Subsection \ref{liedescrpregpartsec}). 
\end{itemize}
\end{defi}
\begin{rem}\label{pringpoidrem} Notice that:
\begin{itemize} \item[i)] a proper Lie groupoid $\G\rightrightarrows M$ with connected leaf space $\underline{M}$ is of principal type if and only if $\G_x$ is isomorphic to $\G_y$ for all $x,y\in M$.
\item[ii)] a Hamiltonian action of a proper symplectic groupoid $(\G,\Omega)$ along $J:(S,\omega)\to M$ with connected orbit space $\underline{S}$ and connected leaf space $\underline{M}$ is of principal type if and only if $\G_p$ is isomorphic to $\G_q$ for all $p,q\in S$ and $\G_x$ is isomorphic to $\G_y$ for all $x,y\in M$.
\end{itemize}
\end{rem}
For the rest of this subsection, let $(\G,\Omega)\rightrightarrows M$ be a proper symplectic groupoid and suppose that we are given a Hamiltonian $(\G,\Omega)$-action of principal type along $J:(S,\omega)\to M$, for which both the orbit space $\underline{S}$ and the leaf space $\underline{M}$ are connected. Then both $\underline{S}$ and $\underline{M}$ are smooth manifolds and $J:S\to M$, as well as $\underline{J}:\underline{S}\to \underline{M}$, is of constant rank. If the action happens to be free, then $J$ is a submersion and the gauge construction (\cite[Theorem 3.2]{Xu}) yields a proper symplectic groupoid integrating $(\underline{S},\pi_{\underline{S}})$. This groupoid is obtained as quotient of the submersion groupoid: 
\begin{equation*}\label{submgpoid} S\times_MS\rightrightarrows S
\end{equation*} by the diagonal action of $\G$ on $S\times_MS$ along $J\circ \textrm{pr}_1$. As we will now show, this construction can be generalized to arbitrary Hamiltonian actions of principal type (for which the action need not be free). To this end, we consider to the subgroupoid:
\begin{equation*} \mathcal{R}=\{(p_1,p_2)\in S\times S\mid J(p_1)=J(p_2)\textrm{ and }\G_{p_1}=\G_{p_2}\}
\end{equation*} of the pair groupoid $S\times S$.
\begin{thm}\label{intgpoidpropprintyp} The groupoid $\mathcal{R}$ has the following properties.
\begin{itemize}\item[a)] It is a closed embedded Lie subgroupoid of the pair groupoid $S\times S$.
\item[b)] It is invariant under the diagonal action of $\G$ on $S\times_MS$, the restriction of the action to $\mathcal{R}$ is smooth, $\underline{\mathcal{R}}:=\mathcal{R}/\G$ is a smooth manifold and the orbit projection $\mathcal{R}\to \underline{\mathcal{R}}$ is a submersion. 
\item[c)] The symplectic pair groupoid $(S\times S,\omega\oplus -\omega)$ descends to give a proper symplectic groupoid: 
\begin{equation*} (\underline{\mathcal{R}},\Omega_{\underline{\mathcal{R}}})\rightrightarrows \underline{S},
\end{equation*} that integrates $(\underline{S},\pi_{\underline{S}})$.
\end{itemize}
\end{thm}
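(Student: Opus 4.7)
The plan is to prove (a), (b), (c) in sequence. The technical backbone is the Hamiltonian Morita equivalence of Example \ref{locmodmoreq}, which in the principal type setting simplifies the local model drastically. Indeed, $M^\textrm{princ}=M$ forces each ambient isotropy $\G_x$ to act trivially on its coadjoint representation $\g_x^*$, hence (by compactness) to be abelian; and $S^\textrm{princ}_\textrm{Ham}=S$ forces $\G_p$ to act trivially on the symplectic slice $\S\No_p$, so that $V=V^H$ in the local model. By (\ref{quadmomfixset}) the quadratic momentum map $J_V$ then vanishes identically on $V$, and the groupoid map $\J_\p$ of Example \ref{locmodmoreq} degenerates to $(h,\alpha,v)\mapsto (h,\alpha)$, with both $G$ and $H$ acting trivially on the surrounding vector spaces.

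For part (a), the plan is to combine this local picture with Proposition \ref{transgeomham}$(a)$: within a single Morita chart the isotropy assignment $q\mapsto \G_q$ corresponds, under the Morita-equivariant identifications, to the constant subgroup $H\subset G=\G_{J(p)}$, so locally the condition $\G_{q_1}=\G_{q_2}$ is automatic whenever $J(q_1)=J(q_2)$. Hence $\mathcal{R}$ locally coincides with an open subset of the fibered product $S\times_M S$, which is a closed embedded submanifold of $S\times S$ because $J$ has constant rank in principal type. The subgroupoid properties are immediate, and global closedness of $\mathcal{R}$ in $S\times S$ follows from continuity of $J$ together with upper semi-continuity of the isotropy assignment (granted by properness of $\G$).

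For part (b), invariance of $\mathcal{R}$ under the diagonal $\G$-action follows from the conjugation identity $\G_{g\cdot p}=g\G_p g^{-1}$ combined with the defining equality of isotropies. The stabilizer at $(p_1,p_2)\in\mathcal{R}$ equals $\G_{p_1}\cap\G_{p_2}=\G_{p_1}$, so the $\G$-action on $\mathcal{R}$ has stabilizers of constant type $H$; the standard slice theorem for proper groupoid actions with constant isotropy type then endows $\underline{\mathcal{R}}:=\mathcal{R}/\G$ with a unique smooth manifold structure for which the orbit projection is a submersion. For part (c), the $\G$-action commutes with the pair-groupoid operations on $S\times S$, so $\underline{\mathcal{R}}\rightrightarrows\underline{S}$ inherits a Lie groupoid structure. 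The restricted multiplicative $2$-form $(\omega\oplus-\omega)\vert_{\mathcal{R}}$ is $\G$-basic: using Proposition \ref{infmomact}$(a)$ to identify $T_p\O^\omega=\ker(\d J)_p$, one shows that its null distribution agrees precisely with the tangent to the $\G$-orbits in $\mathcal{R}$. Descent yields a multiplicative symplectic form $\Omega_{\underline{\mathcal{R}}}$, and the induced Poisson structure on $\underline{S}$ agrees with $\pi_{\underline{S}}$ by comparison on invariant functions, using that the orbit projection is forward Dirac (Corollary \ref{orbprojstratdirac}). Properness of $\underline{\mathcal{R}}\rightrightarrows\underline{S}$ then follows from properness of $\G$ by a routine sequence-lifting argument.

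The main obstacle lies in part (a): showing that the condition $\G_{q_1}=\G_{q_2}$ becomes automatic once $J(q_1)=J(q_2)$ in a neighbourhood requires careful bookkeeping of the Morita-equivariant identifications from Proposition \ref{transgeomham}$(a)$, and crucially exploits abelianness of the ambient $\G_x$, which kills the effect of inner automorphisms on subgroups. The descent of the multiplicative symplectic form in (c) is the second delicate step, but should follow a familiar Marsden--Weinstein-style reduction adapted to the multiplicative setting, aided by the Dirac-geometric reformulation provided by Corollary \ref{orbprojstratdirac}.
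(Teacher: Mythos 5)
Your parts (b) and (c) follow the paper's route in outline and are essentially fine, but part (a) contains a genuine error that the rest of your argument for (a) leans on. Principal type does \emph{not} make $\G_x$ abelian: triviality of the coadjoint action of $\G_x$ on $\g_x^*$ (equivalently, of the adjoint action on $\g_x$) only forces the identity component $\G_x^0$ to be central in $\G_x$; the component group may still be non-abelian and may still move subgroups by conjugation. Consequently the condition $\G_{q_1}=\G_{q_2}$ is \emph{not} automatic on the fibers of $J$, even locally. In the local model the two isotropy groups over a common point of $M$ differ by conjugation by the ``difference'' element $[q_1:q_2]\in\G_x$, so $\mathcal{R}$ is cut out inside $S\times_M S$ by the condition $[q_1:q_2]\in N_{\G_x}(\G_{p_1},\G_{p_2})=\{g\in \G_x\mid g\G_{p_1}g^{-1}=\G_{p_2}\}$, which is in general a proper subset. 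A degenerate but concrete counterexample: take $\G$ a finite non-abelian group $G$ over a point and $S=G/H$ a zero-dimensional symplectic $G$-space with $H$ non-normal; this is of principal type, $S\times_M S=S\times S$, yet $\mathcal{R}$ consists only of the pairs $(g_1H,g_2H)$ with $g_2^{-1}g_1\in N_G(H)$.

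What is actually true, and what the paper proves, is that $\mathcal{R}$ is both closed and open in $S\times_M S$. Closedness comes from $N_{\G_x}(\G_{p_1},\G_{p_2})$ being closed in $\G_x$ (your appeal to upper semi-continuity of isotropy is not by itself enough to make \emph{equality} of two isotropy groups a closed condition). Openness is exactly where principal type enters: since $\G_x^0$ is central it lies in the normalizer $N_{\G_x}(\G_{p_1})$, which is therefore an open subgroup of $\G_x$, so along $\mathcal{R}$ the defining condition is open. You also still need the tangent-space description of $S\times_M S$ coming from the local model (where triviality of the symplectic normal representations reduces $J_\theta$ to a projection) to see that $S\times_M S$ is embedded in $S\times S$, and Proposition \ref{infmomact}$b$ to get $\im(\d J_{p_1})=\im(\d J_{p_2})$, which is what makes the two projections $\mathcal{R}\to S$ submersions --- a point you dismiss as ``immediate''. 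With part (a) repaired along these lines, your arguments for (b) and (c) go through essentially as in the paper.
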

\begin{proof}[Proof of Theorem \ref{intgpoidpropprintyp}; part $a$] We will first use the normal form to study the subspace $S\times_MS$. To this end, let $(p_1,p_2)\in S\times_MS$ and let $x:=J(p_1)=J(p_2)$. Then, as in the proof of Theorem \ref{righamthm}, we can find two neighbourhood equivalences $(\Phi,\Psi_1)$ and $(\Phi,\Psi_2)$ between the given Hamiltonian action and the two local models for it around the respective orbits $\O_{p_1}$ and $\O_{p_2}$ through $p_1$ and $p_2$, using one and the same isomorphism of symplectic groupoids $\Phi$ for both neighbourhood equivalences. Using this, the subset $S\times _MS$ of $S\times S$ is identified near $(p_1,p_2)$ with the subset $S_{\theta,1}\times_{M_\theta} S_{\theta,2}$ of the product $S_{\theta,1}\times S_{\theta,2}$ of the local models around $\O_{p_1}$ and $\O_{p_2}$ (using the notation of Subsection \ref{hamlocmodconsec}) near $(\Psi_1(p_1),\Psi_2(p_2))$. Since we assume the Hamiltonian action to be of principal type, the coadjoint $\G_x$-action and the actions underlying the symplectic normal representations at $p_1$ and $p_2$ are trivial (cf. Proposition \ref{normrepsymp}$b$, Example \ref{princtypliegpoidex} and Proposition \ref{regpartalg}). So, denoting by $P$ the source-fiber of $\G$ over $x$, the momentum maps $J_{\theta,i}:S_{\theta,i}\to M_\theta$ in the local model become:
\begin{equation}\label{mommaplocmoddescpprinctyp} P/\G_{p_i}\times (\g_{p_i}^0\oplus \S\No_{p_i})\to P/\G_x\times \g_x^*, \quad ([q],\alpha,v)\mapsto ([q],\alpha), 
\end{equation} (or rather, a restriction of this to an open neighbourhood of the central orbit $P/\G_{p_i}$) for $i\in \{1,2\}$. From this we see that $S_{\theta,1}\times_{M_\theta} S_{\theta,2}$ is a submanifold of $S_{\theta,1}\times S_{\theta,2}$ with tangent space given by all pairs of tangent vectors $(v_1,v_2)$ satisfying $\d J_{\theta,1}(v_1)=\d J_{\theta,2}(v_2)$. Passing back to $S\times S$ via $(\Psi_1,\Psi_2)$, we find that $S\times_MS$ is an embedded submanifold of $S\times S$ at $(p_1,p_2)$ with tangent space:
\begin{equation}\label{destngtspeq} \{(v_1,v_2)\in T_{p_1}S\times T_{p_2}S\mid \d J_{p_1}(v_1)=\d J_{p_2}(v_2)\}.
\end{equation} We now turn to $\mathcal{R}$. As we will show in a moment, $\mathcal{R}$ is both open and closed in $S\times_MS$. Together with the above, this would show that $\mathcal{R}$ is a closed embedded submanifold of $S\times S$ (with connected components of possibly varying dimension), the tangent space of which is given by (\ref{destngtspeq}). To then show that $\mathcal{R}$ is an embedded Lie subgroupoid (with connected components of one and the same dimension), it would be enough to show the two projections $\mathcal{R}\to S$ are submersions. In view of the description (\ref{destngtspeq}) of the tangent space of $\mathcal{R}$ this is equivalent to the requirement that $\im(\d J_{p_1})=\im(\d J_{p_2})$ for all $(p_1,p_2)\in \mathcal{R}$, which is indeed satisfied, as follows from Proposition \ref{infmomact}$b$. So, to prove part $a$ it remains to show that $\mathcal{R}$ is both open and closed in $S\times_MS$.\\

To prove that $\mathcal{R}$ is closed in $S\times_MS$, we will show that every $(p_1,p_2)\in S\times_MS$ admits an open neighbourhood that intersects $\mathcal{R}$ in a closed subset of this neighbourhood. Given such $(p_1,p_2)$, as before, we pass to the local models around $\O_{p_1}$ and $\O_{p_2}$ using $(\Phi,\Psi_1)$ and $(\Phi,\Psi_2)$. From the description (\ref{mommaplocmoddescpprinctyp}) we find that $S_{\theta,1}\times_{M_\theta}S_{\theta,2}$ is the subset of $S_{\theta,1}\times S_{\theta,2}$ consisting of pairs:
\begin{equation*} (([q_1],\alpha_1,v_1),([q_2],\alpha_2,v_2))
\end{equation*} satisfying:
\begin{equation*} [q_1]=[q_2]\in P/\G_x \quad \& \quad \alpha_1=\alpha_2\in \g_x^*.
\end{equation*} 
Furthermore, a straightforward verification shows that $(\Psi_1,\Psi_2)$ identifies $\mathcal{R}$ near $(p_1,p_2)$ with the subset of those pairs that in addition satisfy:
\begin{equation}\label{condRset}  [q_1:q_2]\in N_{\G_x}(\G_{p_1},\G_{p_2}):=\{g\in \G_x\mid g\G_{p_1}g^{-1}=\G_{p_2}\}.
\end{equation} Notice that $N_{\G_x}(\G_{p_1},\G_{p_2})$ is closed in $\G_x$ and invariant under left multiplication by elements of $\G_{p_2}$ and under right multiplication by elements of $\G_{p_1}$, so that it corresponds to a closed subset of: 
\begin{equation*} \G_{p_2}\backslash \G_x /\textrm{ }\G_{p_1}.
\end{equation*} Hence, by continuity of the map:
\begin{equation*} (P/\G_{p_1})\times_{P/\G_x} (P/\G_{p_2})\to \G_{p_2}\backslash \G_x /\textrm{ }\G_{p_1},\quad ([q_1],[q_2])\mapsto [q_1:q_2] \mod \G_{p_2}\times \G_{p_1}
\end{equation*} it follows that (\ref{condRset}) is a closed condition in $S_{\theta,1}\times_{M_\theta}S_{\theta,2}$. So, $\mathcal{R}$ is indeed closed in $S\times_MS$.\\

To show that $\mathcal{R}$ is open in $S\times_MS$ we can argue in exactly the same way, now restricting attention to pairs $(p_1,p_2)\in \mathcal{R}$, so that the condition (\ref{condRset}) becomes:
\begin{equation*} [q_1:q_2]\in N_{\G_x}(\G_{p_1}),
\end{equation*} 
where $N_{\G_x}(\G_{p_1})$ denotes the normalizer of $\G_{p_1}$ in $\G_x$, and we are left to show that $N_{\G_x}(\G_{p_1})$ is open in $\G_x$. To this end, recall from before that the coadjoint action of $\G_x$ on $\g_x^*$ is trivial. So, the action by conjugation of $\G_x$ on its identity component $\G^0_x$ is trivial. This can be rephrased as saying that the action by conjugation of $\G^0_x$ on $\G_x$ is trivial. In particular, $\G^0_x$ is contained in $N_{\G_x}(\G_{p_1})$ and therefore the Lie subgroup $N_{\G_x}(\G_{p_1})$ is indeed open in $\G_x$. This concludes the proof of part $a$.
\end{proof}
For the proof of part $b$ we recall the lemma below, which follows from the linearization theorem for proper Lie groupoids (see e.g. the proof of \cite[Proposition 23]{CrMe} for details).
\begin{lemma}\label{orbspsmoothlem} Let $\G\rightrightarrows M$ be a proper Lie groupoid with a single isomorphism type, meaning that $\G_x$ is isomorphic to $\G_y$ for all $x,y\in M$ (see Example \ref{exisotyp}). Then the leaf space $(\underline{M},\mathcal{C}^\infty_{\underline{M}})$ is a smooth manifold and the projection $M\to \underline{M}$ is a submersion.
\end{lemma}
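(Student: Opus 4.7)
The plan is to apply the linearization theorem for proper Lie groupoids at each point and use the single-isomorphism-type hypothesis to force the relevant slice representations to be trivial, reducing the local model of $\underline{M}$ to a vector space.

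Fix $x \in M$. The linearization theorem (as recalled at the end of Subsection \ref{reddiffspsec}) produces an invariant open neighbourhood $U$ of $\L_x$ in $M$ together with a Morita equivalence between $\G\vert_U$ and (the restriction to a $\G_x$-invariant open $W$ around the origin of) the action groupoid $\G_x \ltimes \No_x$, relating $\L_x$ to the origin. By Proposition \ref{transgeomgpoid}$b$, Morita equivalent points have isomorphic isotropy groups, so for every $v \in W$ the stabilizer $(\G_x)_v$ is isomorphic as a Lie group to some $\G_y$ with $y \in U$, and hence to $\G_x$ itself by hypothesis. Properness of $\G$ makes $\G_x$ a compact Lie group; the elementary fact that a closed subgroup of a compact Lie group abstractly isomorphic to the ambient group must coincide with it then gives $(\G_x)_v = \G_x$ for every $v \in W$. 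Hence the $\G_x$-representation on $\No_x$ is trivial on the open neighbourhood $W$ of $0$, and by linearity it is trivial on all of $\No_x$.

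Once triviality is established, everything falls into place. The orbit space of $\G_x \ltimes W$ with the trivial action is simply $W$, which is a smooth manifold, and by Proposition \ref{moreqisoredring} the Morita equivalence induces an isomorphism of reduced ringed spaces $(\underline{U},\mathcal{C}^\infty_{\underline{M}}\vert_{\underline{U}}) \cong (W,\mathcal{C}^\infty_W)$. This supplies a smooth chart for $\underline{M}$ around $\L_x$, and because the structure sheaf $\mathcal{C}^\infty_{\underline{M}}$ is globally defined as the sheaf of $\G$-invariant smooth functions on $M$ (Example \ref{smoothfunleafsp}), charts built at different points are automatically compatible, so $(\underline{M},\mathcal{C}^\infty_{\underline{M}})$ is a smooth manifold. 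For the submersion claim, let $(P,\alpha_1,\alpha_2)$ be the bi-bundle realizing the Morita equivalence; both $\alpha_1 : P \to U$ and $\alpha_2 : P \to W$ are surjective submersions. Composing $\alpha_2$ with the identification $W \cong \underline{U}$ yields a surjective submersion $P \to \underline{U}$, which agrees with the composition of $\alpha_1 : P \to U$ with the orbit projection $U \to \underline{U}$; since $\alpha_1$ is a surjective submersion, the orbit projection $M \to \underline{M}$ is a submersion as well.

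The main obstacle is primarily bookkeeping rather than a deep technical point: one has to invoke linearization in a form that produces a Morita equivalence so that Proposition \ref{moreqisoredring} transfers smoothness to $\underline{U}$, and then translate the submersion statement through the bi-bundle. The only nontrivial algebraic input, that a closed subgroup of a compact Lie group isomorphic to it must equal it, is standard (it follows immediately by matching dimensions and numbers of connected components).
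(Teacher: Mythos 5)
Your proof is correct and is essentially the argument the paper has in mind: the paper only cites the linearization theorem and refers to \cite{CrMe} for details, and your write-up fills in exactly that route (linearization, Morita invariance of isotropy groups, the compact-subgroup rigidity fact forcing the normal representation to be trivial, and transfer of smoothness via Proposition \ref{moreqisoredring}). No gaps.
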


\begin{proof}[Proof of Theorem \ref{intgpoidpropprintyp}; parts $b$ and $c$] It is readily verified that $\mathcal{R}$ is invariant under the diagonal $\G$-action along $J\circ \textrm{pr}_1:S\times_MS\to M$ and that the restricted action is smooth. Since $\G$ is proper, so is the action groupoid $\G\ltimes \mathcal{R}$. Furthermore, the isotropy group of the $\G$-action at $(p,q)\in \mathcal{R}$ is the isotropy of the $\G$-action on $S$ at $p$. So, since the isotropy groups of the action on $S$ are all isomorphic (by Remark \ref{pringpoidrem}), the same holds for the isotropy groups of the action on $\mathcal{R}$. In view of Lemma \ref{orbspsmoothlem}, we conclude that part $b$ holds. \\

We turn to part $c$. One readily verifies that $\underline{\mathcal{R}}$ inherits the structure of a Lie groupoid over $\underline{S}$ from the Lie groupoid $\mathcal{R}\rightrightarrows S$. To see that the Lie groupoid $\underline{\mathcal{R}}$ is proper, suppose that we are given a sequence of $[p_n,q_n]\in \underline{\mathcal{R}}$ with the property that $t_{\underline{\mathcal{R}}}([p_n,q_n])=[p_n]$ and $s_{\underline{\mathcal{R}}}([p_n,q_n])=[q_n]$ converge in $\underline{S}$ as $n\to \infty$. We have to show that the given sequence in $\underline{\mathcal{R}}$ admits a convergent subsequence. Since the orbit projection $S\to \underline{S}$ is a surjective submersion, it admits local sections around all points in $\underline{S}$. Using this, we can (for $n$ large enough) find $g_n, h_n\in \G$ in the source fiber over $J(p_n)=J(q_n)$ such that $g_n\cdot p_n$ and $h_n\cdot q_n$ converge in $S$ as $n\to \infty$. Then $t_\G(g_nh_n^{-1})=J(g_n\cdot p_n)$ and $s_\G(g_nh_n^{-1})=J(h_n\cdot q_n)$ both converge in $M$ as $n\to \infty$. By properness of $\G$, it follows that there is a subsequence $g_{n_k}h_{n_k}^{-1}$ that converges in $\G$ as $k\to \infty$. Together with convergence of $h_{n_k}\cdot q_{n_k}$, this implies that $g_{n_k}\cdot q_{n_k}$ converges in $S$ as well. So, since $\mathcal{R}$ is closed in $S\times S$, it follows that $g_{n_k}\cdot (p_{n_k},q_{n_k})$ converges in $\mathcal{R}$. Therefore, $[p_{n_k},q_{n_k}]$ converges in $\underline{\mathcal{R}}$. This shows that the required subsequence exists and hence proves properness of the Lie groupoid $\underline{\mathcal{R}}$. \\

To complete the proof of $c$, we are left to show that the symplectic structure on the pair groupoid $S\times S$ descends to a symplectic structure $\Omega_{\underline{\mathcal{R}}}$ on $\underline{\mathcal{R}}$, and that $(\underline{\mathcal{R}},\Omega_{\underline{\mathcal{R}}})$ integrates $(\underline{S},\pi_{\underline{S}})$. To see that the restriction $\Omega_\mathcal{R}\in \Omega^2(\mathcal{R})$ of $\omega\oplus -\omega$ to $\mathcal{R}$ descends to a $2$-form on $\underline{\mathcal{R}}$, recall that this is equivalent to asking that $\Omega_\mathcal{R}$ is basic with respect to the $\G$-action on $\mathcal{R}$ (in the sense of \cite{PfPoTa,Wa,Yu}), which means that: $m_\mathcal{R}^* \Omega_\mathcal{R}=\textrm{pr}_\mathcal{R}^*\Omega_\mathcal{R}$, where $m_\mathcal{R},\textrm{pr}_\mathcal{R}:\G\ltimes \mathcal{R}\to \mathcal{R}$ denote the target and source map of the action groupoid. This equality is readily verified. So, $\Omega_\mathcal{R}$ indeed descends to a $2$-form $\Omega_{\underline{\mathcal{R}}}$ on $\underline{\mathcal{R}}$. Further notice that $\Omega_{\underline{\mathcal{R}}}$ is closed (because $\omega$ is closed) and it inherits multiplicativity from the multiplicative form $\omega\oplus -\omega$ on the pair groupoid $S\times S$. Moreover, using the momentum map condition (\ref{mommapcond}), Proposition \ref{infmomact} and the description (\ref{destngtspeq}) of the tangent space to $\mathcal{R}$, it is straightforward to check that $\Omega_{\underline{\mathcal{R}}}$ is non-degenerate. So, $(\underline{\mathcal{R}},\Omega_{\underline{\mathcal{R}}})$ is a symplectic groupoid. We leave it to the reader to verify that $(\underline{\mathcal{R}},\Omega_{\underline{\mathcal{R}}})$ integrates $(\underline{S},\pi_{\underline{S}})$. \end{proof}

\subsubsection{Reduction to Hamiltonian actions of principal type}
The aim of this subsection is to show that the restriction of a given Hamiltonian action (by a proper symplectic groupoid) to any stratum of $\S_\textrm{Ham}(\underline{S})$ can be reduced to a  Hamiltonian action of principal type. More precisely, we prove:
\begin{thm}\label{redstratthm} Let $(\G,\Omega)$ be a proper symplectic groupoid and suppose that we are given a Hamiltonian $(\G,\Omega)$-action along $J:(S,\omega)\to M$. Let $\underline{\Sigma}_S\in \S_\textrm{Ham}(\underline{S})$ and let $\underline{\Sigma}_M\in \S_\textrm{Gp}(\underline{M})$ be such that $\underline{J}(\underline{\Sigma}_S)\subset\underline{\Sigma}_M$. Finally, let $q_S:S\to \underline{S}$ and $q_M:M\to \underline{M}$ be the orbit and leaf space projections, and consider $\Sigma_S=q_S^{-1}(\underline{\Sigma}_S)$ and $\Sigma_M=q_M^{-1}(\underline{\Sigma}_M)$. Then the following hold.
\begin{itemize}\item[a)] The restriction $\omega_{\Sigma_S}\in \Omega^2(\Sigma_S)$ of the symplectic form $\omega$ to $\Sigma_S$ has constant rank. Moreover, the null foliation integrating $\ker(\omega_{\Sigma_S})$ is simple, meaning that its leaf space admits a smooth manifold structure with respect to which the leaf space projection is a submersion. 
\end{itemize} Let $S_{\Sigma}$ denote this leaf space and let $\omega_{S_\Sigma}$ denote the induced symplectic form on $S_{\Sigma}$. 
\begin{itemize}
\item[b)] The restriction of $\Omega$ to $\G\vert_{\Sigma_M}$ has constant rank and the leaf space of its null foliation is naturally a proper symplectic groupoid $(\G_{\Sigma_M},\Omega_{\Sigma_M})$ over $\Sigma_M$. 
\item[c)] The map $J$ descends to a map: 
\begin{equation*} J_{S_\Sigma}:(S_{\Sigma},\omega_{S_\Sigma})\to \Sigma_M
\end{equation*} and the Hamiltonian $(\G,\Omega)$-action along $J$ descends to a Hamiltonian $(\G_{\Sigma_M},\Omega_{\Sigma_M})$-action along $J_{S_\Sigma}$, which is of principal type.
\item[d)] There is a canonical Poisson diffeomorphism: 
\begin{equation*} (\underline{\Sigma}_S,\pi_{\underline{\Sigma}_S})\cong (\underline{S_{\Sigma}},\pi_{\underline{S_{\Sigma}}}).
\end{equation*} 
\end{itemize}
\end{thm}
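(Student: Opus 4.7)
The plan is to reduce all four claims to explicit computations in the transverse local model for the action, using Theorem \ref{normhamthm} together with the Hamiltonian Morita equivalence of Example \ref{locmodmoreq}, and then globalize via the Morita-invariance of Hamiltonian Morita types (Remark \ref{improphammortyp}(i)) and of the reduced Poisson structure (Proposition \ref{hammorinvpoisbrack}).

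First I would dispose of part (b), which follows from the integration theorem for strata of a proper symplectic groupoid (Example \ref{identityactionex}, i.e., the result from \cite{CrFeTo2}) applied to $(\G,\Omega)$ itself. Concretely, in the local model for $(\G,\Omega)$ along a leaf in $\Sigma_M$ (Subsection \ref{locmodconstsec} and Example \ref{crucexmoreq}) the restriction of $\Omega$ to $\G|_{\Sigma_M}$ is computed via symplectic Morita equivalence with $(G\ltimes \g^*,-\d\lambda_\text{can})$ restricted to $(\g^*)^G\cap W_\theta$, showing that its null foliation is regular of constant dimension. The quotient $\G_{\Sigma_M}$ is a smooth manifold because the restriction $\G|_{\Sigma_M}$ is a proper Lie groupoid of principal type in the sense of Definition \ref{princtypdefi}(i) (by Remark \ref{pringpoidrem}(i), since $\Sigma_M$ is a single Morita type), to which Lemma \ref{orbspsmoothlem} applies; the descended multiplicative form $\Omega_{\Sigma_M}$ is symplectic by a rank count, and properness is inherited from $\G$.

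For part (a), fix $p\in \Sigma_S$ with $x=J(p)$ and set $G=\G_x$, $H=\G_p$, $V=\S\No_p$. By Theorem \ref{normhamthm} and Example \ref{locmodmoreq}, an invariant neighbourhood of $\O_p$ in $S$ is Hamiltonian Morita equivalent to an invariant neighbourhood of the origin in the pre-symplectic $H$-space $(\h^0\oplus V,0\oplus\omega_V)$ equipped with the map $\J_\p$ of (\ref{bigJ_p}). By Lemma \ref{techlemisotype} and the Morita-invariance of Hamiltonian Morita types, this equivalence identifies $\Sigma_S$ locally with an open around the origin in $(\h^0)^G\oplus V^H$. Since $H$ is compact, $V^H$ is a symplectic subspace of $(V,\omega_V)$, so the restriction of $0\oplus\omega_V$ to $(\h^0)^G\oplus V^H$ has constant-rank kernel equal to the $(\h^0)^G$-direction. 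Transferring through the bi-bundle relation $j^*\omega_P=\beta_1^*\omega_{S_\theta}-\beta_2^*(0\oplus\omega_V)$ yields constant rank for $\omega_{\Sigma_S}$ near $\O_p$ with null-leaf dimension $\dim(\h^0)^G$, a Hamiltonian Morita invariant (Proposition \ref{eqcharhammortyp}) that is therefore constant along the connected stratum $\Sigma_S$. For simplicity of the null foliation, I would exhibit $S_\Sigma$ as the base of a proper Lie groupoid of principal type obtained as the appropriate pullback/fiber product of $\G|_{\Sigma_M}$ acting on $\Sigma_S$, and again apply Lemma \ref{orbspsmoothlem}.

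For parts (c) and (d), once parts (a) and (b) are in place the constructions are essentially formal. The momentum map $J$ sends null leaves of $\omega_{\Sigma_S}$ into null leaves of $\Omega|_{\G|_{\Sigma_M}}$ (checked in the local model), so $J$ descends to a smooth map $J_{S_\Sigma}\colon (S_\Sigma,\omega_{S_\Sigma})\to \Sigma_M$; similarly the $\G$-action descends to a smooth $\G_{\Sigma_M}$-action on $S_\Sigma$, which inherits the Hamiltonian condition (\ref{hammultcond}) by multiplicativity. Principal type is immediate because the local model shows that every point of $\Sigma_S$ shares the same pair $(G,H)$ and symplectic slice $V$ up to isomorphism, and these invariants descend through the null-foliation quotients to constant isomorphism classes on $S_\Sigma$ and $\Sigma_M$ (cf.\ Remark \ref{pringpoidrem}(ii)). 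Finally, both $\underline{\Sigma}_S$ and $\underline{S_\Sigma}$ are the quotient of $\Sigma_S$ by the equivalence relation generated by the $\G$-action; the canonical identification is a smooth map in both directions since both orbit projections are submersions of principal type, and it is Poisson because both brackets are induced from $\{\cdot,\cdot\}_\omega$ on $C^\infty(\Sigma_S)^\G$ (a consequence of Proposition \ref{hammorinvpoisbrack} applied to the tautological Morita equivalence between the two actions).

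The main obstacle is the global part of (a): assembling the local description of the null foliation of $\omega_{\Sigma_S}$ into a smooth Hausdorff quotient $S_\Sigma$. Locally this is immediate from the transverse model, but globally the cleanest path is the principal-type argument sketched above, directly parallel to the proof of Theorem \ref{intgpoidpropprintyp}(b), where properness and principal type combine with Lemma \ref{orbspsmoothlem} to produce a smooth leaf space.
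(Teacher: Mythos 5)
There is a genuine error in your treatment of part (a), and it propagates to (c) and (d): you have misidentified the null foliation of $\omega_{\Sigma_S}$. You locate $\ker(\omega_{\Sigma_S})$ in the directions \emph{transverse} to the orbit, namely the $(\h^0)^G$-factor of the local slice $(\h^0)^G\oplus V^H$. In fact the kernel is \emph{tangent} to the orbits: since the orbit projection $q\colon(\Sigma_S,\omega_{\Sigma_S})\to(\underline{\Sigma}_S,\pi_{\underline{\Sigma}_S})$ is forward Dirac onto a Poisson manifold (Corollary \ref{orbprojstratdirac}), any $v\in\ker(\omega_{\Sigma_S})_p$ pushes forward to an element $(\d q(v),0)$ of the graph of $\pi_{\underline{\Sigma}_S}^\sharp$, forcing $\d q(v)=0$, i.e.\ $\ker(\omega_{\Sigma_S})_p\subset T_p\O$. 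The paper's Lemma \ref{liedescrisofol} then identifies the kernel precisely as $(a_J)_p(\c_{\g_x})$, the image under the infinitesimal action of the fixed-point complement of the adjoint representation of $\G_x$. Your answer fails already for the tautological action of $(\G,\Omega)$ on itself along $t$ (Example \ref{identityactionex}), where $\Sigma_S=\G\vert_{\Sigma_M}$ and $\omega_{\Sigma_S}=\Omega\vert_{\G\vert_{\Sigma_M}}$: for $\G=G\ltimes\g^*$ with $G$ connected abelian one has $\c_{\g_x}=0$, so the kernel is zero, whereas $(\h^0)^G=(\g^*_x)^G$ is all of $\g_x^*$; for $G$ connected semisimple the situation is reversed. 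The underlying reason your transfer fails is that the Hamiltonian Morita equivalence of Example \ref{locmodmoreq} collapses exactly the orbit directions in which the kernel lives, so the kernel of the pre-symplectic form $0\oplus\omega_V$ on the transverse model carries no information about $\ker(\omega_{\Sigma_S})$; the relation (\ref{hameqmor1}) does not let you read one off from the other.

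This misidentification also breaks your route to smoothness of $S_\Sigma$ and to parts (c)--(d): your quotient would locally be $\O\times V^H$, whereas the correct quotient is obtained by collapsing, \emph{inside each orbit}, the orbits of the bundle of compact groups $\bigsqcup_{x\in\Sigma_M}C_{\g_x}$ integrating $\c_{\g_x}$ (Proposition \ref{gpoidredstrat}); smoothness then follows from Lemma \ref{orbspsmoothlem} applied to this proper bundle-of-groups action on $\Sigma_S$, whose isotropy groups $C_{\g_x}\cap\G_p$ are all isomorphic by Proposition \ref{eqcharhammortyp}. Relatedly, in part (b) your appeal to Lemma \ref{orbspsmoothlem} is aimed at the wrong quotient: that lemma produces the leaf space $\underline{\Sigma}_M$ of $\G\vert_{\Sigma_M}$, not the quotient of the arrow space by the null foliation of $\Omega\vert_{\G\vert_{\Sigma_M}}$, which is the quotient by the normal subgroupoid $\bigsqcup_x C_{\g_x}$. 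The missing idea throughout is the Lie-theoretic description of the null foliations via the fixed-point complement $\c_{\g_x}$; without it the local model alone does not determine the kernel of the restricted symplectic forms.
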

Together with Theorem \ref{intgpoidpropprintyp}, this would prove Theorem \ref{poisstratintgrthm}. To prove Theorem \ref{redstratthm}, we use the following Lie theoretic description of the null foliation of $\omega_{\Sigma_S}$. Recall that, given a real finite-dimensional representation $V$ of a compact Lie group $G$, the fixed-point set $V^G$ has a canonical $G$-invariant linear complement $\c_V$ in $V$, given by the linear span of the collection:
\begin{equation*} \{v-g\cdot v\mid v\in V, g\in G\}. 
\end{equation*} To see that $\c_V$ is indeed a linear complement to $V^G$, note that for any choice of $G$-invariant inner product on $V$, $\c_V$ coincides with the orthogonal complement to $V^G$ in $V$. We will call $\c_V$ the \textbf{fixed-point complement} of $V$. For the dual representation $V^*$, it holds that:
\begin{equation}\label{coinv} (V^*)^G=(\c_V)^0,
\end{equation} the annihilator of $\c_V$ in $V$. Of particular interest will be the adjoint representation. 
\begin{prop}\label{fixptcompliealg} Let $G$ be a compact Lie group. The fixed-point complement $\c_\g$ of the adjoint representation is a Lie subalgebra of $\g$, given by:
\begin{equation*} \c_\g=\c_{Z(\g)}\oplus \g^{\textrm{ss}},
\end{equation*} where $Z(\g)$ is the center (viewed as $G$-representation) and $\g^\textrm{ss}=[\g,\g]$ is the semi-simple part of $\g$. 
\end{prop}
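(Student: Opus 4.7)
The plan is to exploit the fact that a compact Lie algebra is reductive. I would begin by recalling the standard structure theorem: since $G$ is compact, $\g$ admits a $G$-invariant inner product, so every $G$-invariant subspace has a $G$-invariant complement; in particular the Killing-theoretic decomposition
\[
 \g = Z(\g) \oplus [\g,\g]
\]
is a direct sum of $G$-invariant Lie ideals, and $\g^{\textrm{ss}} := [\g,\g]$ is semisimple. Both $Z(\g)$ and $\g^{\textrm{ss}}$ are $G$-invariant, so the fixed-point decomposition $\g = \g^G \oplus \c_\g$ (for the adjoint action) splits accordingly as
\[
 \g^G = Z(\g)^G \oplus (\g^{\textrm{ss}})^G, \qquad \c_\g = \c_{Z(\g)} \oplus \c_{\g^{\textrm{ss}}}.
\]

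The main step is to show that $(\g^{\textrm{ss}})^G = 0$, which will force $\c_{\g^{\textrm{ss}}} = \g^{\textrm{ss}}$. If $X \in (\g^{\textrm{ss}})^G$, then $\mathrm{Ad}(g)X = X$ for every $g \in G$; differentiating along the identity component gives $[Y,X] = 0$ for all $Y \in \g$, so $X \in Z(\g) \cap \g^{\textrm{ss}} = 0$. (Equivalently, the center of a semisimple Lie algebra is trivial.) This is the only nontrivial input and is essentially immediate from reductivity, so I do not anticipate a real obstacle here.

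Finally, I would verify that $\c_\g = \c_{Z(\g)} \oplus \g^{\textrm{ss}}$ is a Lie subalgebra by checking brackets on the two summands. One has $[\c_{Z(\g)}, \c_{Z(\g)}] = 0$ and $[\c_{Z(\g)}, \g^{\textrm{ss}}] = 0$ because $\c_{Z(\g)} \subset Z(\g)$, while $[\g^{\textrm{ss}},\g^{\textrm{ss}}] = \g^{\textrm{ss}}$ since $\g^{\textrm{ss}}$ is semisimple (hence perfect). Therefore $[\c_\g, \c_\g] \subset \g^{\textrm{ss}} \subset \c_\g$, completing the proof.
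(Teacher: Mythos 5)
Your proof is correct and follows essentially the same route as the paper's: both rest on the reductive decomposition $\g=Z(\g)\oplus[\g,\g]$ into $G$-invariant ideals, with the paper phrasing the key point as ``$Z(\g)$ is the fixed-point set of the identity component and $[\g,\g]$ is its invariant complement,'' which is exactly your observation that $(\g^{\textrm{ss}})^G=0$. Your version just spells out the details (including the easy fact that the fixed-point complement distributes over a direct sum of invariant subspaces) that the paper leaves implicit.
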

\begin{proof} This follows from the observation that $Z(\g)$ is the fixed-point set for the adjoint action of the identity component of $G$ and $[\g,\g]$ is the orthogonal complement to $Z(\g)$ in $\g$ with respect to any invariant inner product. 
\end{proof}
We now give the aforementioned description of the null foliation. 
\begin{lemma}\label{liedescrisofol} Let $p\in \Sigma_S$ and $x=J(p)\in \Sigma_M$, with notation as in Theorem \ref{redstratthm}. Let \begin{equation*} a_J:J^*(T^*M)\to TS
\end{equation*} be the bundle map underlying the infinitesimal action (\ref{assliealgact}) associated to the Hamiltonian action. Further, let $\c_{\g_x}$ denote the fixed-point complement of the adjoint representation of $\G_x$. Then:
\begin{equation*} \ker(\omega_{\Sigma_S})_p=(a_J)_p(\c_{\g_x}),
\end{equation*} where we view $\g_x\subset T_x^*M$ via (\ref{imsymp}). 
\end{lemma}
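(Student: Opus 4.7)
The plan is to rewrite $\ker(\omega_{\Sigma_S})_p$ as an annihilator condition inside $\g_x$, compute the relevant subspace via the normal form theorem, and then recognise the answer as $\c_{\g_x}$ via a short piece of invariant-theoretic linear algebra.

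First, I will reduce the problem to the orbit tangent space. By Corollary \ref{orbprojstratdirac}, the restriction of the orbit projection $q_S:(\Sigma_S,\omega_{\Sigma_S})\to(\underline{\Sigma}_S,\pi_{\underline{\Sigma}_S})$ is a forward Dirac map; any such map carries its presymplectic kernel into its tangential kernel, so $\ker(\omega_{\Sigma_S})_p\subset\ker(\d q_S|_{\Sigma_S})_p=T_p\O_p$. The momentum map condition $\iota_{a_J(\alpha)}\omega=J^*\alpha$ together with Proposition \ref{infmomact} implies that $(a_J)_p$ surjects $\g_x$ onto $T_p\O_p$ with kernel exactly $\g_p$. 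Thus every vector in $\ker(\omega_{\Sigma_S})_p$ is of the form $(a_J)_p(\alpha)$ for some $\alpha\in\g_x$, and the same momentum map condition yields $\omega_p(a_J(\alpha),w)=\langle\alpha,\d J_p(w)\rangle$ for all $w\in T_p\Sigma_S$. Hence the kernel condition is equivalent to $\alpha\in\g_x\cap(\d J_p(T_p\Sigma_S))^{0}\subset T^*_xM$.

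Next I will compute $\d J_p(T_p\Sigma_S)$ via the normal form. Theorem \ref{normhamthm} and Remark \ref{splitremlocmod} identify the Hamiltonian action near $\O_p$ with its local model on an open in $P\times_H(\h^0\oplus V)$, whose momentum map reads $[q,\alpha,v]\mapsto[q,\alpha+\p(J_V(v))]$; by the proof of Proposition \ref{partprop}, the stratum $\Sigma_S$ through $\O_p$ is, near $\O_p$, an open in $P\times_H((\h^0)^G\oplus V^H)$. Choosing a connection on $P$ to split $T_eP\cong T_x\L\oplus\g$ and using that $J_V$ is homogeneous of degree two (so $\d J_V$ vanishes at $0$), one reads off at $p=[e,0,0]$ that
\[
T_p\Sigma_S = T_x\L\oplus(\g/\h)\oplus(\h^0)^G\oplus V^H,
\]
and that $\d J_p$ sends $(\xi_\L,[\xi_\g],\dot\alpha,\dot v)$ to $(\xi_\L,\dot\alpha)$ in $T_x\L\oplus\g^*\cong T_xM$. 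Hence $\d J_p(T_p\Sigma_S)=T_x\L\oplus(\h^0)^G$.

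Finally I will identify the resulting annihilator with $\c_{\g_x}+\g_p$. Under the dual splitting $T^*_xM\cong T^*_x\L\oplus\g$, Proposition \ref{normrepsymp}b places $\g_x$ as $\{0\}\oplus\g$, so
\[
\g_x\cap(\d J_p(T_p\Sigma_S))^{0} \;=\; \bigl((\h^0)^G\bigr)^{0} \;\subset\; \g,
\]
with annihilator taken with respect to the pairing $\g\times\g^*\to\R$. I will choose a $\G_x$-invariant inner product on $\g$ to obtain an equivariant identification $\g\cong\g^*$; this sends $\h^0$ to $\h^\perp$, $(\h^0)^G$ to $\h^\perp\cap\g^{\G_x}$, and (by Proposition \ref{fixptcompliealg} together with the standard fact that an invariant complement of an invariant subspace is its orthogonal complement) $\c_{\g_x}$ to $(\g^{\G_x})^\perp$. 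The elementary identity $(A\cap B)^\perp=A^\perp+B^\perp$ then yields $\bigl((\h^0)^G\bigr)^{0}=\h+\c_{\g_x}$ in $\g$. Since $\ker((a_J)_p|_{\g_x})=\g_p=\h$, applying $(a_J)_p$ collapses the $\h$-summand to zero and gives $\ker(\omega_{\Sigma_S})_p=(a_J)_p(\c_{\g_x})$, as claimed. The delicate point will be the third step, where one must keep careful track of the splittings of $T_pS$ and $T_xM$ induced by the connection so that the leaf and transverse directions line up correctly on both sides of $J$; the remaining linear algebra is routine.
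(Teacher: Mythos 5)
Your overall route is essentially the paper's: use the forward Dirac property of the orbit projection to force $\ker(\omega_{\Sigma_S})_p$ into the orbit directions coming from $\g_x$, convert the kernel condition via the momentum map identity into an annihilator condition on $\d J_p(T_p\Sigma_S)$, compute that image in the normal form (where it is $T_x\L\oplus(\g_p^0)^{\G_x}$, matching the paper's identification of the image of $T_p\Sigma_S/T_p\O$ in $\No_x\cong\g_x^*$ with $(\g_p^0)^{\G_x}$), and finish with the invariant-theoretic identity $\bigl((\g_p^0)^{\G_x}\bigr)^0=\g_p+\c_{\g_x}$, which is exactly the paper's use of (\ref{coinv}). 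The only genuine difference of method is in the middle step: the paper computes the image via Hamiltonian Morita equivalence, Lemma \ref{techlemisotype} and Morita invariance of the $J$-isomorphism types, whereas you compute $\d J_p$ by hand in the MGS-type model of Remark \ref{splitremlocmod} after choosing a connection; both work, and you correctly flag that the bookkeeping of the splittings is the delicate point of your version.

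There is, however, one inaccurate step you should repair. You assert that $(a_J)_p$ surjects $\g_x$ onto $T_p\O$ with kernel $\g_p$. This is false unless $\L$ is zero-dimensional: by the momentum map condition and Proposition \ref{infmomact}, one has $(a_J)_p(\g_x)=T_p\O\cap T_p\O^\omega=T_p\O\cap\ker(\d J_p)$, which has codimension $\dim\L$ in $T_p\O$ (indeed $J_\O:\O\to\L$ is a surjective submersion, so $\d J_p(T_p\O)=T_x\L$). Consequently the inclusion $\ker(\omega_{\Sigma_S})_p\subset T_p\O$ alone does not yet place the kernel inside $(a_J)_p(\g_x)$, which is what you need in order to write every kernel vector as $(a_J)_p(\alpha)$ with $\alpha\in\g_x$. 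The fix is one line, and it is the line the paper supplies: since $T_p\O\subset T_p\Sigma_S$, one also has $\ker(\omega_{\Sigma_S})_p\subset T_p\Sigma_S^{\omega}\subset T_p\O^\omega$, whence $\ker(\omega_{\Sigma_S})_p\subset T_p\O\cap T_p\O^\omega=(a_J)_p(\g_x)$. With that correction inserted, the rest of your argument, including the final identification of the annihilator with $\g_p+\c_{\g_x}$ via an invariant inner product, is sound.
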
 
\begin{proof} Because (by Corollary \ref{orbprojstratdirac}) the orbit projection (\ref{orbprojdirac}) is a forward Dirac map from the pre-symplectic manifold $(\Sigma_S,\omega_{\Sigma_S})$ into a Poisson manifold, it must hold that:
\begin{equation*} \ker(\omega_{\Sigma_S})_p\subset T_p\O.
\end{equation*} Since $T_p\O\subset T_p\Sigma_S$, it also holds that: 
\begin{equation*} \ker(\omega_{\Sigma_S})_p\subset T_p\O^\omega.
\end{equation*} For any Hamiltonian action, we have the equality:
\begin{equation*} T_p\O\cap T_p\O^\omega=(a_J)_p(\g_x),
\end{equation*} as is readily derived from the momentum map condition (\ref{mommapcond}). So, we conclude that:
\begin{equation*} \ker(\omega_{\Sigma_S})_p\subset (a_J)_p(\g_x).
\end{equation*} Now consider the composition of maps: 
\begin{equation}\label{compmapsliedescrisofol} \frac{T_p\Sigma_S}{T_p\O}\hookrightarrow \No_p\xrightarrow{\underline{\d J}_p}\No_x\xrightarrow{\sim}\g_x^*,
\end{equation} where the third map is dual to the canonical isomorphism between $\g_x$ (which via (\ref{imsymp}) we view as the annihilator of $T_x\L$ in $T_x^*M$) and $\No_x^*$. Using a Hamiltonian Morita equivalence as in the proof of Proposition \ref{eqcharhammortyp}, together with Proposition \ref{normrepham}$b$, Proposition \ref{transgeommap}$c$, Lemma \ref{techlemisotype} and Morita invariance of the $J$-isomorphism types, it is readily verified that the image of (\ref{compmapsliedescrisofol}) is $(\g_p^0)^{\G_x}$. From this and the momentum map condition (\ref{mommapcond}) it follows that, given $\alpha\in \g_x$, the tangent vector $(a_J)_p(\alpha)$ belongs to $\ker(\omega_{\Sigma_S})_p$ if and only if $\alpha$ belongs to the annihilator of $(\g_p^0)^{\G_x}$. This annihilator equals $\g_p+\c_{\g_x}$, as (\ref{coinv}) implies that:
\begin{equation*} (\g_p^0)^{\G_x}:=\g_p^0\cap (\g_x^*)^{\G_x}=(\g_p+\c_{\g_x})^0.
\end{equation*} So, all together it follows that:
\begin{equation*} \ker(\omega_{\Sigma_S})_p=(a_J)_p(\g_p+\c_{\g_x})=(a_J)_p(\c_{\g_x}),
\end{equation*} which proves the lemma. 
\end{proof}
We can interpret this as follows: the $T_\pi^*M$-action associated to the momentum map $J$ restricts to an infinitesimal action of the bundle of Lie algebras:
\begin{equation*} \bigsqcup_{x\in \Sigma_M}\c_{\g_x}\subset T^*M\vert_{\Sigma_M}
\end{equation*} and the orbit distribution of this infinitesimal action coincides with the distribution $\ker(\omega_{\Sigma_S})$. The proof of Theorem \ref{redstratthm} therefore boils down to showing that this infinitesimal action integrates to an action of a bundle of Lie groups in $\G$, the orbit space of which is smooth. 

\begin{prop}[\cite{CrFeTo2}]\label{gpoidredstrat} Let $(\G,\Omega)\rightrightarrows M$ be a proper symplectic groupoid, let $\underline{\Sigma}\in \S_\textrm{Gp}(\underline{M})$ and let $\Sigma=q^{-1}(\underline{\Sigma})$, for $q:M\to \underline{M}$ the leaf space projection. Consider the family of Lie groups:
\begin{equation*} \bigsqcup_{x\in \Sigma} C_{\g_x}\subset \G\vert_{\Sigma}
\end{equation*} where $C_{\g_x}$ is the unique connected Lie subgroup of $\G_x$ that integrates $\c_{\g_x}$. This defines a closed, embedded and normal Lie subgroupoid of $\G\vert_{\Sigma}$ and the quotient of $\G\vert_{\Sigma}$ by this bundle of Lie groups is naturally a proper symplectic groupoid over $\Sigma$ of principal type.
\end{prop}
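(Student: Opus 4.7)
The strategy is to use the symplectic Morita equivalence between a neighbourhood of any leaf in $\Sigma$ and a standard local model to reduce both claims to an explicit computation, and then to patch these local descriptions together. Given a leaf $\L\subset\Sigma$ through a point $x_0$, let $G=\G_{x_0}$. Via Theorem \ref{normformsymgp} and Example \ref{crucexmoreq}, there is a saturated open $V$ around $\L$ and a symplectic Morita equivalence between $(\G,\Omega)|_V$ and the restriction of the cotangent groupoid $(G\ltimes \g^*,-\d\lambda_{\textrm{can}})$ to a $G$-invariant open $W$ around $0\in\g^*$. Under the induced homeomorphism of leaf spaces (Proposition \ref{transgeomgpoid}), the stratum $\Sigma\cap V$ corresponds to the set $W^G\subset W$ of $G$-fixed covectors, since by Proposition \ref{transgeomgpoid}$b$ all leaves in $V$ whose Morita type equals that of $\L$ have isotropy group isomorphic to $G$ acting trivially on the normal bundle, and this characterizes $W^G$ in $W$.

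In this local model the restricted groupoid $(G\ltimes \g^*)|_{W^G}$ is the trivial Lie group bundle $G\times W^G\rightrightarrows W^G$, and for every $\alpha\in W^G$ the isotropy Lie algebra is all of $\g$, so the fixed-point complement $\c_{\g_\alpha}$ is simply the constant subalgebra $\c_\g\subset\g$. By Proposition \ref{fixptcompliealg} combined with the fact that $\c_{Z(\g)}$ lies in the center and $[\g,\g]$ is an ideal, $\c_\g$ is in fact an ideal of $\g$; hence $C_\g$ is a closed connected normal Lie subgroup of $G$. The local model of the bundle $\bigsqcup_\alpha C_{\g_\alpha}$ is therefore the trivial bundle $C_\g\times W^G$, which is manifestly a closed embedded normal Lie subgroupoid of $G\times W^G$. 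The pointwise definition of $C_{\g_x}$ is intrinsic to the isotropy representation, and the isomorphisms (\ref{isotgpisomoreq}) coming from a Morita equivalence intertwine the adjoint representations (Proposition \ref{transgeomgpoid}$b$); hence they send $C_{\g_x}$ to $C_{\g_y}$ for $P$-related $x,y$. This guarantees that the local trivializations patch to give a global closed embedded normal Lie subgroupoid $C_\Sigma:=\bigsqcup_{x\in\Sigma}C_{\g_x}\subset\G|_\Sigma$.

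For the symplectic quotient, work again in the local model. The multiplicative form $-\d\lambda_{\textrm{can}}$ pulled back to $G\times W^G$ can be computed explicitly using left-trivialization; its kernel at $(g,\alpha)$ is the left translate of $\c_\g\subset\g$, i.e.\ precisely the tangent space to the fiber of the projection $G\times W^G\to(G/C_\g)\times W^G$. Thus the restriction of $\Omega$ to $\G|_\Sigma$ has locally constant rank with kernel equal to the tangent distribution to the $C_\Sigma$-fibers, and it descends to a closed multiplicative non-degenerate $2$-form $\Omega_\Sigma$ on the quotient $\G|_\Sigma/C_\Sigma$, which in the local model is $(G/C_\g)\times W^G$. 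Properness is inherited from the proper Lie groupoid $\G|_\Sigma$ (quotient by a proper normal bundle of Lie groups remains proper), and since every isotropy group of the quotient is isomorphic to the fixed compact group $G/C_\g$, the resulting symplectic groupoid is of principal type in the sense of Definition \ref{princtypdefi} (cf.\ Remark \ref{pringpoidrem}).

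The main obstacle is the globalization step: verifying that the pointwise family $\bigsqcup_{x\in\Sigma}C_{\g_x}$ is a genuine embedded smooth submanifold of $\G|_\Sigma$ (not merely a set-theoretic union of connected Lie subgroups of varying isotropy). The Morita-equivalence based trivialization $C_\g\times W^G$ is what delivers this smoothness, together with the non-trivial check that the isomorphisms of isotropy groups furnished by the Morita equivalence are compatible with the intrinsic definition of $C_{\g_x}$—this ultimately rests on the Morita invariance of the normal representations. Once smoothness and local triviality of $C_\Sigma$ are established, the remaining verifications (normality, properness of the quotient, closedness and non-degeneracy of $\Omega_\Sigma$) are routine and may be carried out in the local model.
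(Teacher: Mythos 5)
Your strategy has the right shape, and several pieces are correct: the identification of the relevant Morita type with $W^G$ in the transverse model, the kernel computation of $-\d\lambda_{\textrm{can}}$ over $(\g^*)^G$, normality of $C_{\g_x}$ from $\mathrm{Ad}$-invariance of $\c_{\g_x}$, and the principal-type argument for the quotient. There are, however, two genuine gaps. First, you deduce that $C_\g$ is closed in $G$ from the fact that $\c_\g$ is an ideal. That implication is false: in a torus every subspace of the Lie algebra is an ideal, yet the connected subgroup it integrates is closed only for rational slopes. Closedness of each $C_{\g_x}$ is essential (otherwise the sub-bundle is not closed embedded and the quotient is not even Hausdorff) and requires a separate argument. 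The paper proves that $C_\g$ is in fact compact, by exhibiting it as the image of $\left(Z(G^0)^0\right)^n\times G^\textrm{ss}$ under $(h_1,\dots,h_n,g)\mapsto [h_1,g_1]\cdots[h_n,g_n]\cdot g$, using the decomposition $\c_\g=\c_{Z(\g)}\oplus\g^{\textrm{ss}}$ of Proposition \ref{fixptcompliealg}; compactness then gives closedness.

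Second, all of your local computations take place in the transverse model $(G\ltimes\g^*,-\d\lambda_{\textrm{can}})\vert_{W^G}=G\times W^G$, which is only Morita equivalent to $\G\vert_{\Sigma\cap V}$, not isomorphic to it. A Morita equivalence identifies leaf spaces, isotropy groups and normal representations, but it does not furnish charts for $\G\vert_\Sigma$ around points of its isotropy bundle, nor does it identify the restricted multiplicative $2$-forms, which live on non-diffeomorphic manifolds. Consequently neither the embeddedness of $\bigsqcup_{x\in\Sigma}C_{\g_x}$ in $\G\vert_\Sigma$ nor the claim that $\ker\left(\Omega\vert_{\G\vert_\Sigma}\right)$ equals the tangent distribution to the $C$-orbits follows from your computation in $G\times W^G$; the ``local trivializations'' you propose to patch are trivializations of a different groupoid. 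The compatibility of the isotropy isomorphisms with the intrinsic definition of $C_{\g_x}$ --- what you single out as the main obstacle --- is actually the easy part. To close the gap one must either work in the genuine local model furnished by the linearization theorem (an isomorphism near a leaf, not a Morita equivalence), which is how the paper obtains the closed embedded Lie subgroupoid, or argue intrinsically; for the kernel of $\Omega\vert_{\G\vert_\Sigma}$ the paper applies Lemma \ref{liedescrisofol} to the canonical Hamiltonian action of $(\G,\Omega)$ on itself (Example \ref{identityactionex}), which computes the kernel directly on $\G\vert_\Sigma$ via the momentum map condition.
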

\begin{proof} First, observe that for any compact Lie group $G$, the connected Lie subgroup $C_{\g}$ of $G$ with Lie algebra $\c_\g$ is compact. To see this, let $G^\textrm{ss}$ be the connected Lie subgroup of $G$ with Lie algebra the compact and semisimple Lie subalgebra $\g^\textrm{ss}=[\g,\g]$ of $\g$, let $G^0$ be the identity component of $G$ and let $Z(G^0)^0$ denote the identity component of the center of $G^0$. Fix $g_1,...,g_n\in G$ such that $G/G^0=\{[g_1],...,[g_n]\}$. It follows from Proposition \ref{fixptcompliealg} that $C_\g$ is the image of the morphism of Lie groups:
\begin{equation*} \left(Z(G^0)^0\right)^n\times G^\textrm{ss}\to G, \quad (h_1,...,h_n,g)\mapsto [h_1,g_1]\cdot ...\cdot [h_n,g_n]\cdot g,
\end{equation*} where $[h_i,g_i]=h_ig_ih_i^{-1}g_i^{-1}$ is the commutator (which again belongs to $Z(G^0)^0$). So, since both $G^\textrm{ss}$ and $Z(G^0)^0$ are compact, $C_\g$ is compact as well. Using this and the linearization theorem for proper Lie groupoids (see Subsection \ref{locmodconstsec} for the local model) one sees that the family of the Lie groups $C_{\g_x}$ is a closed embedded Lie subgroupoid of $\G\vert_{\Sigma}$ over $\Sigma$. Furthermore, for every $g\in \G\vert_\Sigma$ starting at $x$ and ending at $y$, it holds that:
\begin{equation*} gC_{\g_x}g^{-1}=C_{\g_y}.
\end{equation*} This follows from the observation that an isomorphism of compact Lie groups $G_1\to G_2$ maps $C_{\g_1}$ onto $C_{\g_2}$. So, the family of Lie groups is also a normal subgroupoid of $\G\vert_{\Sigma}$. Therefore, the quotient of the proper Lie groupoid $\G\vert_{\Sigma}$ by this bundle of Lie groups is again a proper Lie groupoid. It follows from Lemma \ref{liedescrisofol}, applied to the Hamiltonian action of Example \ref{identityactionex}, that the pre-symplectic form $\Omega\vert_{(\G\vert_{\Sigma})}$ on $\G\vert_{\Sigma}$ has constant rank and its null foliation coincides with the foliation by orbits of the action on $\G\vert_{\Sigma}$ of this bundle of Lie groups. So, the quotient groupoid inherits a symplectic form. This symplectic form inherits multiplicativity from $\Omega$. Hence, the quotient is a symplectic groupoid. Finally, in light of Remark \ref{pringpoidrem} the quotient groupoid is of principal type, because for any $x,y\in \Sigma$ there is an isomorphism between $\G_x$ and $\G_y$, and any such isomorphism descends to one between the isotropy groups $\G_x/C_{\g_x}$ and $\G_y/C_{\g_y}$ of the quotient groupoid. 
\end{proof}
We are now ready to complete the proof of the reduction theorem.
\begin{proof}[Proof of Theorem \ref{redstratthm}] Consider the family of Lie groups:
\begin{equation*} \H_{\Sigma_M}:=\bigsqcup_{x\in \Sigma_M} C_{\g_x}\subset \G\vert_{\Sigma_M}
\end{equation*} of Proposition \ref{gpoidredstrat}. Being a closed embedded Lie subgroupoid of the proper Lie groupoid $\G\vert_{\Sigma_M}$, the Lie groupoid $\H_{\Sigma_M}$ is proper as well. Hence, so is any smooth action of $\H_{\Sigma_M}$. It acts along $J:\Sigma_S\to \Sigma_M$ via the action of $\G$. Proposition \ref{eqcharhammortyp} implies that for any $p,q\in \Sigma_S$, writing $x=J(p)$ and $y=J(q)$, there is an isomorphism of pairs of Lie groups:
\begin{equation}\label{isopairliegp} (\G_{x},\G_p)\cong (\G_{y},\G_q).
\end{equation} Such an isomorphism restricts to an isomorphism between the isotropy groups of the $\H_{\Sigma_M}$-action:
\begin{equation*} (\H_{\Sigma_M})_p=C_{\g_{x}}\cap \G_p\cong C_{\g_{y}}\cap \G_q=(\H_{\Sigma_M})_q.
\end{equation*} So, appealing to Lemma \ref{orbspsmoothlem}, we find that the orbit space admits a smooth manifold structure for which the orbit projection is a submersion. It follows from Lemma \ref{liedescrisofol} that the orbits of this action are the leaves of the null foliation of $\omega_{\Sigma_S}$, so this proves part $a$ of the theorem. Part $b$ of the theorem is proved in Proposition \ref{gpoidredstrat}. For part $c$, notice that $J$ factors through to a map $J_{S_\Sigma}$ (since the source and target of any element in $\H_{\Sigma_M}$ coincide) and the action of $\G$ along $J$ descends to an action of $\G_{\Sigma_M}$ along $J_{S_\Sigma}$. As the action of $(\G,\Omega)$ along $J$ is Hamiltonian, the same follows for the action of $(\G_{\Sigma_M},\Omega_{\Sigma_M})$ along $J_{S_\Sigma}$. By the previous proposition, $\G_{\Sigma_M}$ is of principal type. Furthermore, for any two $[p],[q]\in S_{\Sigma}$ there is, as before, an isomorphism of pairs (\ref{isopairliegp}), and this descends to an isomorphism between the Lie groups:
\begin{equation*} \G_p/(C_{\g_x}\cap \G_p)\cong \G_q/(C_{\g_y}\cap \G_q),
\end{equation*} which are canonically isomorphic to the respective isotropy groups of the $\G_{\Sigma_M}$-action at $[p]$ and $[q]$. In view of Remark \ref{pringpoidrem} we conclude that the Hamiltonian $(\G_{\Sigma_M},\Omega_{\Sigma_M})$-action is of principal type. This completes the proof of parts $a-c$. For the final statement, consider the diagram:
\begin{center} 
\begin{tikzcd} (\Sigma_S, \omega_{\Sigma_S})\arrow[r] \arrow[d] & (S_{\Sigma},\omega_{S_\Sigma}) \arrow[d] \\
(\underline{\Sigma}_S,\pi_{\underline{\Sigma}_S}) & (\underline{S_{\Sigma}},\pi_{\underline{S_{\Sigma}}})
\end{tikzcd}
\end{center} All arrows are surjective submersions and by construction (in particular, Corollary \ref{orbprojstratdirac}) each is forward Dirac. Evidently, the left vertical map factors through the composition of the other two, and vice versa. Hence, by functoriality of the push-forward construction for Dirac structures, the diagram completes to give the desired Poisson diffeomorphism.   
\end{proof}
\bibliographystyle{plain}

\bibliography{ref}
\Addresses
\end{document}